\renewcommand*{\p@section}{\,}
\renewcommand*{\p@subsection}{\S\,}
\renewcommand*{\p@subsubsection}{\S\,}
\newtheorem{thm}{Theorem}[section]
\newtheorem{cor}[thm]{Corollary}
\newtheorem{lem}[thm]{Lemma}
\newtheorem{prop}[thm]{Proposition}
\newtheorem{conj}[thm]{Conjecture}
\theoremstyle{remark}
\newtheorem{defn}[thm]{Definition}
\newtheorem{rem}[thm]{Remark}
\numberwithin{equation}{section}
\newcommand{\CC}{\ensuremath{\mathbb{C}}}
\newcommand{\N}{\ensuremath{\mathbb{N}}}
\newcommand{\kk}{\ensuremath{\Bbbk}}
\newcommand{\op}{\operatorname{op}}
\newcommand{\ee}{\operatorname{e}}
\newcommand{\out}{\operatorname{out}}
\newcommand{\inn}{\operatorname{inn}}
\newcommand{\du}{\operatorname{d}}
\newcommand{\Dtwo}{\operatorname{D}^2}
\newcommand{\mc}{\operatorname{m}}
\newcommand{\tr}{\operatorname{tr}}
\newcommand{\diag}{\operatorname{diag}}
\newcommand{\Hom}{\operatorname{Hom}}
\newcommand{\Der}{\operatorname{Der}}
\newcommand{\Mat}{\operatorname{Mat}}
\newcommand{\Id}{\operatorname{Id}}
\newcommand{\Gl}{\operatorname{GL}}
\newcommand{\Rep}{\operatorname{Rep}}
\newcommand{\D}{\operatorname{\mathbb{D}er}}
\newcommand{\sgn}{\operatorname{sgn}}
\newcommand{\he}{\hat{e}}
\newcommand\dgal[1]{  \left\{\!\!\!\left\{#1\right\}\!\!\!\right\} }
\newcommand\br[1]{\{ #1 \}} 
\newcommand{\lr}[1]{
  \{\mkern-6mu\{#1\}\mkern-6mu\}}
  \newcommand{\PP}{\ensuremath{\mathrm{P}}}
  \newcommand{\PPn}{\ensuremath{\mathrm{P}_n}}
\newcommand{\qP}{\ensuremath{\mathrm{qP}}}
\newcommand{\lfus}[1]{\{\mkern-6mu\{#1\}\mkern-6mu\}_{\mathrm{fus}}}
\newcommand{\Ac}{\ensuremath{\mathcal{A}}}
\newcommand{\Bc}{\ensuremath{\mathcal{B}}}
\newcommand{\loc}{\operatorname{loc}}
\newcommand{\fus}{\operatorname{fus}}
\newcommand{\ssep}{\operatorname{sep}}
\newcommand{\Bloc}{\ensuremath{\mathcal{B}_{n}^{\loc}}}
\newcommand{\Afus}{\ensuremath{\mathcal{A}_{n}^{\fus}}}
\begin{document}

\title[Euler continuants in NC quasi-Poisson geometry]{Euler continuants in\\noncommutative quasi-Poisson geometry}

\author{Maxime Fairon}
 \address[Maxime Fairon]{School of Mathematics and Statistics\\ University of Glasgow, University Place\\ Glasgow G12 8QQ, UK}
 \email{Maxime.Fairon@glasgow.ac.uk}
 
 \address{\emph{(Current address)} Department of Mathematical Sciences, Schofield Building\\ Loughborough University, Epinal Way\\ Loughborough LE11 3TU, UK}
 \email{M.Fairon@lboro.ac.uk}

\author{David Fern\'andez}
\address[David Fern\'andez]{Fakult\"at f\"ur Mathematik\\ Universit\"at Bielefeld, Universit\"atsstr. 25
\\
33615 Bielefeld, Germany}
\email{dfernand@math.uni-bielefeld.de}

\address{\emph{(Current address)} DMATH. Universit\'e du Luxembourg\\ Maison du Nombre, 6\\ Avenue de la Fonte
\\
L-4364 Esch-sur-Alzette, Luxembourg}
\email{david.fernandez@uni.lu}

\date{}

\begin{abstract} 
It was established by Boalch that Euler continuants arise as Lie group valued moment maps for a class of wild character varieties described as moduli spaces of points on $\mathbb{P}^1$ by Sibuya. 
Furthermore, Boalch noticed that these varieties are multiplicative analogues of certain Nakajima quiver varieties originally introduced by Calabi, which are attached to the quiver $\Gamma_n$ on two  vertices and $n$ equioriented arrows. 
In this article, we go a step further by unveiling that the Sibuya varieties can be understood using noncommutative quasi-Poisson geometry modeled on the quiver $\Gamma_n$. 
We prove that the Poisson structure carried by these varieties is induced, via the Kontsevich--Rosenberg principle, by an explicit Hamiltonian double quasi-Poisson algebra defined at the level of the quiver $\Gamma_n$ such that its noncommutative multiplicative moment map is given in terms of Euler continuants.
This result generalises the Hamiltonian double quasi-Poisson algebra associated with the quiver $\Gamma_1$ by Van den Bergh. Moreover, using the method of fusion, we prove that the Hamiltonian double quasi-Poisson algebra attached to $\Gamma_n$ admits a factorisation in terms of $n$ copies of the algebra attached to $\Gamma_1$.
\end{abstract}

\addcontentsline{toc}{subsection}{Contents}  

\maketitle

 \setcounter{tocdepth}{2} 
 
\vspace{-0.8cm} 

{\small \textsc{2020 Mathematics Subject Classification}. 16G20 - 17B63 - 14A22 - 53D30 - 53D20}

\vspace{-0.5cm}

\tableofcontents


 \section{Introduction} \label{S:Intro}

\subsection{Euler continuants} 
Fix a string $\mathtt{S}=``x_1\dots x_k"$ of $k\geq 1$ indeterminates, which are not necessarily invertible or commuting. We define the \emph{$k$-th Euler continuant (polynomial)}, denoted $(x_1,\dots,x_k)$, by starting with the product $x_1\cdots x_k$ and then taking the sum of 
all the distinct substrings of $\mathtt{S}$ obtained by removing adjacent pairs $x_\ell x_{\ell+1}$ in all possible ways, with $+1$ assigned to  the empty substring. The first instances of this family were already written in 1764 by Euler \cite[p. 55]{Eu} as 
\begin{align*}
(x_1)&=x_1\,,
\\
(x_1,x_2)&=x_1x_2+1\,,
\\
(x_1,x_2,x_3)&=x_1x_2x_3+x_3+x_1\,,
\\
(x_1,x_2,x_3,x_4)&=x_1x_2x_3x_4+x_1x_2+x_1x_4+x_3x_4+1\,,
\\
(x_1,x_2,x_3,x_4,x_5)&=x_1x_2x_3x_4x_5+x_1x_2x_3+x_1x_2x_5+x_1x_4x_5+x_3x_4x_5+x_1+x_3+x_5\,.
\end{align*}
More succinctly, Euler continuants can be defined by the following recurrence:
\begin{equation}
(\emptyset)=1\,,\quad (x_1)=x_1\,,\quad (x_1,\dots,x_k)=(x_1,\dots,x_{k-1})x_k+(x_1,\dots,x_{k-2}) \quad \text{if } k\geq 2\,.
\label{eq:def-Euler-continuant-intro-1}
\end{equation} 

Originally, Euler introduced this class of polynomials to grasp the numerators and denominators of continued fractions (hence the name `continuants'). 
For example, 
\[
x_1+\frac{1}{x_2}=\frac{(x_1,x_2)}{(x_2)}\,, \quad 
x_1+\frac{1}{x_2+\frac{1}{x_3}}=\frac{(x_1,x_2,x_3)}{(x_2,x_3)}\,, \quad 
x_1+\frac{1}{x_2+\frac{1}{x_3+\frac{1}{x_4}}}=\frac{(x_1,x_2,x_3,x_4)}{(x_2,x_3,x_4)}\,.
\]
Since then,  Euler continuants have appeared in many unexpected areas of research and they have become essential in mathematics. 
In the context of number theory, Euler continuants are closely related to Euclid's algorithm, see e.g. \cite{concrete} for a more detailed account. The number of terms of the $k$-th Euler continuant is the $(k+1)$-th Fibonacci number, while the Catalan numbers count the factorisations of the continuant via triangulations of polygons--- see \cite{Paluba}, where a connection to Loday's free duplicial algebras is also stated.
In knot theory, the Conway polynomial of an oriented two-bridge link is related to Euler continuants \cite{KP}; in contact and symplectic geometry, the braid varieties associated with 2-stranded braids are smooth varieties whose defining equations are closely linked to Euler continuants \cite{CGGS}. 
Another appearance of Euler continuants occurs when we consider a matrix version of the Stern--Brocot tree \cite{concrete}, which is a classical construction of all non-negative fractions whose numerators and denominators are coprime.  
If we let
\[
L= \left(\begin{array}{cc} 1&1\\0&1 \end{array} \right) ,\quad R= \left(\begin{array}{cc} 1&0\\1&1 \end{array} \right),
\]
each node in the matrix Stern--Brocot tree can be represented as a sequence 
\[
R^{c_0}L^{c_1}R^{c_2}L^{c_3}\cdots R^{c_{n-2}}L^{c_{n-1}}\,, \quad \text{ where } c_0,c_{n-1}\geq 0\,,\quad c_2,\dots,c_{n-2}\geq 1\,,
\]
and $n$ is even. Then, it is not difficult to prove that the four entries of the obtained matrix can be rewritten in terms of Euler continuants. 
To show this, it suffices to note that 
\begin{equation} \label{Eq:matrix-perm}
\left(\begin{array}{cc} 1&\alpha \\0&1 \end{array} \right) =  
\left(\begin{array}{cc} 0&1\\1&0 \end{array} \right)   \left(\begin{array}{cc} 0&1\\1&\alpha \end{array} \right) \,, \quad 
\left(\begin{array}{cc} 1&0\\\alpha&1 \end{array} \right) =  
\left(\begin{array}{cc} 0&1\\1&\alpha \end{array} \right) \left(\begin{array}{cc} 0&1\\1&0 \end{array} \right) \,, 
\end{equation}
for any constant $\alpha$, and then use that given $k\geq2$ and constants $x_i$ for $1\leq i \leq k$, we have 
\begin{equation} \label{eq:matrix-Euler-continuant}
 \left(\begin{array}{cc} 0&1\\1&x_k \end{array} \right) 
  \left(\begin{array}{cc} 0&1\\1&x_{k-1} \end{array} \right) \cdots 
\left(\begin{array}{cc} 0&1\\1&x_1 \end{array} \right) 
=
 \left(\begin{array}{cc} (x_{k-1},\ldots,x_2)&(x_{k-1},\ldots,x_1)\\(x_{k},\ldots,x_2)&(x_{k},\ldots,x_1) \end{array} \right)\,.
\end{equation}
This remarkable matrix identity can be proven by induction and it will be crucial in \ref{sec:intro-Boalch}.

Finally, beyond the recurrence \eqref{eq:def-Euler-continuant-intro-1}, an effective way to define Euler continuants uses the determinant of a tridiagonal matrix. For such a matrix, the only nonzero entries are given by the indeterminates $x_i$ along the main diagonal, $-1$ on the first diagonal below it, and $+1$ on the first diagonal above it. 
This approach suggests how to introduce new families of continuants; for instance, by taking determinants of other tridiagonal matrices.
They have appeared in interesting contexts such as Coxeter's frieze patterns, Ptolemy--Pl\"ucker relations and cluster algebras, or the discrete Sturm--Liouville, Hill or Schr\"odinger equations (see \cite{MO} and references therein).

Quite strikingly, Boalch \cite{B18} realised that Euler continuants naturally appear in the setting of certain wild character varieties linked to the work of Sibuya \cite{Si75}. In that case, the continuants can be understood as Lie group valued moment maps \cite{AMM98,AKSM02};  we will explain these links in \ref{sec:intro-Boalch}.
In this article, we are able to further prove that Euler continuants are noncommutative moment maps in the sense of Van den Bergh's noncommutative quasi-Poisson geometry--- see  \ref{sec:intro-results}.
This is another step towards the programme that we have initiated in \cite{FF} which aims at understanding the Poisson geometry of wild character varieties in terms of Hamiltonian double quasi-Poisson algebras attached to quivers.
Our programme is based on two main tools. First, the interpretation of such varieties using graphs/quivers undertaken by Boalch \cite{Bo12}--\cite{BoY}. Second, the application of the Kontsevich--Rosenberg principle \cite{KR00}, whereby one studies a noncommutative structure on an associative algebra whenever it induces the corresponding standard algebro-geometric structures on representation spaces.
Whereas in \cite{FF} we pursued this idea by varying the number of vertices of the quivers under consideration (typically, complete graphs with some additional data), in this article we fix the quivers to have two vertices and we vary the number of arrows between them.

\subsection{Euler continuants as moment maps} \label{sec:intro-Boalch}
Character varieties of Riemann surfaces have become central objects in modern mathematics. This is essentially due to their dual nature: they can be defined as moduli spaces of monodromy data of regular singular connections, or as spaces of representations of the fundamental group.
Partially motivated by works on two-dimensional gauge theory concerning the Atiyah--Bott symplectic form, Boalch started in \cite{Bo01,Bo01a} a groundbreaking programme to investigate the geometry of \emph{wild} character varieties (note this terminology appeared in \cite{Bo14}). 
These spaces generalise character varieties by considering moduli spaces of monodromy data classifying irregular meromorphic connections on bundles over Riemann surfaces; alternatively, wild character varieties parametrise fundamental group representations enriched by adding Stokes data at each singularity.
This led to the construction of many new holomorphic symplectic manifolds \cite{Bo-thesis, Bo01}, which turn out to admit hyperk\"ahler metrics, as shown in \cite{BB}. 
These developments prompted Boalch in \cite{Bo-Duke} to begin an extensive study of wild character varieties from an alternative algebraic perspective. 
Indeed, he realised that such varieties can be constructed as finite dimensional multiplicative symplectic quotients of smooth affine varieties, involving a distinguished holomorphic 2-form. This point of view required the introduction of a complex-analytic version of quasi-Hamiltonian geometry \cite{AMM98}, which builds on the key  operations of fission and fusion. 
Whereas the former consists in breaking the structure group into some relevant subgroups --- see \cite{Bo-Grenoble,Bo14}, the latter provides a way to glue pieces of surfaces together out of building blocks given by conjugacy classes, pairs of pants and the important fission spaces \cite{Bo14}.
In the rest of this subsection, we will come across one of these building blocks: the (reduced) fission space $\Bc^{n+1}$, see \eqref{eq:Boalch-space-1}.

In \cite{B18}, Boalch examined an interesting class of wild character varieties arising as specific moduli spaces of points on $\mathbb{P}^1$; they were originally studied by Sibuya \cite{Si75}, and for certain values they give rise to the prominent gravitational instantons. 
These wild character varieties are multiplicative analogues of a family of Hyperk\"ahler varieties introduced by Calabi \cite{Ca79} in 1979 as higher-dimensional examples of the Eguchi--Hanson spaces. 
 If we denote by 
$\Gamma_n$ the quiver with two vertices $\{1,2\}$ and $n$ arrows $a_i:1\to 2$ (whose double $\overline{\Gamma}_n$ has $n$ extra arrows $b_i:2\to 1$, see Figure \ref{fig:quiver-Gamma-n}),
Calabi's varieties can be described as (Nakajima/additive) quiver varieties attached to $\Gamma_n$. They are obtained by Hamiltonian reduction from the moduli space of representations of $\overline{\Gamma}_n$ with dimension vector $d=(1,1)$.
This prompted Boalch \cite{B18} to make the crucial observation that Sibuya's varieties, as multiplicative analogues of Calabi's varieties, could also be defined using the quiver $\Gamma_n$. This point of view naturally leads to Euler continuants as we explain now.

If $G:=\Gl_2(\CC)$, let $U_{+}$ (resp. $U_-$) be the subgroup of unipotent upper (resp. lower) triangular matrices, $T$ be the maximal diagonal torus of $G$, and we let $G_\circ=U_+TU_-$ denote the subspace of matrices in $G$ admitting a Gauss decomposition. Recall that, for a matrix  $M=(M_{ij})\in G$, we have that $M\in G_\circ$ is equivalent to the condition $M_{22}\neq 0$, in which case we can write that 
\begin{equation}\label{eq:matrix-Gauss-decompo}
M=
 \left(\begin{array}{cc} 1&M_{12}M_{22}^{-1}\\0&1 \end{array} \right) 
 \left(\begin{array}{cc} M_{11}-M_{12}M_{22}^{-1}M_{21}&0\\0&M_{22} \end{array} \right)
\left(\begin{array}{cc} 1&0\\M_{22}^{-1}M_{21}&1 \end{array} \right)  \,.
\end{equation}
Following Boalch \cite{B18} from now on (see also Paluba's thesis \cite{Paluba}, and even more generally \cite{Bo-Duke, Bo14}), we introduce for any $n\geq 1$ the reduced fission space $\Bc^{n+1}\subset (U_-\times U_+)^{n}$, which is the smooth complex variety defined by 
\begin{equation} \label{eq:Boalch-space-1}
\Bc^{n+1}:=\left\{S_{b,i}\in U_-,\,\,S_{a,i}\in U_+ \text{ for }1\leq i \leq n \mid S_{b,n}S_{a,n}\cdots S_{b,1}S_{a,1}\in G_\circ\right\}\,. 
\end{equation}
The condition of admitting a Gauss decomposition has a deep geometric meaning. Indeed, there is a natural action of $T$ on $\Bc^{n+1}$ by simultaneous conjugation, which preserves a $2$-form, and the inverse $\mu_T$ of the diagonal part  of the Gauss decomposition can be interpreted as a Lie group valued moment map, in the sense of \cite{AMM98}. 
Furthermore, since we consider the Gauss decomposition of a product of unipotent matrices, $\mu_T$ takes value in $T\cap \operatorname{SL}_2(\CC)$.
This implies that, if we fix the $T$-valued moment map $\mu_T$ to $t_\gamma:=\diag(\gamma,\gamma^{-1})\in T \cap \operatorname{SL}_2(\CC)$ for generic $\gamma\in \CC^\times$, the corresponding GIT quotient $\mathcal{M}_n(t_\gamma)=\mu_T^{-1}(t_\gamma)/\!/T$ admits a symplectic form. 
The space $\mathcal{M}_n(t_\gamma)$ hence obtained is an example of the Sibuya spaces mentioned earlier.

To relate the construction of $\mathcal{M}_n(t_\gamma)$ to quivers and Euler continuants, we let 
\begin{equation} \label{eq:Boalch-space-2}
 S_{b,i}=\left(\begin{array}{cc} 1&0\\B_i&1 \end{array} \right)\,, \quad 
 S_{a,i}= \left(\begin{array}{cc} 1&A_i\\0&1 \end{array} \right) \,, \quad 1\leq i \leq n\,,
\end{equation}
for constants $A_i,B_i\in \CC$ at a point of $\Bc^{n+1}$. Then, by \eqref{Eq:matrix-perm} and \eqref{eq:matrix-Euler-continuant}, we can see that 
\begin{equation} \label{eq:Boalch-space-3}
\begin{aligned}
 S_{b,n}S_{a,n}\cdots S_{b,1}S_{a,1}
=&
\left(\begin{array}{cc} 0&1\\1&B_n \end{array} \right) \left(\begin{array}{cc} 0&1\\1&A_n \end{array} \right) \ldots 
\left(\begin{array}{cc} 0&1\\1&B_1 \end{array} \right) \left(\begin{array}{cc} 0&1\\1&A_1 \end{array} \right)  \\
=& \left(\begin{array}{cc} (A_{n},\ldots,B_1)&(A_{n},\ldots,A_1)\\(B_{n},\ldots,B_1)&(B_{n},\ldots,A_1) \end{array} \right)\,.
\end{aligned}
\end{equation} 
Thus, the condition that this product belongs to $G_\circ$ is equivalent to requiring the invertibility of the Euler continuant $(B_{n},\ldots,A_1)$ placed in the (2,2)-entry. In view of \eqref{eq:matrix-Gauss-decompo}, we get that the diagonal part of the Gauss decomposition is $\diag(\tilde{s}_n,(B_{n},\ldots,A_1))$, where 
\begin{equation}  \label{eq:Boalch-space-4}
 \tilde{s}_n:=(A_{n},\ldots,B_1)-(A_{n},\ldots,A_1)(B_{n},\ldots,A_1)^{-1}(B_{n},\ldots,B_1)\,.
\end{equation}
By taking the determinant in \eqref{eq:Boalch-space-3} we can find that\footnote{ Here, one uses that the constants $(A_i,B_i)_{i=1}^n$ and the Euler continuants are valued in $\CC$ and hence are commuting. The same form for $\tilde{s}_n$ can be obtained for non-commuting variables $A_i,B_i$, though it requires a different proof.} $\tilde{s}_n=(A_1,B_1,\ldots,A_n,B_n)^{-1}$.
The upshot is that we can reexpress  $\Bc^{n+1}$ and its Lie group valued moment map $\mu_T:\Bc^{n+1}\to T$ in terms of the space 
\begin{equation} \label{eq:Boalch-space-5}
\widehat{\Bc}^{n+1}:=\left\{A_i,B_i\in \CC \text{ for }1\leq i \leq n \mid (A_1,\ldots,B_n)\neq 0\right\}\,, 
\end{equation}
and the map 
\begin{equation} \label{eq:Boalch-space-6}
\widehat{\mu}:\widehat{\Bc}^{n+1}\longrightarrow (\CC^\times)^2\,, \quad 
\{A_i,B_i\}\longmapsto \Big( (B_{n},\ldots,A_1)^{-1} , (A_1,\ldots,B_n)\Big)\,. 
\end{equation}
From this point of view, we can realise $\widehat{\Bc}^{n+1}$ as an affine subspace\footnote{The reader should be warned that Boalch \cite{B18} writes $\widehat{\Bc}^{n+1}=\Rep^\ast(\Gamma_n,(1,1))$ as a space of invertible representations inside $\Rep(\Gamma_n,(1,1))$. In that work, one considers $\Gamma_n$ as a \emph{graph} where each edge represents two arrows going in opposite directions, i.e. this is for us the double quiver $\overline{\Gamma}_n$, so that $\Rep(\Gamma_n,(1,1))$ in \cite{B18} corresponds to $\Rep(\overline{\Gamma}_n,(1,1))$ for us.} inside $\Rep(\overline{\Gamma}_n,(1,1))$, where we assign $A_i,B_i$ to the arrows $a_i,b_i$, respectively. 
Moreover, the subspace $\mu_T^{-1}(t_\gamma)$ needed to define the wild character variety $\mathcal{M}_n(t_\gamma)$ can be identified with 
$\widehat{\mu}^{-1}(\gamma,\gamma^{-1})$, which can be further seen as a moduli space of representations of a noncommutative algebra, the \emph{fission algebra} $\mathcal{F}^{q_\gamma}(\overline{\Gamma}_n)$; see \eqref{Eq:Fission-algebra} with parameter $q_\gamma=(\gamma,\gamma^{-1})$. 
Introduced in \cite{B15}, fission algebras generalise the (deformed) multiplicative preprojective algebras of \cite{CBShaw} and they are linked to the \emph{generalised double affine Hecke algebras} of Etingof, Oblomkov and Rains, as explained in \cite{EOR07}.

The importance of the discussion made so far is that we do not really need to consider the elements $\{A_i,B_i\}$ parametrising $\widehat{\Bc}^{n+1}$ as numbers, but, more generally, we can use matrices $A_i\in \Mat_{d_2\times d_1}(\CC)$, $B_i\in \Mat_{d_1\times d_2}(\CC)$ for any $d_1,d_2\geq 1$. 
Indeed, we get in that way other examples of reduced fission spaces  \cite{Bo14}, see \cite{Paluba}.
Hence, we can work with a space of invertible representations inside $\Rep(\overline{\Gamma}_n,(d_1,d_2))$, rather than $\Rep(\overline{\Gamma}_n,(1,1))$. Therefore, it seems natural to try to understand the structures unveiled by Boalch directly at the level of the quiver $\overline{\Gamma}_n$ by using noncommutative quasi-Poisson geometry \cite{VdB1}. In that case, we define from $\overline{\Gamma}_n$ the \emph{noncommutative} $(2n)$-th Euler continuants
\begin{equation*}
 (b_{n},\ldots,a_1)^{-1},\quad (a_1,\ldots,b_n)\,,
\end{equation*}
which shall play the role of noncommutative moment maps. This will be the aim of this article, whose main achievements will be presented in \ref{sec:intro-results}. 

To gain insight into this noncommutative framework, it may be illuminating to illustrate the case $n=1$. Given two complex vector spaces $V_1,V_2$ of finite dimensions $d_1,d_2$, respectively, we can introduce the Van den Bergh space (see \cite{BK16,Bo14,BEF,Ya})
\begin{equation*} 
\Bc^{\operatorname{VdB}}:=\{A\in \Hom(V_1,V_2),\, B\in \Hom(V_2,V_1)\mid \det(\Id_{V_2}+AB)\neq 0\}\,.
\end{equation*}
Van den Bergh proved in \cite{VdB1,VdB2} that this space is a complex quasi-Hamiltonian manifold in the sense of \cite{AMM98} for the natural action of $H=\Gl(V_1)\times \Gl(V_2)$. Furthermore, its Lie group valued moment map is given by 
\begin{equation*}
\mu^{\operatorname{VdB}}(\{A,B\}) := \left((B,A)^{-1},(A,B)\right)=\left((\Id_{V_1} +BA)^{-1},\Id_{V_2}+AB\right)\in H\,.
\end{equation*}
Note that we easily recover $(\widehat{\Bc}^{2},\widehat{\mu})$ from $(\Bc^{\operatorname{VdB}},\mu^{\operatorname{VdB}})$ when $V_1=V_2=\CC$. 
In the setting of multiplicative quiver varieties, Van den Bergh crucially observed that the pair $\big(\Bc^{\operatorname{VdB}},\mu^{\operatorname{VdB}}\big)$ and its corresponding $2$-form can be understood at the level of the quiver $\Gamma_1$. Indeed, he introduced the notion of a \emph{quasi-bisymplectic algebra} \cite{VdB2} as a noncommutative analogue of a quasi-Hamiltonian manifold, and he showed that an appropriate localisation, denoted $\Ac(\Gamma_1)$, of the path algebra of the double quiver $\overline{\Gamma}_1$ is endowed with such a structure. In fact, Van den Bergh first introduced a ``quasi-Poisson version'' of this result, when he derived in \cite{VdB1} that $\Ac(\Gamma_1)$ is a \emph{Hamiltonian double quasi-Poisson algebra}; this is the noncommutative version of a Hamiltonian quasi-Poisson manifold in the sense of \cite{AKSM02}, see Definition \ref{defn:double-quasi-Hamiltonian}.
The key point is that this structure is such that the second Euler continuants $(b,a)^{-1}$ and $(a,b)$ become noncommutative moment maps. 
Our aim is to generalise this result of Van den Bergh: we want to prove that we can attach to each quiver $\Gamma_n$ an explicit Hamiltonian double quasi-Poisson algebra structure, whose moment map is given in terms of the $(2n)$-th Euler continuants $(a_1,\dots,b_n)$ and $(b_n,\dots,a_1)^{-1}$.

\subsection{Main results}
 \label{sec:intro-results}

We fix a field $\kk$ of characteristic zero. Recall that $\overline{\Gamma}_n$ denotes the quiver on $2$ vertices and $2n$ arrows labelled $\{a_i,b_i\mid 1\leq i\leq n\}$ as in Figure \ref{fig:quiver-Gamma-n}. 
We introduce the \emph{Boalch algebra} $\mathcal{B}(\Gamma_n)$ which is obtained from the path algebra $\kk\overline{\Gamma}_n$ by requiring that the $(2n)$-th Euler continuants $(a_1,\dots,b_n)$ and $ (b_n,\dots,a_1)$ are inverted (see \ref{sec:Euler-continuants-introd} for the definition of Euler continuants in $\kk\overline{\Gamma}_n$). 
The main result of this article is Theorem \ref{Thm:MAIN}: we prove that $\mathcal{B}(\Gamma_n)$ is endowed with a double quasi-Poisson bracket $\lr{-,-}$, explicitly defined in \eqref{eq:Euler-double-bracket}, which is such that $\Phi:= (a_1,\dots,b_n) + (b_n,\dots,a_1)^{-1}$ is a multiplicative moment map. In other words, the triple $\big(\mathcal{B}(\Gamma_n),\lr{-,-},\Phi\big)$ is a Hamiltonian double quasi-Poisson algebra. 
As a consequence of this result, we obtain in Corollary \ref{cor:fission-H0} that the corresponding fission algebra $\mathcal{F}^q(\Gamma_n)$ defined by Boalch \cite{B18} carries a noncommutative Poisson structure, called an $H_0$-Poisson structure, as introduced in \cite{CB11}. Such an $H_0$-Poisson structure induces a Poisson bracket on the corresponding varieties 
\begin{equation*}
\mathcal{M}_n^{\kk}(q):= \Rep\big(\mathcal{F}^q(\Gamma_n),(d_1,d_2)\big)/\!/\big(\Gl(d_1)\times \Gl(d_2)\big)\,,
\end{equation*}
when $\kk$ is algebraically closed. 
If $\kk=\CC$, $(d_1,d_2)=(1,1)$ and $q=(\gamma,\gamma^{-1})$, we recover the wild character variety $\mathcal{M}_n(t_\gamma)$ from \ref{sec:intro-Boalch}. 
In particular, the Poisson bracket hence obtained is nondegenerate and corresponds to Boalch's symplectic form \cite{B18}, see \ref{ss:qbisymp}. We will investigate this nondegeneracy property for a general field $\kk$ in a future work.

It is important to observe that if $n=1$, the Boalch algebra $\Bc(\Gamma_1)$ coincides with Van den Bergh's algebra $\mathcal{A}(\Gamma_1)$ described above, and the Hamiltonian double quasi-Poisson structures are the same. Thus, Theorem \ref{Thm:MAIN} can be regarded as a generalisation of \cite[Theorem 6.5.1]{VdB1}.
We also note that, in the inductive proof of Theorem \ref{Thm:MAIN} carried out in Section \ref{sec:proff-thm-Euler}, formulae \eqref{eq:Convenient-formula-Euler-cont-a} and \eqref{eq:Convenient-formula-Euler-cont-b} play a key role, since they enable us to rewrite Euler continuants in terms of lower ones. To the best of our knowledge, these formulae are new and may be of independent interest. 
After that, using the correspondence between double (quasi-Poisson) brackets and noncommutative bivectors explained in \cite[\S4.2]{VdB1},  we exhibit in Proposition \ref{prop:the-nc-bivector} the bivector $\PP_n$ that gives rise to the double quasi-Poisson bracket from Theorem \ref{Thm:MAIN}. 
As in \cite[Proposition 8.3.1]{VdB2}, we expect that this result will be important in the future to prove that the double quasi-Poisson bracket \eqref{eq:Euler-double-bracket} is nondegenerate.

Furthermore, Boalch observed \cite[Remark 5]{B18} that, at certain values ($n=2$), $\mathcal{M}_n^{\kk}(q)$ is isomorphic to the Flaschka--Newell surface \cite{FN80}. This is an affine cubic surface endowed with a Poisson structure; in the setting of integrable systems, it is closely related to solutions to Painlev\'e II equation (see also \cite{CMR}). Recently, based on \cite{FN}  (see also \cite[\S5]{B18}), Bertola and Tarricone \cite{BT} explicitly wrote the Poisson bracket on the wild character variety $\mathcal{M}_n(t_\gamma)$ (with no restriction on $n$); note that an explicit expression for the corresponding symplectic 2-form is well-known by experts. Whereas we originally obtained the Hamiltonian double quasi-Poisson structure of Theorem \ref{Thm:MAIN} employing entirely noncommutative arguments, in \ref{ss:FN-Poisson} we are able to show that it induces the Poisson bracket due to Flaschka and Newell  
on $\Rep\big(\mathcal{B}(\Gamma_n),(1,1)\big)$  (i.e. before performing quasi-Hamiltonian reduction to end up with $\mathcal{M}_n(t_\gamma)$).
In particular, this shows that our Theorem  \ref{Thm:MAIN} should be regarded as the natural noncommutative counterpart of the well-known commutative theory.

Finally, one of the driving ideas in quasi-Hamiltonian geometry consists of constructing interesting moduli spaces 
from simple pieces by using the operation of \emph{fusion} \cite[\S5]{AKSM02}; 
algebraically, it endows the category of quasi-Hamiltonian manifolds with a symmetric monoidal category structure. As we explain in \ref{sec:ssec-Fusion}, Van den Bergh \cite[\S5.3]{VdB1} unveiled a noncommutative analogue of this method to obtain a double quasi-Poisson bracket and a multiplicative moment map from a Hamiltonian double quasi-Poisson algebra by identifying several idempotents.
The last important result of this article is Theorem \ref{Thm:Factorisation} which states that, after further localisation at $2n-2$ Euler continuants, the structure of Hamiltonian double quasi-Poisson algebra of $\mathcal{B}(\Gamma_n)$ unveiled in Theorem \ref{Thm:MAIN} can be obtained by fusing the idempotents in $n$ copies of $\mathcal{B}(\Gamma_1)$. 
In the simplest case $n=2$, this can be interpreted as a way to get the factorisation 
\begin{equation*}
 (a_1,b_1,a_2,b_2)=(a_1',b_1')(a_2',b_2')
\end{equation*}
of the fourth Euler continuant in terms of a product of second ones through the substitution 
\begin{equation} \label{eq:intro-change-var}
 a_1'=a_1,\,\quad b_1'=b_1\,;\qquad a_2'=a_2+(a_1,b_1)^{-1}a_1,\,\quad b_2'=b_2.
\end{equation}
The proof of Theorem \ref{Thm:Factorisation} is based on an explicit description of the Hamiltonian double quasi-Poisson algebra structure obtained after the change of coordinates \eqref{Eq:Locgen3} that generalises \eqref{eq:intro-change-var} for any $n\geq 2$. 
We also note that this result relies on the curious identity of Lemma \ref{lem:curious-identity}, which commutes the adjacent arrows in a product of Euler continuants.

\medskip

\noindent\textbf{Layout of the article.} In Section \ref{sec:nc-Poisson-geom}, we recall the basics of Van den Bergh's noncommutative quasi-Poisson geometry \cite{VdB1}. We introduce the important notions of double quasi-Poisson brackets and Hamiltonian double quasi-Poisson algebras, as well as the method of fusion. In this context, we also present an example associated with the quiver $\Gamma_1$ by Van den Bergh \cite{VdB1,VdB2}, which is generalised in the rest of this work. In Section \ref{sec:Euler-continuants}, we state as part of Theorem \ref{Thm:MAIN} that we can obtain a Hamiltonian double quasi-Poisson algebra $\Bc(\Gamma_n)$ by localisation of the path algebra $\kk\overline{\Gamma}_n$ of the double of the quiver $\Gamma_n$. This structure is such that its noncommutative moment map is given in terms of Euler continuants. We then write down the corresponding noncommutative bivector $\PP_n$ in Proposition \ref{prop:the-nc-bivector}, and we link our result to the Flaschka--Newell Poisson bracket \cite{FN} in \ref{ss:FN-Poisson}. 
The proof of Theorem \ref{Thm:MAIN} is the subject of Section \ref{sec:proff-thm-Euler}.
Finally, in Section \ref{sec:factorisation-after-localiz} we exhibit a factorisation of (a localisation of) the algebra $\Bc(\Gamma_n)$ in terms of $n$ copies of $\Bc(\Gamma_1)$ through the method of fusion. 

\medskip

\noindent\textbf{Acknowledgements.} M. F. is supported by a Rankin-Sneddon Research Fellowship of the University of Glasgow.  
D. F. is supported by the Alexander von Humboldt Stiftung in the framework of an Alexander von Humboldt professorship endowed by the German Federal Ministry of Education and Research. 
We are deeply grateful to Luis \'Alvarez-C\'onsul, Philip Boalch, Oleg Chalykh, William Crawley-Boevey and Vladimir Rubtsov for useful discussions and their encouragements, and to Thomas Br\"{u}stle for bringing \cite{Br01} to our attention. Special thanks are due to Philip Boalch for suggesting that Proposition \ref{Pr:RepCorrespond} should hold, and to the referees for useful comments.


\section{Noncommutative quasi-Poisson geometry}
\label{sec:nc-Poisson-geom}

\subsection{Hamiltonian double quasi-Poisson algebras}
\label{sec:sec-double-quasi-Ham}

Hereafter, we follow \cite{VdB1,CBEG07,F19}.

\subsubsection{Double derivations} \label{sec:ssec-double-derivations}

We fix a finitely generated associative unital algebra $A$ over a field $\kk$ of characteristic zero, and we use the unadorned notations $\otimes=\otimes_\kk$, $\Hom=\Hom_\kk$ for brevity. The \emph{opposite algebra} and the \emph{enveloping algebra} of $A$ will be denoted $A^\op$ and $A^\ee:=A\otimes A^\op$, respectively. We shall identify the category of $A$-bimodules and the category of (left) $A^\ee$-modules. Note that the underlying $A$-bimodule of $A$ carries two $A$-bimodule structures with the same underlying vector space $A\otimes A$, namely the \emph{outer} and the \emph{inner} $A$-bimodule structures, respectively denoted by $(A\otimes A)_{\out}$ and $(A\otimes A)_{\inn}$; they are given by
\begin{align*}
a_1(a'\otimes a'')a_2&:=(a_1a')\otimes (a''a_2)\quad \text{on }(A\otimes A)_{\out}\,;
\\
a_1*(a'\otimes a'')*a_2&:=(a'a_2)\otimes (a_1a'')\quad \text{on }(A\otimes A)_{\inn}\,,
\end{align*}
for all $a',a'',a_1,a_2\in A$.

Given another unital associative $\kk$-algebra $B$, $A$ is called a \emph{$B$-algebra} if there exists a unit preserving $\kk$-algebra morphism $B\to A$.
Let $M$ be an arbitrary $A$-bimodule, and $\Der_B(A,M)$ be the vector space of $B$-derivations of $A$ into $M$; that is, additive maps $\theta\colon A\to M$ satisfying the Leibniz rule and $\theta(b)=0$, for all $b\in B$.
Following \cite{CQ95}, we define the $A$-bimodule of \emph{noncommutative differential 1-forms} as $\Omega^1_B A:=\operatorname{ker}(m\colon A\otimes_B A\to A)$, where $m$ is the product of $A$. As in the commutative case, it carries a canonical $B$-derivation $\du\colon A\to\Omega^1_B A$, $a\mapsto \du{a}:=a\otimes_B 1-1\otimes_B a$, and the pair $(\Omega^1_B A,\du)$ has a universal property (\cite[\S 2]{Quillen}), which can be rephrased as 
\begin{equation}
\Der_B (A,M)\cong \Hom_{A^\ee}(\Omega^1_B A,M)\,.
\label{eq:univ-property}
\end{equation}

By taking $M=(A\otimes A)_{\out}$ in \eqref{eq:univ-property}, we define the $A$-bimodule of \emph{double derivations}
\[
\D_B A:=\Hom_{A^\ee}(\Omega^1_B A,(A\otimes A)_{\out})\cong \Der_B(A,(A\otimes A)_{\out})\,.
\]
If $\delta\in\D_B A$, we systematically use Sweedler's notation: $\delta\colon A\to A\otimes A$, $a\mapsto \delta(a)^\prime\otimes \delta(a)''$. Note that 
 the $A$-bimodule structure on $\D_B A$ is induced from the inner bimodule structure: $(a_1\delta a_2)(a):=\delta(a)'a_2\otimes a_1\delta(a)^{\prime\prime}$, for all $a,a_1,a_2\in A$.

The $A$-bimodule $\D_BA$ carries a distinguished element, called the \emph{gauge element} $E$, defined by
\begin{equation}
E\colon A\longrightarrow A\otimes A,\quad  a\,\longmapsto\, a\otimes 1-1\otimes a.
\label{eq:gauge-element}
\end{equation}
Note that this element was also introduced in \cite[\S 3]{CBEG07} and denoted by $\Delta$.

Following \cite[\S 3.1]{VdB1}, we define the \emph{algebra of poly-vector fields} as $D_BA=T_A(\D_B A)$, which is the tensor algebra of the $A$-bimodule $\D_B A$ (over $A$). The $n$-th component is the $n$-fold $(D_BA)_n=(\D_B A)^{\otimes_ An}$ for $n\geq 1$.

\subsubsection{Double derivations for quivers} \label{ssec:Quivers}

A \emph{quiver} $Q$ is an oriented graph. We denote its set of vertices by $I$, and its set of arrows (oriented edges) by $Q$. We can define on $Q$ the tail (resp. head) map $t\colon Q\to I$ (resp. $h\colon Q\to I$) assigning to any arrow $a\in Q$ its tail/starting vertex $t(a)\in Q$ (resp. its head/ending vertex $h(a)\in Q$). 
We can then form the double $\overline{Q}$ of $Q$ with the same vertex set $I$ by adding an opposite arrow $a^\ast : h(a)\to t(a)$ for each $a\in Q$. 
We naturally extend $h,t$ to $\overline{Q}$.

Now, given a quiver $Q$ (not necessarily a double quiver), we form the path algebra $\kk Q$ which is the $\kk$-algebra generated by the arrows $a\in Q$ and idempotents $(e_s)_{s\in I}$ labelled by the vertices such that 
\begin{equation}
  a=e_{h(a)}a e_{t(a)}\,, \quad e_s e_t = \delta_{st}\,e_s\,,
  \label{eq:convention-arrows}
\end{equation}
and the product is given by concatenation of paths (if possible; otherwise the idempotent decomposition makes the product vanish). In this article we will regard $\kk Q$ as a $B$-algebra with $B=\oplus_{s\in I}\kk e_s$. 
Then, if $A=\kk Q$, the $A$-bimodule of double derivations $\D_B A$ is generated as an $A$-bimodule by $\big\{\frac{\partial}{\partial a}\mid a\in Q\big\}$, where if $b\in Q$,
\[
\frac{\partial}{\partial a}(b):=\delta_{ab}e_{h(a)}\otimes e_{t(a)}\, .
\] 

\begin{rem}\label{rem:convention-on-arrows}
Note that \eqref{eq:convention-arrows} implies that we read paths from right to left. This is the convention followed by Boalch in \cite{B15,B18} and Crawley-Boevey and Shaw \cite{CBShaw}, but it contrasts with  \cite{VdB1}, where Van den Bergh uses the \emph{opposite} convention (\emph{i.e.}, $a=e_{t(a)}a e_{h(a)}$).  \end{rem}

\subsubsection{Double quasi-Poisson brackets}
Let $A$ be a finitely generated associative unital $B$-algebra.
A \emph{$B$-linear  $n$-bracket} \cite{VdB1} is a map $\lr{-,\cdots,-}\colon A^{\times n}\to A^{\otimes n}$  
which
\begin{enumerate}
 \item [\textup{(a)}]
 is linear in all its arguments;
  \item  [\textup{(b)}]
  is cyclically antisymmetric, i.e. 
$$\tau_{(1\cdots n)}\circ\lr{-,\cdots,-}\circ \tau^{-1}_{(1\cdots n)}=(-1)^{n+1}\lr{-,\cdots,-}$$ 
for the cyclic permutation $\tau_{(1\cdots n)}(a_1\otimes \cdots\otimes a_{n-1}\otimes a_n)=a_n\otimes a_1\otimes\cdots\otimes a_{n-1}$; 
\item [\textup{(c)}]
 vanishes when its last argument (hence any argument) is in the image of $B$;
 \item [\textup{(d)}]
 is a derivation $A\to A^{\otimes n}$ in its last argument for the outer bimodule structure on $A^{\otimes n}$ given by 
  $b(a_1\otimes a_2 \otimes\cdots \otimes a_{n-1}\otimes a_n)c= (ba_1)\otimes a_2 \otimes\cdots \otimes a_{n-1}\otimes (a_nc)$.
\end{enumerate}
We call a 2- and 3- bracket a \emph{double} and \emph{triple} bracket, respectively. For the reader's convenience, we explicitly write the definition of the former. 
\begin{defn}[\cite{VdB1}]
A \emph{$B$-linear double bracket} on the $B$-algebra $A$ is a $\kk$-bilinear map $\lr{-,-}:A\times A \to A \otimes A$, which satisfies for any $a,b,c \in A$,
\begin{align} 
 \lr{a,b}&=-\tau_{(12)}\lr{b,a}\,, &&\text{\emph{(cyclic antisymmetry)}},  \label{Eq:cycanti}
\\
 \lr{a,bc}&=\lr{a,b}c+b\lr{a,c}\,,  &&\text{\emph{(right Leibniz rule)}}, \label{Eq:outder}
\end{align}
and which is such that $\lr{-,e}=0$ for all $e\in B$. 
\label{def:quasi-Poisson -bracket}
\end{defn}
Assuming that \eqref{Eq:cycanti} holds, it is easy to check that \eqref{Eq:outder} is equivalent to 
\begin{equation}\label{Eq:inder}
 \lr{bc,a}=\lr{b,a}\ast c+b\ast\lr{c,a} \,,\qquad \text{(left Leibniz rule)}.
\end{equation}
From the derivation rules \eqref{Eq:outder} and \eqref{Eq:inder}, observe that it suffices to define double brackets on generators of $A$. 
From now on, if the context is clear, by a double bracket we will mean a $B$-linear double bracket.
The following result will be used in \ref{sec:nc-bivector}:

\begin{prop}[\cite{VdB1}, Proposition 4.1.1] \label{prop:VdB-mu}
There exists a well-defined linear map
\begin{align*}
\mu\colon (D_B A)_n\longrightarrow&\, \{B\text{-linear }n\text{-brackets on }A\}
\\
Q\,\,\longmapsto&\, \lr{-,\cdots,-}_Q=\sum^{n-1}_{i=0}(-1)^{(n-1)i}\,\tau^{i}_{(1\cdots n)}\circ\lr{-,\cdots,-}^\sim_Q\circ\tau^{-i}_{(1\cdots n)}\, ,
\end{align*}
where we set for $Q=\delta_1\cdots\delta_n$ and $a_1,\dots,a_n\in A$, 
\[
\lr{a_1,\cdots,a_n}^\sim_Q=\delta_n(a_n)'\delta_1(a_1)''\otimes \delta_1(a_1)'\delta_2(a_2)''\otimes\cdots\otimes\delta_{n-1}(a_{n-1})'\delta_n(a_n)'\, .
\]
\end{prop}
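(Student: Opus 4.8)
The plan is to prove the proposition by a direct verification, following \cite{VdB1}: first treat the formula $\lr{-,\cdots,-}^\sim_Q$ for a word $Q=\delta_1\cdots\delta_n$ of $n$ double derivations, extended $\kk$-multilinearly in the $\delta_i$, and then check that it descends to the $A$-module $(D_BA)_n=(\D_B A)^{\otimes_A n}$. Property~(a) (linearity in each argument $a_j$) will be immediate from the shape of the formula, and $\lr{-,\cdots,-}_Q$ is obtained from $\lr{-,\cdots,-}^\sim_Q$ by the stated cyclic symmetrisation. The first genuine task is well-definedness over $A$: for $1\le i\le n-1$ and $a\in A$ I would check that replacing the adjacent pair $(\delta_i,\delta_{i+1})$ by $(\delta_i a,\delta_{i+1})$ yields the same map as replacing it by $(\delta_i,a\delta_{i+1})$, where $\delta a$ and $a\delta$ refer to the inner $A$-bimodule structure on $\D_B A$ recalled in~\ref{sec:ssec-double-derivations}. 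Using $(a_1\delta a_2)(a)=\delta(a)'a_2\otimes a_1\delta(a)''$, one sees that in both cases the sole effect is the insertion of $a$ between $\delta_i(a_i)'$ and $\delta_{i+1}(a_{i+1})''$ in the $(i+1)$-st tensor slot of $\lr{a_1,\cdots,a_n}^\sim_Q$, all other slots being untouched; hence the two agree, and therefore $\lr{-,\cdots,-}^\sim_Q$, and with it $\lr{-,\cdots,-}_Q$, depends only on the class of $Q$ in $(D_BA)_n$, so $\mu$ is a well-defined $\kk$-linear map.

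Next I would dispatch properties~(c) and~(d). For~(c): if the last argument is $e\in B$, then in the $i$-th summand $\tau^{i}_{(1\cdots n)}\circ\lr{-,\cdots,-}^\sim_Q\circ\tau^{-i}_{(1\cdots n)}$ the element $e$ is fed to the double derivation $\delta_{n-i}$ (indices modulo $n$), which annihilates $B$ because $\delta_{n-i}\in\D_B A$; hence every summand, and so the bracket, vanishes. For~(d): the Leibniz rule for $\delta_n$ as a derivation into $(A\otimes A)_{\out}$ shows, upon substituting $a_n b$ for the last argument of $\lr{-,\cdots,-}^\sim_Q$, that
\[
\lr{a_1,\cdots,a_{n-1},a_n b}^\sim_Q=a_n\,\lr{a_1,\cdots,a_{n-1},b}^\sim_Q+\lr{a_1,\cdots,a_{n-1},a_n}^\sim_Q\,b
\]
for the outer bimodule structure on $A^{\otimes n}$, because $\delta_n(a_n)'$ sits at the left end of the first slot and $\delta_n(a_n)''$ at the right end of the last slot. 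More generally, each $\delta_j$ being a derivation, $\lr{-,\cdots,-}^\sim_Q$ obeys a Leibniz rule in every argument (with the inserted element split between two consecutive tensor slots), and conjugating by $\tau^{i}_{(1\cdots n)}$ converts the Leibniz rule in the $(n-i)$-th argument into the outer-derivation property in the last argument, giving~(d).

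For cyclic antisymmetry~(b) I would reindex the defining sum,
\[
\tau_{(1\cdots n)}\circ\lr{-,\cdots,-}_Q\circ\tau^{-1}_{(1\cdots n)}=\sum_{j=1}^{n}(-1)^{(n-1)(j-1)}\,\tau^{j}_{(1\cdots n)}\circ\lr{-,\cdots,-}^\sim_Q\circ\tau^{-j}_{(1\cdots n)},
\]
and then use $(-1)^{(n-1)(j-1)}=(-1)^{n-1}(-1)^{(n-1)j}$ together with $\tau^{n}_{(1\cdots n)}=\operatorname{id}$ and $(-1)^{(n-1)n}=1$ — which make the $j=n$ term coincide with the $j=0$ term of the original sum — to conclude that the right-hand side equals $(-1)^{n-1}=(-1)^{n+1}$ times $\lr{-,\cdots,-}_Q$. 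This is a short telescoping computation. Taken together, these steps show that each $\lr{-,\cdots,-}_Q$ is a $B$-linear $n$-bracket and that $Q\mapsto\lr{-,\cdots,-}_Q$ is the asserted linear map. I expect the only real obstacle to be the bookkeeping in the first two steps — keeping precise track of which Sweedler component of each $\delta_j(a_j)$ lands in which tensor factor and under which ($\out$ versus $\inn$) bimodule action — since the sign manipulation in~(b) is then routine.
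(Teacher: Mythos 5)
This proposition is not proved in the paper; it is quoted verbatim from Van den Bergh \cite[Proposition~4.1.1]{VdB1}, whose argument is precisely the direct verification you sketch. Your plan is correct, and you have silently corrected the paper's typo in the last tensor slot of $\lr{a_1,\cdots,a_n}^\sim_Q$, which should read $\delta_{n-1}(a_{n-1})'\delta_n(a_n)''$ rather than $\delta_{n-1}(a_{n-1})'\delta_n(a_n)'$.
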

Note that if $\Omega^1_B A$ is a projective $A^\ee$-module--- for instance, if $A$ is the path algebra of a quiver, $\mu$ is an isomorphism (see \cite[Proposition 4.1.2]{VdB1}).
Also, since it will be used in \ref{sec:nc-bivector}, we specialise the previous formula to noncommutative bivectors $Q=\delta_1\delta_2\in (D_B A)_2=(\D_B A) \otimes_A (\D_B A)$:
\begin{equation} \label{Eq:bivector-bracket}
 \lr{a_1,a_2}_{\delta_1 \delta_2}=\delta_2(a_2)' \delta_1(a_1)'' \otimes \delta_1(a_1)' \delta_2(a_2)'' - \delta_1(a_2)' \delta_2(a_1)'' \otimes \delta_2(a_1)' \delta_1(a_2)'' \,,
\end{equation}
for any $a_1,a_2\in A$. 

To develop a noncommutative version of quasi-Poisson geometry, as introduced in \cite{AKSM02}, we need a non-vanishing noncommutative Jacobi identity.
Firstly, given a double bracket $\lr{-,-}$ on $A$, and $a,b,c\in A$, Van den Bergh \cite{VdB1} introduced a suitable extension of the double bracket, given by $\lr{a,b\otimes c}_L:=\lr{a,b}\otimes c\in A^{\otimes 3}$. Next, he defined a natural triple bracket associated with $\lr{-,-}$ by setting
\begin{equation}
\label{Eq:TripBr}
 \lr{a,b,c}:=\lr{a,\lr{b,c}}_L+\tau_{(123)}\lr{b,\lr{c,a}}_L+\tau_{(132)}\lr{c,\lr{a,b}}_L\, .
\end{equation}
Note that  
$\tau_{(123)}(a_1\otimes a_2\otimes a_3)=a_{3}\otimes a_1 \otimes a_{2}$
and
$\tau_{(132)}(a_1\otimes a_2\otimes a_3)=a_2\otimes a_3\otimes a_1$,
for all $a_1,a_2,a_3\in A$. 
Secondly, we assume that the unit in $A$ admits a decomposition $1=\sum_{s\in I} e_s$ in terms of a finite set of orthogonal idempotents, i.e. $|I|\in \N^\times$ and $e_s e_t = \delta_{st} e_s$.  In that case, we view $A$ as a $B$-algebra for $B=\oplus_{s\in I} \kk e_s$. 
Now, using the distinguished double derivation $E$ defined in \eqref{eq:gauge-element}, we have $E_s=e_s E e_s\in D_B A$ and we can apply Proposition \ref{prop:VdB-mu} to $\sum_{s\in I}E_s^3\in (D_B A)_3$ and define the following triple bracket:
\begin{equation}
\begin{aligned}
&\quad \lr{a,b,c}_{\qP}:=\frac{1}{12}\sum_{s\in I} \lr{a,b,c}_{E^3_s}
\\
&=\frac14 \sum_{s\in I} \Big(
c e_s a \otimes e_s b \otimes e_s  - c e_s a \otimes e_s \otimes b e_s - c e_s \otimes a e_s b \otimes e_s 
+ c e_s \otimes a e_s \otimes b e_s \\
&\qquad \quad - e_s a \otimes e_s b \otimes e_s c + e_s a \otimes e_s \otimes b e_s c + e_s \otimes a e_s b \otimes e_s c - e_s \otimes a e_s \otimes b e_s c \Big)\,,
\end{aligned}
\label{eq:triple-bracket-E3}
\end{equation}
for any $a,b,c\in A$.

\begin{defn}[\cite{VdB1}]  \label{def:quasi-Poisson-bracket}
Let $A$ be an associative $B$-algebra endowed with a double bracket $\lr{-,-}$.
We say that the double bracket is \emph{quasi-Poisson} if the triple brackets \eqref{Eq:TripBr} and \eqref{eq:triple-bracket-E3} coincide:
\begin{equation}
  \lr{a,b,c}=\lr{a,b,c}_{\qP}\,, 
   \label{qPabc}
\end{equation}
 for all $a,b,c\in A$. The pair $(A,\lr{-,-})$ is called a \emph{double quasi-Poisson algebra}.
\end{defn}
Note that $\lr{-,-,-}$ and $\lr{-,-,-}_{\qP}$ are triple brackets according to the terminology given at the beginning of this subsection. Thus, since triple brackets are cyclic and enjoy a derivation property, it suffices to check \eqref{qPabc} on generators of $A$.

\subsubsection{Multiplicative moment maps}
In addition to the introduction of a noncommutative version of quasi-Poisson brackets, Van den Bergh adapted the notion of Lie group valued moment maps from \cite{AMM98,AKSM02} as follows:

\begin{defn}[\cite{VdB1}]
Let $(A,\lr{-,-})$ be a double quasi-Poisson algebra over $B=\oplus_{s\in I} \kk e_s$. 
An invertible element $\Phi\in A$ is a \emph{multiplicative moment map}  if it can be decomposed as $\Phi=\sum_{s\in I}\Phi_s$ with $\Phi_s\in e_sAe_s$, and it satisfies that  for all $a\in A$ and $s\in I$ 
\begin{equation} \label{Phim}
 \lr{\Phi_s,a}=\frac12 \Big(ae_s\otimes \Phi_s-e_s \otimes \Phi_s a +  a \Phi_s \otimes e_s-\Phi_s \otimes e_s a\Big)\,.
\end{equation}
Then we call the triple $(A,\lr{-,-},\Phi)$ a \emph{Hamiltonian double quasi-Poisson algebra}\footnote{We use the terminology of \cite{VdB2,FF}. Note that the triple $(A,\lr{-,-},\Phi)$ is also called a \emph{Hamiltonian algebra} in \cite{VdB1,F19,F20}.}. 
\label{defn:double-quasi-Hamiltonian}
\end{defn}
The invertibility of $\Phi$ implies that $\Phi_s^{-1}:=e_s\Phi^{-1}e_s$ is an inverse for  $\Phi_s$ in $e_s A e_s$. 
Then, the identity \eqref{Phim} yields for all $a\in A$ that  
\begin{equation} \label{PhimInv}
 \lr{\Phi_s^{-1},a}=-\frac12 \Big(a \Phi_s^{-1}\otimes e_s-\Phi_s^{-1} \otimes e_s a +  a e_s \otimes \Phi_s^{-1}-e_s \otimes \Phi_s^{-1} a\Big)\,.
\end{equation}

Finally, we can define morphisms between such algebras, which we use in Section \ref{sec:factorisation-after-localiz}.

\begin{defn}[\cite{F20}] \label{def:iso-DQHA}
Let $(A,\lr{-,-},\Phi)$ and $(A',\lr{-,-}',\Phi')$ be two Hamiltonian double quasi-Poisson algebras over $B$. An isomorphism $\psi:A\to A'$ of $B$-algebras is said to be an \emph{isomorphism of Hamiltonian double quasi-Poisson algebras} if it preserves the double quasi-Poisson brackets and the multiplicative moment maps, that is,
\begin{equation}
 \lr{\psi(a_1),\psi(a_2)}'=(\psi\otimes \psi)(\lr{a_1,a_2})\,, \quad \text{ and }\quad \psi(\Phi)=\Phi'\,,
 \end{equation}
for all $a_1,a_2\in A$.
\end{defn}

\subsection{Fusion of Hamiltonian double quasi-Poisson algebras} \label{sec:ssec-Fusion}

Given an algebra $A$, the method of fusion allows to construct a new algebra $A^f$ by identifying two orthogonal idempotents in $A$. More importantly, if $A$ is a Hamiltonian double quasi-Poisson algebra, it is possible to obtain such a structure on $A^f$ after performing fusion. Below, we recall the main results associated with this method, which are due to Van den Bergh \cite[\S 2.5, 5.3]{VdB1} (see also \cite[\S2.1]{Br01} without the perspective of double brackets); 
alternative presentations can be found in \cite{F19,F20}. 
These results are noncommutative analogues of \cite[\S5]{AKSM02}. 

As in \ref{sec:sec-double-quasi-Ham}, we assume\footnote{We can directly have in mind that $B=\oplus_{s\in I} \kk e_s$, though the construction of fusion algebra is more general.} that there exist orthogonal idempotents $e_i,e_j\in B$.  
First, we extend the algebra $A$ along the (ordered) idempotents $(e_i,e_j)$  as 
\begin{equation}
  \bar{A}:=A \ast_{\kk e_i \oplus \kk e_j \oplus \kk \he} \Big(\Mat_2(\kk)\oplus \kk \he\Big ) = A \ast_B \bar{B}\,.
\end{equation}
Here, we have set $\he=1-e_i-e_j$, and $\Mat_2(\kk)$ is the $\kk$-algebra generated by the elements $e_i=e_{ii},e_{ij},e_{ji},e_j=e_{jj}$ subject to the matrix relations  $e_{st}e_{uv}=\delta_{tu}e_{sv}$. 
Second, we can introduce the \emph{fusion algebra} $A^f_{e_j \to e_i}$ obtained by fusing $e_j$ onto $e_i$, which we abbreviate as $A^f$. It is simply defined as  
\begin{equation}
  A^f:=\, \epsilon \bar{A} \epsilon\,, \quad \text{for } \epsilon=1-e_j\,.
\end{equation}
The fusion algebra $A^f$ can be seen as dismissing the  elements of $e_j \bar{A} + \bar{A} e_j$ inside  $\bar{A}$. 
Note that $A^f$ is a $B^f$-algebra, where  $B^f:=\epsilon \bar{B} \epsilon$. 
Furthermore, there exists a projection onto the fusion algebra 
\begin{equation} \label{Eq:prfus}
 A \longrightarrow A^f\,, \quad a \longmapsto a^f:=\epsilon a \epsilon + e_{ij}ae_{ji} + e_{ij}a \epsilon + \epsilon a e_{ji}\,,
\end{equation}
from which we can get a set of generators in $A^f$, which we split into four types. Namely, 
  \begin{align}
   (\text{first type})&\qquad\qquad t^f=t\,, \quad & &\text{for }t \in \epsilon A \epsilon\,, \label{type1}\\
   (\text{second type})&\qquad\qquad u^f=e_{ij}u\,, \quad & &\text{for }u \in e_j A \epsilon\,,\label{type2} \\
   (\text{third type})&\qquad\qquad v^f=v e_{ji}\,, \quad & &\text{for }v \in \epsilon A e_j\,, \label{type3} \\
   (\text{fourth type})&\qquad\qquad w^f=e_{ij} w e_{ji}\,, \quad & &\text{for }w \in e_j A e_j\,. \label{type4}
  \end{align}
  
We assume that $(A,\lr{-,-},\Phi)$ is a Hamiltonian double quasi-Poisson algebra over $B=\oplus_{s\in I} \kk e_s$ from now on.   Then, the double bracket uniquely extends from $A$ to $\bar A$ by requiring it to be $\bar{B}$-linear, and it can then be restricted to $A^f$. We also denote the double bracket hence induced on $A^f$ by $\lr{-,-}$. The image in $A^f$ of the component $\Phi_s$ of the multiplicative moment map is either $\Phi_s^f=\epsilon \Phi_s\epsilon=\Phi_s$ if $s\neq j$, or $\Phi_s^f=e_{ij}\Phi_j e_{ji}$ if $s=j$. However, the data $(\lr{-,-},\Phi^f)$ \emph{does not} turn $A^f$ into a Hamiltonian double quasi-Poisson algebra. 
\begin{lem}\label{lem:fus-double-bracket}
 There exists a unique double bracket $\lfus{-,-}$ on $A^f$ such that for any 
\begin{equation} \label{Eq:lem-fus}
t,\tilde{t} \in \epsilon A \epsilon,\quad  u,\tilde{u} \in e_j A \epsilon,\quad v,\tilde{v} \in \epsilon A e_j, \quad w,\tilde{w} \in e_j A e_j,
\end{equation}
we have that 
\begin{subequations}
\begin{align}
& \lfus{\epsilon t \epsilon , \epsilon \tilde{t} \epsilon}=0\,, \label{tt}\\
&\lfus{\epsilon t \epsilon , e_{ij} u \epsilon}=\frac12 \big( e_i \otimes t e_{ij}u - e_i t  \otimes e_{ij}u\big)\,, \label{tu}\\
&\lfus{\epsilon t \epsilon ,\epsilon v e_{ji} }=\frac12 \left( v e_{ji} t   \otimes e_i - v e_{ji} \otimes t e_i\right)\,, \label{tv}\\
&\lfus{\epsilon t \epsilon , e_{ij} w e_{ji}}=\frac12 \left( e_{ij}we_{ji}t \otimes e_i + e_i \otimes t e_{ij} w e_{ji} - e_{ij} w e_{ji} \otimes t e_i - e_i t \otimes e_{ij} w e_{ji}\right), \label{tw}\\
&\lfus{e_{ij} u \epsilon , e_{ij} \tilde{u} \epsilon}= \frac12 (e_i \otimes e_{ij}u e_{ij}\tilde{u}-e_{ij}\tilde{u} e_{ij}u \otimes e_i) \,, \label{uu}\\
&\lfus{e_{ij} u \epsilon ,\epsilon v e_{ji} }=\frac12(e_{ij}u \otimes e_i ve_{ji}-v e_{ji} \otimes e_{ij}u e_i)\,, \label{uv}\\
&\lfus{e_{ij} u\epsilon , e_{ij} w e_{ji}}=\frac12(e_i \otimes e_{ij}u e_{ij}we_{ji} - e_{ij}w  e_{ji} \otimes e_{ij}u e_i) \,, \label{uw} \\
&\lfus{\epsilon v e_{ji},\epsilon \tilde{v} e_{ji} }=\frac12( \tilde{v} e_{ji}v e_{ji}\otimes e_i - e_i \otimes v e_{ji}\tilde{v} e_{ji})\,, \label{vv}\\
&\lfus{\epsilon v e_{ji} , e_{ij} w e_{ji}}=\frac12(e_{ij}we_{ji}ve_{ji}\otimes e_i - e_i v e_{ji}\otimes e_{ij}we_{ji} ) \,, \label{vw} \\
&\lfus{e_{ij} w e_{ji} , e_{ij} \tilde{w} e_{ji}}=0 \,. \label{ww}
\end{align}
\end{subequations}
\end{lem}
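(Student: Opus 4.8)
The plan is to establish existence and uniqueness separately, exploiting the structure of the fusion construction. For uniqueness, I would observe that the four types of elements \eqref{type1}--\eqref{type4} generate $A^f$ as a $B^f$-algebra, so by the Leibniz rules \eqref{Eq:outder}--\eqref{Eq:inder} any double bracket on $A^f$ is determined by its values on pairs of generators; the ten formulae \eqref{tt}--\eqref{ww} (together with cyclic antisymmetry \eqref{Eq:cycanti}, which supplies the missing orderings such as $\lfus{e_{ij}u\epsilon,\epsilon t\epsilon}$) cover all such pairs, so at most one double bracket can satisfy them. One subtlety to check is that the prescription is consistent: the generators are not free, since e.g.\ a product of two first-type elements is again first-type, and one must verify that the Leibniz rule applied to such a product is compatible with \eqref{tt}. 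This is where the specific shape of the right-hand sides matters — each is built from the ``half-moment-map'' expressions reminiscent of \eqref{Phim}.

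For existence, rather than guessing the formula and checking all Leibniz compatibilities by hand, I would follow Van den Bergh's actual route: first extend $\lr{-,-}$ from $A$ to $\bar A=A\ast_B\bar B$ by $\bar B$-linearity (this extension is unique and automatic, since $\Mat_2(\kk)$ contributes no new bracket relations beyond $\bar B$-linearity), then define on $A^f=\epsilon\bar A\epsilon$ a \emph{modified} bracket by adding to the restricted bracket $\lr{-,-}$ a correction term coming from the gauge/fusion data. Concretely, the fusion bracket should be $\lfus{-,-}=\lr{-,-}+\tfrac12\lr{-,-}_{\text{corr}}$ where the correction is the double bracket $\lr{-,-}_P$ associated via Proposition \ref{prop:VdB-mu} to an explicit bivector $P\in(D_{B^f}A^f)_2$ built from the double derivations $E_{e_i}$ and the element that was $E_{e_j}$ before fusion — this is exactly the content of \cite[\S5.3]{VdB1}. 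Because $\lr{-,-}_P$ is automatically a double bracket (Proposition \ref{prop:VdB-mu} lands in $B$-linear $2$-brackets, and cyclic antisymmetry plus the Leibniz property hold by construction), the sum $\lfus{-,-}$ is a double bracket, and one only needs to evaluate it on the generating pairs \eqref{tt}--\eqref{ww} and match.

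The main obstacle will be the bookkeeping in that final evaluation: one must compute $\lr{-,-}_P$ on each of the ten types of pairs, keeping careful track of how the idempotents $e_i,e_j,e_{ij},e_{ji},\he$ interleave with the Sweedler components, and how the outer versus inner bimodule structures interact with the projection $a\mapsto a^f$ of \eqref{Eq:prfus}. In particular, the reason the ten right-hand sides look so asymmetric (some have two terms, some four, \eqref{tt} and \eqref{ww} vanish) is that the bivector $P$ only ``sees'' arrows touching the fused vertices, so the correction is nonzero precisely when at least one argument lies outside $\he A\he$ but is not of pure fourth type in a way that makes the contributions cancel — verifying each of these cases is routine but lengthy. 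A secondary point to be careful about: since we are in the convention $a=e_{h(a)}ae_{t(a)}$ of Remark \ref{rem:convention-on-arrows}, which is opposite to \cite{VdB1}, every formula transported from that reference must have its tensor factors and left/right Leibniz roles swapped accordingly, and the statements \eqref{tt}--\eqref{ww} are already written in our convention, so the check must be done consistently in it.
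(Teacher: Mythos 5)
Your uniqueness argument is essentially the paper's: the ten formulae together with cyclic antisymmetry pin down $\lfus{-,-}$ on a generating set, and Leibniz does the rest. For existence, however, you take a genuinely different route from the paper. The paper gives a short, direct well-definedness argument: any relation in $A$ decomposes into its components in $\epsilon A \epsilon$, $e_j A \epsilon$, $\epsilon A e_j$, $e_j A e_j$, these induce relations in $A^f$, and the formulae are compatible with them — the consistency check you flag as a ``subtlety'' (e.g. a product of two first-type elements being first-type) is exactly the content the paper compresses into that one sentence. You instead propose to \emph{manufacture} the bracket as a bivector-induced bracket $\lr{-,-}_P$ via Proposition~\ref{prop:VdB-mu}, so that cyclic antisymmetry and Leibniz hold automatically, and then merely verify agreement with \eqref{tt}--\eqref{ww} on generators. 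That route — which is indeed closer to Van den Bergh's own construction in \cite[\S5.3]{VdB1} — trades the paper's consistency argument for a concrete computation of $\lr{-,-}_P$ on the ten generator pairs. Both are legitimate; the bivector approach is more structural and makes well-definedness free, while the paper's approach avoids invoking the polyvector formalism.

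One error you should fix: you write $\lfus{-,-}=\lr{-,-}+\tfrac12\lr{-,-}_{\text{corr}}$, which conflates the fusion bracket with the \emph{total} bracket $\lr{-,-}^f$ of Theorem~\ref{Thm:IsoFusqHam}. The lemma you are proving defines $\lfus{-,-}$ alone, and the ten right-hand sides \eqref{tt}--\eqref{ww} are universal — they involve only the idempotents $e_i, e_{ij}, e_{ji}$ and the generators themselves, with no dependence on the pre-existing bracket $\lr{-,-}$. So in your construction, $\lfus{-,-}$ should be identified with the correction term $\lr{-,-}_P$ itself (for the appropriate bivector $P$ built from the gauge data), not with the restricted bracket plus a correction; the additivity $\lr{-,-}^f=\lr{-,-}+\lfus{-,-}$ only enters at the level of \eqref{dgalf}. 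Once this is corrected, your bivector strategy goes through and the remaining work is exactly the bookkeeping you describe.
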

\begin{proof}
Defining a double bracket on $A^f$ is equivalent to specifying the double bracket on a set of generators. But we know that the image in $A^f$ of all the elements of the form \eqref{Eq:lem-fus} provides a set of generators, thus specifying \eqref{tt}--\eqref{ww} and using the cyclic antisymmetry \eqref{Eq:cycanti} is enough to determine uniquely the double bracket $\lfus{-,-}$. 
Since any relation in $A$ can be decomposed in components of $\epsilon A \epsilon$, $e_j A \epsilon$, $\epsilon A e_j$ and $e_j A e_j$ which, in turn, induce relations in $A^f$, this yields a well-defined double bracket $\lfus{-,-}$. 
\end{proof}
The double bracket from Lemma \ref{lem:fus-double-bracket} is due to Van den Bergh \cite[Theorem 5.3.1]{VdB1}; it was introduced to get a Hamiltonian double quasi-Poisson structure on $A^f$. 

\begin{thm}[\cite{VdB1,F19}] \label{Thm:IsoFusqHam}
Let $A^f=A^f_{e_j \to e_i}$ be the fusion algebra. Then $A^f$ is a Hamiltonian double quasi-Poisson algebra, whose double quasi-Poisson bracket is given by
\begin{equation} \label{dgalf}
  \lr{-,-}^f:= \lr{-,-} + \lfus{-,-}\,,
\end{equation}
where $\lr{-,-}$ is induced in $A^f$ by the double quasi-Poisson bracket of $A$, while $\lfus{-,-}$ is defined in Lemma \ref{lem:fus-double-bracket}. 
Its multiplicative moment map is given by 
\begin{equation}
 \Phi^{f\!f}=\Phi^f_i \Phi_j^f+\sum_{s\neq i,j}\Phi_s^f = \Phi_i e_{ij}\Phi_j e_{ji} + \sum_{s\neq i,j} \Phi_s\,.
\end{equation}
\end{thm}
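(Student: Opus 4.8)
The plan is to verify directly that the triple $(A^f, \lr{-,-}^f, \Phi^{f\!f})$ satisfies the three defining conditions of a Hamiltonian double quasi-Poisson algebra: that $\lr{-,-}^f$ is a double bracket, that it is quasi-Poisson in the sense of Definition \ref{def:quasi-Poisson-bracket}, and that $\Phi^{f\!f}$ satisfies the moment map condition \eqref{Phim}. Since $\lr{-,-}$ (the bracket induced on $A^f$ from $A$) and $\lfus{-,-}$ are both $B^f$-linear double brackets — the former by $\bar B$-linear extension followed by restriction, the latter by Lemma \ref{lem:fus-double-bracket} — their sum is automatically a $B^f$-linear double bracket, so the first condition is immediate. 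The substance is in the other two.

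For the quasi-Poisson property, I would work on the generators of $A^f$ split into the four types \eqref{type1}--\eqref{type4}, and expand the associated triple bracket $\lr{-,-,-}^f$ built from $\lr{-,-}^f = \lr{-,-} + \lfus{-,-}$ via \eqref{Eq:TripBr}. This produces three groups of terms: the pure $\lr{-,-}$ triple bracket, the pure $\lfus{-,-}$ triple bracket, and the mixed cross-terms. The pure $\lr{-,-}$ contribution gives $\lr{-,-,-}_{\qP}$ computed with the idempotent set of $B$ (not $B^f$); the pure $\lfus{-,-}$ contribution and the mixed terms must combine to produce exactly the \emph{difference} between the $B^f$-version and the $B$-version of the canonical triple bracket \eqref{eq:triple-bracket-E3} — that is, the terms involving $e_i = e_i^f$ in $B^f$ that were split as $e_i + e_{ij} e_j e_{ji}$-type contributions. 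Concretely one checks that $\sum_{s \in I^f} E_s^3$ in $A^f$ differs from the image of $\sum_{s \in I} E_s^3$ precisely by the poly-vector field whose associated triple bracket is the $\lfus{}$-plus-cross-term combination; this is the heart of Van den Bergh's argument and the place where one must be careful with Sweedler bookkeeping and the inner/outer bimodule conventions.

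For the moment map condition, I would take $\Phi^{f\!f} = \Phi_i e_{ij} \Phi_j e_{ji} + \sum_{s \neq i,j} \Phi_s$ and its component decomposition over the idempotents of $B^f$: for $s \neq i$ the component is $\Phi_s^f = \Phi_s$ unchanged, and for $s = i$ the component is $\Phi_i^{f\!f} = \Phi_i e_{ij}\Phi_j e_{ji}$, which lies in $e_i^f A^f e_i^f$ and is invertible with inverse $\Phi_j^{-1} e_{ij}\Phi_i^{-1} e_{ji}$-type expression (using $e_{ij} e_{ji} = e_i$, $e_{ji} e_{ij} = e_j$). For $s \neq i,j$ the required identity \eqref{Phim} for $\lr{\Phi_s, a}^f$ follows because $\lfus{\Phi_s, -}$ vanishes on all four generator types (inspection of \eqref{tt}--\eqref{ww} with $t, u, v, w$ chosen appropriately, since $\Phi_s \in \epsilon A \epsilon$ commutes past the $e_{ij}, e_{ji}$ and the relevant fusion brackets are built from idempotents orthogonal to $e_s$) so only the unchanged $\lr{\Phi_s,-}$ survives. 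The genuinely new computation is $\lr{\Phi_i e_{ij}\Phi_j e_{ji}, a}^f$ for $a$ of each of the four types: one uses the Leibniz rule \eqref{Eq:outder}/\eqref{Eq:inder} to split it into $\lr{\Phi_i, -}$, $\lr{\Phi_j, -}$ contributions (each satisfying \eqref{Phim} in $A$, hence controllable) plus $\lfus{\Phi_i, -}$ and $\lfus{\Phi_j, -}$ contributions read off from Lemma \ref{lem:fus-double-bracket} with $t = \Phi_i$, $w = \Phi_j$; these should telescope to exactly the right-hand side of \eqref{Phim} with $e_s$ replaced by $e_i^f$ and $\Phi_s$ by $\Phi_i e_{ij}\Phi_j e_{ji}$.

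The main obstacle I anticipate is the quasi-Poisson verification: tracking which idempotent-set the canonical triple bracket $\lr{-,-,-}_{\qP}$ is computed with, and matching the pure-$\lfus{}$ and cross-terms to the discrepancy term-by-term across all the generator-type combinations (there are on the order of $4^3$ cases before symmetry reductions, though cyclicity and the derivation property cut this down substantially). Since all of this is exactly \cite[Theorem 5.3.1]{VdB1} together with the Hamiltonian refinement in \cite{F19}, I would either reproduce Van den Bergh's computation in the present right-to-left arrow convention (Remark \ref{rem:convention-on-arrows}) or, more economically, invoke those references directly after checking that the conventions match up; the only real content to re-examine is the sign/ordering bookkeeping induced by the opposite arrow convention.
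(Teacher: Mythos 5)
Your proposal is correct and matches the route the paper itself takes: the paper gives no proof of its own, stating immediately after the theorem that "this result was derived in [VdB1, Theorems 5.3.1 and 5.3.2] under mild assumptions, which were removed in [F19, Theorems 2.14 and 2.15]" — precisely the economical citation you identify at the end. Your sketch of the underlying verification (comparing $\sum_s E_s^3$ over $B$ versus $B^f$ for the quasi-Poisson part, splitting the moment-map identity by generator type via the Leibniz rule and Lemma \ref{lem:fus-double-bracket}, and noting that $\lfus{\Phi_s,-}$ vanishes for $s\neq i,j$ because $e_i\Phi_s=\Phi_s e_i=0$) accurately describes what those references actually prove, with the arrow-convention bookkeeping being the only genuinely new check in this paper's setting.
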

This result was derived in \cite[Theorems 5.3.1 and 5.3.2]{VdB1} under mild assumptions, which were removed in \cite[Theorems 2.14 and 2.15]{F19}.  

Let us note that fusing the two idempotents $e_i,e_j$ in the opposite order yields an isomorphic algebra. 
Indeed, each $a\in A$ has a unique image in both $A^f_{e_j \to e_i}$ and $A^f_{e_i \to e_j}$ through the projection map  \eqref{Eq:prfus}. By mapping one projection onto the other, we get an algebra isomorphism $A^f_{e_j \to e_i}\longrightarrow A^f_{e_i \to e_j}$ which is induced by the identity map on $A$. However, this is \emph{not} an isomorphism of Hamiltonian double quasi-Poisson algebras as in Definition \ref{def:iso-DQHA}. To preserve the  double quasi-Poisson structure, one needs a different isomorphism 
$A^f_{e_j \to e_i}\longrightarrow A^f_{e_i \to e_j}$ that involves (images of) $\Phi_j\in A$, see \cite[\S4.1.1]{F20}.

\subsection{\texorpdfstring{$H_0$}{H_0}-Poisson algebras}
\label{sec:H0-Poisson}
The zeroth Hochschild homology of $A$, denoted $H_0(A)$, is the vector space obtained by identifying all the commutators to zero. This means that $H_0(A)=A/[A,A]$, where $[A,A]\subset A$ is spanned by commutators. We write $\overline{a}$ for the image of $a\in A$ under the natural projection map $A\to A/[A,A]$.

\begin{defn}[\cite{CB11}]\label{defn:H0-Poisson}
Let  $\{-,-\}$ be a Lie bracket on $H_0 (A)$. We say that $A$ is endowed with an \emph{$H_0$-Poisson structure} if, for all $\overline{a}\in H_0(A)$, the map $\{\overline{a},-\}\colon H_0(A)\to H_0(A)$, which is obtained from the Lie bracket, is induced by a derivation $A\to A$.
\label{def:H0-Poisson-str}
\end{defn}

Interestingly, double quasi-Poisson brackets induce $H_0$-Poisson structures as follows. 
By \cite[\S2.4 and \S5.1]{VdB1}, the \emph{associated bracket} $\{-,-\}:=m\circ\lr{-,-}:A\to A$ obtained by multiplication descends to a Lie bracket on $H_0(A)$. Moreover, since the associated bracket on $A$ is a derivation in its second argument, we get an $H_0$-Poisson structure on $A$.  
Furthermore, $H_0$-Poisson structures naturally arise after performing quotients of Hamiltonian double quasi-Poisson algebras  
using the corresponding multiplicative moment map as in Definition \ref{defn:double-quasi-Hamiltonian}. 

\begin{prop}[\cite{VdB1}]
\label{prop:double-quasi-Ham-red}
Let $(A,\lr{-,-},\Phi)$ be a Hamiltonian double quasi-Poisson algebra. Fix $q\in B^\times$ and set $\overline{A}^q=A/(\Phi-q)$. 
Then the associated bracket $\{-,-\}$ descends to an $H_0$-Poisson structure on $\overline{A}^q$.
\end{prop}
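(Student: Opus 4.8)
\textbf{Proof proposal for Proposition \ref{prop:double-quasi-Ham-red}.}

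The plan is to reduce the statement to an analysis of how the associated bracket $\{-,-\} = m\circ\lr{-,-}$ interacts with the two-sided ideal $(\Phi-q)$ generated by the components $\Phi_s - q_s$ for $s\in I$. We already know from the discussion preceding the proposition that $\{-,-\}$ descends to a Lie bracket on $H_0(A)$ and defines an $H_0$-Poisson structure there, with each $\{\overline{a},-\}$ induced by the derivation $\{a,-\} = m\circ\lr{a,-}\colon A\to A$. So the work is entirely about passing from $A$ to the quotient $\overline{A}^q = A/(\Phi-q)$, i.e. showing that the ideal $(\Phi-q)$ is a "Poisson ideal up to commutators" so that the induced map on $H_0(\overline{A}^q)$ is well defined and satisfies the $H_0$-Poisson axiom.

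First I would fix notation: write $J$ for the two-sided ideal of $A$ generated by $\{\Phi_s - q_s\mid s\in I\}$, so $\overline{A}^q = A/J$, and note $H_0(\overline{A}^q) = A/(J+[A,A])$. The key computation is to apply the moment map identity \eqref{Phim} to evaluate $\lr{\Phi_s, a}$ and then multiply out via $m$: one gets
\begin{equation*}
\{\Phi_s, a\} = m\lr{\Phi_s,a} = \tfrac12\big(a e_s \Phi_s - \Phi_s a + a\Phi_s e_s - \Phi_s a\big) \quad\text{(modulo }e_s\text{-placement)},
\end{equation*}
which, after using $e_s\Phi_s = \Phi_s = \Phi_s e_s$, simplifies to $\{\Phi_s,a\} = \tfrac12(a\Phi_s + \Phi_s a) - \Phi_s a = \tfrac12[a,\Phi_s]$. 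Hence $\{\Phi_s,a\}\in[A,A]$ for every $a$, and therefore $\{\Phi_s - q_s, a\} \in [A,A]$ as well. By the cyclic antisymmetry/derivation properties of the double bracket, $\{a, \Phi_s - q_s\} = -\{\Phi_s-q_s,a\}$ modulo commutators, and more generally, using that $\{a,-\}$ is a derivation, $\{a, x(\Phi_s-q_s)y\} = \{a,x\}(\Phi_s-q_s)y + x\{a,\Phi_s-q_s\}y + x(\Phi_s-q_s)\{a,y\}$, so $\{a, J\}\subseteq J + [A,A]$. Combined with $\{J,A\}\subseteq[A,A]\subseteq J+[A,A]$, this shows that the derivation $\{a,-\}$ descends to a well-defined map $H_0(\overline{A}^q)\to H_0(\overline{A}^q)$, and that the value only depends on $\overline a\in H_0(\overline{A}^q)$ because the correction terms involving $\{J,-\}$ land in $[A,A]$. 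The bilinearity, antisymmetry, and Jacobi identity for the induced bracket on $H_0(\overline{A}^q)$ are then inherited from the corresponding properties on $H_0(A)$ (which hold by the quoted result of Van den Bergh), since we have a surjection of Lie algebras $H_0(A)\twoheadrightarrow H_0(\overline{A}^q)$ whose kernel is the image of $J$, shown above to be an ideal for $\{-,-\}$; and each $\{\overline a,-\}$ on $H_0(\overline{A}^q)$ is by construction induced by the derivation $\{a,-\}$ of $\overline{A}^q$ (the derivation property survives the quotient because $J$ is an ideal of the algebra). This is exactly the $H_0$-Poisson axiom of Definition \ref{def:H0-Poisson-str}.

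I expect the main obstacle to be purely bookkeeping rather than conceptual: carefully tracking the idempotent $e_s$ insertions in \eqref{Phim} when $m$ is applied, and verifying that the quadratic terms $a\Phi_s\otimes e_s$ etc. collapse correctly to $\tfrac12[a,\Phi_s]$ after multiplication — together with confirming that summing over $s\in I$ (and using $\sum_s e_s = 1$) causes no cross-terms to survive. A secondary point worth stating explicitly is that $q\in B^\times$ is central enough: since $q = \sum_s q_s e_s$ with $q_s\in\kk^\times$ and $B$ lies in the "killed" part of every double bracket, $\{\Phi_s - q_s, a\} = \{\Phi_s, a\}$, so replacing $\Phi_s$ by $\Phi_s - q_s$ changes nothing in the bracket computation; this is what makes the level set at an arbitrary group-valued parameter $q$ (not just the identity) still yield a Poisson ideal. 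Once these verifications are in place the proposition follows.
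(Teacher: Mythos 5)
Your overall strategy is right, and the opening observation is correct in spirit: one multiplies out the moment map identity \eqref{Phim} and finds $\{\Phi_s,a\}=m\lr{\Phi_s,a}=[a,\Phi_s]\in[A,A]$ (you dropped a factor: after inserting $e_s\Phi_s=\Phi_se_s=\Phi_s$ the four terms give $a\Phi_s-\Phi_s a+a\Phi_s-\Phi_s a$, so the coefficient is $1$, not $\tfrac12$, but that is immaterial). The genuine gap occurs in the next step, where you argue $\{a,J\}\subseteq J+[A,A]$. Expanding $\{a,x(\Phi_s-q_s)y\}$ by the derivation rule, the outer two terms land in $J$, but the middle term is $x\,\{a,\Phi_s\}\,y$. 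From $\{a,\Phi_s\}\equiv -\{\Phi_s,a\}\pmod{[A,A]}$ you only get $\{a,\Phi_s\}\in[A,A]$, hence $x\{a,\Phi_s\}y\in x[A,A]y$. Since $[A,A]$ is merely a $\kk$-subspace and not an ideal, $x[A,A]y$ is \emph{not} contained in $J+[A,A]$ in general, so the claimed inclusion is not established. Moreover, even if it were, $\{a,J\}\subseteq J+[A,A]$ is not what Definition \ref{def:H0-Poisson-str} requires: you need $\{\bar a,-\}$ to be induced by a derivation of $\overline{A}^q=A/J$ itself, for which the correct statement is the strict inclusion $\{a,J\}\subseteq J$.

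The fix is to compute $\{a,\Phi_s\}$ directly rather than relying on antisymmetry modulo commutators, and it is cleaner than expected: by the cyclic antisymmetry \eqref{Eq:cycanti},
\begin{equation*}
\lr{a,\Phi_s}=-\tau_{(12)}\lr{\Phi_s,a}
=-\tfrac12\big(\Phi_s\otimes ae_s-\Phi_sa\otimes e_s+e_s\otimes a\Phi_s-e_sa\otimes\Phi_s\big)\,,
\end{equation*}
and applying $m$ collapses the four terms in pairs: $m\lr{a,\Phi_s}=-\tfrac12(\Phi_sae_s-\Phi_sae_s+e_sa\Phi_s-e_sa\Phi_s)=0$. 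So $\{a,\Phi_s\}$ vanishes identically, not merely lands in $[A,A]$. Consequently the middle term disappears, $\{a,J\}\subseteq J$, the derivation $\{a,-\}$ descends to a derivation of $\overline{A}^q$, and the image of $J$ in $H_0(A)$ is a Lie ideal (using $\{j,a\}\equiv-\{a,j\}\in J+[A,A]$). This gives both that the Lie bracket descends to $H_0(\overline{A}^q)$ and that it is realized by derivations of $\overline{A}^q$, completing the argument. In short: you found the right quantity to control but evaluated the wrong slot of the bracket; the content of the proposition really hinges on the identity $\{-,\Phi_s\}\equiv 0$.
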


\begin{rem}
Proposition \ref{prop:double-quasi-Ham-red} is an analogue in the noncommutative setting of quasi-Hamiltonian reduction \cite{AKSM02}. 
To see this, note that a Hamiltonian double quasi-Poisson algebra $(A,\lr{-,-},\Phi)$ induces a structure of Hamiltonian quasi-Poisson variety on its representation spaces--- see \cite[\S7.12 and \S7.13]{VdB1}. 
This structure further induces a Poisson structure on the GIT quotient obtained from the representation space of $\overline{A}^q=A/(\Phi-q)$. This (commutative) Poisson structure is determined by the (noncommutative) $H_0$-Poisson structure on $\overline{A}^q$ obtained in Proposition \ref{prop:double-quasi-Ham-red}; see \cite[Theorem 4.5]{CB11}. 
\end{rem}

\subsection{Example from a one-arrow quiver}
\label{sec:CBS-multiplicative}

Let $\Gamma_1:=A_2$ be the quiver with one arrow $a:1\to 2$, and $\overline{\Gamma}_1$ be its double with the additional arrow $b=a^\ast:2\to 1$. 
We form the path algebra $\kk\overline{\Gamma}_1$ as explained in \ref{ssec:Quivers}.
Now, let $\Ac(\Gamma_1)$ be the algebra obtained by universal localisation from the set $S=\{1+ab,1+ba \}$. 
This is equivalent to add the local inverse $(e_2+ab)^{-1}$ to $e_2+ab$ in $e_{2}\Ac(\Gamma_1) e_{2}$, as well as to add $(e_1+ba)^{-1}$ to $e_1+ba$ in $e_{1}\Ac(\Gamma_1) e_{1}$. 
Finally, note that $\Ac(\Gamma_1)$ is a $B$-algebra for $B=\kk e_1\oplus \kk e_2$.

Following \cite{CBShaw}, we define the \emph{multiplicative preprojective algebra}\footnote{The definition for an arbitrary quiver depends on an ordering, which is irrelevant in the case of $\Gamma_1$.} of $\Gamma_1$ with parameter $q=(q_1,q_2)\in(\kk^\times)^2$ as the algebra 
\begin{equation}
 \Lambda^q(\Gamma_1):=\Ac(\Gamma_1)/R^q\,,
\end{equation}
where $R^q$ is the ideal generated by 
\begin{equation} \label{Eq:MQV1}
 e_2+ab=q_2 e_2\,, \quad (e_1+ba)^{-1}=q_1 e_1\,.
\end{equation}

\begin{thm}[\cite{VdB1}, Theorem 6.5.1 and Proposition 6.8.1]
\hfill
\begin{enumerate}
\item [\textup{(i)}]
The algebra $\Ac(\Gamma_1)$ possesses a Hamiltonian double quasi-Poisson structure given by the double quasi-Poisson bracket
\begin{align*}
\lr{a,a}&=\;\;\,0\,,\qquad \lr{b,b}=0\,;
\\
\lr{a,b}&=\;\;\,\frac{1}{2}\Big(ba\otimes e_{2}+e_{1}\otimes ab\Big)+e_{1}\otimes e_{2}\,;
\\
\lr{b,a}&=-\frac{1}{2}\Big(e_{2}\otimes ba+ab\otimes e_{1}\Big)-e_{2}\otimes e_{1}\,;
\end{align*}
and by the multiplicative moment map 
\[
\Phi=(e_2+ab)+(e_1+ba)^{-1}\,.
\]
\item [\textup{(ii)}] 
The multiplicative preprojective algebra (with parameter $q$) $\Lambda^q(\Gamma_1)$ is endowed with an $H_0$-Poisson structure, as defined in \cite{CB11}.
\end{enumerate}
\label{tm:VdB-mult-preproj-quasi-Hamilt}
\end{thm}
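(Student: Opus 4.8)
The plan is to verify part (i) directly on the generators $a,b$ of $\Ac(\Gamma_1)$, and then deduce part (ii) from Proposition \ref{prop:double-quasi-Ham-red}. Since a double bracket is determined by its values on generators (via the Leibniz rules \eqref{Eq:outder}--\eqref{Eq:inder}), the formulas for $\lr{a,a},\lr{b,b},\lr{a,b},\lr{b,a}$ given in the statement \emph{define} a $B$-linear double bracket on $\kk\overline{\Gamma}_1$; one first checks it is well-defined (compatible with the idempotent relations $a=e_2ae_1$, $b=e_1be_2$), which amounts to checking that the right-hand sides lie in the correct bimodule components $e_2Ae_1\otimes e_1Ae_2$ etc. Next, since $\Ac(\Gamma_1)$ is a universal localisation of $\kk\overline{\Gamma}_1$, by \cite[\S2.8 or Prop.~2.8.x]{VdB1} (double brackets extend uniquely along universal localisations), the bracket descends to $\Ac(\Gamma_1)$ once we know $1+ab$ and $1+ba$ become invertible — which is exactly how $\Ac(\Gamma_1)$ was constructed.

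The substantive step is the quasi-Poisson property \eqref{qPabc}. Because the triple bracket $\lr{-,-,-}$ built from $\lr{-,-}$ via \eqref{Eq:TripBr} and the model triple bracket $\lr{-,-,-}_{\qP}$ of \eqref{eq:triple-bracket-E3} are both genuine triple brackets (cyclically antisymmetric, derivations in each slot), it suffices to verify \eqref{qPabc} on triples of generators. By cyclicity only finitely many triples need to be checked: $(a,a,a)$, $(b,b,b)$, $(a,a,b)$, $(a,b,b)$ (and cyclic rotates, which are automatic). For each such triple one expands the left-hand side using the given double bracket — this produces terms involving $\lr{a,\lr{b,a}}_L$ and the like, where one must apply the Leibniz rule to brackets of the \emph{composite} elements $ab$, $ba$ appearing inside — and compares with the explicit eight-term expression \eqref{eq:triple-bracket-E3} for $I=\{1,2\}$. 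I expect the $(a,a,b)$ and $(a,b,b)$ checks to be the computational heart: one must carefully track which idempotents $e_1,e_2$ are inserted, and the cross-terms coming from the ``inner'' bimodule structure of double derivations are where errors creep in. This is essentially a bookkeeping verification with no conceptual obstruction, and it is the bulk of \cite[Theorem 6.5.1]{VdB1}.

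It then remains to check that $\Phi=(e_2+ab)+(e_1+ba)^{-1}$ is a multiplicative moment map, i.e.\ that $\Phi_2:=e_2+ab$ and $\Phi_1:=(e_1+ba)^{-1}$ satisfy \eqref{Phim} for $s=2$ and $s=1$ respectively. Since both sides of \eqref{Phim} are derivations in $a$ (the free variable), it is enough to verify the identity for $a$ replaced by each of the two generators $a$ and $b$. For $\Phi_2=e_2+ab$ one computes $\lr{e_2+ab,a}$ and $\lr{e_2+ab,b}$ from the given double bracket and the Leibniz rule, and matches against the right-hand side of \eqref{Phim}; for $\Phi_1$ one uses \eqref{PhimInv} together with $\lr{e_1+ba,-}$, again tested on $a$ and $b$. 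Part (ii) is then immediate: $\Lambda^q(\Gamma_1)$ is the quotient $\overline{\Ac(\Gamma_1)}^{\,q}=\Ac(\Gamma_1)/(\Phi-q)$ by \eqref{Eq:MQV1}, so Proposition \ref{prop:double-quasi-Ham-red} endows it with the desired $H_0$-Poisson structure via the associated bracket $\{-,-\}=m\circ\lr{-,-}$. The main obstacle, as noted, is purely the controlled expansion of nested double brackets of composite elements in the quasi-Poisson check; everything else is formal.
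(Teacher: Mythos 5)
This theorem is cited from \cite{VdB1} (Theorem 6.5.1 and Proposition 6.8.1) and is not re-proved in the present paper; indeed, part (i) is invoked in \S4.3 as the base case of the strong induction establishing Theorem \ref{Thm:MAIN}. Your sketch therefore has no in-paper proof to be matched against, but it correctly describes the direct verification that Van den Bergh carries out in \cite[\S 6.5]{VdB1}, and it is precisely what the paper's proof of Theorem \ref{Thm:MAIN} specialises to at $n=1$: the quasi-Poisson identity \eqref{qPabc} is checked on the four essentially distinct triples of generators (these are the $i=j=k=1$ cases of \S4.2), and the moment-map identity \eqref{Phim} is checked on $a$ and $b$, directly for $\Phi_2=e_2+ab$ and, for $\Phi_1=(e_1+ba)^{-1}$, by first computing $\lr{e_1+ba,-}$ and then applying $\lr{\Phi_1,c}=-\Phi_1*\lr{e_1+ba,c}*\Phi_1$ — which is what the paper does at the end of \S4.3 via the order-two automorphism $S$, and is what your slightly awkward phrase ``one uses \eqref{PhimInv} together with $\lr{e_1+ba,-}$'' should be read as saying. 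Part (ii) via Proposition \ref{prop:double-quasi-Ham-red} and \eqref{Eq:MQV1} is exactly how the paper reads the result (generalised in Corollary \ref{cor:fission-H0}). The only caveat is that the sketch stops short of the actual triple-bracket and moment-map expansions, which constitute the substance of the verification; the outlined route is nonetheless sound.
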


\begin{rem} \label{rem:VdB-double}
Note that Theorem \ref{tm:VdB-mult-preproj-quasi-Hamilt} can be adapted to an arbitrary quiver $Q$. Indeed, it suffices to use fusion and Theorem \ref{Thm:IsoFusqHam}. We will not need this general result which the reader can find in \cite[Theorem 6.7.1]{VdB1} (keeping in mind Remark \ref{rem:convention-on-arrows}). A particular case of this generalisation is derived in \ref{ss:Afus} for the quiver $Q=\Gamma_n$.
\end{rem}


\section{Euler continuants and Hamiltonian double quasi-Poisson algebras} 
\label{sec:Euler-continuants}

\subsection{Euler continuants with idempotents}
\label{sec:Euler-continuants-introd}
\allowdisplaybreaks

Given an integer $n\geq 1$, let $\Gamma_n$ be the quiver whose set of vertices is $\{1,2\}$, and whose set of arrows is $\{a_i\mid 1\leq i \leq n\}$, where $a_i$  is an arrow from the vertex $1$ to $2$. 
We form the double $\overline{\Gamma}_n$ of $\Gamma_n$ by adding the arrows $\{b_i \mid 1\leq i \leq n\}$, where $b_i$ is an arrow from the vertex $2$ to $1$, see Figure \ref{fig:quiver-Gamma-n}. 

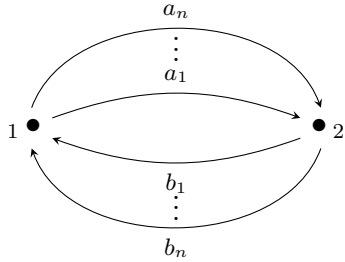
\begin{figure} 
   \begin{tikzpicture}
 \node  (vA) at (-2,0) {${}_1\,\bullet$};
 \node  (vB) at (2,0) {$\bullet\,{}_2$};
  \node  (dot1) at (0,1.15) {$\vdots$};
  \node  (dot2) at (0,-0.95) {$\vdots$};
   \path[->,>=stealth,font=\scriptsize]  
   (vA) edge [bend left=20,looseness=1] node[above] {$a_1$}  (vB) ;
   \path[->,>=stealth,font=\scriptsize]
   (vA) edge [bend left=70,looseness=1] node[above] {$a_n$} (vB) ;
      \path[->,>=stealth,font=\scriptsize]  
   (vB) edge [bend left=20,looseness=1] node[below] {$b_1$}  (vA) ;
   \path[->,>=stealth,font=\scriptsize]
   (vB) edge [bend left=70,looseness=1] node[below] {$b_n$} (vA) ;
   \end{tikzpicture} 
\caption{The quiver $\overline{\Gamma}_n$ with $2n$ arrows is the double of $\Gamma_n$, which is only formed of the $n$ arrows $a_i:1\to 2$.}
\label{fig:quiver-Gamma-n}
\end{figure}

We form the path algebra $\kk \overline{\Gamma}_n$ of $\overline{\Gamma}_n$ as in \ref{ssec:Quivers}. This allows us to see this algebra as being generated by the orthogonal idempotents $e_1,e_2$ and the symbols $\{a_i,b_i\mid 1\!\leq\! i\!\leq\! n\}$ subject to the relations 
\begin{equation}
 e_1+e_2=1\,, \quad a_i=e_2a_ie_1\,, \quad b_i=e_1b_ie_2,\,\, \text{ for }\, 1\leq i\leq n\,.
\end{equation}

Since $a_ib_j$ and $b_ja_i$ are not trivially vanishing in $\kk \overline{\Gamma}_n$, we can slightly adapt the definition of Euler continuants from Section \ref{S:Intro} as follows. We introduce the  first and second Euler continuants by setting 
\begin{equation}
\begin{aligned}
  (a_i)&:=a_i\in e_2(\kk \overline{\Gamma}_n)e_1\,,&\quad (b_i)&:=b_i \in e_1(\kk \overline{\Gamma}_n)e_2\,, \\
  (a_i,b_j)&:= e_2+a_ib_j\in e_2(\kk \overline{\Gamma}_n)e_2\,,&\quad (b_i,a_j)&:= e_1+b_ia_j\in e_1(\kk \overline{\Gamma}_n)e_1\,,
\end{aligned}
\end{equation}
for $i,j=1,\ldots,n$. We also put $(a_i,a_j)=0=(b_i,b_j)$ as $a_ia_j=0=b_ib_j$ in $\kk \overline{\Gamma}_n$. 
 This ensures that the first and second Euler continuants respect the idempotent decomposition, i.e. if $y_1,y_2\in \{a_i,b_i\mid 1\leq i\leq n\}$, we have $(y_1,y_2)=e_{h(y_1)}(y_1,y_2)e_{t(y_2)}$.
Then, given elements $y_1,\ldots,y_k\in \{a_i,b_i\mid 1\leq i\leq n\}$ for $k\geq 3$, we have that the $k$-th Euler continuant is obtained recursively using the rule
\begin{equation}
(y_1,\dots,y_k)=(y_1,\dots,y_{k-1})y_k+(y_1,\dots,y_{k-2})\,.
\label{eq:def-Euler-continuant-very-beginning}
\end{equation} 
We will only consider the Euler continuant $(y_1,\dots,y_k)$ when the elements $\{y_\ell\}$ are alternating with respect to the generators $\{a_i,b_i\mid 1\leq i\leq n\}$; that is if $y_\ell=a_i$ (resp. $y_\ell=b_i$) then $y_{\ell+1}=b_{j}$ (resp. $y_{\ell+1}=a_{j}$) for some $1\leq i,j\leq n$.

\begin{rem}\label{rmk:alt-Euler-cont-def}
Alternatively to \eqref{eq:def-Euler-continuant-intro-1}, observe that  the recursive rule used to define Euler continuants can be expressed as
\begin{equation}
(x_1,\dots,x_k)=x_1(x_2,\dots,x_k)+(x_3,\dots,x_k)\,.
\label{eq:def-Euler-continuant-intro-2}
\end{equation}
Then, when dealing with idempotents, we can also use \eqref{eq:def-Euler-continuant-intro-2} instead of \eqref{eq:def-Euler-continuant-very-beginning} to define the $k$-th Euler continuants for $k\geq 3$.
\end{rem}

We close this subsection by proving several identities involving Euler continuants that will be important for later purposes.

\begin{lem}
For $n\geq 1$, we have  that
\begin{equation} 
(a_1,b_1,\dots, b_{n-1},a_n)=\sum^{n+1}_{\ell=3}\Big[(a_1,\dots,b_{\ell-2})a_{\ell-1}\Big]+a_1\, , \label{eq:Convenient-formula-Euler-cont-a}
\end{equation}
and for $n\geq 2$, we have that
\begin{equation}
(a_1,b_1,\dots,a_n ,b_n)=(a_1,\dots ,b_{n-1})(a_n,b_n)+\sum^n_{\ell=3}\Big[(a_1,\dots,b_{\ell-2})a_{\ell-1}b_n\Big]+a_1b_n\,. \label{eq:Convenient-formula-Euler-cont-b}
\end{equation}
\end{lem}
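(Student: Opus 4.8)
The plan is to prove both identities by induction on $n$, using the defining recurrences \eqref{eq:def-Euler-continuant-very-beginning} and \eqref{eq:def-Euler-continuant-intro-2} for Euler continuants, together with the observation that \eqref{eq:Convenient-formula-Euler-cont-a} and \eqref{eq:Convenient-formula-Euler-cont-b} feed into one another at successive levels. First I would dispose of the base cases: for \eqref{eq:Convenient-formula-Euler-cont-a}, the cases $n=1$ and $n=2$ read $(a_1)=a_1$ and $(a_1,b_1,a_2)=(a_1,b_1)a_2+a_1 = (a_1)b_1 a_2 + a_1$, which match the right-hand side directly (the sum $\sum_{\ell=3}^{n+1}$ has the single term $(a_1)a_2$ when $n=2$ and is empty when $n=1$); for \eqref{eq:Convenient-formula-Euler-cont-b}, the case $n=2$ reads $(a_1,b_1,a_2,b_2)=(a_1,b_1)(a_2,b_2)+a_1b_2$, which follows by applying \eqref{eq:def-Euler-continuant-intro-2} twice: $(a_1,b_1,a_2,b_2)=a_1(b_1,a_2,b_2)+(a_2,b_2)$ and $(b_1,a_2,b_2)=(b_1,a_2)b_2+(b_1)=b_1a_2b_2+b_1$, so $(a_1,b_1,a_2,b_2)=a_1b_1a_2b_2+a_1b_1+a_2b_2+1=(a_1,b_1)(a_2,b_2)+a_1b_2$ since $(a_1,b_1)(a_2,b_2)=(e_2+a_1b_1)(e_2+a_2b_2)=e_2+a_1b_1+a_2b_2+a_1b_1a_2b_2$ — wait, that gives an extra $a_1b_1a_2b_2$ but no cross term, so indeed it equals $a_1b_1a_2b_2+a_1b_1+a_2b_2+1$, matching.

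For the inductive step of \eqref{eq:Convenient-formula-Euler-cont-a}, assuming it holds up to $n-1$, I would expand $(a_1,b_1,\dots,b_{n-1},a_n)$ using \eqref{eq:def-Euler-continuant-very-beginning} with last letter $a_n$: this gives $(a_1,b_1,\dots,a_{n-1},b_{n-1})a_n + (a_1,\dots,a_{n-1})$. The second summand $(a_1,b_1,\dots,b_{n-2},a_{n-1})$ is exactly the $(n-1)$-case of \eqref{eq:Convenient-formula-Euler-cont-a}, hence equals $\sum_{\ell=3}^{n}[(a_1,\dots,b_{\ell-2})a_{\ell-1}]+a_1$. The first summand requires rewriting $(a_1,b_1,\dots,a_{n-1},b_{n-1})$ via the $(n-1)$-case of \eqref{eq:Convenient-formula-Euler-cont-b} (valid since $n-1\geq 2$ once $n\geq 3$; the small cases being handled separately), which yields $(a_1,\dots,b_{n-2})(a_{n-1},b_{n-1})+\sum_{\ell=3}^{n-1}[(a_1,\dots,b_{\ell-2})a_{\ell-1}b_{n-1}]+a_1b_{n-1}$; multiplying on the right by $a_n$ and recognising that $(a_1,\dots,b_{n-2})(a_{n-1},b_{n-1})a_n=(a_1,\dots,b_{n-2})(e_2+a_{n-1}b_{n-1})a_n = (a_1,\dots,b_{n-2})a_n + (a_1,\dots,b_{n-2})a_{n-1}b_{n-1}a_n$, one should see the terms reorganise into $\sum_{\ell=3}^{n+1}[(a_1,\dots,b_{\ell-2})a_{\ell-1}]+a_1$ after cancellation — the key is that the $b_{n-1}a_n$ factors in the sum telescope against the shift in the index $\ell$, since $(a_1,\dots,b_{\ell-2})a_{\ell-1}b_{n-1}a_n$ for $\ell\le n-1$ together with $(a_1,\dots,b_{n-2})a_{n-1}b_{n-1}a_n$ and $a_1 b_{n-1} a_n$ recombine using \eqref{eq:def-Euler-continuant-very-beginning} again to produce the missing high-index terms. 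The inductive step for \eqref{eq:Convenient-formula-Euler-cont-b} is analogous: expand $(a_1,b_1,\dots,a_n,b_n)$ using \eqref{eq:def-Euler-continuant-very-beginning} with last letter $b_n$ to get $(a_1,b_1,\dots,b_{n-1},a_n)b_n+(a_1,b_1,\dots,a_{n-1},b_{n-1})$, then apply \eqref{eq:Convenient-formula-Euler-cont-a} (case $n$, just proved, or available from the joint induction) to the first term and \eqref{eq:Convenient-formula-Euler-cont-b} (case $n-1$) to the second.

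Because the two identities reference each other, I would in fact run a single simultaneous induction on $n$: prove \eqref{eq:Convenient-formula-Euler-cont-a}$_n$ using \eqref{eq:Convenient-formula-Euler-cont-a}$_{n-1}$ and \eqref{eq:Convenient-formula-Euler-cont-b}$_{n-1}$, and prove \eqref{eq:Convenient-formula-Euler-cont-b}$_n$ using \eqref{eq:Convenient-formula-Euler-cont-a}$_n$ and \eqref{eq:Convenient-formula-Euler-cont-b}$_{n-1}$. The main obstacle I anticipate is purely bookkeeping: tracking the index shifts in the summations $\sum_\ell$ so that, after expanding the products of continuants via $(a_i,b_i)=e_2+a_ib_i$ and regrouping, every term of the claimed right-hand side appears exactly once with the correct sign (all signs are $+$ here, which helps). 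A clean way to organise this is to verify that both sides satisfy the \emph{same} one-step recurrence in $n$ — i.e. show directly that the right-hand sides of \eqref{eq:Convenient-formula-Euler-cont-a} and \eqref{eq:Convenient-formula-Euler-cont-b} obey the relations forced by \eqref{eq:def-Euler-continuant-very-beginning}, namely RHS${}_a$($n$) $=$ RHS${}_b$($n-1$)$\,\cdot a_n + $ RHS${}_a$($n-1$) and RHS${}_b$($n$) $=$ RHS${}_a$($n$)$\,\cdot b_n + $ RHS${}_b$($n-1$) — after which the result follows from the base cases with no further computation. I would present the argument in that form to keep the term-chasing to a minimum.
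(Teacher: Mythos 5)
Your plan would eventually close, but it is considerably more tangled than the argument the paper gives, and the base-case check for \eqref{eq:Convenient-formula-Euler-cont-b} contains an arithmetic slip that you notice and then wave away rather than fix. Concretely, when you compute $(b_1,a_2,b_2)=(b_1,a_2)b_2+(b_1)$ you write $b_1a_2b_2+b_1$, but $(b_1,a_2)=e_1+b_1a_2$, so $(b_1,a_2)b_2=b_2+b_1a_2b_2$ and hence $(b_1,a_2,b_2)=b_2+b_1a_2b_2+b_1$; the dropped $b_2$ is exactly the source of the "missing cross term" $a_1b_2$ you then puzzle over before asserting the match anyway. The correct fourth continuant is $e_2+a_1b_1+a_1b_2+a_2b_2+a_1b_1a_2b_2$, which does equal $(a_1,b_1)(a_2,b_2)+a_1b_2$, so the statement is fine, but that step of your write-up is wrong as written.

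More substantively, you set up a joint induction on $n$ for both identities, and in the inductive step for \eqref{eq:Convenient-formula-Euler-cont-a} you plan to expand the first summand $(a_1,\dots,b_{n-1})a_n$ via \eqref{eq:Convenient-formula-Euler-cont-b}$_{n-1}$ and then "telescope". That step is circular make-work: the expansion you propose just decomposes $(a_1,\dots,b_{n-1})$ and the promised cancellations recompose it, producing nothing new. The key simplification, which you miss, is that $(a_1,\dots,b_{n-1})a_n$ \emph{is already} the $\ell=n+1$ term of the claimed sum, so after writing $(a_1,\dots,a_n)=(a_1,\dots,b_{n-1})a_n+(a_1,\dots,a_{n-1})$ and applying the inductive hypothesis to the second summand you are done. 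Consequently no induction on \eqref{eq:Convenient-formula-Euler-cont-b} is needed at all: once \eqref{eq:Convenient-formula-Euler-cont-a} is established, one application of the recurrence $(a_1,\dots,b_n)=(a_1,\dots,a_n)b_n+(a_1,\dots,b_{n-1})$, one more to the first factor, regrouping $(a_1,\dots,b_{n-1})a_nb_n+(a_1,\dots,b_{n-1})=(a_1,\dots,b_{n-1})(a_n,b_n)$, and finally \eqref{eq:Convenient-formula-Euler-cont-a} at level $n-1$ applied to $(a_1,\dots,a_{n-1})b_n$ yields \eqref{eq:Convenient-formula-Euler-cont-b} directly. So the paper proves \eqref{eq:Convenient-formula-Euler-cont-a} by a one-observation induction and then deduces \eqref{eq:Convenient-formula-Euler-cont-b} without any further induction; your proposal imports a mutual-recursion scaffolding and a spurious expansion-recomposition step that the argument does not require.
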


\begin{proof}
We prove \eqref{eq:Convenient-formula-Euler-cont-a} by induction. The base case occurs when $n=1$, which follows by definition: $(a_1)=a_1$. 
Now, by \eqref{eq:def-Euler-continuant-very-beginning} and the inductive hypothesis, we have that  
\begin{align*}
(a_1,b_1,&\dots ,a_{n-1},b_{n-1},a_n)=(a_1,b_1,\dots ,a_{n-1},b_{n-1})a_n+(a_1,b_1,\dots,a_{n-1})
\\
&=(a_1,b_1,\dots ,a_{n-1},b_{n-1})a_n+\sum^{n}_{\ell=3}\Big[(a_1,\dots,b_{\ell-2})a_{\ell-1}\Big]+a_1
\\
&=\sum^{n+1}_{\ell=3}\Big[(a_1,\dots,b_{\ell-2})a_{\ell-1}\Big]+a_1\, .
\end{align*} 

To prove \eqref{eq:Convenient-formula-Euler-cont-b}, it suffices to use the recursive relation \eqref{eq:def-Euler-continuant-very-beginning} and \eqref{eq:Convenient-formula-Euler-cont-a}: 
\begin{align*}
(a_1,b_1,&\dots,a_n ,b_n)=(a_1,\dots ,a_n)b_n+(a_1,\dots ,b_{n-1})
\\
&=\Big[(a_1,\dots ,b_{n-1})a_n+ (a_1,\dots ,a_{n-1})\Big]b_n+(a_1,\dots ,b_{n-1})
\\
&=(a_1,\dots ,b_{n-1})(a_n,b_n)+ (a_1,\dots ,a_{n-1})b_n
\\
&=(a_1,\dots ,b_{n-1})(a_n,b_n)+\sum^n_{\ell=3}\Big[(a_1,\dots,b_{\ell-2})a_{\ell-1}b_n\Big]+a_1b_n\, .\qedhere
\end{align*} 
\end{proof}

\begin{lem}\label{lem:curious-identity}
For $n\geq 1$, we have
 $$(a_1,b_1,\ldots,b_{n-1},a_{n})(b_{n},a_n,\ldots,b_1,a_1)=(a_1,b_1,\ldots,a_n,b_{n})(a_{n},b_{n-1},\ldots,b_1,a_1)\,.$$ 
\end{lem}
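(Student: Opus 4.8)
The plan is to argue by induction on $n$. In the base case $n=1$ the asserted identity reads $(a_1)(b_1,a_1)=(a_1,b_1)(a_1)$, and both sides expand to $a_1+a_1b_1a_1$ directly from the definitions in \ref{sec:Euler-continuants-introd}, so there is nothing to do.

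For the inductive step, fix $n\geq 2$ and abbreviate the four continuants in the statement as $C_1=(a_1,b_1,\dots,b_{n-1},a_n)$, $C_2=(b_n,a_n,\dots,b_1,a_1)$, $C_3=(a_1,b_1,\dots,a_n,b_n)$, $C_4=(a_n,b_{n-1},\dots,b_1,a_1)$, so the goal is $C_1C_2=C_3C_4$. The idea is to strip the top-index arrows $a_n$ and $b_n$ off each continuant: I would apply the recurrence \eqref{eq:def-Euler-continuant-very-beginning} (which removes the last entry) to $C_1$ and $C_3$, and the recurrence \eqref{eq:def-Euler-continuant-intro-2} of Remark~\ref{rmk:alt-Euler-cont-def} (which removes the first entry) to $C_2$ and $C_4$. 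Introducing $X=(a_1,b_1,\dots,a_{n-1},b_{n-1})$, $Y=(a_1,b_1,\dots,a_{n-1})$, $V=(b_{n-1},a_{n-1},\dots,b_1,a_1)$, $W=(a_{n-1},b_{n-2},\dots,b_1,a_1)$ (with $Y=W=(a_1)$ when $n=2$), these recurrences give
\[
C_1=Xa_n+Y,\qquad C_3=C_1b_n+X,\qquad C_2=b_nC_4+V,\qquad C_4=a_nV+W.
\]
Substituting, one finds $C_1C_2=C_1b_nC_4+C_1V$ and $C_3C_4=C_1b_nC_4+XC_4$, so that
\[
C_1C_2-C_3C_4=C_1V-XC_4=(Xa_n+Y)V-X(a_nV+W)=YV-XW.
\]
Finally, $YV-XW=0$ is exactly the present identity with $n$ replaced by $n-1$: indeed $Y,V,X,W$ are, respectively, the continuants $C_1,C_2,C_3,C_4$ for the parameter $n-1$. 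Hence it holds by the induction hypothesis, and the step is complete.

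The underlying algebra is short, and I expect the only real care to go into the bookkeeping: one must check that the leftover continuants $X,Y,V,W$ produced by the peeling are precisely the four continuants in the $(n-1)$-instance of the statement, and that the idempotent decompositions line up so that every product is the one intended. The reason to peel $C_1,C_3$ from the right while peeling $C_2,C_4$ from the left is that only this combination yields the shorter continuants with their indices still in consecutive order; peeling all four from the same side would instead produce continuants whose index patterns do not telescope into the inductive hypothesis. This choice of peeling directions, rather than any hard estimate, is what I expect to be the crux of a clean proof.
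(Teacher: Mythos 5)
Your proof is correct and is essentially the paper's argument: both proceed by induction on $n$, peel $a_n$ and $b_n$ off the continuants using the recurrences \eqref{eq:def-Euler-continuant-very-beginning} and \eqref{eq:def-Euler-continuant-intro-2}, and reduce to the inductive hypothesis $YV=XW$. The only cosmetic difference is that you form the difference $C_1C_2-C_3C_4$ and show it vanishes, whereas the paper transforms $C_1C_2$ forward step by step into $C_3C_4$.
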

\begin{proof}
 For $n=1$, we can write that
\begin{equation*}
 (a_1)(b_1,a_1)=a_1(e_1+b_1a_1)=(e_2+a_1b_1)a_1=(a_1,b_1)(a_1)\,.
\end{equation*}
Now, using the induction hypothesis, and the recursive relations \eqref{eq:def-Euler-continuant-very-beginning} and \eqref{eq:def-Euler-continuant-intro-2} of the Euler continuants, we have
  \begin{align*}
(a_1,\ldots,&a_{n})(b_{n},\ldots,a_1) \\
&=(a_1,\ldots,b_{n-1})a_{n}(b_{n},\ldots,a_1)+(a_1,\ldots,a_{n-1})(b_{n},\ldots,a_1) \\
&=(a_1,\ldots,b_{n-1})a_{n}(b_{n},\ldots,a_1)+(a_1,\ldots,a_{n-1})\Big[b_{n}(a_{n},\ldots,a_1)+(b_{n-1},\ldots,a_1)\Big] \\
&=(a_1,\ldots,b_{n-1})\Big[a_{n}(b_{n},\ldots,a_1)+(a_{n-1},\ldots,a_1) \Big]+(a_1,\ldots,a_{n-1})b_{n}(a_{n},\ldots,a_1) \\
&=(a_1,\ldots,b_{n-1})(a_{n},b_{n},\ldots,a_1) + (a_1,\ldots,a_{n-1})b_{n}(a_{n},\ldots,a_1) \\
&=(a_1,\ldots,b_{n})(a_{n},\ldots,a_1)\,. \qedhere
  \end{align*} 
\end{proof}

\subsection{The result} \label{ss:Result}

We consider the $(2n)$-th Euler continuants
\begin{equation}\label{Eq:invEuler}
\begin{aligned}
 (a_1,b_1,a_2,b_2,\dots,a_n,b_n)&\in e_2(\kk\overline{\Gamma}_n)e_2\, , 
 \\
 (b_n,a_n,b_{n-1},a_{n-1},\dots,b_1,a_1)&\in e_1(\kk\overline{\Gamma}_n)e_1\, ,
 \end{aligned}
\end{equation}
and we form the \emph{Boalch algebra} $\mathcal{B}(\Gamma_n)$ as the path algebra $\kk\overline{\Gamma}_n$ with $(a_1,\dots,b_n)$ and $ (b_n,\dots,a_1)$ inverted. 
As in \cite[\S6.5]{VdB1}, this means that we introduce elements $A,B$ such that $A=Ae_2=e_2A$, $B=Be_1=e_1B$, and $A(a_1,\dots,b_n)=(a_1,\dots,b_n)A=e_2$ and $B(b_n,\dots,a_1)=(b_n,\dots,a_1)B=e_1$. From now on, we use the notation $(a_1,\dots ,b_n)^{-1}$ and $(b_n,\dots,a_1)^{-1}$ for $A$ and $B$, respectively. 
This entails that $e_1+(a_1,\dots ,b_n)$ and $e_2+(b_n,\dots,a_1)$ are invertible in  $\mathcal{B}(\Gamma_n)$ in the usual sense. 

Next, we follow Boalch \cite[Remark 17]{B18} and define the \emph{fission algebra} 
\begin{equation} \label{Eq:Fission-algebra}
 \mathcal{F}^q(\Gamma_n):=\mathcal{B}(\Gamma_n)/R^q\,, \quad q=(q_1,q_2)\in (\kk^\times)^2\,,
\end{equation}
where $R^q$ is the ideal generated by the two relations 
\begin{equation}
(b_n,\dots,a_1)^{-1}=q_1 e_1\,, \quad 
(a_1,\dots,b_n)=q_2e_2\,.
\end{equation}
 Note that $\mathcal{B}(\Gamma_n)$ is a generalisation of $\mathcal{B}(\Gamma_1)=\Ac(\Gamma_1)$ introduced in \ref{sec:CBS-multiplicative}, while 
 $\mathcal{F}^q(\Gamma_n)$ is a generalisation of the multiplicative preprojective algebra $\mathcal{F}^q(\Gamma_1)=\Lambda^q(\Gamma_1)$ of \cite{CBShaw}.
 
 Finally, let $B=\kk e_1\oplus \kk e_2$, and we introduce a $B$-linear double bracket $\lr{-,-}$ on the arrows of $\overline{\Gamma}_n$ (seen as generators of $\mathcal{B}(\Gamma_n)$) as
\begin{subequations}
\label{eq:Euler-double-bracket}
\begin{align}
 \lr{a_i,a_j}&=
\begin{cases} 
-\frac{1}{2}\big(a_i\otimes a_j+a_j\otimes a_i\big)\,, &\mbox{if } i<j
 \\ 
  \;\;\;  0\, , &\mbox{if } i=j
 \\
 \;\;\;   \frac{1}{2}\big(a_i\otimes a_j+a_j\otimes a_i\big), & \mbox{if } i>j
\end{cases}\,;  \label{eq:Euler-double-bracket.a}
\\
 \lr{b_i,b_j}&=
\begin{cases} 
-\frac{1}{2}\big(b_i\otimes b_j+b_j\otimes b_i\big)\,, &\mbox{if } i<j
 \\ 
  \;\;\;  0\,, &\mbox{if } i=j
 \\
 \;\;\;  \frac{1}{2}\big(b_i\otimes b_j+b_j\otimes b_i\big)\,, & \mbox{if } i>j
\end{cases}\,;  \label{eq:Euler-double-bracket.b}
\\
 \lr{a_i,b_j}&=
\begin{cases} 
 \;\;\;  \frac{1}{2}\big(e_1\otimes a_ib_j+b_ja_i\otimes e_2\big)\,, &\mbox{if } i<j
 \\ 
 \;\;\;  \frac{1}{2}\big(b_ia_i\otimes e_2+e_1\otimes a_ib_i\big)+e_1\otimes e_2\,, &\mbox{if } i=j
 \\
- \frac{1}{2}\big(b_ja_{i}\otimes e_2+e_1\otimes a_{i}b_j\big)-\delta_{i-j,1}(e_1\otimes e_2)\,, &\mbox{if } i>j
\end{cases}\,;   \label{eq:Euler-double-bracket.c}
\\
 \lr{b_i,a_j}&=
\begin{cases} 
   \;\;\;   \frac{1}{2}\big(e_2\otimes b_ia_j+ a_jb_i\otimes e_1\big)+\delta_{j-i,1}(e_2\otimes e_1)\, , &\mbox{if } i<j
 \\
 - \frac{1}{2}\big(e_2\otimes b_ia_i+ a_ib_i\otimes e_1\big)-e_2\otimes e_1\,, &\mbox{if } i=j
\\
-\frac{1}{2}\big(a_jb_i\otimes e_1+ e_2\otimes b_ia_j)\,, &\mbox{if } i>j
\end{cases}\,;    \label{eq:Euler-double-bracket.d}
\end{align}
\end{subequations}
where $i,j\in\{1,\dots,n\}$, and we extend $\lr{-,-}$ to $\mathcal{B}(\Gamma_n)$ by the Leibniz rules \eqref{Eq:outder} and \eqref{Eq:inder}.

It is worth noting how the double bracket can be determined on the inverses of the elements \eqref{Eq:invEuler}, so that it is is uniquely defined on $\mathcal{B}(\Gamma_n)$ and not only on $\kk\overline{\Gamma}_n$. By $B$-linearity $\lr{c,(b_n,\ldots,a_1)(b_n,\ldots,a_1)^{-1}}=\lr{c,e_1}=0$ for any $c\in \mathcal{B}(\Gamma_n)$, hence by \eqref{Eq:outder} 
$$\lr{c,(b_n,\ldots,a_1)^{-1}}=-(b_n,\ldots,a_1)^{-1} \lr{c,(b_n,\ldots,a_1)} (b_n,\ldots,a_1)^{-1}\,.$$
A similar relation holds for $\lr{c,(a_1,\ldots,b_n)^{-1}}$.

\begin{thm} \label{Thm:MAIN}
Given an integer $n\geq 1$, let $\overline{\Gamma}_n$ be the quiver with vertices $\{1,2\}$ and arrows $\{a_i,b_i\mid 1\leq i\leq n\}$ such that $a_i=e_2a_ie_1$ and $b_i=e_1b_ie_2$. Let $B:=\kk e_1\oplus\kk e_2$, and $\kk\overline{\Gamma}_n$ be the path algebra of $\overline{\Gamma}_n$ over $B$. Finally, if $(a_1,\dots,b_n)$ and $ (b_n,\dots,a_1)$ are $(2n)$-th Euler continuants, let $\mathcal{B}(\Gamma_n)$ denote $\kk\overline{\Gamma}_n$ with $(a_1,\dots,b_n)$ and $ (b_n,\dots,a_1)$ inverted.

Let $\lr{-,-}$ be the $B$-linear double bracket defined on $\mathcal{B}(\Gamma_n)$ by \eqref{eq:Euler-double-bracket}. 
In addition, we introduce the following element of  $\mathcal{B}(\Gamma_n)$
\begin{equation}
\Phi:= (a_1,\dots,b_n) + (b_n,\dots,a_1)^{-1}.
\label{eq:mult-moment-map-statement}
\end{equation}
Then the triple $\big (\mathcal{B}(\Gamma_n),\lr{-,-},\Phi\big)$ is a Hamiltonian double quasi-Poisson algebra.
\label{thm:Euler-quasi-Hamiltonian-algebra}
\end{thm}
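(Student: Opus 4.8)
The plan is to verify the three defining properties of a Hamiltonian double quasi-Poisson algebra for the triple $\big(\mathcal{B}(\Gamma_n),\lr{-,-},\Phi\big)$, namely: (i) $\lr{-,-}$ as given by \eqref{eq:Euler-double-bracket} is a well-defined double bracket on $\mathcal{B}(\Gamma_n)$; (ii) it is quasi-Poisson, i.e. satisfies the identity \eqref{qPabc} on the generators $a_i,b_i$; and (iii) $\Phi$ from \eqref{eq:mult-moment-map-statement} is a multiplicative moment map, i.e. satisfies \eqref{Phim} for $s=1,2$. Point (i) is essentially immediate: one checks that the right-hand sides of \eqref{eq:Euler-double-bracket.a}--\eqref{eq:Euler-double-bracket.d} respect cyclic antisymmetry \eqref{Eq:cycanti} and the idempotent decomposition (each bracket $\lr{y_1,y_2}$ lies in the correct $e_\ast\otimes e_\ast$ component), and then the Leibniz rules \eqref{Eq:outder}, \eqref{Eq:inder} determine a unique extension to $\kk\overline{\Gamma}_n$; the extension to the localisation $\mathcal{B}(\Gamma_n)$ is forced by the formula displayed just before the theorem statement.

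\textbf{Reduction and induction.} For (ii), since triple brackets are cyclic and satisfy a derivation rule, it suffices to check \eqref{qPabc} on triples of generators drawn from $\{a_i,b_i\mid 1\leq i\leq n\}$. The key organising idea is induction on $n$. For $n=1$ the statement is exactly Theorem~\ref{tm:VdB-mult-preproj-quasi-Hamilt}(i) (Van den Bergh), since $\mathcal{B}(\Gamma_1)=\Ac(\Gamma_1)$ and the brackets \eqref{eq:Euler-double-bracket} specialise to the ones there (note $(b_1a_1\otimes e_2+e_1\otimes a_1b_1)/2+e_1\otimes e_2$ is precisely $\lr{a,b}$). For the inductive step, observe that the double bracket restricted to the subalgebra generated by $\{a_i,b_i\mid 1\leq i\leq n-1\}$ together with the idempotents is, up to the $H_0$-irrelevant localisation, the one for $\mathcal{B}(\Gamma_{n-1})$; the new relations involve only $a_n,b_n$ interacting with the lower-index arrows, and these interaction brackets \eqref{eq:Euler-double-bracket} have the same shape (with signs governed by the ordering) as the $n=1$ brackets after a relabelling. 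So the quasi-Poisson identity needs to be verified only for triples that genuinely involve the index $n$, and the bracket formulae are symmetric enough under the ``shift by one index'' that most of these cases reduce combinatorially to cases with one fewer index. One still has to do a finite number of base computations by hand --- e.g. triples $(a_i,a_j,a_k)$, $(a_i,a_j,b_k)$ and their $b\leftrightarrow a$ mirrors with all indices distinct, and the mixed cases where two indices coincide --- comparing $\lr{a,b,c}$ computed from \eqref{Eq:TripBr} against $\lr{a,b,c}_{\qP}$ from \eqref{eq:triple-bracket-E3}.

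\textbf{The moment map identity.} For (iii), the core difficulty, and the step I expect to be the main obstacle, is proving \eqref{Phim} for the two components $\Phi_2=(a_1,\dots,b_n)$ and $\Phi_1=(b_n,\dots,a_1)^{-1}$; by \eqref{PhimInv} the latter is equivalent to the moment map identity for $(b_n,\dots,a_1)$ itself, so it suffices to handle the two $(2n)$-th Euler continuants $(a_1,b_1,\dots,a_n,b_n)$ and $(b_n,a_n,\dots,b_1,a_1)$. The plan is to compute $\lr{(a_1,\dots,b_n),a_i}$ and $\lr{(a_1,\dots,b_n),b_i}$ by expanding the continuant via \eqref{Eq:outder}, \eqref{Eq:inder} and the double-bracket table \eqref{eq:Euler-double-bracket}, and to show the result collapses to $\tfrac12(a_ie_2\otimes\Phi_2-e_2\otimes\Phi_2a_i+a_i\Phi_2\otimes e_2-\Phi_2\otimes e_2a_i)$ (and the analogue with $b_i$). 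Here the auxiliary identities \eqref{eq:Convenient-formula-Euler-cont-a}, \eqref{eq:Convenient-formula-Euler-cont-b} of the Lemma are decisive: they rewrite a $(2n)$-th continuant in terms of lower ones in a way that exposes the telescoping structure, so that the many terms produced by the Leibniz expansion cancel in cascades. I would run this computation by induction on $n$ as well, peeling off the last pair $(a_n,b_n)$ using \eqref{eq:Convenient-formula-Euler-cont-b}: the bracket of the ``old'' part $(a_1,\dots,b_{n-1})$ with $a_i,b_i$ is controlled by the inductive hypothesis (it is a moment map for $\mathcal{B}(\Gamma_{n-1})$), the bracket of the ``new'' factor $(a_n,b_n)=e_2+a_nb_n$ is controlled by the $n=1$ case, and the cross terms $\sum(a_1,\dots,b_{\ell-2})a_{\ell-1}b_n$ are handled directly from the table, with Lemma~\ref{lem:curious-identity} available to commute adjacent arrows where needed to match terms. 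The bookkeeping of signs across the three index regimes $i<j$, $i=j$, $i>j$ in \eqref{eq:Euler-double-bracket} is where the proof is most error-prone, and carefully isolating the dependence on whether the arrow $a_i$ (or $b_i$) appears to the left or right of each slot in the continuant is the right way to keep it under control.
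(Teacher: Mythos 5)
Your overall blueprint --- verify that \eqref{eq:Euler-double-bracket} is a double bracket, check quasi-Poisson on generators, then establish the moment map identity inductively --- matches the structure of the paper's proof, and your treatment of the moment map component $\Phi_2=(a_1,\dots,b_n)$ is essentially what the paper does: strong induction on $n$, peeling off $(a_n,b_n)$ via \eqref{eq:Convenient-formula-Euler-cont-b}, with the base case $n=1$ supplied by Van den Bergh. Two economies in the paper that your sketch misses are worth flagging. First, instead of handling $(b_n,\dots,a_1)$ by a second computation as you propose, the paper exploits the order-two automorphism $S$ defined by $e_1\leftrightarrow e_2$, $a_\ell\leftrightarrow b_{n+1-\ell}$, which sends $\lr{-,-}\mapsto -\lr{-,-}$ and $(a_1,\dots,b_n)\mapsto (b_n,\dots,a_1)$; applying $S$ to the already-proved identity for $\Phi_2$, and then \eqref{Eq:inder} to pass to the inverse, immediately gives the identity for $\Phi_1$. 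Second, your phrase ``the bracket of the new factor $(a_n,b_n)$ is controlled by the $n=1$ case'' is not accurate: the brackets $\lr{(a_n,b_n),a_i}$ and $\lr{(a_n,b_n),b_i}$ for $i<n$ mix index $n$ with lower indices, a situation that simply does not occur when $n=1$. The paper isolates these in a preparatory lemma (Lemma~\ref{lem:technic-lemma}), together with Lemma~\ref{lem:technic-lemma-no-inductive}, which covers the ``boundary'' brackets $\lr{(a_1,\dots,b_i),a_j}$ for $j>i$ that lie outside the reach of the inductive hypothesis. You will need such lemmas; the inductive hypothesis alone is insufficient. Also, Lemma~\ref{lem:curious-identity}, which you invoke, plays no role in this proof --- it is used only for the factorisation theorem of Section~\ref{sec:factorisation-after-localiz}.

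For the quasi-Poisson verification your proposed organisation (``induction on $n$'' plus ``reduction to lower index by relabelling'') is where I see a genuine gap. The double bracket depends on the indices only through their relative order and the adjacency condition $|i-j|=1$ (via $\delta_{i-j,1}$), so the verification is a finite case analysis over order-types, not an induction that gets lighter as $n$ grows, and it does not collapse to the $n=1$ case: triples such as $(a_i,a_j,b_k)$ with $i<j<k$ have no analogue in $\overline{\Gamma}_1$, and the bracket $\lr{a_i,a_j}$ for $i\neq j$ is new structure absent when $n=1$. The paper instead abstracts the $(a_i,a_j,a_k)$ case into Lemma~\ref{Lem-qP}: whenever $\lr{c_i,c_j}=\epsilon_{ij}(c_i\otimes c_j+c_j\otimes c_i)$ with $\epsilon_{ij}=-\epsilon_{ji}$, the quasi-Poisson identity reduces to the scalar condition $\epsilon_{ij}\epsilon_{jk}+\epsilon_{jk}\epsilon_{ki}+\epsilon_{ki}\epsilon_{ij}=-\tfrac14$, which is trivial to check for $\epsilon_{ij}=\tfrac12\sgn(j-i)$. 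It then handles $(a_i,a_j,b_k)$ by a direct six-case computation on order-types, and obtains the $(b_i,b_j,b_k)$ and $(b_i,b_j,a_k)$ cases by the automorphism $S$. If you attempt the brute-force case-by-case version you sketch, it will work, but you should drop the claim that it reduces to $n=1$, and you should expect to verify the full catalogue of order-type configurations (with the $\delta_{|i-j|,1}$ distinctions) rather than a shrinking number of ``new'' cases.
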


We postpone the proof of Theorem \ref{thm:Euler-quasi-Hamiltonian-algebra} to Section \ref{sec:proff-thm-Euler}.
We remark that, in the case $n=1$ where $\mathcal{B}(\Gamma_1)=\Ac(\Gamma_1)$, Theorem \ref{thm:Euler-quasi-Hamiltonian-algebra} recovers Van den Bergh's double quasi-Poisson bracket described in Theorem \ref{tm:VdB-mult-preproj-quasi-Hamilt}(i).
Furthermore, using Proposition \ref{prop:double-quasi-Ham-red}, we can obtain the following generalisation of Theorem \ref{tm:VdB-mult-preproj-quasi-Hamilt}(ii).

\begin{cor} \label{cor:fission-H0}
The fission algebra $\mathcal{F}^q(\Gamma_n)$ attached to $\Gamma_n$ carries an $H_0$-Poisson structure.
\end{cor}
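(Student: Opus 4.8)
\textbf{Proof proposal for Corollary~\ref{cor:fission-H0}.} The plan is to apply Proposition~\ref{prop:double-quasi-Ham-red} directly to the Hamiltonian double quasi-Poisson algebra $\big(\mathcal{B}(\Gamma_n),\lr{-,-},\Phi\big)$ furnished by Theorem~\ref{thm:Euler-quasi-Hamiltonian-algebra}. Recall that $\Phi = (a_1,\dots,b_n) + (b_n,\dots,a_1)^{-1}$ decomposes along the idempotents as $\Phi = \Phi_1 + \Phi_2$ with $\Phi_2 = (a_1,\dots,b_n) \in e_2\mathcal{B}(\Gamma_n)e_2$ and $\Phi_1 = (b_n,\dots,a_1)^{-1} \in e_1\mathcal{B}(\Gamma_n)e_1$, both invertible in their respective corner algebras by construction of the Boalch algebra.

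First I would fix the parameter $q=(q_1,q_2)\in (\kk^\times)^2 = B^\times$ and recall from \eqref{Eq:Fission-algebra} that the fission algebra is precisely the quotient $\mathcal{F}^q(\Gamma_n) = \mathcal{B}(\Gamma_n)/R^q$, where $R^q$ is the two-sided ideal generated by $(b_n,\dots,a_1)^{-1} - q_1 e_1$ and $(a_1,\dots,b_n) - q_2 e_2$. Since $\Phi_1 = (b_n,\dots,a_1)^{-1}$ and $\Phi_2 = (a_1,\dots,b_n)$, this ideal is exactly the ideal $(\Phi - q)$ generated by $\Phi_1 - q_1 e_1$ and $\Phi_2 - q_2 e_2$ appearing in Proposition~\ref{prop:double-quasi-Ham-red}; hence $\mathcal{F}^q(\Gamma_n) = \overline{\mathcal{B}(\Gamma_n)}^q$ in the notation there. (One should note here that quotienting by $\Phi - q$ forces $\Phi_s = q_s e_s$ in each corner, which is consistent with $q_s$ being invertible, so the quotient is non-trivial.) Then Proposition~\ref{prop:double-quasi-Ham-red} asserts that the associated bracket $\{-,-\} = \mathrm{m}\circ\lr{-,-}$ descends to an $H_0$-Poisson structure on $\overline{\mathcal{B}(\Gamma_n)}^q = \mathcal{F}^q(\Gamma_n)$, which is exactly the claim.

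There is essentially no obstacle: the corollary is a formal consequence of Theorem~\ref{thm:Euler-quasi-Hamiltonian-algebra} together with Proposition~\ref{prop:double-quasi-Ham-red}. The only point requiring a line of verification is the bookkeeping identification of the relation ideal $R^q$ of \eqref{Eq:Fission-algebra} with the ideal $(\Phi-q)$ used in Proposition~\ref{prop:double-quasi-Ham-red} — that is, confirming that the two generators of $R^q$ are $\Phi_1 - q_1 e_1$ and $\Phi_2 - q_2 e_2$ — which is immediate from the definitions of $\Phi_1,\Phi_2$ above. No further calculation is needed; the $H_0$-Poisson bracket is inherited from $\{-,-\}$ on $\mathcal{B}(\Gamma_n)$ via the canonical projection $\mathcal{B}(\Gamma_n) \to \mathcal{F}^q(\Gamma_n)$.
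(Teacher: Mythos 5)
Your proposal is correct and is exactly the paper's argument: the fission algebra $\mathcal{F}^q(\Gamma_n)=\mathcal{B}(\Gamma_n)/R^q$ is identified with the quotient $\mathcal{B}(\Gamma_n)/(\Phi-q)$ since the generators of $R^q$ are precisely $\Phi_1-q_1e_1$ and $\Phi_2-q_2e_2$, and then Proposition \ref{prop:double-quasi-Ham-red} applied to the Hamiltonian double quasi-Poisson algebra of Theorem \ref{thm:Euler-quasi-Hamiltonian-algebra} gives the $H_0$-Poisson structure. No gaps; this matches the paper's (deliberately brief) derivation.
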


Using the Kontsevich--Rosenberg principle,  
Theorem \ref{thm:Euler-quasi-Hamiltonian-algebra} turns the representation space $\Rep\big(\mathcal{B}(\Gamma_n),(d_1,d_2)\big)$ into a Hamiltonian quasi-Poisson space for the canonical action of $H:=\Gl(d_1)\times \Gl(d_2)$. Similarly, Corollary \ref{cor:fission-H0} induces a Poisson structure on the reduced spaces $\Rep\big(\mathcal{F}^q(\Gamma_n),(d_1,d_2)\big)/\!/H$, which coincides with the structure obtained by quasi-Hamiltonian reduction; we refer to \cite[\S2.3]{FF} for a review of this general construction based on \cite{CB11,VdB1}.

As pointed out in the introduction, if $\kk=\CC$ and $(d_1,d_2)=(1,1)$, we are in the situation of the Sibuya spaces \cite{Si75} whose symplectic geometry was studied by Boalch in \cite{B18}. 
In \ref{ss:FN-Poisson}, we give the precise form of the bracket obtained from \eqref{eq:Euler-double-bracket} in this specific case, and we explain its connection to the Flaschka--Newell Poisson bracket \cite{FN}. 
More generally, if $\kk=\CC$ and the dimension vector $(d_1,d_2)$ is arbitrary, we get that the Poisson structure induced on any reduced space $\Rep\big(\mathcal{F}^q(\Gamma_n),(d_1,d_2)\big)/\!/H$ corresponds (on its smooth locus) to the symplectic structure defined by Boalch. This follows from Proposition \ref{Pr:RepCorrespond} presented in \ref{ss:qbisymp}, where we prove that the Hamiltonian quasi-Poisson structure that we obtain on the representation space $\Rep\big(\mathcal{B}(\Gamma_n),(d_1,d_2)\big)$ corresponds to the quasi-Hamiltonian structure introduced by Boalch \cite{Bo-Duke,Bo14} and conveniently studied by Paluba \cite[Ch.~5]{Paluba}.

\subsection{The noncommutative bivector} 
\label{sec:nc-bivector}
 
In the case of Theorem \ref{thm:Euler-quasi-Hamiltonian-algebra}, it is possible to write explicitly the bivector defining the double bracket on the Boalch algebra $\mathcal{B}(\Gamma_n)$ through Proposition \ref{prop:VdB-mu}. As in \ref{ssec:Quivers}, we consider the double derivations 
\begin{equation}\label{Eq:dder-ab}
 \frac{\partial}{\partial a_i}\,,\,\quad  \frac{\partial}{\partial b_i} \,, \quad i=1,\ldots,n\,,
\end{equation}
which are defined for $i,k=1,\ldots,n$  by 
\begin{equation} 
 \frac{\partial a_k}{\partial a_i}=\delta_{ik} \, e_2\otimes e_1\,,\quad \frac{\partial b_k}{\partial a_i}=0\,, \quad 
 \frac{\partial b_k}{\partial b_i}=\delta_{ik}\, e_1\otimes e_2 \,,\,\,\,\frac{\partial a_k}{\partial b_i}=0 \,.
\end{equation}
Then the following result is a direct application of the equivalence given by Proposition \ref{prop:VdB-mu}, since path algebras of quivers are smooth. 
\begin{prop}\label{prop:the-nc-bivector}
 The double quasi-Poisson bracket from Theorem \ref{thm:Euler-quasi-Hamiltonian-algebra} is associated with the following bivector: 
 \begin{equation}
  \begin{aligned} \label{Eq:bivector-P}
 \PPn&=\frac12 \sum_{i>j} \left[ \frac{\partial}{\partial a_i}a_i  \frac{\partial}{\partial a_j} a_j +  \frac{\partial}{\partial a_i} a_j  \frac{\partial}{\partial a_j} a_i \right]   
+\frac12 \sum_{i>j} \left[ \frac{\partial}{\partial b_i}b_i  \frac{\partial}{\partial b_j} b_j +  \frac{\partial}{\partial b_i} b_j  \frac{\partial}{\partial b_j} b_i \right]
 \\
&\quad+\frac12 \sum_{i\neq j} \operatorname{sgn}(j-i)  \left[ \frac{\partial}{\partial a_i}a_i b_j  \frac{\partial}{\partial b_j} + a_i \frac{\partial}{\partial a_i}   \frac{\partial}{\partial b_j} b_j \right] 
-\sum_{i=2}^n \frac{\partial}{\partial a_i}   \frac{\partial}{\partial b_{i-1}} 
\\
&\quad +\frac12 \sum_{i=1}^n\left[ \frac{\partial}{\partial a_i}a_i b_i \frac{\partial}{\partial b_i}
+ a_i\frac{\partial}{\partial a_i} \frac{\partial}{\partial b_i} b_i \right] 
+\sum_{i=1}^n \frac{\partial}{\partial a_i}  \frac{\partial}{\partial b_{i}}\,.
 \end{aligned}
 \end{equation}
\end{prop}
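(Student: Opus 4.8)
The plan is to invoke Proposition \ref{prop:VdB-mu}, which provides a linear isomorphism $\mu$ from $(D_BA)_2$ to the space of $B$-linear double brackets on $A$ whenever $\Omega^1_BA$ is projective; this projectivity holds for $A=\kk\overline{\Gamma}_n$ (path algebras are smooth), and it persists after the universal localisation producing $\mathcal{B}(\Gamma_n)$, so $\mu$ is bijective in our setting. Hence it suffices to exhibit a bivector $\PP_n\in(D_BA)_2$ whose image $\lr{-,-}_{\PP_n}$ agrees with the double bracket \eqref{eq:Euler-double-bracket} of Theorem \ref{thm:Euler-quasi-Hamiltonian-algebra}, and since both brackets are determined by their values on the generators $\{a_i,b_i\}$ (via the Leibniz rules \eqref{Eq:outder}, \eqref{Eq:inder}, which are built into the notion of double bracket), the whole verification reduces to a finite check on pairs of generators.

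Concretely, I would take $\PP_n$ to be the expression \eqref{Eq:bivector-P} and compute $\lr{y,y'}_{\PP_n}$ for each pair $y,y'\in\{a_1,\dots,a_n,b_1,\dots,b_n\}$ using formula \eqref{Eq:bivector-bracket}, namely
\[
\lr{a_1,a_2}_{\delta_1\delta_2}=\delta_2(a_2)'\delta_1(a_1)''\otimes\delta_1(a_1)'\delta_2(a_2)''-\delta_1(a_2)'\delta_2(a_1)''\otimes\delta_2(a_1)'\delta_1(a_2)''\,,
\]
applied term by term to the six groups of summands in $\PP_n$. The key simplifications are that $\frac{\partial}{\partial a_i}$ kills every $b_k$ and $e_s$, that $\frac{\partial a_k}{\partial a_i}=\delta_{ik}\,e_2\otimes e_1$ (and symmetrically for $b$), and that idempotent bookkeeping ($a_i=e_2a_ie_1$, $b_i=e_1b_ie_2$) forces most cross-terms to vanish; so for a given pair $(y,y')$ only a handful of summands of $\PP_n$ contribute. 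One then matches: the first line of \eqref{Eq:bivector-P} reproduces \eqref{eq:Euler-double-bracket.a}--\eqref{eq:Euler-double-bracket.b} (the $i>j$ cases, with the $i<j$ cases recovered by cyclic antisymmetry \eqref{Eq:cycanti} and the $i=j$ cases vanishing since $\frac{\partial}{\partial a_i}a_i\frac{\partial}{\partial a_i}a_i$ symmetrises to something antisymmetric under the sign in \eqref{Eq:bivector-bracket}); the $\operatorname{sgn}(j-i)$ sum together with the $\sum_{i=1}^n\frac{\partial}{\partial a_i}a_ib_i\frac{\partial}{\partial b_i}+\dots$ line reproduces the $a_ib_j$-type terms of \eqref{eq:Euler-double-bracket.c}; and the two ``delta'' sums $-\sum_{i=2}^n\frac{\partial}{\partial a_i}\frac{\partial}{\partial b_{i-1}}+\sum_{i=1}^n\frac{\partial}{\partial a_i}\frac{\partial}{\partial b_i}$ produce precisely the constant $e_1\otimes e_2$ terms with the coefficients $\delta_{i-j,1}$ and $\delta_{i=j}$ appearing in \eqref{eq:Euler-double-bracket.c}--\eqref{eq:Euler-double-bracket.d}.

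The only genuine subtlety, and the step I expect to be the main obstacle, is getting the signs and the order of the tensor factors exactly right: formula \eqref{Eq:bivector-bracket} is not symmetric in $\delta_1,\delta_2$, so the two orderings $\frac{\partial}{\partial a_i}\cdots\frac{\partial}{\partial b_j}$ versus $\frac{\partial}{\partial b_j}\cdots\frac{\partial}{\partial a_i}$ give genuinely different contributions, and one must be careful that $\PP_n$ as written lies in $(\D_BA)\otimes_A(\D_BA)$ with the $A$-bimodule structure on $\D_BA$ coming from the \emph{inner} bimodule structure (so that e.g. $\frac{\partial}{\partial a_i}a_i$ and $a_i\frac{\partial}{\partial a_i}$ are distinct, and the placement of $a_i,b_j$ between the two double derivations matters). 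Beyond that, the proof is a bookkeeping exercise: since this is an explicit finite check against the already-proven Theorem \ref{thm:Euler-quasi-Hamiltonian-algebra}, no new ideas are needed, and one may legitimately present it by treating one representative case of each of the groups in \eqref{Eq:bivector-P} in detail and indicating that the remaining cases are analogous.
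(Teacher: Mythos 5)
Your proposal is correct and follows essentially the same approach as the paper: invoke the bijectivity of $\mu$ from Proposition \ref{prop:VdB-mu} (valid since the path algebra is smooth and this persists under localisation), then verify term by term via formula \eqref{Eq:bivector-bracket} that $\lr{-,-}_{\PPn}$ reproduces \eqref{eq:Euler-double-bracket} on generators, handling the $i>j$ cases directly and recovering $i<j$ (and the mixed $b$--$a$ brackets) by cyclic antisymmetry. One minor clarification: the $i=j$ case of \eqref{eq:Euler-double-bracket.a} is zero simply because the first sum in \eqref{Eq:bivector-P} is indexed over $i>j$, so no term contributes at all, rather than because of a cancellation by symmetrisation as you suggest.
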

\begin{proof}
 It suffices to show that the double bracket $\lr{-,-}_\PPn$ defined through \eqref{Eq:bivector-bracket} using $\PPn$  gives \eqref{eq:Euler-double-bracket.a}--\eqref{eq:Euler-double-bracket.c} when evaluated on generators. If $i>j$, we note that the only terms contributing to $\lr{a_i,a_j}_\PPn$ come from the first term of  \eqref{Eq:bivector-P} as 
\begin{equation*}
\begin{aligned}
 \frac12 \left(\frac{\partial}{\partial a_i}a_i\right) \left( \frac{\partial}{\partial a_j} a_j\right) 
 + \frac12 \left(\frac{\partial}{\partial a_i} a_j\right)  \left(\frac{\partial}{\partial a_j} a_i \right)\,,
\end{aligned}
\end{equation*}
so that \eqref{Eq:bivector-bracket} yields 
\begin{equation}
\begin{aligned}
\lr{a_i,a_j}_\PPn= \frac12\big(a_j e_1 \otimes a_i e_1 - 0\big) + \frac12\big(a_i e_1 \otimes a_j e_1 - 0\big) = \frac12\Big(a_j \otimes a_i + a_i \otimes a_j\Big) \,.
\end{aligned}
\end{equation}
Clearly, $\lr{a_i,a_i}_\PPn=0$, so by cyclic antisymmetry \eqref{Eq:cycanti} we get that $\lr{a_i,a_j}_\PPn$ coincides with \eqref{eq:Euler-double-bracket.a} for all $1\leq i,j \leq n$. We can show in the same way that  $\lr{b_i,b_j}_\PPn$ coincides with \eqref{eq:Euler-double-bracket.b} for all indices. 

We now compute $\lr{a_i,b_j}_\PPn$. If $i<j$, the only terms that contribute come from the third term of \eqref{Eq:bivector-P} as 
\begin{equation*}
\begin{aligned}
\frac12 \left(\frac{\partial}{\partial a_i}a_i\right) \left(b_j \frac{\partial}{\partial b_j}\right) 
+ \frac12  \left(a_i \frac{\partial}{\partial a_i} \right)  \left( \frac{\partial}{\partial b_j} b_j  \right)\,,
\end{aligned}
\end{equation*}
and we get from \eqref{Eq:bivector-bracket} that 
\begin{equation}
\begin{aligned}
\lr{a_i,b_j}_\PPn= \frac12  e_1 e_1 \otimes a_i b_j  + \frac12 b_j a_i \otimes  e_2 e_2   = \frac12(e_1 \otimes a_i b_j + b_j a_i \otimes e_2) \,.
\end{aligned}
\end{equation}
If $i=j$, we only need the following terms from the last line in  \eqref{Eq:bivector-P}
\begin{equation*}
\begin{aligned}
\frac12 \left(\frac{\partial}{\partial a_i}a_i\right) \left(b_i \frac{\partial}{\partial b_i}\right) 
+ \frac12  \left(a_i \frac{\partial}{\partial a_i} \right)  \left( \frac{\partial}{\partial b_i} b_i  \right)
+ \left( \frac{\partial}{\partial a_i} \right)  \left( \frac{\partial}{\partial b_i}   \right)\,,
\end{aligned}
\end{equation*}
and \eqref{Eq:bivector-bracket} yields 
\begin{equation}
\begin{aligned}
\lr{a_i,b_i}_\PPn= \frac12\Big(e_1 \otimes a_i b_i + b_i a_i \otimes e_2\Big) + e_1 \otimes e_2 \,.
\end{aligned}
\end{equation}
In the case $i>j$, the contributing terms come from  the third and fourth sums in \eqref{Eq:bivector-P} as 
\begin{equation*}
\begin{aligned}
-\frac12 \left(\frac{\partial}{\partial a_i}a_i\right) \left(b_j \frac{\partial}{\partial b_j}\right) 
- \frac12  \left(a_i \frac{\partial}{\partial a_i} \right)  \left( \frac{\partial}{\partial b_j} b_j  \right)
-\delta_{i-j,1} \left( \frac{\partial}{\partial a_i} \right)  \left( \frac{\partial}{\partial b_j}   \right)\,,
\end{aligned}
\end{equation*}
and \eqref{Eq:bivector-bracket} yields 
\begin{equation}
\begin{aligned}
\lr{a_i,b_j}_\PPn= -\frac12\Big(e_1 \otimes a_i b_j + b_j a_i \otimes e_2\Big) -\delta_{i-j,1} e_1 \otimes e_2 \,.
\end{aligned}
\end{equation}
Gathering the three cases, we get that $\lr{a_i,b_j}_\PPn$ coincides with \eqref{eq:Euler-double-bracket.c} for all indices.
\end{proof}

\begin{rem} \label{rem:relation-bivector}
In the case $n=1$, the noncommutative bivector $\PP_1$ can directly be compared with the one of Van den Bergh \cite[Theorem 6.5.1]{VdB1}. Setting $a:=a_1$, $b:=b_1$, we have that modulo graded-commutator $\PP_1$ equals 
\begin{equation}
 \PP_{\operatorname{VdB}}:=\frac12(1+ba)\frac{\partial}{\partial a}\frac{\partial}{\partial b}
 -\frac12 (1+ab)\frac{\partial}{\partial b} \frac{\partial}{\partial a}\,,
\end{equation}
which was constructed by Van den Bergh. It is an easy exercise (see \cite[\S4.1]{VdB1} in full generalities) to check that the double bracket induced by an element $Q\in (D_BA)_2$ by  Proposition \ref{prop:VdB-mu} only depends on $Q$ modulo graded commutators. Thus  $\PP_1$ and $\PP_{\operatorname{VdB}}$ induce the same double quasi-Poisson bracket on $\Bc(\Gamma_1)= \Ac(\Gamma_1)$. 
\end{rem}

\subsection{The Flaschka--Newell Poisson bracket} 
\label{ss:FN-Poisson}

The representation space   of dimension $(1,1)$, denoted $\Rep\big(\Bc(\Gamma_n),(1,1)\big)$, is parametrised by assigning $A_i,B_i\in \CC$ to  the generators $a_i,b_i$ for each $ 1\leq i\leq n$, where we require the invertibility of the (scalar) matrices representing the two elements \eqref{Eq:invEuler}. 
Denoting by $A_i,B_i$ the corresponding evaluation functions on $\Rep\big(\Bc(\Gamma_n),(1,1)\big)$, 
this gives precisely the parametrisation \eqref{eq:Boalch-space-5} of the space $\widehat{\Bc}^{n+1}$ from the introduction. It is endowed with the action of $\CC^\times \times \CC^\times$ through 
\begin{equation*}
 t\cdot \{A_i,B_i\mid 1\leq i\leq n\} = \{t_2A_i t_1^{-1},t_1 B_i t_2^{-1}\mid 1\leq i\leq n\}\,, \quad t=(t_1,t_2)\in \CC^\times \times \CC^\times \,.
\end{equation*}
This is a Hamiltonian quasi-Poisson space using Theorem \ref{thm:Euler-quasi-Hamiltonian-algebra} and \cite[Proposition 7.13.2]{VdB1}, in application of the Kontsevich--Rosenberg principle. Using \cite[Proposition 7.5.1]{VdB1}, the quasi-Poisson bracket is given by\footnote{To get the formulas, it suffices in \eqref{eq:Euler-double-bracket} to replace all elements $a_i,b_j$ by $A_i,B_j$ while $e_1,e_2$ are replaced by $1$, and then we multiply the two components in the tensor product hence obtained.} 
\begin{subequations}
 \begin{align}
\br{A_i,A_j}=&\sgn(i-j) A_iA_j\,, \quad \br{B_i,B_j}=\sgn(i-j)B_iB_j\,, \label{Eq:bracket-rep11-AABB}\\
 \br{A_i,B_j}=&\left\{
 \begin{array}{cc}
  A_iB_j \,, &\mbox{if }i<j\\
  A_iB_i+1\,, &\mbox{if }i=j\\
  -A_iB_j-\delta_{i-j,1}\,, &\mbox{if }i>j
 \end{array}
 \right.\,.  \label{Eq:bracket-rep11-mix}
 \end{align}
\end{subequations}
The corresponding moment map $\widehat{\mu}$ is given by \eqref{eq:Boalch-space-6} and, noting that the variables $A_i,B_i$ commute, it can be simply written as  
\begin{equation} \label{Eq:momap-rep11}
 \widehat{\mu}=(\lambda^{-1},\lambda)\,, \quad \lambda:=(A_1,B_1,\ldots,A_n,B_n)=(B_n,A_n,\ldots,B_1,A_1)\,.
\end{equation}
Since $\lambda$ represents the component $\Phi_2$ of the (noncommutative) moment map from Theorem \ref{thm:Euler-quasi-Hamiltonian-algebra}, we have from \cite[Proposition 7.5.1]{VdB1}, \eqref{eq:moment-map-b} and \eqref{eq:moment-map-a} that 
\begin{equation} \label{Eq:bracket-rep11-lamb}
 \br{\lambda,A_i}=-\lambda A_i\,, \quad \br{\lambda,B_i}=\lambda B_i\,.
\end{equation}

Let us note that since the group $\CC^\times \times \CC^\times$ acting on $\widehat{\Bc}^{n+1}$ is abelian, the quasi-Poisson bracket is in fact a \emph{Poisson bracket}. This will be important in view of the following change of variables, motivated by a standard parametrisation  connected to Stokes matrices \cite{BT,B18}. We set 
\begin{equation}
 A_i=s_{2n+3-2i}\,, \quad B_i=s_{2n+2-2i}\,,
\end{equation}
for $s_j\in \CC$ with $2\leq j\leq 2n+1$. (The parameters $s_k$ with odd/even indices are the $A_i$/$B_i$ respectively, and they appear with decreasing indices.) We also denote by $s_j$ the corresponding evaluation function on $\widehat{\Bc}^{n+1}$. Then, using this change of coordinates, the moment map $\lambda$ in \eqref{Eq:momap-rep11} becomes $\lambda=(s_2,\ldots,s_{2n+1})$.
It is an easy exercise using \eqref{Eq:bracket-rep11-AABB} to see that 
we can write 
\begin{equation*}
 \br{s_k,s_l}=s_k s_l \quad \text{ for }k<l \text{ both odd or both even}\,.
\end{equation*}
We can also note that \eqref{Eq:bracket-rep11-mix}  yields
\begin{equation*}
\br{s_k,s_l}=\left\{ 
\begin{array}{ll}
s_ks_l \,, &\mbox{if } k>l\\
s_ks_l+1\,, &\mbox{if }k=l+1\\
-s_ks_l-1\,, &\mbox{if } k=l-1\\
-s_ks_l\,, &\mbox{if }k<l-1
\end{array}
\right. \quad  \text{ for   $k$ odd and $l$ even}\,.
\end{equation*}
This is equivalent to 
\begin{equation*}
\br{s_k,s_l}=\left\{ 
\begin{array}{ll}
-s_ks_l\,,     &\mbox{if }k<l\\
-s_ks_l-1\,, &\mbox{if }k=l-1\\
s_ks_l+1\,, &\mbox{if }k=l+1\\
s_ks_l\,, &\mbox{if }k>l+1
\end{array}
\right. \quad  \text{ for   $k$ even and $l$ odd}\,.
\end{equation*}
Gathering these identities with the moment map identity \eqref{Eq:bracket-rep11-lamb}, we get that the Poisson bracket is completely determined by 
\begin{subequations}
 \begin{align} 
 \br{s_k,s_l}&=-\delta_{k,l-1}+(-1)^{k-l}s_ks_l\,, \quad \text{ for }1<k<l<2n+2\,, \label{Eq:bracket-svar} \\
 \br{s_k,\lambda}&=(-1)^{k+1}s_k\lambda\,.
\end{align}
\end{subequations}

Using \eqref{eq:Boalch-space-3} and the subsequent Gauss decomposition, we note that we can equivalently parametrise 
$\widehat{\Bc}^{n+1}$ in terms of the elements $\{s_j \mid 2\leq j\leq 2n+1\}$, such that 
\begin{equation}
\begin{aligned}
  &\left(\begin{array}{cc} 1&0\\s_2&1 \end{array} \right) \left(\begin{array}{cc} 1&s_3\\0&1 \end{array} \right) \ldots 
\left(\begin{array}{cc} 1&0\\s_{2n}&1 \end{array} \right) \left(\begin{array}{cc} 1&s_{2n+1}\\0&1 \end{array} \right) \\
=& \left(\begin{array}{cc} 1&(s_3,\ldots,s_{2n+1})\lambda^{-1}\\0&1 \end{array} \right)
\left(\begin{array}{cc} \lambda^{-1}&0\\0&\lambda \end{array} \right)
\left(\begin{array}{cc} 1&0\\ \lambda^{-1} (s_2,\ldots,s_{2n})&1 \end{array} \right)\,.  \label{eq:B-hat-equiv-param}
\end{aligned}
\end{equation}
Consequently, $\widehat{\Bc}^{n+1}$ can be written as the following subspace of $\CC^{2n+2}\times \CC^\times$: 
\begin{equation}
 \left\{ 
  \left(\begin{array}{cc} 1&s_1\\0&1 \end{array} \right)\left(\begin{array}{cc} 1&0\\s_2&1 \end{array} \right) \ldots 
 \left(\begin{array}{cc} 1&s_{2n+1}\\0&1 \end{array} \right)  \left(\begin{array}{cc} 1&0\\s_{2n+2}&1 \end{array} \right)
  \left(\begin{array}{cc} \lambda&0\\0&\lambda^{-1} \end{array} \right) = \Id_2
\right\}\,.
\label{eq:B-hat-s1-s2n+1}
\end{equation}
Indeed, we recover equation \eqref{eq:B-hat-equiv-param} from \eqref{eq:B-hat-s1-s2n+1} because the Gauss decomposition yields 
\begin{equation}
 s_1=- (s_3,\ldots,s_{2n+1})\lambda^{-1}\,, \quad s_{2n+2}=-\lambda^{-1} (s_2,\ldots,s_{2n})\,.
\end{equation}
Using the description of $\widehat{\Bc}^{n+1}$ given in \eqref{eq:B-hat-s1-s2n+1}, Bertola and Tarricone \cite{BT} have recently written the following Poisson bracket 
\begin{subequations}
 \begin{align}
 \br{s_k,s_l}_{FN}=&\delta_{k,l-1}-\frac{\delta_{k,1}\delta_{l,2n+2}}{\lambda^2}+(-1)^{k-l+1}s_ks_l\,, \quad \text{ for }1\leq k<l\leq 2n+2\,, \label{Eq:bracket-FN} \\
 \br{s_k,\lambda}_{FN}=&(-1)^{k}s_k\lambda\,, \label{Eq:bracket-FN-momap}
\end{align}
\end{subequations}
which was originally introduced by Flaschka and Newell \cite{FN}. By comparing the expressions \eqref{Eq:bracket-svar} and \eqref{Eq:bracket-FN} on the generators $\{s_j \mid 2\leq j\leq 2n+1\}$, we directly see that the double quasi-Poisson bracket from Theorem \ref{thm:Euler-quasi-Hamiltonian-algebra} induces the Flaschka--Newell Poisson bracket on the representation space $\Rep\big(\Bc(\Gamma_n),(1,1)\big)$ (up to an irrelevant factor). In particular, \eqref{Eq:bracket-FN-momap} is nothing else than the moment map condition.


\section{Proof of Theorem \ref{thm:Euler-quasi-Hamiltonian-algebra}}
\label{sec:proff-thm-Euler}
\allowdisplaybreaks
 
In order to prove Theorem  \ref{thm:Euler-quasi-Hamiltonian-algebra}, we will show in \ref{sec:proof-Euler-quasi-Poisson} that the double bracket defined by  \eqref{eq:Euler-double-bracket} is quasi-Poisson, i.e. it satisfies \eqref{qPabc}. 
We then prove in \ref{sec:proof-Euler-multiplicative-moment-map} that the components of the moment map $\Phi$ given in \eqref{eq:mult-moment-map-statement} satisfy \eqref{Phim}, which finishes the proof.  
Before tackling these two steps, we will derive some preliminary identities in \ref{sec:preparation-proof-main-theorem}.

\subsection{Preparation for the proof}
\label{sec:preparation-proof-main-theorem}
\allowdisplaybreaks

In this subsection we shall prove two results that we will extensively use in the proof of Theorem \ref{thm:Euler-quasi-Hamiltonian-algebra} below.  
We recall $(a_n,b_n)=e_2+a_nb_n$, and $\ast$ denotes the inner bimodule multiplication from \ref{sec:ssec-double-derivations}. 

\begin{lem}
Consider the double bracket defined in \eqref{eq:Euler-double-bracket}. 
Then the following identities hold:
\begin{enumerate}
\item [\textup{(i)}]
$\lr{(a_n,b_n),a_i}\!=
\begin{cases}
\;\;\,\frac{1}{2}\Big((a_n,b_n)\otimes a_i-e_2\otimes (a_n,b_n)a_i\Big), &\mbox{if } 1\leq i\leq n-1
\\
-\frac{1}{2}\Big((a_n,b_n)\otimes a_n+e_2\otimes (a_n,b_n)a_n\Big), &\mbox{if } i=n
\end{cases};
$
\item [\textup{(ii)}]
$
\lr{(a_n,b_n),b_i}\!=
\begin{cases}
\frac{1}{2}\Big(b_i\otimes (a_n,b_n)-b_i(a_n,b_n)\otimes e_2 \Big), &\!\!\!\mbox{if } 1\leq i\leq n-2
\\
\frac{1}{2}\Big(b_{n-1}\otimes (a_n,b_n)-b_{n-1} (a_n,b_n)\otimes e_2\Big)\!-\!b_n\!\otimes \!e_2, &\!\!\!\mbox{if } i=n-1
\\
\frac{1}{2}\Big(b_n\otimes (a_n,b_n)+b_n(a_n,b_n)\otimes e_2\Big), &\!\!\!\mbox{if } i=n
\end{cases}.
$
\end{enumerate}
\label{lem:technic-lemma}
\end{lem}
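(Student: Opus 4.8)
\medskip
\noindent\textbf{Proof strategy.}
The plan is to deduce both identities directly from the Leibniz rules and the defining formulae \eqref{eq:Euler-double-bracket.a}--\eqref{eq:Euler-double-bracket.d}. Since $(a_n,b_n)=e_2+a_nb_n$ and the double bracket is $B$-linear, one has $\lr{(a_n,b_n),c}=\lr{a_nb_n,c}$ for every generator $c\in\{a_i,b_i\mid 1\leq i\leq n\}$. Expanding by the left Leibniz rule \eqref{Eq:inder} and unwinding the inner bimodule multiplication $\ast$ recalled in \ref{sec:ssec-double-derivations}, this becomes
\begin{equation*}
\lr{a_nb_n,c}=\lr{a_n,c}'\,b_n\otimes\lr{a_n,c}''+\lr{b_n,c}'\otimes a_n\,\lr{b_n,c}''\,.
\end{equation*}
So it suffices to substitute the known values of $\lr{a_n,c}$ and $\lr{b_n,c}$ into this expression and simplify, reconstituting $(a_n,b_n)$ out of $e_2+a_nb_n$ at the end.

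The computation then splits into exactly the cases listed in the statement, according to which branch of \eqref{eq:Euler-double-bracket.a}--\eqref{eq:Euler-double-bracket.d} governs $\lr{a_n,c}$ and $\lr{b_n,c}$. For part~(i), with $c=a_i$: when $i=n$ one uses $\lr{a_n,a_n}=0$ together with the ``$i=j$'' line of \eqref{eq:Euler-double-bracket.d}; when $1\leq i\leq n-1$ one uses the ``$i>j$'' lines of \eqref{eq:Euler-double-bracket.a} and \eqref{eq:Euler-double-bracket.d}, in which no Kronecker term occurs. For part~(ii), with $c=b_i$: when $i=n$ one uses the ``$i=j$'' line of \eqref{eq:Euler-double-bracket.c} together with $\lr{b_n,b_n}=0$; when $1\leq i\leq n-1$ one uses the ``$i>j$'' lines of \eqref{eq:Euler-double-bracket.b} and \eqref{eq:Euler-double-bracket.c}, and the subcase $i=n-1$ has to be isolated because the summand $\delta_{i-j,1}(e_1\otimes e_2)$ in \eqref{eq:Euler-double-bracket.c} is active precisely for $i=n-1$; this is exactly what produces the extra term $-\,b_n\otimes e_2$ appearing in the middle line of~(ii). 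In each case two of the four resulting simple tensors cancel against one another, and the two survivors recombine into the claimed expression after writing $e_2=(a_n,b_n)-a_nb_n$ in the terms that carry an isolated $e_2$.

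I expect no conceptual obstacle: the only real task is careful bookkeeping of the idempotents, keeping in mind that $e_2a_n=a_n=a_ne_1$ and $e_1b_n=b_n=b_ne_2$, and that in the displayed expansion right-multiplication by $b_n$ acts on the first tensor leg while left-multiplication by $a_n$ acts on the second; for instance, a term $e_1\otimes e_2$ occurring in $\lr{a_n,b_{n-1}}$ becomes $b_n\otimes e_2$ once its first leg is multiplied by $b_n$. As a consistency check one may test the output against the cyclic antisymmetry \eqref{Eq:cycanti} and against the $n=1$ specialisation, where (i)--(ii) must reduce to the brackets of $(a_1,b_1)=e_2+a_1b_1$ with $a_1,b_1$ computed from Theorem~\ref{tm:VdB-mult-preproj-quasi-Hamilt}(i).
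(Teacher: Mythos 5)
Your proposal is correct and follows essentially the same route as the paper's proof: both reduce $\lr{(a_n,b_n),c}$ to $\lr{a_n,c}\ast b_n + a_n\ast\lr{b_n,c}$ via the left Leibniz rule and $B$-linearity, then substitute the defining formulae \eqref{eq:Euler-double-bracket} case by case and reassemble $(a_n,b_n)=e_2+a_nb_n$ at the end. Your identification of exactly when the Kronecker term in \eqref{eq:Euler-double-bracket.c} fires (namely $i=n-1$ in part (ii), producing the extra $-b_n\otimes e_2$) matches the paper's treatment, and the $\ast$-unwinding you display is correct.
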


\begin{proof}
To prove (i), if $1\leq i\leq n-1$, by \eqref{Eq:inder}, \eqref{eq:Euler-double-bracket.a} and \eqref{eq:Euler-double-bracket.d}, we obtain
\begin{small}
\begin{align*}
\lr{(a_n,b_n),a_i}&=a_n*\lr{b_n,a_i}+\lr{a_n,a_i}*b_n
\\
&=-a_n*\Big(\frac{1}{2}\big(a_ib_n\otimes e_1+e_2\otimes b_na_i\big)\Big)+\Big(\frac{1}{2}\big(a_n\otimes a_i+a_i\otimes a_n\big)\Big)*b_n
\\
&=\frac{1}{2}a_nb_n\otimes a_i-\frac{1}{2}e_2\otimes a_nb_na_i
=\frac{1}{2}\Big((a_n,b_n)\otimes a_i-e_2\otimes (a_n,b_n)a_i\Big)\,,
\end{align*}
\end{small}%
where we used $(a_n,b_n)=a_nb_n+e_2$.
Similarly, in (ii), the reader can prove the identity $2\lr{(a_n,b_n),b_i}=b_i\otimes (a_n,b_n)-b_i(a_n,b_n)\otimes e_2  $, for $1\leq i\leq n-2$.

Now, to prove the second formula in (i), since $\lr{a_n,a_n}=0$, by \eqref{eq:Euler-double-bracket.d} we have
\begin{small}
\begin{align*}
\lr{(a_n,b_n),a_n}&=a_n*\lr{b_n,a_n}
\\
&=-a_n*\Big(\frac{1}{2}\big(e_2\otimes b_na_n+a_nb_n\otimes e_2\big)+e_2\otimes e_1\Big)
\\
&=-\frac{1}{2}\Big( (a_n,b_n)\otimes a_n+e_2\otimes (a_n,b_n)a_n\Big)\, ,
\end{align*}
\end{small}%
where we used again that $(a_n,b_n)=a_nb_n+e_2$. Similarly, the reader can check that $2\lr{(a_n,b_n),b_n}=b_n\otimes (a_n,b_n)+b_n(a_n,b_n)\otimes e_2$ holds in (ii).

Finally, we prove the remaining identity in (ii). 
By \eqref{eq:Euler-double-bracket.b} and \eqref{eq:Euler-double-bracket.c}, 
\begin{small}
\begin{align*}
\lr{(&a_n,b_n),b_{n-1}}=a_n*\lr{b_n,b_{n-1}}+\lr{a_n,b_{n-1}}*b_n
\\
&=a_n*\Big(\frac{1}{2}\big(b_n\otimes b_{n-1}+b_{n-1}\otimes b_n\big)\Big)
-\Big(\frac{1}{2}\big(b_{n-1}a_n\otimes e_2+e_1\otimes a_nb_{n-1}\big)+e_1\otimes e_2\Big)*b_n
\\
&=\frac{1}{2}b_{n-1}\otimes a_nb_n-\frac{1}{2}b_{n-1}a_nb_n\otimes e_2-b_n\otimes e_2
\\
&=\frac{1}{2}\Big(b_{n-1}\otimes (a_n,b_n)-b_{n-1}(a_n,b_n)\otimes e_2\Big)-b_n\otimes e_2\,.\qedhere
\end{align*}
\end{small}
\end{proof}

\begin{lem}
Consider the double bracket defined in \eqref{eq:Euler-double-bracket}. 
Then the following identities hold:
\begin{enumerate}
\item [\textup{(i)}]
$2\lr{(a_1,\dots, b_i),b_j}=b_j(a_1,\dots, b_i)\otimes e_2-b_j\otimes (a_1,\dots, b_i)$, with $1\leq i<j\leq n$;
\item [\textup{(ii)}]
$2\lr{(a_1,\dots, b_i),a_j}=e_2\otimes (a_1,\dots, b_i)a_j- (a_1,\dots, b_i)\otimes a_j$, with $j\geq  i+2$;
\item [\textup{(iii)}]
$\lr{(a_1,\dots, b_i),a_{i+1}}=\frac{1}{2}\Big(e_2\otimes (a_1,\dots, b_i)a_{i+1}- (a_1,\dots, b_i)\otimes a_{i+1}\Big)+e_2\otimes (a_1,\dots, a_i)$, with $1\leq i\leq n-1$.
\end{enumerate}
\label{lem:technic-lemma-no-inductive}
\end{lem}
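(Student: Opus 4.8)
The plan is to prove the three identities \textup{(i)}, \textup{(ii)}, \textup{(iii)} simultaneously by induction on $i$, using as the main structural input the recursion \eqref{eq:Convenient-formula-Euler-cont-b}, which for $i\geq 2$ writes $(a_1,\dots,b_i)=(a_1,\dots,b_{i-1})(a_i,b_i)+\sum_{\ell=3}^{i}(a_1,\dots,b_{\ell-2})a_{\ell-1}b_i+a_1b_i$, together with the left Leibniz rule \eqref{Eq:inder}. For the base case $i=1$ one has $(a_1,b_1)=e_2+a_1b_1$, and each of the three brackets is computed directly from the defining formulas \eqref{eq:Euler-double-bracket} via \eqref{Eq:inder} (for \textup{(ii)} the indices are $j\geq 3$, for \textup{(iii)} it is $j=2$, where the Kronecker term $\delta_{j-i,1}$ in \eqref{eq:Euler-double-bracket.d} first contributes).

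For the inductive step I would apply \eqref{Eq:inder} to the expansion \eqref{eq:Convenient-formula-Euler-cont-b} and sort the resulting terms into three kinds. The first kind involves brackets of the shorter continuants $(a_1,\dots,b_{i-1})$ and $(a_1,\dots,b_{\ell-2})$ against $b_j$ or $a_j$; since $j$ always exceeds the top index of these shorter continuants, the inductive hypotheses \textup{(i)} and \textup{(ii)} apply directly (note that in case \textup{(iii)}, i.e. $j=i+1$, the continuant $(a_1,\dots,b_{i-1})$ is paired with $a_{i+1}=a_{(i-1)+2}$, so it is \textup{(ii)} at step $i-1$ that is invoked, not \textup{(iii)}). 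The second kind involves the single-block brackets $\lr{(a_i,b_i),b_j}$ and $\lr{(a_i,b_i),a_j}$ for $j>i$, which I would compute once from \eqref{eq:Euler-double-bracket} exactly as in Lemma \ref{lem:technic-lemma}, obtaining $\lr{(a_i,b_i),b_j}=\frac12\big(b_j(a_i,b_i)\otimes e_2-b_j\otimes(a_i,b_i)\big)$ and $\lr{(a_i,b_i),a_j}=\frac12\big(e_2\otimes(a_i,b_i)a_j-(a_i,b_i)\otimes a_j\big)+\delta_{j-i,1}\,(e_2\otimes a_i)$. The third kind consists of the atomic brackets $\lr{a_\ell,b_j}$, $\lr{a_\ell,a_j}$, $\lr{b_i,b_j}$, $\lr{b_i,a_j}$ read off from \eqref{eq:Euler-double-bracket}; since $\ell<j$ and $i<j$ throughout, only the ``$<$'' branches occur, with a single exceptional Kronecker contribution coming from $\lr{b_i,a_{i+1}}$ in case \textup{(iii)}.

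After expanding the $\ast$-products (recalling $a_\ell=e_2a_\ell e_1$, $b_i=e_1b_ie_2$, so the idempotents are absorbed cleanly) the bulk of the terms cancel in pairs, and the surviving terms reassemble — via \eqref{eq:Convenient-formula-Euler-cont-b} run backwards — into $\frac12\big(b_j(a_1,\dots,b_i)\otimes e_2-b_j\otimes(a_1,\dots,b_i)\big)$ in case \textup{(i)} and into $\frac12\big(e_2\otimes(a_1,\dots,b_i)a_j-(a_1,\dots,b_i)\otimes a_j\big)$ in cases \textup{(ii)} and \textup{(iii)}. In case \textup{(iii)} there is an extra pool of terms, each carrying a factor $e_2\otimes e_1$ or $e_2\otimes a_i$ produced by the Kronecker deltas above; these combine into $e_2\otimes\big((a_1,\dots,b_{i-1})a_i+\sum_{\ell=3}^{i}(a_1,\dots,b_{\ell-2})a_{\ell-1}+a_1\big)=e_2\otimes(a_1,\dots,a_i)$, the last equality being exactly identity \eqref{eq:Convenient-formula-Euler-cont-a}. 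The main obstacle is purely organisational: keeping the numerous terms under control, checking the telescoping cancellations, and — most delicately — tracking which terms carry the Kronecker deltas and verifying that in case \textup{(iii)} they sum to precisely $e_2\otimes(a_1,\dots,a_i)$ with nothing left over. The boundary between $j=i+1$ and $j\geq i+2$ is the only point where the three statements genuinely interact, and it is where one must be most careful.
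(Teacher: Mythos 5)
Your proposal is correct and follows essentially the same strategy as the paper: strong induction on $i$ via the decomposition \eqref{eq:Convenient-formula-Euler-cont-b} and the left Leibniz rule, with the single-block brackets $\lr{(a_i,b_i),\,\cdot\,}$ computed once from \eqref{eq:Euler-double-bracket}, and the Kronecker terms in case (iii) reassembled into $e_2\otimes(a_1,\dots,a_i)$ through \eqref{eq:Convenient-formula-Euler-cont-a}. You also correctly identify the one subtle point — that in the inductive step of (iii) the shorter continuants are paired with $a_{i+1}$ at a gap of at least two, so the hypothesis invoked is always (ii), never (iii) — which is exactly how the paper organises the argument.
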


\begin{proof}
We will prove these formulas by strong induction.
To prove (i), we start noting that by \eqref{eq:Euler-double-bracket.b} and \eqref{eq:Euler-double-bracket.c},
\begin{small}
\begin{equation}
\begin{aligned}
\lr{(a_i,b_i),b_j}&=a_i*\lr{b_i,b_j}+\lr{a_i,b_j}*b_i
\\
&=\frac{1}{2}b_ja_ib_i\otimes e_2-\frac{1}{2}b_j\otimes a_ib_i
=\frac{1}{2}\Big(b_j(a_i,b_i)\otimes e_2-b_j\otimes (a_i,b_i)\Big)\, .
\end{aligned}
\label{eq:prop-technic-euler-i}
\end{equation}
\end{small}%
Then the base case arises when $i=1$. 
Now, if $i\geq 2$, by \eqref{eq:Convenient-formula-Euler-cont-b}, we have
\begin{small}
\begin{align*}
\lr{(a_1,\dots, b_i),&\,b_j}=\dgal{\Big((a_1,\dots ,b_{i-1})(a_i,b_i)+\sum^i_{\ell=3}\Big[(a_1,\dots,b_{\ell-2})a_{\ell-1}b_i\Big]+a_1b_i\Big),b_j}
\\
&=(a_1,\dots,b_{i-1})*\lr{(a_i,b_i),b_j}+\lr{(a_1,\dots,b_{i-1}),b_j}*(a_i,b_i)
\\
&\quad +\sum^i_{\ell=3}\Big[(a_1,\dots,b_{\ell-2})a_{\ell-1}*\lr{b_i,b_j}+(a_1,\dots,b_{\ell-2})*\lr{a_{\ell-1},b_j}*b_i\Big]
\\
&\quad +\sum^i_{\ell=3}\Big[ \lr{(a_1,\dots,b_{\ell-2}),b_j}*a_{\ell-1}b_i\Big]+a_1*\lr{b_i,b_j}+\lr{a_1,b_j}*b_i\,.
\end{align*}
\end{small}%
If we apply \eqref{eq:prop-technic-euler-i} in the first term, and the inductive hypothesis in the second and fifth terms, we obtain:
\begin{small}
\begin{align*}
&=\frac{1}{2}b_j(a_i,b_i)\otimes (a_1,\dots,b_{i-1})-\frac{1}{2}b_j\otimes (a_1,\dots,b_{i-1})(a_i,b_i)
\\
&\quad +\frac{1}{2}\Big(b_j(a_1,\dots,b_{i-1})(a_i,b_i)\otimes e_2-b_j(a_i,b_i)\otimes (a_1,\dots,b_{i-1})\Big)
\\
&\quad -\frac{1}{2}\sum^i_{\ell=3}\Big[b_i\otimes (a_1,\dots,b_{\ell-2})a_{\ell-1}b_j+b_j\otimes (a_1,\dots ,b_{\ell-2})a_{\ell-1}b_i\Big]
\\
&\quad+ \frac{1}{2}\sum^i_{\ell=3}\Big[b_i\otimes (a_1,\dots,b_{\ell-2})a_{\ell-1}b_j+b_ja_{\ell-1}b_i\otimes (a_1,\dots,b_{\ell-2})\Big]
\\
&\quad+ \frac{1}{2}\sum^i_{\ell=3}\Big[b_j (a_1,\dots,b_{\ell-2})a_{\ell-1}b_i\otimes e_2-b_ja_{\ell-1}b_i\otimes (a_1,\dots,b_{\ell-2})\Big]
\\
&\quad -\frac{1}{2}\Big(b_i\otimes a_1b_j+b_j\otimes a_1b_i\Big)+\frac{1}{2}\Big(b_i\otimes a_1b_j+b_ja_1b_i\otimes e_2\Big)
\\
&=\frac{1}{2}\Big(b_j (a_1,\dots,b_{i-1})(a_i,b_i)\otimes e_2+\sum^i_{\ell=3}\Big[b_j(a_1,\dots ,b_{\ell-2})a_{\ell-1}b_i\otimes e_2\Big]+b_ja_1b_i\otimes e_2\Big)
\\
&\quad -\frac{1}{2}\Big(b_j\otimes (a_1,\dots,b_{i-1})(a_i,b_i)+\sum^i_{\ell=3}\Big[b_j\otimes(a_1,\dots ,b_{\ell-2})a_{\ell-1}b_i\Big]+b_j\otimes a_1b_i\Big)
\\
&=\frac{1}{2}\Big(b_j(a_1,\dots,b_i)\otimes e_2-b_j\otimes (a_1,\dots, b_i)\Big)\,,
\end{align*}
\end{small}%
where we used \eqref{eq:Convenient-formula-Euler-cont-b}.

The proof of the formula in (ii) is pretty similar to (i), so it is left to the reader. Nevertheless, the formula in (iii) is slightly more delicate, so we prove it now. If $1\leq i\leq n-1$, note that by \eqref{eq:Euler-double-bracket.a} and \eqref{eq:Euler-double-bracket.d}, we get
\begin{small}
\begin{equation}
\begin{aligned}
\lr{(a_i,b_i),a_{i+1}}&=a_i*\lr{b_i,a_{i+1}}+\lr{a_i,a_{i+1}}*b_{i}
\\
&=a_i*\Big(\frac{1}{2}\big(e_2\otimes b_ia_{i+1}+a_{i+1}b_i\otimes e_1\big)+e_2\otimes e_1\Big)-\frac{1}{2}\Big(a_i\otimes a_{i+1}+a_{i+1}\otimes a_i\Big)*b_i
\\
&=\frac{1}{2}e_2\otimes a_ib_ia_{i+1}+e_2\otimes a_{i}-\frac{1}{2}a_ib_i\otimes a_{i+1}
\\
&=\frac{1}{2}\Big(e_2\otimes (a_i,b_i)a_{i+1}-(a_i,b_i)\otimes a_{i+1}\Big)+e_2\otimes (a_i)\,.
\end{aligned}
\label{eq:prop-technic-euler-iii}
\end{equation}
\end{small}%
In particular, if $i=1$, we have the base case of (iii).

Next, we address the general case by assuming that $i\geq 2$. By \eqref{eq:Convenient-formula-Euler-cont-b},
\begin{small}
\begin{align*}
\lr{(a_1,&\dots, b_i),a_{i+1}}=\dgal{\Big((a_1,\dots ,b_{i-1})(a_i,b_i)+\sum^i_{\ell=3}\Big[(a_1,\dots,b_{\ell-2})a_{\ell-1}b_i\Big]+a_1b_i\Big),a_{i+1}}
\\
&=(a_1,\dots, b_{i-1})*\lr{(a_i,b_i),a_{i+1}}+\lr{(a_1,\dots,b_{i-1}),a_{i+1}}*(a_i,b_i)
\\
&\quad +\sum^i_{\ell=3}\Big[(a_1,\dots,b_{\ell-2})a_{\ell-1}*\lr{b_i,a_{i+1}}+(a_1,\dots,b_{\ell-2})*\lr{a_{\ell-1},a_{i+1}}*b_i\Big]
\\
&\quad +\sum^i_{\ell=3}\Big[\lr{(a_1,\dots,b_{\ell-2}),a_{i+1}}*a_{\ell-1}b_i\Big]
+a_1*\lr{b_i,a_{i+1}}+\lr{a_1,a_{i+1}}*b_i\,.
\end{align*}
\end{small}%
Now, we apply \eqref{eq:prop-technic-euler-iii} in the first summand, and Lemma \ref{lem:technic-lemma-no-inductive}(ii) in the second and fifth summands to obtain
\begin{small}
\begin{align*}
&=\frac{1}{2}\Big(e_2\otimes (a_1,\dots,b_{i-1})(a_i,b_i)a_{i+1}-(a_i,b_i)\otimes (a_1,\dots,b_{i-1})a_{i+1}\Big)+e_2\otimes (a_1,\dots,b_{i-1})a_i
\\
&\; +\frac{1}{2}\Big((a_i,b_i)\otimes (a_1,\dots,b_{i-1})a_{i+1}-(a_1,\dots,b_{i-1})(a_i,b_i)\otimes a_{i+1}\Big)
\\
&\; +\sum^i_{\ell=3}\Big[\frac{1}{2}\Big(e_2\otimes (a_1,\dots,b_{\ell-2})a_{\ell-1}b_ia_{i+1}+a_{i+1}b_i\otimes (a_1,\dots,b_{\ell-2})a_{\ell-1}\Big)+e_2\otimes (a_1,\dots,b_{\ell-2})a_{\ell-1}\Big]
\\
&\; -\frac{1}{2}\sum^i_{\ell=3}\Big[a_{\ell-1}b_i\otimes (a_1,\dots,b_{\ell-2})a_{i+1}+a_{i+1}b_i\otimes (a_1,\dots,b_{\ell-2})a_{\ell-1}\Big]
\\
&\; +\frac{1}{2}\sum^i_{\ell=3}\Big[a_{\ell-1}b_i\otimes (a_1,\dots,b_{\ell-2})a_{i+1}-(a_1,\dots,b_{\ell-2})a_{\ell-1}b_i\otimes a_{i+1}\Big]
\\
&\; +\frac{1}{2}\Big(e_2\otimes a_1b_ia_{i+1}+a_{i+1}b_i\otimes a_1\Big)+e_2\otimes a_1
 -\frac{1}{2}\Big(a_1b_i\otimes a_{i+1}+a_{i+1}b_i\otimes a_1\Big)
\\
&=\frac{1}{2}\Big(e_2\otimes (a_1,\dots,b_{i-1})(a_i,b_i)a_{i+1}+\sum^i_{\ell=3}\Big[e_2\otimes (a_1,\dots,b_{\ell-2})a_{\ell-1}b_ia_{i+1}\Big]+e_2\otimes a_1b_ia_{i+1}\Big)
\\
&\;-\frac{1}{2}\Big((a_1,\dots,b_{i-1})(a_i,b_i)\otimes a_{i+1}+\sum^i_{\ell=3}\Big[(a_1,\dots,b_{\ell-2})a_{\ell-1}b_i\otimes a_{i+1}+a_1b_i\otimes a_{i+1}\Big)
\\
&\;+e_2\otimes (a_1,\dots,b_{i-1})a_i+\sum^i_{\ell=3}\Big[e_2\otimes (a_1,\dots,b_{\ell-2})a_{\ell-1}\Big]+e_2\otimes a_1
\\
&=\frac{1}{2}\Big(e_2\otimes (a_1,\dots,b_i)a_{i+1}-(a_1,\dots,b_i)\otimes a_{i+1}\Big)+e_2\otimes (a_1,\dots,a_i)\,,
\end{align*}
\end{small}%
where in the last identity we used \eqref{eq:Convenient-formula-Euler-cont-b} and \eqref{eq:Convenient-formula-Euler-cont-a}.
\end{proof}

\subsection{The double bracket (\ref{eq:Euler-double-bracket}) is quasi-Poisson}
\label{sec:proof-Euler-quasi-Poisson}
\allowdisplaybreaks

By its definition, it is clear that $\lr{-,-}$, as given in \eqref{eq:Euler-double-bracket}, is a $B$-linear double bracket on $\mathcal{B}(\Gamma_n)$. In order to prove that the pair $(\mathcal{B}(\Gamma_n),\lr{-,-})$ is a double quasi-Poisson algebra, we need to show that $\lr{-,-}$ satisfies \eqref{qPabc} for every triple of generators $\{x_i,y_j,z_k\}$, with $x_i,y_j,z_k\in\{a_\ell,b_{\ell}\mid 1\leq \ell\leq n\}$, by distinguishing every possible case. To do that, we need to attend to the number of $a$'s and $b$'s occurring in such a triple. 
We essentially have four cases\footnote{We warn the reader that, throughout this part of the proof, we will repeatedly deal with expressions such as $(a_i,a_j,b_k)$ as an ordered triple, \emph{not} as an Euler continuant.}: $(a_i,a_j,a_k)$, $(a_i,a_j,b_k)$, $(b_i,b_j,a_k)$ and $(b_i,b_j,b_k)$. Then, we shall keep track of the relations of the indices $i,j,k$ with respect to the natural total order $\leq$ on $\mathbb{N}$. By cyclicity of the triple bracket without loss of generality, we will assume that the distinct element, if any, is in the third position.

Firstly, we note the following general result. 
\begin{lem} \label{Lem-qP}
 Let $A_0$ be an algebra over $B_0$, where $B_0$ contains orthogonal idempotents $e_1,e_2$. Assume that $A_0$ is endowed with a $B_0$-linear double bracket $\lr{-,-}$, and that there exist elements $c_i\in e_2 A_0 e_1$, $i\in \{1,\ldots,n\}$, such that 
 \begin{equation} \label{Eq:cc-Lem-qP}
  \lr{c_i,c_j}=\epsilon_{ij} (c_i \otimes c_j + c_j \otimes c_i)\,, \quad i,j\in \{1,\ldots,n\}\,,
 \end{equation}
with $\epsilon_{ij}\in \kk$. Then $\epsilon_{ij}=-\epsilon_{ji}$, and we have that the quasi-Poisson property  \eqref{qPabc} is satisfied when evaluated on any triples from $\{c_i\}$ if and only if 
\begin{equation} \label{Eq:Cond-Lem-qP}
C_{ijk}:= \epsilon_{ij}\epsilon_{jk}+\epsilon_{jk}\epsilon_{ki}+\epsilon_{ki}\epsilon_{ij}=-\frac14\,,
\end{equation}
for all $i,j,k\in \{1,\ldots,n\}$ with $(i,j,k)\neq (i,i,i)$, i.e. the three indices are not all equal. 
\end{lem}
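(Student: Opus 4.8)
The plan is to evaluate both triple brackets on an arbitrary triple $(c_i,c_j,c_k)$ directly, to show that each one equals a scalar multiple of the single tensor $c_i\otimes c_j\otimes c_k-c_k\otimes c_i\otimes c_j$, and then to read off when the two scalars coincide. First I would record the antisymmetry of the coefficients: applying the cyclic antisymmetry \eqref{Eq:cycanti} to \eqref{Eq:cc-Lem-qP} gives $\epsilon_{ij}(c_i\otimes c_j+c_j\otimes c_i)=-\tau_{(12)}\bigl(\epsilon_{ji}(c_j\otimes c_i+c_i\otimes c_j)\bigr)=-\epsilon_{ji}(c_i\otimes c_j+c_j\otimes c_i)$, hence $\epsilon_{ij}=-\epsilon_{ji}$; in particular $\epsilon_{ii}=0$, so $C_{iii}=0$.

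Next comes the main computation, that of the ``honest'' triple bracket via \eqref{Eq:TripBr}. Using $\lr{c_a,c_b\otimes c_c}_L=\lr{c_a,c_b}\otimes c_c$ together with \eqref{Eq:cc-Lem-qP}, each of the three summands $\lr{c_i,\lr{c_j,c_k}}_L$, $\tau_{(123)}\lr{c_j,\lr{c_k,c_i}}_L$ and $\tau_{(132)}\lr{c_k,\lr{c_i,c_j}}_L$ expands as a sum of four simple tensors in the $c$'s with coefficients of the shape $\pm\,\epsilon_{ab}\epsilon_{cd}$. After carrying out the cyclic permutations $\tau_{(123)},\tau_{(132)}$ and normalising the coefficients through $\epsilon_{ba}=-\epsilon_{ab}$, the four ``mixed'' monomials $c_j\otimes c_i\otimes c_k$, $c_i\otimes c_k\otimes c_j$, $c_k\otimes c_j\otimes c_i$ and $c_j\otimes c_k\otimes c_i$ cancel identically, and one is left with $\lr{c_i,c_j,c_k}=C_{ijk}\,\bigl(c_i\otimes c_j\otimes c_k-c_k\otimes c_i\otimes c_j\bigr)$ with $C_{ijk}$ exactly as in \eqref{Eq:Cond-Lem-qP}. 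This collecting of twelve monomials is the most laborious step of the proof, but it is entirely mechanical.

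Finally I would treat the quasi-Poisson side and compare. In \eqref{eq:triple-bracket-E3} with $I=\{1,2\}$, the hypothesis $c_\ell\in e_2A_0e_1$ and the orthogonality of $e_1,e_2$ force $e_sc_\ell=\delta_{s,2}\,c_\ell$, $c_\ell e_s=\delta_{s,1}\,c_\ell$ and $c_\ell\,e_s\,c_m=0$ for $s\in\{1,2\}$ (since $e_1e_se_2=0$); therefore six of the eight terms vanish identically, and only the contributions $\tfrac14\,c\,e_s\otimes a\,e_s\otimes b\,e_s$ (giving $\tfrac14\,c_k\otimes c_i\otimes c_j$, from $s=1$) and $-\tfrac14\,e_s a\otimes e_s b\otimes e_s c$ (giving $-\tfrac14\,c_i\otimes c_j\otimes c_k$, from $s=2$) survive, so that $\lr{c_i,c_j,c_k}_{\qP}=-\tfrac14\,\bigl(c_i\otimes c_j\otimes c_k-c_k\otimes c_i\otimes c_j\bigr)$. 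Comparing, the quasi-Poisson identity \eqref{qPabc} on $(c_i,c_j,c_k)$ is equivalent to $\bigl(C_{ijk}+\tfrac14\bigr)\bigl(c_i\otimes c_j\otimes c_k-c_k\otimes c_i\otimes c_j\bigr)=0$; when $i=j=k$ this holds automatically because the tensor vanishes (which is exactly why the case $(i,i,i)$ is excluded from \eqref{Eq:Cond-Lem-qP}), while when the three indices are not all equal the tensor is nonzero — the $c_\ell$ being linearly independent in the applications of interest — so the identity holds precisely when $C_{ijk}=-\tfrac14$. The only genuine obstacle is thus the careful bookkeeping in the third paragraph; once that telescoping is done correctly, everything else is immediate.
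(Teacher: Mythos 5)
Your proposal is correct and follows essentially the same route as the paper: derive the antisymmetry of $\epsilon_{ij}$, expand the three summands of \eqref{Eq:TripBr}, observe that the mixed monomials cancel leaving $C_{ijk}(c_i\otimes c_j\otimes c_k - c_k\otimes c_i\otimes c_j)$, compute $\lr{c_i,c_j,c_k}_{\qP}=-\tfrac14(c_i\otimes c_j\otimes c_k - c_k\otimes c_i\otimes c_j)$ from \eqref{eq:triple-bracket-E3}, and compare. Your explicit accounting of which of the eight $\qP$-terms survive (using $c_\ell e_s c_m=0$) and your remark that the ``only if'' direction needs the tensor $c_i\otimes c_j\otimes c_k-c_k\otimes c_i\otimes c_j$ to be nonzero are welcome clarifications of points the paper leaves implicit.
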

\begin{proof}
Clearly, the cyclic antisymmetry implies that $\epsilon_{ij}=-\epsilon_{ji}$. 
 The quasi-Poisson property  \eqref{qPabc}  on $c_i,c_j,c_k$  reads:
\begin{equation}
\lr{c_i,c_j,c_k} =\frac{1}{4}\big(c_k\otimes c_i\otimes c_j-c_i\otimes c_j\otimes c_k\big)\,.
\label{eq:RHS-a-a-a-Bis}
\end{equation}
Now, we can compute that 
\begin{align*}
\lr{c_i,\lr{c_j,c_k}}_L &= \epsilon_{jk} [\epsilon_{ij} (c_i \otimes c_j \otimes c_k + c_j \otimes c_i \otimes c_k) 
+ \epsilon_{ik} (c_i \otimes c_k \otimes c_j + c_k \otimes c_i \otimes c_j) ]  \\
\tau_{(123)}\lr{c_j,\lr{c_k,c_i}}_L &= \epsilon_{ki} [\epsilon_{jk} (c_i \otimes c_j \otimes c_k + c_i \otimes c_k \otimes c_j) 
+ \epsilon_{ji} (c_k \otimes c_j \otimes c_i + c_k \otimes c_i \otimes c_j) ] \\
\tau_{(132)}\lr{c_k,\lr{c_i,c_j}}_L &= \epsilon_{ij} [\epsilon_{ki} (c_i \otimes c_j \otimes c_k + c_k \otimes c_j \otimes c_i) 
+ \epsilon_{kj} (c_j \otimes c_i \otimes c_k + c_k \otimes c_i \otimes c_j) ] 
\end{align*}
Using the identity $\epsilon_{ij}=-\epsilon_{ji}$, we obtain 
\begin{equation}
\lr{c_i,c_j,c_k}= C_{ijk} \, (c_i \otimes c_j \otimes c_k - c_k \otimes c_i \otimes c_j)\,,
\end{equation}
which satisfies \eqref{eq:RHS-a-a-a-Bis} if and only if $C_{ijk}=-\frac14$ when $i,j,k$ are not all equal. 
\end{proof}

In our case, we note that if we consider  $A_0=\kk\overline{\Gamma}_n$ (or $\mathcal{B}(\Gamma_n)$ directly), $B_0=B$ and the elements $c_i=a_i$, then \eqref{eq:Euler-double-bracket.a} takes the  form \eqref{Eq:cc-Lem-qP} for 
\begin{equation}
 \epsilon_{ij}=\frac12 \, \operatorname{sgn}(j-i)\,.
\end{equation}
We can then directly verify that the condition \eqref{Eq:Cond-Lem-qP} holds in the cases 
\begin{equation}
 i=j=k,\quad i=j \neq k,\quad i<j<k,\quad i<k<j,\quad i>j>k,\quad i>k>j\,.
\end{equation}
Due to the cyclic antisymmetry \eqref{Eq:cycanti}, these are the only cases to check and we obtain from Lemma \ref{Lem-qP} that \eqref{qPabc} holds on any triple $(a_i,a_j, a_k)$ with $i,j,k\in\{1,\dots, n\}$.

Secondly, we shall show that \eqref{qPabc} holds for every triple $(b_i,b_j,b_k)$, where $1\leq i,j,k\leq n$. 
In fact, this can be obtained by an application of Lemma \ref{Lem-qP}, but we shall use a different method that we will employ below.
As in the proof of \cite[Theorem 6.5.1]{VdB1}, we note the existence of an automorphism of order two on $\kk\overline{\Gamma}_n$, namely $S$,  given by 
\begin{equation}
e_1\longleftrightarrow e_2\,,\quad a_\ell \longleftrightarrow b_{n+1-\ell}\,,\quad b_\ell\longleftrightarrow a_{n+1-\ell}\,,
\label{eq:automorphism-of-order-two}
\end{equation}
for all $1\leq\ell\leq n$. By inspection on \eqref{eq:Euler-double-bracket}, we can see that this automorphism has the effect $\lr{-,-}\mapsto -\lr{-,-}$. Moreover, this automorphism has no effect on the triple bracket $\lr{-,-,-}$ defined by \eqref{Eq:TripBr}; in other words, $\lr{-,-,-}\mapsto \lr{-,-,-}$. Once we have just proved that the identity $4\lr{a_i,a_j,a_k}=a_k\otimes a_i\otimes a_j-a_i\otimes a_j\otimes a_k$ 
is satisfied for all $1\leq i,j,k\leq n$, we can apply the automorphism $S$ to this identity to obtain 
\[
\lr{b_{n+1-i},b_{n+1-j},b_{n+1-k}}=\frac{1}{4}\Big(b_{n+1-k}\otimes b_{n+1-i}\otimes b_{n+1-j}-b_{n+1-i}\otimes b_{n+1-j}\otimes b_{n+1-k}\Big)\,,
\]
for all $1\leq i,j,k\leq n$.
In this way, we can conclude that \eqref{qPabc} also holds for every triple $(b_i,b_j,b_k)$, where $1\leq i,j,k\leq n$.\\

Thirdly, we will prove that the bracket $\lr{-,-}$ defined in \eqref{eq:Euler-double-bracket} satisfies \eqref{qPabc} for triples $(a_i,a_j,b_k)$, where $1\leq i,j,k\leq n$. To achieve this purpose, we shall distinguish the different cases attending to the different values of the indices $i,j,k$. To begin with, note that in this case the right-hand side of \eqref{qPabc} reduces to
\begin{equation}
\frac{1}{4}\Big(b_ka_i\otimes a_j\otimes e_2-e_1\otimes a_i\otimes a_jb_k\Big)\,.
\label{eq:RHS-a-a-b}
\end{equation}

Now, the simplest case occurs when the three indices are equal, namely $i=j=k$. Then, since $\lr{a_i,a_i}=0$, and by \eqref{eq:Euler-double-bracket.a}, \eqref{eq:Euler-double-bracket.c} and \eqref{eq:Euler-double-bracket.d}, at the left-hand side of \eqref{qPabc} we obtain
\begin{small}
\begin{align*}
\lr{a_i,a_i,b_i}&=\lr{a_i,\lr{a_i,b_i}}_L+\tau_{(123)}\lr{a_i,\lr{b_i,a_i}}_L
\\
&=\frac{1}{2}\Big(\lr{a_i,b_i}a_i\otimes e_2\Big)-\frac{\tau_{(123)}}{2}\Big(a_i\lr{a_i,b_i}\otimes e_1\Big)
\\
&=\frac{1}{4}\Big(b_ia_i\otimes a_i\otimes e_2+e_1\otimes a_ib_ia_i\otimes e_2+2e_1\otimes a_i\otimes e_2\Big)
\\
&\quad-\frac{\tau_{(123)}}{4}\Big(a_ib_ia_i\otimes e_2\otimes e_1+a_i\otimes a_ib_i\otimes e_1+2a_i\otimes e_2\otimes e_1\Big)
\\
&=\frac{1}{4}\Big(b_ia_i\otimes a_i\otimes e_2-e_1\otimes a_i\otimes a_ib_i\Big)\,,
\end{align*}
\end{small}%
which coincides with \eqref{eq:RHS-a-a-b}. 

Now, we address the cases when two indices coincide. If $i=j$ and $i<k$, by \eqref{eq:Euler-double-bracket.c} and \eqref{eq:Euler-double-bracket.d},
\begin{small}
\begin{align*}
\lr{a_i,a_i,b_k}&=\lr{a_i,\lr{a_i,b_k}}_L+\tau_{(123)}\lr{a_i,\lr{b_k,a_i}}_L
\\
&=\frac{1}{2}\Big(\lr{a_i,b_k}a_i\otimes e_2\Big)-\frac{\tau_{(123)}}{2}\Big(a_i\lr{a_i,b_k}\otimes e_1\Big)
\\
&=\frac{1}{4}\Big(e_1\otimes a_ib_ka_i\otimes e_2+b_ka_i\otimes a_i\otimes e_2\Big)-\frac{\tau_{(123)}}{4}\Big(a_i\otimes a_ib_k\otimes e_1+a_ib_ka_i\otimes e_2\otimes e_1\Big)
\\
&=\frac{1}{4}\Big(b_ka_i\otimes a_i\otimes e_2-e_1\otimes a_i\otimes a_ib_k\Big)\,,
\end{align*} 
\end{small}%
as we wished. 
Similarly, the reader can check that the four cases occurring when $i=k$ and $j\neq k$, and $j=k$ and $i\neq k$ are completely analogous to the previous ones.

Finally, we address the six possible cases of triples $(a_i,a_j,b_k)$ when $i,j,k$ are all distinct.
Let $(a_i,a_j,b_k)$ with $i<j<k$. Then, by \eqref{eq:Euler-double-bracket.a}, \eqref{eq:Euler-double-bracket.c} and \eqref{eq:Euler-double-bracket.d}, we obtain
\begin{small}
\begin{align*}
\lr{a_i,&a_j,b_k}=\lr{a_i,\lr{a_j,b_k}}_L+\tau_{(123)}\lr{a_j,\lr{b_k,a_i}}_L+\tau_{(132)}\lr{b_k,\lr{a_i,a_j}}_L
\\
&=\frac{1}{2}\Big(\big(b_k\lr{a_i,a_j}+\lr{a_i,b_k}a_j\big)\otimes e_2\Big)
-\frac{\tau_{(123)}}{2}\Big(\big(a_i\lr{a_j,b_k}+\lr{a_j,a_i}b_k\big)\otimes e_1\Big)
\\
&\quad -\frac{\tau_{(132)}}{2}\Big(\lr{b_k,a_i}\otimes a_j+\lr{b_k,a_j}\otimes a_i\Big)
\\
&=-\frac{1}{4}\Big(b_ka_i\otimes a_j\otimes e_2+b_ka_j\otimes a_i\otimes e_2-e_1\otimes a_ib_ka_j\otimes e_2-b_ka_i\otimes a_j\otimes e_2\Big)
\\
&\quad -\frac{\tau_{(123)}}{4}\Big(a_i\otimes a_jb_k\otimes e_1+a_ib_ka_j\otimes e_2\otimes e_1+a_j\otimes a_ib_k\otimes e_1+a_i\otimes a_jb_k\otimes e_1\Big)
\\
&\quad +\frac{\tau_{(132)}}{4}\Big(a_ib_k\otimes e_1\otimes a_j+e_2\otimes b_ka_i\otimes a_j+a_jb_k\otimes e_1\otimes a_i+e_2\otimes b_ka_j\otimes a_i\Big)
\\
&=\frac{1}{4}\Big(b_ka_i\otimes a_j\otimes e_2-e_1\otimes a_i\otimes a_jb_k\Big)\,;
\end{align*}
\end{small}%
consequently, \eqref{qPabc} holds.
The following case occurs when $i<j$, $i<k$ and $j>k$:
\begin{small}
\begin{align*}
&\lr{a_i,a_j,b_k}
\\
&=-\frac{1}{2}\Big(\big(b_k\lr{a_i,a_j}+\lr{a_i,b_k}a_j\big)\otimes e_2\Big)-\frac{\tau_{(123)}}{2}\Big(\big(a_i\lr{a_j,b_k}+\lr{a_j,a_i}b_k
\big)\otimes e_1\Big)
\\
&\quad -\frac{\tau_{(132)}}{2}\Big(\lr{b_k,a_i}\otimes a_j+\lr{b_k,a_j}\otimes a_i\Big)
\\
&=\frac{1}{4}\Big(b_ka_i\otimes a_j\otimes e_2+b_ka_j\otimes a_i\otimes e_2-e_1\otimes a_ib_ka_j\otimes e_2-b_ka_i\otimes a_j\otimes e_2\Big)
\\
&\quad +\frac{\tau_{(123)}}{4}\Big(a_ib_ka_j\otimes e_2\otimes e_1+a_i\otimes a_jb_k\otimes e_1+2\delta_{j-k,1}(a_i\otimes e_2\otimes e_1)-a_j\otimes a_ib_k\otimes e_1
\\
&\qquad \qquad -a_i\otimes a_jb_k\otimes e_1\Big)
\\
&\quad +\frac{\tau_{(132)}}{4}\Big(a_ib_k\otimes e_1\otimes a_j+e_2\otimes b_ka_i\otimes a_j-e_2\otimes b_ka_j\otimes a_i-a_jb_k\otimes e_1\otimes a_i
\\
&\qquad \qquad -2\delta_{j-k,1}(e_2\otimes e_1\otimes a_i)\Big)
\\
&=\frac{1}{4}\Big(b_ka_i\otimes a_j\otimes e_2-e_1\otimes a_i\otimes a_jb_k\Big)\,,
\end{align*}
\end{small}%
which is tantamount to \eqref{eq:RHS-a-a-b}. 
The next case occurs when $i<j$, $i>k$ and $j>k$. It is important to note that with these constrains $j-k\neq 1$, because the condition $j-k=1$ would imply that $0<i-k<1$, which is a contradiction since $i,k\in\mathbb{N}$. Then, by \eqref{eq:Euler-double-bracket.a}, \eqref{eq:Euler-double-bracket.c} and \eqref{eq:Euler-double-bracket.d}, we have
\begin{small}
\begin{align*}
&\lr{a_i,a_j,b_k}
\\
&=-\frac{1}{2}\Big(\big(b_k\lr{a_i,a_j}+\lr{a_i,b_k}a_j\big)\otimes e_2\Big)
+\frac{\tau_{(123)}}{2}\Big(\big(a_i\lr{a_j,b_k}+\lr{a_j,a_i}b_k\big)\otimes e_1\Big)
\\
&\quad -\frac{\tau_{(132)}}{2}\Big(\lr{b_k,a_i}\otimes a_j+\lr{b_k,a_j}\otimes a_i\Big)
\\
&=\frac{1}{4}\Big(b_ka_i\otimes a_j\otimes e_2+b_ka_j\otimes a_i\otimes e_2+b_ka_i\otimes a_j\otimes e_2+e_1\otimes a_ib_ka_j\otimes e_2+2\delta_{i-k,1}(e_1\otimes a_j\otimes e_2)\Big)
\\
&\quad -\frac{\tau_{(123)}}{4}\Big(a_ib_ka_j\otimes e_2\otimes e_1+a_i\otimes a_jb_k\otimes e_1-a_j\otimes a_ib_k\otimes e_1-a_i\otimes a_jb_k\otimes e_1\Big)
\\
&\quad -\frac{\tau_{(132)}}{4}\Big(e_2\otimes b_ka_i\otimes a_j+a_ib_k\otimes e_1\otimes a_j+2\delta_{i-k,1}(e_2\otimes e_1\otimes a_j)+e_2\otimes b_ka_j\otimes a_i
\\
&\qquad \qquad +a_jb_k\otimes e_1\otimes a_i\Big)
\\
&=\frac{1}{4}\Big(b_ka_i\otimes a_j\otimes e_2-e_1\otimes a_i\otimes a_jb_k\Big)\,,
\end{align*}
\end{small}%
which is \eqref{eq:RHS-a-a-b}. 
Next, to prove that the bracket $\lr{-,-}$ as defined in \eqref{eq:Euler-double-bracket} is a double quasi-Poisson bracket, we need to check that \eqref{qPabc} holds for the triple $(a_i,a_j,b_k)$, whose indices are subject to the constrains $i>j$, $j>k$ and $i>k$. Note that $i-k\neq 1$. If not, $0<j-k<1$ that contradicts the fact that $j,k\in\mathbb{N}$. Then,
\begin{small}
\begin{align*}
&\lr{a_i,a_j,b_k}
\\
&=-\frac{1}{2}\Big(\big(b_k\lr{a_i,a_j}+\lr{a_i,b_k}a_j\big)\otimes e_2\Big)+\frac{\tau_{(123)}}{2}\Big(\big(a_i\lr{a_j,b_k}+\lr{a_j,a_i}b_k\big)\otimes e_1\Big)
\\
&\quad +\frac{\tau_{(132)}}{2}\Big(\lr{b_k,a_i}\otimes a_j+\lr{b_k,a_j}\otimes a_i\Big)
\\
&=-\frac{1}{4}\Big(b_ka_i\otimes a_j\otimes e_2+b_ka_j\otimes a_i\otimes e_2-b_ka_i\otimes a_j\otimes e_2-e_1\otimes a_ib_ka_j\otimes e_2\Big)
\\
&\quad - \frac{\tau_{(123)}}{4}\Big(a_ib_ka_j\otimes e_2\otimes e_1+a_i\otimes a_jb_k\otimes e_1+2\delta_{j-k,1}(a_i\otimes e_2\otimes e_1)+a_j\otimes a_ib_k\otimes e_1
\\
&\qquad \qquad +a_i\otimes a_jb_k\otimes e_1\Big)
\\
&\quad +\frac{\tau_{(132)}}{4}\Big(e_2\otimes b_ka_i\otimes a_j+a_ib_k\otimes e_1\otimes a_j+e_2\otimes b_ka_j\otimes a_i+a_jb_k\otimes e_1\otimes a_i
\\
&\qquad \qquad +2\delta_{j-k,1}(e_2\otimes e_1\otimes a_i)\Big)
\\
&=\frac{1}{4}\Big(b_ka_i\otimes a_j\otimes e_2-e_1\otimes a_i\otimes a_jb_k\Big)
\end{align*}
\end{small}%
that is just  the right-hand side of \eqref{qPabc}.
Next, the fifth case that we need to study comes from the triple $(a_i,a_j,b_k)$ such that $i>j$, $j<k$ and $i>k$. Then,
\begin{small}
\begin{align*}
&\lr{a_i,a_j,b_k}
\\
&=\frac{1}{2}\Big(\big(b_k\lr{a_i,a_j}+\lr{a_i,b_k}a_j\big)\otimes e_2\Big)
+\frac{\tau_{(123)}}{2}\Big(\big(a_i\lr{a_j,b_k}+\lr{a_j,a_i}b_k\big)\otimes e_1\Big)
\\
&\quad +\frac{\tau_{(132)}}{2}\Big(\lr{b_k,a_i}\otimes a_j+\lr{b_k,a_j}\otimes a_i\Big)
\\
&=\frac{1}{4}\Big(b_ka_i\otimes a_j\otimes e_2+b_ka_j\otimes a_i\otimes e_2-b_ka_i\otimes a_j\otimes e_2-e_1\otimes a_ib_ka_j\otimes e_2
\\
&\qquad \qquad -2\delta_{i-k,1}(e_1\otimes a_j\otimes e_2)\Big)
\\
&\quad +\frac{\tau_{(123)}}{4}\Big(a_i\otimes a_jb_k\otimes e_1+a_ib_ka_j\otimes e_2\otimes e_1-a_j\otimes a_ib_k\otimes e_1-a_i\otimes a_jb_k\otimes e_1\Big)
\\
&\quad +\frac{\tau_{(132)}}{4}\Big(e_2\otimes b_ka_i\otimes a_j+a_ib_k\otimes e_1\otimes a_j+2\delta_{i-k,1}(e_2\otimes e_1\otimes a_j)-a_jb_k\otimes e_1\otimes a_i
\\
&\qquad\qquad  -e_2\otimes b_ka_j\otimes a_i\Big)
\\
&=\frac{1}{4}\Big(b_ka_i\otimes a_j\otimes e_2-e_1\otimes a_i\otimes a_jb_k\Big)\,,
\end{align*}
\end{small}%
which proves \eqref{qPabc}.
The final case for a triple $(a_i,a_j,b_k)$, with $i,j,k$ distinct occurs when $i>j$, $j<k$ and $i<k$. So, by \eqref{eq:Euler-double-bracket.a}, \eqref{eq:Euler-double-bracket.c} and \eqref{eq:Euler-double-bracket.d}, we can write
\begin{small}
\begin{align*}
&\lr{a_i,a_j,b_k}
\\
&=\frac{1}{2}\Big(\big(b_k\lr{a_i,a_j}+\lr{a_i,b_k}a_j\big)\otimes e_2\Big)
-\frac{\tau_{(123)}}{2}\Big(\big(a_i\lr{a_j,b_k}+\lr{a_j,a_i}b_k\big)\otimes e_1\Big)
\\
&\quad +\frac{\tau_{(132)}}{2}\Big(\lr{b_k,a_i}\otimes a_j+\lr{b_k,a_j}\otimes a_i\Big)
\\
&=\frac{1}{4}\Big(b_ka_i\otimes a_j\otimes e_2+b_ka_j\otimes a_i\otimes e_2+e_1\otimes a_ib_ka_j\otimes e_2+b_ka_i\otimes a_j\otimes e_2\Big)
\\
&\quad -\frac{\tau_{(123)}}{4}\Big(a_i\otimes a_jb_k\otimes e_1+a_ib_ka_j\otimes e_2\otimes  e_1-a_j\otimes a_ib_k\otimes e_1-a_i\otimes a_jb_k\otimes e_1\Big)
\\
&\quad -\frac{\tau_{(132)}}{4}\Big(a_ib_k\otimes e_1\otimes a_j+e_2\otimes b_ka_i\otimes a_j+a_jb_k\otimes e_1\otimes a_i+e_2\otimes b_ka_j\otimes a_i\Big)
\\
&=\frac{1}{4}\Big(b_ka_i\otimes a_j\otimes e_2-e_1\otimes a_i\otimes a_jb_k\Big)\,,
\end{align*}
\end{small}%
as we wanted. So, we can conclude that \eqref{qPabc} holds for every triple $(a_i,a_j,b_k)$, where $1\leq i,j,k\leq n$.\\


Finally, we deal with the case $(b_i,b_j,a_k)$, with $1\leq i,j,k\leq n$. Since we proved that \eqref{qPabc} is fulfilled for every triple $(a_i,a_j,b_k)$, that is, $4\lr{a_i,a_j,b_k}=b_ka_i\otimes a_j\otimes e_2-e_1\otimes a_i\otimes a_jb_k$, we can apply the automorphism $S$ on $\kk\overline{\Gamma}_n$ defined in \eqref{eq:automorphism-of-order-two} to obtain the identity
\[
\lr{b_{n+1-i},b_{n+1-j},a_{n+1-k}}=\frac{1}{4}\Big(a_{n+1-k}b_{n+1-i}\otimes b_{n+1-j}\otimes e_1-e_2\otimes b_{n+1-i}\otimes b_{n+1-j}
a_{n+1-k}\Big)\,,
\]
which holds for all indices $i,j,k\in\{1,\dots,n\}$. Hence, \eqref{qPabc} is also satisfied for a given triple $(b_i,b_j,a_k)$ with $1\leq i,j,k\leq n$.

To sum up, we proved that the pair $(\Bc(\Gamma_n),\lr{-,-})$ (in fact, the pair $(\kk\overline{\Gamma}_n,\lr{-,-})$), where $\lr{-,-}$ was defined in \eqref{eq:Euler-double-bracket}, is a double quasi-Poisson algebra (over $B$).

\subsection{The element (\ref{eq:mult-moment-map-statement}) is a multiplicative moment map}
\label{sec:proof-Euler-multiplicative-moment-map}
\allowdisplaybreaks

In this subsection, we proceed to prove that \eqref{eq:mult-moment-map-statement} is a multiplicative moment map; 
in other words, we show that \eqref{eq:mult-moment-map-statement} satisfies \eqref{Phim} for $s\in\{1,2\}$ and $a_1,\dots ,a_n,b_1,\dots, b_n$, which are the generators of $\Bc(\Gamma_n)$. We write $\Phi=\Phi_1+\Phi_2$, where
\[
\Phi_1=(b_n,\dots,a_1)^{-1}\in e_1 \Bc(\Gamma_n) e_1\,,\quad \Phi_2=(a_1,\dots,b_n)\in e_2 \Bc(\Gamma_n) e_2\,.
\]

Since Euler continuants are defined by the recursive relation \eqref{eq:def-Euler-continuant-very-beginning}, we shall use strong induction. Note that the base case is given by Theorem \ref{tm:VdB-mult-preproj-quasi-Hamilt}(i).
Next, to fix ideas, the inductive hypothesis states that lower Euler continuants satisfy \eqref{Phim}. In particular, using the orthogonality of the idempotents, the inductive hypothesis gives rise to the formulas
\begin{subequations}
\label{eq:inductive-hypothesis}
\begin{align}
\lr{(a_1,\dots b_{\ell}),a_{i}}&=-\frac{1}{2}\Big(e_2\otimes (a_1,\dots b_{\ell})a_i+(a_1,\dots,b_{\ell})\otimes a_i\Big)\, ,\label{eq:inductive-hypothesis.a}
\\
\lr{(a_1,\dots b_{\ell}),b_{i}}&=\;\frac{1}{2}\Big(b_i\otimes (a_1,\dots,b_{\ell})+b_i(a_1,\dots,b_{\ell})\otimes e_2\Big)\, , \label{eq:inductive-hypothesis.b}
\end{align}
\end{subequations}
where $\ell\in\{1,\dots,n-1\}$, and $1\leq i\leq \ell$.

By the very definition of the double quasi-Poisson bracket \eqref{eq:Euler-double-bracket}, we need to distinguish different cases, depending on the use of the inductive hypothesis and Lemma \ref{lem:technic-lemma-no-inductive}.\\

Let $b_i\in\{b_1,\dots,b_n\}$. Then, since $e_ie_j=\delta_{i,j}e_i$ for $i,j\in\{1,2\}$, the identity \eqref{Phim} reduces to
\begin{small}
\begin{equation}
\begin{aligned}
\lr{\Phi_2,b_i}&=\frac{1}{2}\Big(b_i\otimes \Phi_2+b_i\Phi_2\otimes e_2\Big)
\\
&=\frac{1}{2}\Big(b_i\otimes(a_1,\dots ,b_{n-1})(a_n,b_n)+\sum^n_{\ell=3}\Big[b_i\otimes (a_1,\dots,b_{\ell-2})a_{\ell-1}b_n\Big]+b_i\otimes a_1b_n
\\
&\quad + b_i(a_1,\dots,b_{n-1})(a_n,b_n)\otimes e_2+\sum^n_{\ell=3}\Big[b_i(a_1,\dots,b_{\ell-2})a_{\ell-1}b_n\otimes e_2\Big]+b_ia_1b_n\otimes e_2\Big)\,,
\end{aligned}
\label{eq:moment-map-b}
\end{equation}
\end{small}%
where in the last identity we applied \eqref{eq:Convenient-formula-Euler-cont-b}.

To begin with, we shall check \eqref{Phim} for $b_1$. Using the left Leibniz rule \eqref{Eq:inder} in the first argument (that makes the inner action appear) and separating the case $\ell=3$ in the sum to keep track of $\lr{a_2,b_1}$, we can write
\begin{small}
\begin{align*}
\lr{&\Phi_2,b_1}=\lr{(a_1,\dots,b_n),b_1}
\\
&=\dgal{\Big((a_1,\dots ,b_{n-1})(a_n,b_n)+\sum^n_{\ell=3}\Big[(a_1,\dots,b_{\ell-2})a_{\ell-1}b_n\Big]+a_1b_n\Big),b_1}
\\
&=(a_1,\dots,b_{n-1})*\lr{(a_n,b_n),b_1}+\lr{(a_1,\dots ,b_{n-1}),b_1}*(a_n,b_n)
\\
&\quad  +\sum^n_{\ell=3}\Big[(a_1,\dots, b_{\ell-2})a_{\ell-1}*\lr{b_n,b_1}\Big]
+(a_1,b_1)*\lr{a_2,b_1}*b_n
\\
 &\quad +\sum^{n}_{\ell=4}\Big[(a_1,\dots,b_{\ell-2})*\lr{a_{\ell-1},b_1}*b_n \Big] +\sum^n_{\ell=3}\Big[\lr{(a_1,\dots,b_{\ell-2}),b_1}*a_{\ell-1}b_n\Big]
\\
&\quad + a_1*\lr{b_n,b_1}+\lr{a_1,b_1}*b_n\,.
\end{align*}
\end{small}%
By Lemma \ref{lem:technic-lemma}(ii), the inductive hypothesis \eqref{eq:inductive-hypothesis.b}, and \eqref{eq:Euler-double-bracket}, we have 
\begin{small}
\begin{align*}
&=\frac{1}{2}\Big(b_1\otimes (a_1,\dots,b_{n-1})(a_n,b_n)-b_1(a_n,b_n)\otimes (a_1,\dots,b_{n-1})\Big)
\\
&\quad +\frac{1}{2}\Big(b_1(a_n,b_n)\otimes (a_1,\dots,b_{n-1})+b_1(a_1,\dots,b_{n-1})(a_n,b_n)\otimes e_2\Big)
\\
&\quad +\frac{1}{2}\sum^n_{\ell=3}\Big[b_n\otimes (a_1,\dots,b_{\ell-2})a_{\ell-1}b_1+b_1\otimes (a_1,\dots,b_{\ell-2})a_{\ell-1}b_n\Big]
\\
&\quad -\frac{1}{2}\Big(b_1a_2b_n\otimes (a_1,b_1)+b_n\otimes (a_1,b_1)a_2b_1\Big)-b_n\otimes (a_1,b_1)
\\
&\quad -\frac{1}{2}\sum^n_{\ell=4}\Big[b_1a_{\ell-1}b_n\otimes (a_1,\dots,b_{\ell-2})+b_n\otimes (a_1,\dots,b_{\ell-2})a_{\ell-1}b_1\Big]
\\
&\quad +\frac{1}{2}\sum^n_{\ell=3}\Big[b_1a_{\ell-1}b_n\otimes (a_1,\dots,b_{\ell-2})+b_1(a_1,\dots,b_{\ell-2})a_{\ell-1}b_n\otimes e_2\Big]
\\
&\quad +\frac{1}{2}\Big(b_n\otimes a_1b_1+b_1\otimes a_1b_n\Big)+\frac{1}{2}\Big(b_1a_1b_n\otimes e_2+b_n\otimes a_1b_1\Big)+b_n\otimes e_2
\\
&=\frac{1}{2}\Big(b_1\otimes (a_1,\dots,b_{n-1})(a_n,b_n)+\sum^n_{\ell=3}\Big[b_1\otimes (a_1,\dots,b_{\ell-2})a_{\ell-1}b_n\Big]+b_1\otimes a_1b_n\Big)
\\
&\quad +\frac{1}{2}\Big(b_1(a_1,\dots,b_{n-1})(a_n,b_n)\otimes e_2+\sum^n_{\ell=3}\Big[b_1(a_1,\dots,b_{\ell-2})a_{\ell-1}b_n\otimes e_2\Big]+b_1a_1b_n\otimes e_2\Big]
\\
&\quad -b_n\otimes (a_1,b_1)+b_n\otimes a_1b_1+b_n\otimes e_2
\\
&=\frac{1}{2}\big(b_1\otimes \Phi_2+b_1\Phi_2\otimes e_2\big)\,,
\end{align*}
\end{small}%
where the last identity is due to \eqref{eq:Convenient-formula-Euler-cont-b}.

Now, we need to check \eqref{Phim} for an arrow $b_k$, where $k\in \{2,\dots,n-2\}$ is fixed. Then, 
\begin{small}
\begin{align*}
\lr{&\Phi_2,b_k}=\dgal{\Big((a_1,\dots ,b_{n-1})(a_n,b_n)+\sum^n_{\ell=3}\Big[(a_1,\dots,b_{\ell-2})a_{\ell-1}b_n\Big]+a_1b_n\Big),b_k}
\\
&=(a_1,\dots,b_{n-1})*\lr{(a_n,b_n),b_k}+\lr{(a_1,\dots,b_{n-1}),b_k}*(a_n,b_n)
\\
&\quad +\sum^n_{\ell=3}\Big[(a_1,\dots,b_{\ell-2})a_{\ell-1}*\lr{b_n,b_k}\Big]+\sum^k_{\ell=3}\Big[(a_1,\dots,b_{\ell-2})*\lr{a_{\ell-1},b_k}*b_n\Big]
\\
&\quad +(a_1,\dots,b_{k-1})*\lr{a_k,b_k}*b_n+(a_1,\dots,b_k)*\lr{a_{k+1},b_k}*b_n
\\
&\quad + \sum^n_{\ell=k+3}\Big[(a_1,\dots,b_{\ell-2})*\lr{a_{\ell-1},b_k}*b_n\Big] +\sum^{k+1}_{\ell=3}\Big[\lr{(a_1,\dots,b_{\ell-2}),b_k}*a_{\ell-1}b_n\Big]
\\
&\quad +\sum^n_{\ell=k+2}\Big[\lr{(a_1,\dots,b_{\ell-2}),b_k}*a_{\ell-1}b_n\Big]
 +a_1*\lr{b_n,b_k}+\lr{a_1,b_k}*b_n\,.
\end{align*}
\end{small}%
Applying Lemma \ref{lem:technic-lemma}(ii) in the first summand, the inductive hypothesis \eqref{eq:inductive-hypothesis.b} in the second and ninth summands and Lemma \ref{lem:technic-lemma-no-inductive}(i) in the eighth one, we obtain:
\begin{small}
\begin{align*}
&=\frac{1}{2}\Big(b_k\otimes (a_1,\dots,b_{n-1})(a_n,b_n)-b_k(a_n,b_n)\otimes (a_1,\dots,b_{n-1})\Big)
\\
 &\quad +\frac{1}{2}\Big(b_k(a_n,b_n)\otimes (a_1,\dots,b_{n-1})+b_k(a_1,\dots,b_{n-1})(a_n,b_n)\otimes e_2\Big)
\\
 &\quad +\frac{1}{2}\sum^n_{\ell=3}\Big[b_n\otimes (a_1,\dots,b_{\ell-2})a_{\ell-1}b_k+b_k\otimes (a_1,\dots,b_{\ell-2})a_{\ell-1}b_n\Big]
\\
&\quad +\frac{1}{2}\sum^k_{\ell=3}\Big[b_ka_{\ell-1}b_n\otimes (a_1,\dots,b_{\ell-2})+b_n\otimes (a_1,\dots,b_{\ell-2})a_{\ell-1}b_k\Big]
\\
&\quad  +\frac{1}{2}\Big(b_ka_kb_n\otimes (a_1,\dots ,b_{k-1}) +b_n\otimes (a_1,\dots,b_{k-1})a_kb_k\Big)+b_n\otimes (a_1,\dots,b_{k-1})
\\
&\quad-\frac{1}{2}\Big(b_ka_{k+1}b_n\otimes (a_1,\dots ,b_k)+b_n\otimes (a_1,\dots,b_k)a_{k+1}b_k\Big)-b_n\otimes (a_1,\dots ,b_k)
\\
&\quad  -\frac{1}{2}\sum^n_{\ell=k+3}\Big[b_ka_{\ell-1}b_n\otimes (a_1,\dots,b_{\ell-2})+b_n\otimes (a_1,\dots,b_{\ell-2})a_{\ell-1}b_k\Big]
\\
&\quad +\frac{1}{2}\sum^{k+1}_{\ell=3}\Big[b_k(a_1,\dots, b_{\ell-2})a_{\ell-1}b_n\otimes e_2-b_ka_{\ell-1}b_n\otimes (a_1,\dots,b_{\ell-2})\Big]
\\
&\quad +\frac{1}{2}\sum^{n}_{\ell=k+2}\Big[b_k(a_1,\dots, b_{\ell-2})a_{\ell-1}b_n\otimes e_2+b_ka_{\ell-1}b_n\otimes (a_1,\dots,b_{\ell-2})\Big]
\\
&\quad +\frac{1}{2}\Big(b_n\otimes a_1b_k+b_k\otimes a_1b_n\Big)+\frac{1}{2}\Big(b_ka_1b_n\otimes e_2+b_n\otimes a_1b_k\Big).
\end{align*}
\end{small}%
At this point, if in the third line we rewrite  
\begin{equation*}
\begin{split}
\sum^n_{\ell=3}\Big[b_n\otimes (a_1,\dots,b_{\ell-2})a_{\ell-1}b_k\Big]=\sum^k_{\ell=3}\Big[b_n\otimes (a_1,\dots,b_{\ell-2})a_{\ell-1}b_k\Big]+b_n\otimes (a_1,\dots,b_{k-1})a_kb_k
\\
+b_n\otimes (a_1,\dots,b_{k})a_{k+1}b_k+\sum^n_{\ell=k+3}\Big[b_n\otimes (a_1,\dots,b_{\ell-2})a_{\ell-1}b_k\Big]\,,
\end{split}
\end{equation*}
we can cancel some terms, obtaining
\begin{small}
\begin{align*}
&=\frac{1}{2}\Big(b_k\otimes (a_1,\dots,b_{n-1})(a_n,b_n)+\sum^n_{\ell=3}\Big[b_k\otimes (a_1,\dots,b_{\ell-2})a_{\ell-1}b_n\Big]+b_k\otimes a_1b_n\Big)
\\
&\quad +\frac{1}{2}\Big(b_k(a_1,\dots,b_{n-1})(a_n,b_n)\otimes e_2+\sum^n_{\ell=3}\Big[b_k(a_1,\dots,b_{\ell-2})a_{\ell-1}b_n\otimes e_2\Big]+b_ka_1b_n\otimes e_2\Big)
\\
&\quad -b_n\otimes (a_1,,\dots,b_k)+b_n\otimes (a_1,\dots,b_{k-1})(a_k,b_k)+\sum^k_{\ell=3}\Big[b_n\otimes (a_1,\dots,b_{\ell-2})a_{\ell-1}b_k\Big]+b_n\otimes a_1b_k
\\
&=\frac{1}{2}\big(b_k\otimes \Phi_2+b_k\Phi_2\otimes e_2\big)\,,
\end{align*}
\end{small}%
where in the last identity we used \eqref{eq:Convenient-formula-Euler-cont-b}.

Now, we shall show the validity of \eqref{Phim} for $b_{n-1}$. By \eqref{eq:Convenient-formula-Euler-cont-a} and the left Leibniz rule \eqref{Eq:inder} of the double bracket, we can write
\begin{small}
\begin{align*}
&\lr{\Phi_2,b_{n-1}}=\dgal{\Big((a_1,\dots,b_{n-1})(a_n,b_n)+\sum^n_{\ell=3}\Big[(a_1,\dots,b_{\ell-2})a_{\ell-1}b_n\Big]+a_1b_n\Big),b_{n-1}}
\\
&=(a_1,\dots,b_{n-1})*\lr{(a_n,b_n),b_{n-1}}+\lr{(a_1,\dots,b_{n-1}),b_{n-1}}*(a_n,b_n)
\\
&\quad +\sum^n_{\ell=3}\Big[(a_1,\dots,b_{\ell-2})a_{\ell-1}*\lr{b_n,b_{n-1}}\Big]
\\
&\quad +\sum^{n-1}_{\ell=3}\Big[(a_1,\dots,b_{\ell-2})*\lr{a_{\ell-1},b_{n-1}}*b_n\Big]+(a_1,\dots,b_{n-2})*\lr{a_{n-1},b_{n-1}}*b_n
\\
&\quad +\sum^n_{\ell=3}\Big[\lr{(a_1,\dots,b_{\ell-2}),b_{n-1}}*a_{\ell-1}b_n\Big]+a_1*\lr{b_n,b_{n-1}}+\lr{a_1,b_{n-1}}*b_n\,.
\end{align*}
\end{small}%
Applying Lemma \ref{lem:technic-lemma}(ii) in the first term, the inductive hypothesis in the second one, and Lemma \ref{lem:technic-lemma-no-inductive}(i), we obtain
\begin{small}
\begin{align*}
&=\frac{1}{2}\Big(b_{n-1}\otimes (a_1,\dots,b_{n-1})(a_n,b_n)-b_{n-1}(a_n,b_n)\otimes (a_1,\dots,b_{n-1})\Big)-b_n\otimes (a_1,\dots,b_{n-1})
\\
&\quad  +\frac{1}{2}\Big(b_{n-1}(a_n,b_n)\otimes (a_1,\dots,b_{n-1})+b_{n-1}(a_1,\dots,b_{n-1})(a_n,b_n)\otimes e_2\Big)
\\
&\quad +\frac{1}{2}\sum^n_{\ell=3}\Big[b_n\otimes (a_1,\dots,b_{\ell-2})a_{\ell-1}b_{n-1}+b_{n-1}\otimes (a_1,\dots,b_{\ell-2})a_{\ell-1}b_n\Big]
\\
&\quad +\frac{1}{2}\sum^{n-1}_{\ell=3}\Big[b_n\otimes (a_1,\dots,b_{\ell-2})a_{\ell-1}b_{n-1}+b_{n-1}a_{\ell-1}b_n\otimes (a_1,\dots,b_{\ell-2})\Big]
\\
&\quad + \frac{1}{2}\Big(b_{n-1}a_{n-1}b_n\otimes (a_1,\dots,b_{n-2})+b_n\otimes (a_1,\dots,b_{n-2})a_{n-1}b_{n-1}\Big)+b_n\otimes (a_1,\dots,b_{n-2})
\\
&\quad +\frac{1}{2}\sum^n_{\ell=3}\Big[b_{n-1}(a_1,\dots,b_{\ell-2})a_{\ell-1}b_n\otimes e_2-b_{n-1}a_{\ell-1}b_n\otimes (a_1,\dots,b_{\ell-2})\Big]
\\
&\quad +\frac{1}{2}\Big(b_n\otimes a_1b_{n-1}+b_{n-1}\otimes a_1b_n\Big)+\frac{1}{2}\Big(b_n\otimes a_1b_{n-1}+b_{n-1}a_1b_n\otimes e_2\Big)\,.
\end{align*}
\end{small}%
We can cancel some terms to obtain
\begin{small}
\begin{equation*}
\begin{split}
&=\frac{1}{2}\Big(b_{n-1}\otimes(a_1,\dots,b_{n-1})(a_n,b_n)+\sum^n_{\ell=3}\Big[b_{n-1}\otimes (a_1,\dots,b_{\ell-2})a_{\ell-1}b_n\Big]+b_{n-1}\otimes a_1b_n\Big)
\\
&\quad +\frac{1}{2}\Big(b_{n-1}(a_1,\dots,b_{n-1})(a_n,b_n)\otimes e_2
+\sum^n_{\ell=3}\Big[b_{n-1}(a_1,\dots,b_{\ell-2})a_{\ell-1}b_n\otimes e_2
+b_{n-1}a_1b_n\otimes e_2\Big)
\\
&\quad -b_n\otimes (a_1,\dots,b_{n-1})+b_n\otimes (a_1,\dots,b_{n-2})(a_{n-1},b_{n-1})+\sum^{n-1}_{\ell=3}\Big[b_n\otimes (a_1,\dots,b_{\ell-2})a_{\ell-1}b_{n-1}\Big]
\\
&\quad +b_n\otimes a_1b_{n-1}
\\
&=\frac{1}{2}\big(b_{n-1}\otimes \Phi_2+b_{n-1}\Phi_2\otimes e_2\big)\,,
\end{split}
\end{equation*}
\end{small}%
which is tantamount to \eqref{eq:moment-map-b}.

Finally, we shall prove that \eqref{Phim} holds for $b_n$ via \eqref{eq:moment-map-b}. Then,
\begin{small}
\begin{align*}
&\lr{\Phi_2,b_{n}}=\dgal{\Big((a_1,\dots,b_{n-1})(a_n,b_n)+\sum^n_{\ell=3}\Big[(a_1,\dots,b_{\ell-2})a_{\ell-1}b_n\Big]+a_1b_n\Big),b_{n}}
\\
&=(a_1,\dots,b_{n-1})*\lr{(a_n,b_n),b_{n}}+\lr{(a_1,\dots,b_{n-1}),b_{n}}*(a_n,b_n)
\\
&\quad +\sum^{n}_{\ell=3}\Big[(a_1,\dots,b_{\ell-2})*\lr{a_{\ell-1},b_{n}}*b_n+\lr{(a_1,\dots,b_{\ell-2}),b_{n}}*a_{\ell-1}b_n\Big]+\lr{a_1,b_{n}}*b_n\,,
\end{align*}
\end{small}%
where we used that $\lr{b_n,b_n}=0$. Now, by Lemma \ref{lem:technic-lemma}(ii) and Lemma \ref{lem:technic-lemma-no-inductive}(i), we have
\begin{small}
\begin{align*}
&=\frac{1}{2}\Big(b_n\otimes (a_1,\dots,b_{n-1})(a_n,b_n)+b_n(a_n,b_n)\otimes (a_1,\dots, b_{n-1})\Big)
\\
&\quad +\frac{1}{2}\Big(b_n(a_1,\dots,b_{n-1})(a_n,b_n)\otimes e_2-b_n(a_n,b_n)\otimes (a_1,\dots,b_{n-1})\Big)
\\
&\quad +\frac{1}{2}\sum^n_{\ell=3}\Big[b_n\otimes (a_1,\dots,b_{\ell-2})a_{\ell-1}
b_n+b_na_{\ell-1}b_n\otimes (a_1,\dots,b_{\ell-2})\Big]
\\
&\quad +\frac{1}{2}\sum^n_{\ell=3}\Big[b_n(a_1,\dots,b_{\ell-2})a_{\ell-1}b_n\otimes e_2-b_na_{\ell-1}b_n\otimes (a_1,\dots,b_{\ell-2})\Big]
\\
&\quad +\frac{1}{2}\Big(b_n\otimes a_1b_n+b_na_1b_n\otimes e_2\Big)
\\
&=\frac{1}{2}\big(b_{n}\otimes \Phi_2+b_{n}\Phi_2\otimes e_2\big)\,,
\end{align*}
\end{small}%
where, once again, in the last identity we applied \eqref{eq:Convenient-formula-Euler-cont-b}.
To sum up, so far, we proved that \eqref{Phim} holds for $\Phi_2$ and $b_i\in\{b_1,\dots,b_n\}$.\\


Next, let $a_i\in\{a_1,\dots,a_n\}$. Then, since $e_ie_j=\delta_{i,j}e_i$ for $i,j\in\{1,2\}$, \eqref{Phim} reduces to
\begin{equation}
\begin{aligned}
\lr{&\Phi_2,a_i}=-\frac{1}{2}\Big(e_2\otimes \Phi_2a_i+\Phi_2\otimes a_i\Big)
\\
&=-\frac{1}{2}\Big(e_2\otimes(a_1,\dots ,b_{n-1})(a_n,b_n)a_i+ \sum^n_{\ell=3} \Big[e_2\otimes(a_1,\dots,b_{\ell-2})a_{\ell-1}b_na_i\Big]+e_2\otimes a_1b_na_i
\\
&\quad + (a_1,\dots,b_{n-1})(a_n,b_n)\otimes a_i+\sum^n_{\ell=3}\Big[(a_1,\dots,b_{\ell-2})a_{\ell-1}b_n\otimes a_i\Big]+a_1b_n\otimes a_i\Big)\,,
\end{aligned}
\label{eq:moment-map-a}
\end{equation}
where we used \eqref{eq:Convenient-formula-Euler-cont-b}.

Now, applying the left Leibniz rule \eqref{Eq:inder} and using that $\lr{a_1,a_1}=0$, we can write
\begin{small}
\begin{align*}
&\lr{\Phi_2,a_1}=\dgal{\Big((a_1,\dots,b_{n-1})(a_n,b_n)+\sum^n_{\ell=3}\Big[(a_1,\dots,b_{\ell-2})a_{\ell-1}b_n\Big]+a_1b_n\Big),a_1}
\\
&=(a_1,\dots,b_{n-1})*\lr{(a_n,b_n),a_1}+\lr{(a_1,\dots, b_{n-1}),a_1}*(a_n,b_n)
\\
&\quad+ \sum^n_{\ell=3}\Big[(a_1,\dots,b_{\ell-2})a_{\ell-1}*\lr{b_n,a_1}
+(a_1,\dots,b_{\ell-2})*\lr{a_{\ell-1},a_1}*b_n\Big]
\\
&\quad + \sum^n_{\ell=3}\Big[ \lr{(a_1,\dots,b_{\ell-2}),a_1}*a_{\ell-1}b_n\Big]
+a_1*\lr{b_n,a_1}\,.
\end{align*}
\end{small}%
Using Lemma \ref{lem:technic-lemma}(i) in the first term and the inductive hypothesis \eqref{eq:inductive-hypothesis.a} in the second and fifth terms, 
\begin{small}
\begin{align*}
&=\frac{1}{2}\Big((a_n,b_n)\otimes (a_1,\dots,b_{n-1})a_1-e_2\otimes (a_1,\dots,b_{n-1})(a_n,b_n)a_1\Big)
\\
&\quad -\frac{1}{2}\Big((a_n,b_n)\otimes (a_1,\dots,b_{n-1})a_1+(a_1,\dots,b_{n-1})(a_n,b_n)\otimes a_1\Big)
\\
&\quad -\frac{1}{2}\sum^n_{\ell=3}\Big[a_1b_n\otimes (a_1,\dots,b_{\ell-2})a_{\ell-1}+e_2\otimes (a_1,\dots,b_{\ell-2})a_{\ell-1}b_na_1\Big]
\\
&\quad  +\frac{1}{2}\sum^n_{\ell=3}\Big[a_{\ell-1}b_n\otimes (a_1,\dots,b_{\ell-2})a_1+a_1b_n\otimes (a_1,\dots,b_{\ell-2})a_{\ell-1}\Big]
\\
&\quad -\frac{1}{2}\sum^n_{\ell=3}\Big[a_{\ell-1}b_n\otimes (a_1,\dots,b_{\ell-2})a_1+(a_1,\dots,b_{\ell-2})a_{\ell-1}b_n\otimes a_1 \Big]
\\
&\quad 
-\frac{1}{2}\Big(a_1b_n\otimes a_1+e_2\otimes a_1b_na_1\Big)
\\
&=-\frac{1}{2}\Big(e_2\otimes \Phi_2a_1+\Phi_2\otimes a_1\Big)\,,
\end{align*}
\end{small}%
where we used \eqref{eq:Convenient-formula-Euler-cont-b} in the last identity. Thus, \eqref{Phim} holds for $a_1$.

Next, we need to prove first that \eqref{Phim} holds for $a_2,\dots, a_{n-2}$. Since the proof is similar, we are going to fix $k\in\{2,\dots, n-2\}$, and we shall show that \eqref{Phim} is satisfied for the arrow $a_k$. By \eqref{eq:Convenient-formula-Euler-cont-b},
\begin{small}
\begin{align*}
\lr{\Phi_2,a_k}&=\dgal{\Big((a_1,\dots, b_{n-1})(a_n,b_n)+\sum^n_{\ell=3}\Big[(a_1,\dots,b_{\ell-2})a_{\ell-1}b_n\Big]+a_1b_n\Big),a_k}
\\
&=(a_1,\dots,b_{n-1})*\lr{(a_n,b_n),a_k}+\lr{(a_1,\dots,b_{n-1}),a_k}*(a_n,b_n)
\\
&\quad +\sum^n_{\ell=3}\Big[(a_1,\dots,b_{\ell-2})a_{\ell-1}*\lr{b_n,a_k}\Big]
 +\sum^k_{\ell=3}\Big[(a_1,\dots,b_{\ell-2})*\lr{a_{\ell-1},a_k}*b_n\Big]
\\
&\quad +\sum^n_{\ell=k+2}\Big[(a_1,\dots,b_{\ell-2})*\lr{a_{\ell-1},a_k}*b_n\Big]
 +\sum^k_{\ell=3}\Big[\lr{(a_1,\dots,b_{\ell-2}),a_k}*a_{\ell-1}b_n\Big]
\\
&\quad
+\lr{(a_1,\dots,b_{k-1}),a_k}*a_kb_n
+\sum^n_{\ell=k+2}\Big[\lr{(a_1,\dots,b_{\ell-2}),a_k}*a_{\ell-1}b_n\Big]
\\
&\quad
 +a_1*\lr{b_n,a_k}+\lr{a_1,a_k}*b_n\,,
\end{align*}
\end{small}%
where we used that $\lr{a_k,a_k}=0$. Now, we use Lemma \ref{lem:technic-lemma}(i) in the first summand, the inductive hypothesis in the second and eighth terms, Lemma \ref{lem:technic-lemma-no-inductive}(ii) in the sixth term and Lemma \ref{lem:technic-lemma-no-inductive}(iii) in the seventh summand.
\begin{small}
\begin{align*}
&=\frac{1}{2}\Big((a_n,b_n)\otimes (a_1,\dots,b_{n-1})a_k-e_2\otimes (a_1,\dots,b_{n-1})(a_n,b_n)a_k\Big)
\\
&\quad -\frac{1}{2}\Big((a_1,\dots,b_{n-1})(a_n,b_n)\otimes a_k+(a_n,b_n)\otimes (a_1,\dots,b_{n-1})a_k\Big)
\\
&\quad -\frac{1}{2}\sum^n_{\ell=3}\Big[a_kb_n\otimes (a_1,\dots,b_{\ell-2})a_{\ell-1}+e_2\otimes (a_1,\dots,b_{\ell-2})a_{\ell-1}b_na_k\Big]
\\
&\quad -\frac{1}{2}\sum^k_{\ell=3}\Big[a_{\ell-1}b_n\otimes (a_1,\dots,b_{\ell-2})a_k+a_kb_n\otimes (a_1,\dots,b_{\ell-2})a_{\ell-1}\Big]
\\
&\quad +\frac{1}{2}\sum^n_{\ell=k+2}\Big[a_{\ell-1}b_n\otimes (a_1,\dots,b_{\ell-2})a_k+a_kb_n\otimes (a_1,\dots,b_{\ell-2})a_{\ell-1}\Big]
\\
&\quad +\frac{1}{2}\sum^k_{\ell=3}\Big[a_{\ell-1}b_n\otimes (a_1,\dots,b_{\ell-2})a_k-(a_1,\dots,b_{\ell-2})a_{\ell-1}b_n\otimes a_k\Big]
\\
&\quad +\frac{1}{2}\Big(a_kb_n\otimes (a_1,\dots,b_{k-1})a_k-(a_1,\dots,b_{k-1})a_kb_n\otimes a_k\Big)+a_kb_n\otimes (a_1,\dots,a_{k-1})
\\
&\quad-\frac{1}{2}\sum^n_{\ell=k+2}\Big[a_{\ell-1}b_n\otimes (a_1,\dots,b_{\ell-2})a_k+(a_1,\dots,b_{\ell-2})a_{\ell-1}b_n\otimes a_k\Big]
\\
&\quad -\frac{1}{2}\Big(a_kb_n\otimes a_1+e_2\otimes a_1b_na_k\Big)-\frac{1}{2}\Big(a_1b_n\otimes a_k+a_kb_n\otimes a_1\Big)\,.
\end{align*}
\end{small}%
Some terms cancel each other, giving
\begin{small}
\begin{align*}
&=-\frac{1}{2}\Big(e_2\otimes (a_1,\dots,b_{n-1})(a_n,b_n)a_k+\sum^n_{\ell=3}\Big[e_2\otimes (a_1,\dots,b_{\ell-2})a_{\ell-1}b_na_k\Big]+e_2\otimes a_1b_na_k\Big)
\\
&\quad -\frac{1}{2}\Big((a_1,\dots,b_{n-1})(a_n,b_n)\otimes a_k+\sum^n_{\ell=3}\Big[(a_1,\dots,b_{\ell-2})a_{\ell-1}b_n\otimes a_k\Big]+a_1b_n\otimes a_k\Big)
\\
&\quad +a_kb_n\otimes (a_1,\dots,a_{k-1})-\sum^k_{\ell=3}\Big[a_kb_n\otimes (a_1,\dots,b_{\ell-2})a_{\ell-1}\Big]-a_kb_n\otimes a_1
\\
&=-\frac{1}{2}\Big(e_2\otimes \Phi_2a_k+\Phi_2\otimes a_k\Big)\,.
\end{align*}
\end{small}%
Consequently, we can conclude that \eqref{Phim} holds for $a_2,\dots,a_{n-2}$.

Now, we proceed to prove that $a_{n-1}$ satisfies \eqref{Phim}, that is, we need to show that $2\lr{\Phi_2,a_{n-1}}=-\big(e_2\otimes \Phi_2a_{n-1}+\Phi_2\otimes a_{n-1}\big)$. By \eqref{eq:Convenient-formula-Euler-cont-b}, 
\begin{small}
\begin{align*}
\lr{\Phi_2,a_{n-1}}&=\dgal{\Big((a_1,\dots,b_{n-1})(a_n,b_n)+\sum^n_{\ell=3}\Big[(a_1,\dots,b_{\ell-2})a_{\ell-1}b_n\Big]+a_1b_n\Big),a_{n-1}}
\\
&=(a_1,\dots,b_{n-1})*\lr{(a_n,b_n),a_{n-1}}+\lr{(a_1,\dots,b_{n-1}),a_{n-1}}*(a_n,b_n)
\\
&\quad +\sum^n_{\ell=3}\Big[(a_1,\dots,b_{\ell-2})a_{\ell-1}*\lr{b_n,a_{n-1}}\Big]+\sum^{n-1}_{\ell=3}\Big[(a_1,\dots,b_{\ell-2})*\lr{a_{\ell-1},a_{n-1}}*b_n\Big]
\\
&\quad+ \sum^{n-1}_{\ell=3}\Big[\lr{(a_1,\dots,b_{\ell-2}),a_{n-1}}*a_{\ell-1}b_n\Big]+\lr{(a_1,\dots,b_{n-2}),a_{n-1}}*a_{n-1}b_n
\\
&\quad 
+ a_1*\lr{b_n,a_{n-1}}+\lr{a_1,a_{n-1}}*b_n\,,
\end{align*}
\end{small}%
where we used that $\lr{a_{n-1},a_{n-1}}=0$. Next, we apply Lemma \ref{lem:technic-lemma}(i) to the first summand, the inductive hypothesis to the second summand, Lemma \ref{lem:technic-lemma-no-inductive}(ii) to the fifth term, and Lemma \ref{lem:technic-lemma-no-inductive}(iii) in the sixth one.
\begin{small}
\begin{align*}
&=\frac{1}{2}\Big((a_n,b_n)\otimes (a_1,\dots,b_{n-1})a_{n-1}-e_2\otimes (a_1,\dots,b_{n-1})(a_n,b_n)a_{n-1}\Big)
\\
&\quad -\frac{1}{2}\Big((a_n,b_n)\otimes (a_1,\dots,b_{n-1})a_{n-1}+(a_1,\dots,b_{n-1})(a_n,b_n)\otimes a_{n-1}\Big)
\\
&\quad -\frac{1}{2}\sum^{n}_{\ell=3}\Big[a_{n-1}b_n\otimes (a_1,\dots,b_{\ell-2})a_{\ell-1}+e_2\otimes (a_1,\dots,b_{\ell-2})a_{\ell-1}b_na_{n-1}\Big]
\\
&\quad -\frac{1}{2}\sum^{n-1}_{\ell=3}\Big[a_{\ell-1}b_n\otimes (a_1,\dots,b_{\ell-2})a_{n-1}+a_{n-1}b_n\otimes (a_1,\dots, b_{\ell-2})a_{\ell-1}\Big]
\\
&\quad +\frac{1}{2}\sum^{n-1}_{\ell=3}\Big[a_{\ell-1}b_n\otimes (a_1,\dots,b_{\ell-2})a_{n-1}-(a_1,\dots,b_{\ell-2})a_{\ell-1}b_n\otimes a_{n-1}\Big]
\\
&\quad +\frac{1}{2}\Big(a_{n-1}b_n\otimes (a_1,\dots,b_{n-2})a_{n-1}-(a_1,\dots,b_{n-2})a_{n-1}b_n\otimes a_{n-1}\Big)+a_{n-1}b_n\otimes (a_1,\dots, a_{n-2})
\\
&\quad -\frac{1}{2}\Big(a_{n-1}b_n\otimes a_1+e_2\otimes a_1b_na_{n-1}\Big)-\frac{1}{2}\Big(a_1b_n\otimes a_{n-1}+a_{n-1}b_n\otimes a_1\Big)\,.
\end{align*}
\end{small}%
If we cancel some terms and use both \eqref{eq:Convenient-formula-Euler-cont-b} and \eqref{eq:Convenient-formula-Euler-cont-a}, we obtain
\begin{small}
\begin{align*}
&=-\frac{1}{2}\Big(e_2\otimes (a_1,\dots,b_{n-1})(a_n,b_n)a_{n-1}
+\sum^n_{\ell=3}\Big[e_2\otimes(a_1,\dots,b_{\ell-2})a_{\ell-1}b_na_{n-1}\Big]
+e_2\otimes a_1b_na_{n-1} \Big)
\\
&\quad -\frac{1}{2} \Big( (a_1,\dots,b_{n-1})(a_n,b_n)\otimes a_{n-1}
+\sum^n_{\ell=3}\Big[(a_1,\dots,b_{\ell-2})a_{\ell-1}b_n\otimes a_{n-1}\Big]
+ a_1b_n\otimes a_{n-1} \Big)
\\
&\quad +a_{n-1}b_n\otimes (a_1,\dots,a_{n-2})-\sum^{n-1}_{\ell=3}\Big[a_{n-1}b_n\otimes (a_1,\dots,b_{\ell-2})a_{\ell-1}\Big]- a_{n-1}b_n\otimes a_1
\\
&=-\frac{1}{2}\Big(e_2\otimes \Phi_2a_{n-1}+\Phi_2\otimes a_{n-1}\Big)\,.
\end{align*}
\end{small}%


Finally, we check \eqref{Phim} for $a_{n}$ via \eqref{eq:moment-map-a}. Once again, by \eqref{eq:Convenient-formula-Euler-cont-b},
\begin{small}
\begin{align*}
&\lr{\Phi_2,a_{n}}=\dgal{\Big((a_1,\dots,b_{n-1})(a_n,b_n)+\sum^n_{\ell=3}\Big[(a_1,\dots,b_{\ell-2})a_{\ell-1}b_n\Big]+a_1b_n\Big),a_{n}}
\\
&=(a_1,\dots,b_{n-1})*\lr{(a_n,b_n),a_n}+\lr{(a_1,\dots,b_{n-1}),a_n}*(a_n,b_n)
\\
&\quad  +\sum^n_{\ell=3}\Big[(a_1,\dots,b_{\ell-2})a_{\ell-1}*\lr{b_n,a_n}+(a_1,\dots,b_{\ell-2})*\lr{a_{\ell-1},a_n}*b_n\Big]
\\
&\quad + \sum^n_{\ell=3}\Big[\lr{(a_1,\dots,b_{\ell-2}),a_n}*a_{\ell-1}b_n\Big]+ a_1*\lr{b_n,a_n}+\lr{a_1,a_n}*b_n\,.
\end{align*}
\end{small}%
Now, applying Lemma \ref{lem:technic-lemma}(i) to the first summand, Lemma \ref{lem:technic-lemma-no-inductive}(iii) in the second term, 
and Lemma \ref{lem:technic-lemma-no-inductive}(ii) in the fifth term, we get
\begin{small}
\begin{align*}
&=-\frac{1}{2}\Big(e_2\otimes (a_1,\dots ,b_{n-1})(a_n,b_n)a_n+(a_n,b_n)\otimes (a_1,\dots,b_{n-1})a_n\Big)
\\
&\quad +\frac{1}{2}\Big((a_n,b_n)\otimes (a_1,\dots,b_{n-1})a_n-(a_1,\dots,b_{n-1})(a_n,b_n)\otimes a_n\Big)+(a_n,b_n)\otimes (a_1,\dots,a_{n-1})
\\
&\quad -\sum^n_{\ell=3}\Big[\frac{1}{2}\big(e_2\otimes (a_1,\dots,b_{\ell-2})a_{\ell-1}b_na_n+a_nb_n\otimes (a_1,\dots,b_{\ell-2})a_{\ell-1}\big)+e_2\otimes (a_1,\dots,b_{\ell-2})a_{\ell-1}\Big]
\\
&\quad -\frac{1}{2}\sum^n_{\ell=3}\Big[a_{\ell-1}b_n\otimes (a_1,\dots,b_{\ell-2})a_n+a_nb_n\otimes (a_1,\dots,b_{\ell-2})a_{\ell-1}\Big]
\\
&\quad +\frac{1}{2}\sum^n_{\ell=3}\Big[a_{\ell-1}b_n\otimes (a_1,\dots,b_{\ell-2})a_n-(a_1,\dots,b_{\ell-2})a_{\ell-1}b_n\otimes a_n\Big]
\\
&\quad -\frac{1}{2}\Big(e_2\otimes a_1b_na_n+a_nb_n\otimes a_1\Big)-e_2\otimes a_1
 -\frac{1}{2}\Big(a_1b_n\otimes a_n+a_nb_n\otimes a_1\Big)\,.
\end{align*}
\end{small}%
Using that $(a_n,b_n)=a_nb_n+e_2$, \eqref{eq:Convenient-formula-Euler-cont-b}, and \eqref{eq:Convenient-formula-Euler-cont-a}, we finally obtain
\begin{small}
\begin{align*}
&=-\frac{1}{2}\Big(e_2\otimes (a_1,\dots,b_{n-1})(a_n,b_n)a_n+\sum^n_{\ell=3}\Big[e_2\otimes(a_1,\dots,b_{\ell-2})a_{\ell-1}b_na_n\Big]+e_2\otimes a_1b_na_n\Big)
\\
&\quad -\frac{1}{2} \Big( (a_1,\dots,b_{n-1})(a_n,b_n)\otimes a_n+\sum^n_{\ell=3}\Big[(a_1,\dots,b_{\ell-2})a_{\ell-1}b_n\otimes a_n\Big]+a_1b_n\otimes a_n\Big)
\\
&\quad+ (a_n,b_n)\otimes (a_1,\dots,a_{n-1})-\sum^n_{\ell=3}\Big[(a_n,b_n)\otimes (a_1,\dots,b_{\ell-2})a_{\ell-1}\Big]-(a_n,b_n)\otimes a_1
\\
&=-\frac{1}{2}\Big(e_2\otimes \Phi_2a_{n}+\Phi_2\otimes a_{n}\Big)\,.
\end{align*}
\end{small}

To sum up, we proved that $\Phi_2=(a_1,\dots, b_n)$ satisfies \eqref{Phim} in $\Bc(\Gamma_n)$. 
Now, we need to prove that $\Phi_1=(b_n,\dots, a_1)^{-1}\in e_1 \Bc(\Gamma_n) e_1$ satisfies \eqref{Phim} as well. The automorphism $S$ of order two (see \eqref{eq:automorphism-of-order-two}) has the effect $\lr{-,-}\mapsto -\lr{-,-}$ and $(a_1,\dots,b_n)\mapsto (b_n,\dots, a_1)$. By applying $S$ to the just proved identity $2\lr{(a_1,\dots, b_n),a_i}=-\big(e_2\otimes \Phi_2a_{i}+\Phi_2\otimes a_{i}\big)$---see \eqref{eq:moment-map-a}, for $1\leq i\leq n$, we obtain
\[
\lr{(b_n,\dots, a_1),b_{n+1-i}}=\frac{1}{2}\Big(e_1\otimes (b_n,\dots,a_1)b_{n+1-i}+(b_n,\dots, a_1)\otimes b_{n+1-i}\Big)\,,
\]
for $1\leq i\leq n$. Then, by \eqref{Eq:inder} and noting that $\Phi_1(b_n,\dots,a_1)=(b_n,\dots,a_1)\Phi_1=e_1$, we finally obtain
\begin{align*}
\lr{\Phi_1,b_{n+1-i}}&=-\Phi_1*\lr{(b_n,\dots, a_1),b_{n+1-i}}*\Phi_1
\\
&=-\Phi_1*\Big(\frac{1}{2}\big(e_1\otimes (b_n,\dots,a_1)b_{n+1-i}+(b_n,\dots, a_1)\otimes b_{n+1-i}\big)\Big)*\Phi_1
\\
&=-\frac{1}{2}\Big(\Phi_1\otimes b_{n+1-i}+e_1\otimes \Phi_1b_{n+1-i}\Big)\,,
\end{align*}
for all $1\leq i\leq n$, which is tantamount to \eqref{Phim}, since $b_{n+1-i}e_1=0$. Similarly, applying the automorphism $S$ to \eqref{eq:moment-map-b}, we get the identity
\[
-\lr{(b_n,\dots, a_1),a_{n+1-i}}=\frac{1}{2}\Big(a_{n+1-i}\otimes (b_n,\dots, a_1)+a_{n+1-i}(b_n,\dots, a_1)\otimes e_1\Big)\,.
\]
By \eqref{Eq:inder} and the fact $e_1a_{n+1-i}=0$, we have
\begin{align*}
\lr{\Phi_1,a_{n+1-i}}&=-\Phi_1*\lr{(b_n,\dots, a_1),a_{n+1-i}}*\Phi_1
\\
&=\Phi_1*\Big(\frac{1}{2}\big(a_{n+1-i}\otimes (b_n,\dots, a_1)+a_{n+1-i}(b_n,\dots, a_1)\otimes e_1\big)\Big)*\Phi_1
\\
&\frac{1}{2}\Big(a_{n+1-i}\Phi_1\otimes e_1+a_{n+1-i}\otimes \Phi_1\Big)\,,
\end{align*}
as we wished. So, we conclude that \eqref{Phim} holds for $\Phi_1$; therefore $\Phi$, as defined in \eqref{eq:mult-moment-map-statement}, is a multiplicative moment map for the double quasi-Poisson bracket \eqref{eq:Euler-double-bracket}.
This finishes the proof of Theorem \ref{thm:Euler-quasi-Hamiltonian-algebra}.


\section{Factorisation after localisation at several Euler continuants} 
\label{sec:factorisation-after-localiz}

\subsection{The result}
Recall from \ref{ss:Result} that $\Bc(\Gamma_n)$ denotes the algebra obtained from $\kk\overline{\Gamma}_n$ after  localisation at the Euler continuants \eqref{Eq:invEuler}. We now consider the algebra $\Bloc$ obtained by further localising $\Bc(\Gamma_n)$ at the elements 
\begin{equation}\label{Eq:invLoc}
\begin{aligned}
  &(a_1,b_1)\,,\ldots\,,(a_1,b_1,\dots,a_{n-1},b_{n-1})\in e_2(\kk\overline{\Gamma}_n)e_2\,, \\
  &(b_1,a_1)\,,\ldots,(b_{n-1},a_{n-1}\,,\dots,b_1,a_1)\in e_1(\kk\overline{\Gamma}_n)e_1\,.
\end{aligned}
\end{equation}
In other words, $\Bloc$ is obtained from $\kk \overline{\Gamma}_n$ by requiring that the following elements are inverted (in the sense of \ref{ss:Result}):
\begin{equation} \label{Eq:invLocB}
  (a_1,b_1,\dots,a_k,b_k)\in e_2(\kk\overline{\Gamma}_n)e_2\,,\quad 
  (b_k,a_k,\dots,b_1,a_1)\in e_1(\kk\overline{\Gamma}_n)e_1\,,\quad k=1,\ldots,n\,.
\end{equation}
The  structure on $\Bc(\Gamma_n)$ given in Theorem \ref{Thm:MAIN} descends to $\Bloc$ by localisation. 
\begin{thm} \label{Thm:Factorisation}
 There exists an isomorphism of Hamiltonian double quasi-Poisson algebras between $\Bloc$ endowed with the structure induced by localisation of $\Bc(\Gamma_n)$, and the fusion algebra $\Afus$ obtained by identifying the idempotents in $n$ copies of $\Bc(\Gamma_1)$ endowed with the structure from Theorem \ref{Thm:MAIN}. 
 \end{thm}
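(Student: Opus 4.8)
The plan is to construct the isomorphism $\psi\colon \Afus \to \Bloc$ explicitly and then verify it preserves the double quasi-Poisson bracket and the multiplicative moment map. The starting point is the iterated fusion of $n$ copies of $\Bc(\Gamma_1)$. Label the $i$-th copy by generators $a_i',b_i'$, with idempotents $e_1^{(i)},e_2^{(i)}$ and moment map components $\Phi_i^{(1)}=(e_2^{(i)}+a_i'b_i')$, $\Phi_i^{(2)}=(e_1^{(i)}+b_i'a_i')^{-1}$ (here I follow the indexing from Theorem~\ref{tm:VdB-mult-preproj-quasi-Hamilt} with the two vertices of each $\Gamma_1$). Fusion identifies all the ``vertex-$1$'' idempotents together and all the ``vertex-$2$'' idempotents together in the appropriate order; Theorem~\ref{Thm:IsoFusqHam} guarantees the result $\Afus$ is a Hamiltonian double quasi-Poisson algebra with bracket $\lr{-,-}=\sum(\text{original brackets})+\sum(\text{fusion corrections }\lfus{-,-})$ and moment map $\Phi^{ff}=\Phi_1^{(\cdot)}\cdots\Phi_n^{(\cdot)}$ suitably placed. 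The key algebraic input is the change of variables generalising \eqref{eq:intro-change-var}, namely the substitution \eqref{Eq:Locgen3} (referenced but not displayed in the excerpt): one sets $a_i,b_i$ in $\Bloc$ to be specific expressions in the $a_j',b_j'$ involving the lower Euler continuants $(a_1',b_1',\dots)^{-1}$, e.g. $a_i' = a_i$, $b_i' = b_i$ for $i=1$ and $a_i' = a_i + (a_1,b_1,\dots,a_{i-1},b_{i-1})^{-1}\,(\text{something})$ for $i\geq 2$; this is precisely why one must localise at the continuants \eqref{Eq:invLocB} — they become the partial products $\Phi^{(\cdot)}_1\cdots\Phi^{(\cdot)}_k$ and must be invertible for the substitution to make sense.

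\textbf{Key steps.} First I would write down the map $\psi$ on generators and check it is a $B$-algebra isomorphism: this amounts to verifying that the substitution \eqref{Eq:Locgen3} is invertible, which follows because it is ``triangular'' in the index $i$ (the formula for $a_i'$ involves only $a_j,b_j$ with $j\le i$ and inverses of continuants in those, and conversely), so it extends to an isomorphism of the localised path algebras. Second, I would check $\psi(\Phi^{ff})=\Phi$: using Lemma~\ref{lem:curious-identity} and the factorisation identity $(a_1,b_1,\dots,a_n,b_n)=(a_1',b_1')\cdots(a_n',b_n')$ — which is itself proved by induction from \eqref{eq:Convenient-formula-Euler-cont-b} and the explicit substitution — one sees that the product of the $n$ fused moment maps on vertex $2$ equals the $2n$-th Euler continuant $(a_1,\dots,b_n)$, and similarly on vertex $1$ one gets $(b_n,\dots,a_1)^{-1}$; summing gives $\Phi$. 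Third, and this is the bulk of the work, I would verify $\lr{\psi(x),\psi(y)}_{\Afus} = (\psi\otimes\psi)\lr{x,y}_{\Bloc}$ for $x,y$ ranging over the generators $a_i,b_i$. The right-hand side is the localisation of \eqref{eq:Euler-double-bracket}; the left-hand side is computed by expanding $\psi(a_i),\psi(b_i)$ via \eqref{Eq:Locgen3}, applying the Leibniz rules and the fusion bracket formulae \eqref{tt}--\eqref{ww} together with the original $\Gamma_1$-brackets from Theorem~\ref{tm:VdB-mult-preproj-quasi-Hamilt}, and then simplifying. The moment-map property \eqref{Phim} for each $\Phi_i^{(\cdot)}$ inside $\Afus$ is what makes all the continuant-valued coefficients behave correctly when differentiated; in practice one pushes the brackets through using \eqref{Phim} and \eqref{PhimInv} repeatedly.

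\textbf{Main obstacle.} The hard part will be Step three: the bookkeeping in the cross-brackets $\lr{a_i,b_j}$ and $\lr{a_i,a_j}$ for $i\ne j$. In $\Afus$ these pick up fusion corrections of types \eqref{tu}--\eqref{vw} between different copies, each carrying the ``wrong-looking'' idempotent products; after substituting \eqref{Eq:Locgen3} one must reorganise a sum of many terms — products of continuants, their inverses, and the elementary arrows — into exactly the form dictated by \eqref{eq:Euler-double-bracket.a}--\eqref{eq:Euler-double-bracket.d}. I expect the decisive simplifications to be (i) the telescoping identities \eqref{eq:Convenient-formula-Euler-cont-a}--\eqref{eq:Convenient-formula-Euler-cont-b}, which collapse partial sums of ``$(a_1,\dots,b_{\ell-2})a_{\ell-1}$''-type terms back into a single continuant, and (ii) Lemma~\ref{lem:curious-identity}, needed precisely to commute an arrow past a continuant when matching the asymmetric $i<j$ versus $i>j$ cases. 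A clean way to organise the verification is to prove it first for adjacent indices ($j=i+1$, where the $\delta_{i-j,1}$ terms and the genuine fusion of consecutive copies interact) and then reduce the general $|i-j|\ge 2$ case to the adjacent one by an inductive ``associativity of fusion'' argument, exploiting that fusing copies $i$ and $j$ can be done through the intermediate copies. Since the excerpt stops before \eqref{Eq:Locgen3} is displayed, in the write-up I would first state that substitution as a lemma, record the factorisation of the $2n$-th continuant as its corollary, and only then carry out the bracket computation generator-pair by generator-pair, citing Theorem~\ref{Thm:IsoFusqHam} for the structure on $\Afus$ and Definition~\ref{def:iso-DQHA} for what remains to be checked.
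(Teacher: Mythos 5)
Your proposal follows essentially the same route as the paper: introduce the new generators $a_\ell' = a_\ell + (a_1,\ldots,b_{\ell-1})^{-1}(a_1,\ldots,a_{\ell-1})$ of $\Bloc$ (this is \eqref{Eq:Locgen2}, reformulated via Lemma~\ref{lem:curious-identity} as \eqref{Eq:Locgen3}), prove the factorisation $(a_1,\ldots,b_n)=(e_2+a_1'b_1')\cdots(e_2+a_n'b_n')$ by induction, and compare bracket formulas term-by-term. The paper organises the verification by first establishing Proposition~\ref{Pr:Alt-qHam} (the $\Bloc$-bracket rewritten in the $a_\ell',b_\ell'$-generators visibly coincides with the fused formulas of Proposition~\ref{Pr:Fus-qHam}), which is the computation you outline, except that the various $(i,j)$-cases are handled uniformly through a chain of auxiliary bracket lemmas (Lemmas~\ref{Lem:prep-1}--\ref{Lem:a-prim-3}) rather than by your tentative reduction to adjacent indices.
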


The algebra $\Afus$ is obtained by fusion which, as we recalled in \ref{sec:ssec-Fusion}, is a method to get a double quasi-Poisson bracket and a multiplicative moment map from a Hamiltonian double quasi-Poisson algebra by  identifying several idempotents. We describe the structure of $\Afus$ in \ref{ss:Afus}. In particular, we will notice in Remark \ref{Rem:RelVDB} that the algebra $\Afus$ is, by construction, an example of Hamiltonian double quasi-Poisson algebra associated with the quiver $\Gamma_n$ (where we see $\overline{\Gamma}_n$ as its double quiver for $b_k=a_k^\ast$) by Van den Bergh, see Remark \ref{rem:VdB-double}. Thus, the next result is a direct consequence of Theorem \ref{Thm:Factorisation}. 

\begin{cor}\label{coro:coro-factorisation}
After localisation at the Euler continuants \eqref{Eq:invLocB}, 
the Boalch algebra $\Bc(\Gamma_n)$ associated with the quiver $\Gamma_n$ is isomorphic as a Hamiltonian double quasi-Poisson algebra to the algebra $\Ac(\Gamma_n)$ associated with the quiver $\Gamma_n$ by Van den Bergh.
\end{cor}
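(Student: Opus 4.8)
The statement of Corollary \ref{coro:coro-factorisation} is a formal consequence of Theorem \ref{Thm:Factorisation}: by Remark \ref{rem:VdB-double} and Remark \ref{Rem:RelVDB}, Van den Bergh's algebra $\Ac(\Gamma_n)$ is, by construction, exactly the Hamiltonian double quasi-Poisson algebra $\Afus$ obtained by fusing the idempotents in $n$ copies of $\Ac(\Gamma_1)=\Bc(\Gamma_1)$ (and inverting the relevant second Euler continuants in each copy). So the plan is to prove Theorem \ref{Thm:Factorisation}, after which the corollary follows by transport of structure along the identification $\Afus=\Ac(\Gamma_n)$.

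First I would make the structure of $\Afus$ completely explicit. Fixing the natural order in which the $n-1$ pairs of idempotents are fused, I would apply Theorem \ref{Thm:IsoFusqHam} and Lemma \ref{lem:fus-double-bracket} iteratively to the $n$ copies of $\Bc(\Gamma_1)$, with generators $\hat a_k,\hat b_k$ coming from the $k$-th copy, and record: (i) the underlying algebra, namely $\kk\overline{\Gamma}_n$ localised at the $2n$ second Euler continuants $(\hat a_k,\hat b_k)$, $(\hat b_k,\hat a_k)$; (ii) the iterated fused double quasi-Poisson bracket $\lr{-,-}^{f\!f}$ evaluated on generators; and (iii) the iterated fused moment map $\Phi^{f\!f}=\Phi_2^{f\!f}+\Phi_1^{f\!f}$, whose components are the ordered products $\Phi_2^{f\!f}=(\hat a_1,\hat b_1)\cdots(\hat a_n,\hat b_n)$ and $\Phi_1^{f\!f}=(\hat b_n,\hat a_n)^{-1}\cdots(\hat b_1,\hat a_1)^{-1}$ of the local moment maps. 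This is the explicit description announced in \ref{sec:intro-results} and carried out in \ref{ss:Afus}.

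Next I would introduce the candidate isomorphism. Generalising \eqref{eq:intro-change-var}, define a $B$-algebra homomorphism $\Psi\colon\Afus\to\Bloc$ on generators by $\Psi(\hat b_k)=b_k$ and $\Psi(\hat a_k)=a_k+c_k$, where $c_k$ is the Euler-continuant correction prescribed by \eqref{Eq:Locgen3} (so $c_1=0$ and, for instance, $c_2=(a_1,b_1)^{-1}a_1$). The verification splits into four steps. \emph{(a) Well-definedness}: using the convenient formulas \eqref{eq:Convenient-formula-Euler-cont-a}--\eqref{eq:Convenient-formula-Euler-cont-b} and the curious identity of Lemma \ref{lem:curious-identity}, establish the telescoping factorisations $(e_2+\Psi(\hat a_1)b_1)\cdots(e_2+\Psi(\hat a_k)b_k)=(a_1,b_1,\dots,a_k,b_k)$ and the mirror identity on the $b$-side; then $\Psi$ of each localised element of $\Afus$ is a product of two invertible elements of $\Bloc$, hence invertible, and in particular $\Psi(\Phi_2^{f\!f})=(a_1,\dots,b_n)$, $\Psi(\Phi_1^{f\!f})=(b_n,\dots,a_1)^{-1}$. \emph{(b) Bijectivity}: the substitution $\hat a_k\mapsto a_k+c_k$ is unipotent-triangular in $k$ (each $c_k$ involves only lower-indexed arrows), so it is invertible with an inverse of the same shape, and one checks $\Psi^{-1}$ is likewise well defined. \emph{(c) Moment map}: by (a), $\Psi(\Phi^{f\!f})=(a_1,\dots,b_n)+(b_n,\dots,a_1)^{-1}=\Phi$. \emph{(d) Double bracket}: since both brackets are $B$-linear and obey the Leibniz rules, it suffices to verify $(\Psi\otimes\Psi)\lr{\hat x,\hat y}^{f\!f}=\lr{\Psi(\hat x),\Psi(\hat y)}$ for generators $\hat x,\hat y\in\{\hat a_k,\hat b_k\}$; expanding the right-hand side via bilinearity and the Leibniz rules reduces everything to the base brackets \eqref{eq:Euler-double-bracket} together with the auxiliary double brackets $\lr{(a_1,\dots,b_\ell),a_i}$ and $\lr{(a_1,\dots,b_\ell),b_i}$, which are precisely \eqref{eq:inductive-hypothesis.a}--\eqref{eq:inductive-hypothesis.b} (and, more finely, Lemmas \ref{lem:technic-lemma} and \ref{lem:technic-lemma-no-inductive}). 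Once (d) holds, $\Psi$ is an isomorphism of Hamiltonian double quasi-Poisson algebras in the sense of Definition \ref{def:iso-DQHA}.

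The main obstacle is step (d). The expanded right-hand side $\lr{a_i+c_i,a_j+c_j}$ is a sizeable alternating sum of products of arrows and Euler continuants, and showing that it collapses onto the comparatively simple fused bracket requires careful bookkeeping; the decisive cancellations are controlled by the convenient formulas \eqref{eq:Convenient-formula-Euler-cont-a}--\eqref{eq:Convenient-formula-Euler-cont-b} and, for the terms that move an adjacent arrow past a continuant, by Lemma \ref{lem:curious-identity}. I would organise the case analysis by the type and relative order of indices ($\hat a_i\hat a_j$, $\hat b_i\hat b_j$, and $\hat a_i\hat b_j$ with $i<j$, $i=j$, $i>j$) and exploit the order-two automorphism $S$ of \eqref{eq:automorphism-of-order-two}, which sends $\lr{-,-}\mapsto-\lr{-,-}$ and is compatible with the factorisation, to halve the number of cases treated by hand, much as in \ref{sec:proof-Euler-quasi-Poisson}. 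With (d) in place, Theorem \ref{Thm:Factorisation} is proved, and Corollary \ref{coro:coro-factorisation} follows immediately from the identification $\Afus=\Ac(\Gamma_n)$.
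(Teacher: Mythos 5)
Your plan is essentially the paper's: localise, pass to the primed generators $a_k'=a_k+c_k$, $b_k'=b_k$ as in \eqref{Eq:Locgen2}--\eqref{Eq:Locgen3}, show the factorisation $(a_1,\dots,b_k)=(e_2+a_1'b_1')\cdots(e_2+a_k'b_k')$ and its mirror (the paper's \eqref{Eq:contInd}), compute the resulting double bracket on the primed generators (the paper's Proposition~\ref{Pr:Alt-qHam}, via the preparatory Lemmas~\ref{Lem:prep-1}--\ref{Lem:a-prim-3}), match it and the moment map with the explicit fusion structure on $\Afus$ (the paper's Proposition~\ref{Pr:Fus-qHam}), and conclude the corollary from the identification $\Afus=\Ac(\Gamma_n)$ given in Remarks~\ref{rem:VdB-double} and \ref{Rem:RelVDB}. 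The fact that your $\Psi$ runs in the opposite direction to the paper's $\psi$ in \eqref{eq:isomorphism-quasi-Ham-psi} is immaterial.

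One caveat concerning your step (d): the suggestion to use the order-two automorphism $S$ of \eqref{eq:automorphism-of-order-two} to halve the case analysis is unlikely to work as stated, because the change of generators \eqref{Eq:Locgen2} is not $S$-equivariant. Indeed $S$ would send $a_k'$ to $b_{n+1-k}+(b_n,\dots,a_{n-k+2})^{-1}(b_n,\dots,b_{n-k+2})$, not to $b_{n+1-k}'=b_{n+1-k}$, since the Euler-continuant corrections are inserted only into the $a$-variables. This asymmetry is precisely why the paper verifies $\lr{b_i',b_j'}$ trivially (Step~2), and then handles $\lr{a_i',b_j'}$ and $\lr{a_i',a_j'}$ by direct computation via Lemmas~\ref{Lem:prep-1}--\ref{Lem:a-prim-3} rather than by symmetrisation. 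Dropping the $S$-shortcut and carrying out those cases directly, as you otherwise propose, closes the gap and yields the paper's proof.
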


\begin{rem} \label{Rem:factoris}
Theorem \ref{Thm:Factorisation} and Corollary \ref{coro:coro-factorisation}  are motivated by the fact that, at the level of representation spaces, $\Rep(\Bc(\Gamma_n),(d_1,d_2))$ can be seen as a ``partial compactification'' of   $\Rep(\Ac(\Gamma_n),(d_1,d_2))$. We are grateful to Philip Boalch for mentioning this geometric property which can be found in \cite[\S5.3--5.5]{Paluba}.
\end{rem}

\subsection{The algebra \texorpdfstring{$\Afus$}{Afus}} \label{ss:Afus}

We consider $n$ copies of the algebra $\Bc(\Gamma_1)$, that we denote by $\Ac(\Gamma_1)$ for the rest of this subsection, since the Hamiltonian double quasi-Poisson structures of Theorems \ref{tm:VdB-mult-preproj-quasi-Hamilt} and \ref{thm:Euler-quasi-Hamiltonian-algebra} (with $n=1$) coincide. 
We respectively denote the two generators $a_1,b_1$ in the $\ell$-th copy as $c_\ell,d_\ell$, and the two idempotents $e_1,e_2$ in the $\ell$-th copy as $e_{1,\ell},e_{2,\ell}$. In particular, the continuants $(c_\ell,d_\ell)$ and $(d_\ell,c_\ell)$ are inverted in each copy. We can naturally see $\oplus_{\ell=1}^n \Ac(\Gamma_1)$ as a Hamiltonian double quasi-Poisson algebra over 
$\tilde{B}$, where $\tilde B = (\oplus_{\ell=1}^n \kk e_{1,\ell}) \bigoplus  (\oplus_{\ell=1}^n \kk e_{2,\ell})$; see \cite[Remark 2.13]{F19}. 

Next, we identify idempotents in  $\oplus_{\ell=1}^n \Ac(\Gamma_1)$. We let $e_1:=e_{1,1}$ and $e_2:=e_{2,1}$, and we inductively perform fusion (\ref{sec:ssec-Fusion}) of the idempotents $e_{1,\ell}$ onto $e_1$ and $e_{2,\ell}$ onto $e_2$ for $\ell=2,\ldots,n$. 
The algebra $\Afus$ obtained in this way can be described as being generated by the  orthogonal idempotents $e_1,e_2$ satisfying $e_1+e_2=1$, together with the elements 
\begin{equation} \label{Eq:Fusgen1}
 c_1=e_2c_1 e_1\,,\quad \ldots\,,\quad c_n=e_2c_n e_1\,;\quad d_1=e_1 d_1 e_2\,,\quad \ldots\,,\quad  d_n=e_1d_n e_2\,,
\end{equation}
and the following elements are inverted:  
\begin{equation} \label{Eq:invFus}
  (c_\ell,d_\ell)=e_2+c_\ell d_\ell \in e_2\Afus e_2\,,\quad 
  (d_\ell ,c_\ell)=e_1+d_\ell c_\ell\in e_1\Afus e_1\,,\quad \ell=1,\ldots,n\,.
\end{equation}
The choice of order in which we perform fusion gives a Hamiltonian double quasi-Poisson algebra structure on $\Afus$ by \cite[\S5.3]{VdB1}.
\begin{prop} \label{Pr:Fus-qHam}
The Hamiltonian double quasi-Poisson structure on $\Afus$ is given by 
the $B$-linear double bracket $\lr{-,-}^{(n)}$ which is defined on generators by  
\begin{subequations}
\label{eq:Fus-double-bracket}
\begin{align}
 \lr{c_i,c_j}^{(n)}&=
\begin{cases} 
-\frac{1}{2}\big(c_i\otimes c_j+c_j\otimes c_i\big)\,, &\mbox{if } i<j
 \\ 
  \;\;\;  0\,, &\mbox{if } i=j
 \\
 \;\;\;   \frac{1}{2}\big(c_i\otimes c_j+c_j\otimes c_i\big)\,, & \mbox{if } i>j
\end{cases};  \label{eq:Fus-double-bracket.a}
\\
 \lr{d_i,d_j}^{(n)}&=
\begin{cases} 
-\frac{1}{2}\big(d_i\otimes d_j+d_j\otimes d_i\big)\,, &\mbox{if } i<j
 \\ 
  \;\;\;  0\,, &\mbox{if } i=j
 \\
 \;\;\;  \frac{1}{2}\big(d_i\otimes d_j+d_j\otimes d_i\big)\,, & \mbox{if } i>j
\end{cases};  \label{eq:Fus-double-bracket.b}
\\
 \lr{c_i,d_j}^{(n)}&=
\begin{cases} 
 \;\;\;  \frac{1}{2}\big(e_1\otimes c_id_j+d_jc_i\otimes e_2\big)\,, &\mbox{if } i<j
 \\ 
 \;\;\;  \frac{1}{2}\big(e_1\otimes c_id_i+ d_ic_i\otimes e_2\big)+e_1\otimes e_2\,, &\mbox{if } i=j
 \\
- \frac{1}{2}\big(e_1\otimes c_id_j+d_jc_i\otimes e_2\big)\,, &\mbox{if } i>j
\end{cases};   \label{eq:Fus-double-bracket.c}
\end{align}
\end{subequations}
and which we extend to $\Afus$ by the Leibniz rule and cyclic antisymmetry. The multiplicative moment map $\Phi^{(n)}=\Phi_1^{(n)}+\Phi_2^{(n)}$ is given by 
\begin{equation} \label{Eq:Fus-momap}
 \Phi_1^{(n)}=(e_1+d_1 c_1)^{-1}\cdots (e_1+d_n c_n)^{-1},\,\quad
 \Phi_2^{(n)}=(e_2+c_1 d_1)\cdots(e_2+c_n d_n)\,.
\end{equation}
\end{prop}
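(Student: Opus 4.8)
The plan is to construct $\Afus$ by applying Theorem~\ref{Thm:IsoFusqHam} iteratively: starting from $\bigoplus_{\ell=1}^n\Ac(\Gamma_1)$, which is a Hamiltonian double quasi-Poisson algebra over $\tilde B$ by \cite[Remark 2.13]{F19} with the structure of Theorem~\ref{tm:VdB-mult-preproj-quasi-Hamilt}(i) on each summand, I fuse, for $\ell=2,\dots,n$ in increasing order, the idempotents $e_{2,\ell}$ onto $e_2=e_{2,1}$ and $e_{1,\ell}$ onto $e_1=e_{1,1}$ (say in that sub-order). Each elementary step is an instance of the fusion construction of \ref{sec:ssec-Fusion}, so the output is automatically a Hamiltonian double quasi-Poisson algebra and the only task is to identify the resulting data with \eqref{eq:Fus-double-bracket} and \eqref{Eq:Fus-momap}. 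For the moment map this is a one-line induction: by Theorem~\ref{Thm:IsoFusqHam}, fusing $e_{2,\ell}$ onto $e_2$ replaces the current $e_2$-component $\Psi_2$ by $\Psi_2\cdot(e_{2,\ell}+c_\ell d_\ell)$ and leaves the $e_1$-component alone, while fusing $e_{1,\ell}$ onto $e_1$ replaces $\Psi_1$ by $\Psi_1\cdot(e_{1,\ell}+d_\ell c_\ell)^{-1}$; performing these in increasing order of $\ell$ produces exactly $\Phi_2^{(n)}=(e_2+c_1d_1)\cdots(e_2+c_nd_n)$ and $\Phi_1^{(n)}=(e_1+d_1c_1)^{-1}\cdots(e_1+d_nc_n)^{-1}$.

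For the double bracket I use $\lr{-,-}^f=\lr{-,-}+\lfus{-,-}$ at each step. The self-brackets $\lr{c_i,c_i}$, $\lr{d_i,d_i}$ and the ``diagonal'' brackets $\lr{c_i,d_i}$ are never altered: at the fusion of copy $i$ itself one checks from \eqref{uv} (and the analogous formulas) that the matrix-unit products occurring in $\lfus{-,-}$ vanish for degree reasons, since e.g. $e_2d_i=0$ and $c_ie_2=0$ by orthogonality of the idempotents of $\bigoplus_\ell\Ac(\Gamma_1)$; and at any later fusion both arguments already lie in the accumulated block, hence are of ``first type'' and \eqref{tt} gives $0$. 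Therefore these brackets retain their values from Theorem~\ref{tm:VdB-mult-preproj-quasi-Hamilt}(i), which, after identifying $e_{1,i}$ with $e_1$ and $e_{2,i}$ with $e_2$, are precisely the $i=j$ cases of \eqref{eq:Fus-double-bracket}.

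It remains to compute the off-diagonal brackets $\lr{c_i,c_j}^{(n)}$, $\lr{d_i,d_j}^{(n)}$ and $\lr{c_i,d_j}^{(n)}$ for $i\neq j$; all of them vanish in $\bigoplus_\ell\Ac(\Gamma_1)$, so they are built up entirely from fusion contributions. Assuming $i<j$ without loss of generality, contributions arise only when copy $j$ is merged onto the block already containing copy $i$ — once both generators sit in the block, the corresponding $\lfus{-,-}$ vanishes by \eqref{tt} — and at that point the block generator ($c_i$ or $d_i$) is of ``first type'' while the copy-$j$ generator is of ``second'' or ``third'' type, so one records the terms produced by \eqref{tu} and \eqref{tv}, adding the contribution from the $e_{2,j}$-fusion to that from the $e_{1,j}$-fusion. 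Carrying this out and using $e_1+e_2=1$ to collapse the matrix-unit contractions reproduces $-\tfrac12(c_i\otimes c_j+c_j\otimes c_i)$, $-\tfrac12(d_i\otimes d_j+d_j\otimes d_i)$ and $\tfrac12(e_1\otimes c_id_j+d_jc_i\otimes e_2)$ as in \eqref{eq:Fus-double-bracket}, the opposite sign in the $i>j$ cases then following from cyclic antisymmetry \eqref{Eq:cycanti}. I expect this last step — keeping precise track of how the generators are renamed and re-typed across the $2(n-1)$ elementary fusions, and checking that the several matrix-unit contractions indeed collapse to the stated closed form with the correct signs — to be the main obstacle. As a shortcut, one may alternatively observe that $\Afus$ is, by construction, the Hamiltonian double quasi-Poisson algebra that Van den Bergh attaches to the quiver $\Gamma_n$ by fusion (Remark~\ref{rem:VdB-double}, \cite[Theorem 6.7.1]{VdB1}) and simply read off \eqref{eq:Fus-double-bracket} and \eqref{Eq:Fus-momap} from his general formulas, taking Remark~\ref{rem:convention-on-arrows} into account.
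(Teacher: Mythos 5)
Your proposal is correct and follows the same strategy as the paper's proof: iteratively applying Theorem~\ref{Thm:IsoFusqHam} to $\bigoplus_{\ell=1}^n \Ac(\Gamma_1)$, tracking the moment map inductively and reading off the extra fusion contributions to the double bracket from the formulas \eqref{tt}--\eqref{ww} of Lemma~\ref{lem:fus-double-bracket} (the order of the two vertex-fusions within each step is immaterial and does not affect the outcome). The alternative shortcut you mention at the end, namely reading off the structure from Van den Bergh's general fusion formulas for $\Ac(\Gamma_n)$, is precisely what the paper records separately in Remark~\ref{Rem:RelVDB}.
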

\begin{proof}
 The Hamiltonian double quasi-Poisson structure on $\oplus_{\ell=1}^n \Ac(\Gamma_1)$ is given by the double quasi-Poisson bracket $\lr{-,-}^{\ssep}$ satisfying for $1\leq i,j\leq n$
 \begin{equation} \label{Eq:dbrSep}
 \begin{aligned}
    &\lr{c_i,c_j}^{\ssep}=0\,, \quad\lr{d_i,d_j}^{\ssep}=0\,, \\ 
  &\lr{c_i,d_j}^{\ssep}=\delta_{ij} \,e_{1,i}\otimes e_{2,i}+\frac12 \delta_{ij} (e_{1,i}\otimes c_i d_i + d_i c_i \otimes e_{2,i})\,,
 \end{aligned}
 \end{equation}
 and the multiplicative moment map $\Phi^{\ssep}$ for 
 \begin{equation}
  \Phi^{\ssep}=\sum_{\ell=1}^n (\Phi_{1,\ell}+\Phi_{2,\ell})\,, \quad \Phi_{1,\ell}:=(e_{1,\ell}+d_\ell c_\ell)^{-1},\quad  \Phi_{2,\ell}:=e_{2,\ell}+c_\ell d_\ell\,.
 \end{equation}
We inductively perform fusion of $e_{1,\ell}$ onto $e_1=e_{1,1}$, and of $e_{2,\ell}$ onto $e_2=e_{2,1}$ for $\ell=2,\ldots,n$. When such fusions have been performed for all $\ell=1,\ldots,k$, $2\leq k\leq n$, we have by Theorem \ref{Thm:IsoFusqHam} that the multiplicative moment map $\Phi^{(k)}$ becomes 
 \begin{equation}
 \begin{aligned}
  & \Phi^{(k)}=\Phi_1^{(k)}+ \Phi_2^{(k)} +\sum_{\ell=k+1}^n (\Phi_{1,\ell}+\Phi_{2,\ell})\,, \\
  &\Phi_{1}^{(k)}=(e_{1}+d_1 c_1)^{-1}\cdots (e_{1}+d_k c_k)^{-1},\quad \Phi_{2}^{(k)}=(e_{2}+c_1 d_1)\cdots (e_{2}+c_k d_k)\,.
 \end{aligned}
 \end{equation}
In particular, $\Phi^{(n)}_1=e_1 \Phi^{(n)}e_1$ and $\Phi^{(n)}_2=e_2\Phi^{(n)}e_2$ are the components of the multiplicative moment map on $\Afus$ by construction, and this is just \eqref{Eq:Fus-momap}.  
 
Using again  Theorem \ref{Thm:IsoFusqHam}, we get that the double quasi-Poisson bracket $\lr{-,-}^{(k)}$ obtained after fusing together the first $k$ copies of $\Ac(\Gamma_1)$ in $\oplus_{\ell=1}^n \Ac(\Gamma_1)$  is defined by adding an extra ``fusion'' double bracket $\lr{-,-}^{(k)}_{\fus}$ to the previously obtained one $\lr{-,-}^{(k-1)}$ (where $\lr{-,-}^{(0)}=\lr{-,-}^{\ssep}$). 
It can be  read from Lemma \ref{lem:fus-double-bracket} that the fusions of  $e_{1,k}$ onto $e_1$ and of $e_{2,k}$ onto $e_2$ result in the additional fusion double bracket $\lr{-,-}^{(k)}_{\fus}$ which satisfies the following relations 
\begin{subequations}
\begin{align}
  \lr{c_i,c_k}^{(k)}_{\fus}=&-\frac12 (c_i \otimes c_k + c_k \otimes c_i)\,, \quad i<k\,, \label{Eq:cckfus} \\
  \lr{d_i,d_k}^{(k)}_{\fus}=&-\frac12 (d_i \otimes d_k + d_k \otimes d_i)\,, \quad i<k\,,  \\
   \lr{c_i,d_k}^{(k)}_{\fus}=&\;\;\;\;\,\frac12 (e_1 \otimes c_i d_k + d_k c_i \otimes e_2)\,, \quad i<k\,, \\
  \lr{d_i,c_k}^{(k)}_{\fus}=&\;\;\;\;\,\frac12 (e_1 \otimes c_i d_k + d_k c_i \otimes e_2)\,, \quad i<k\,, \\
  \lr{f,f'}^{(k)}_{\fus}=&\;\;\;\;\, 0\,, \quad \text{if either }f,f'\in \{c_i,d_i \mid i \neq k\} \text{ or if }f,f'\in \{c_k,d_k\}\,.
\end{align}
\end{subequations}
Let us derive \eqref{Eq:cckfus}. When we fuse $e_{1,k}$ onto $e_1$, according to the terminology of \ref{sec:ssec-Fusion} the element $c_i$ (for $i<k$) is a generator of first type, and $c_k$ is a generator of third type. Hence this fusion results in the following added term in $\lr{c_i,c_k}^{(k)}_{\fus}$ by \eqref{tv}: 
\begin{equation}
 \frac12 (c_k c_i \otimes e_1 - c_k \otimes c_i e_1 ) = -\frac12 c_k\otimes c_i \,.
 \label{eq:aux-fusion-1}
\end{equation}
When we fuse $e_{2,k}$ onto $e_2$, the element $c_i$ (for $i<k$)  is again of first type, but $c_k$ is of second type, so we add the following term in $\lr{c_i,c_k}^{(k)}_{\fus}$ by \eqref{tu}: 
\begin{equation}
 \frac12 (e_1  \otimes c_i c_k - e_1 c_i \otimes c_k) = -\frac12 c_i \otimes c_k \,.
 \label{eq:aux-fusion-2}
\end{equation}
Putting together \eqref{eq:aux-fusion-1} and \eqref{eq:aux-fusion-2}, we get the two terms in \eqref{Eq:cckfus}. 

To conclude, note that the double bracket $\lr{-,-}^{(n)}$ on $\Afus$ is obtained by adding all the double brackets $\lr{-,-}_{\fus}^{(\ell)}$ for $2\leq \ell \leq n$ to \eqref{Eq:dbrSep}. It is easy to see that it coincides with \eqref{eq:Fus-double-bracket}. 
\end{proof}

\begin{rem} \label{Rem:RelVDB}
We can relate Proposition \ref{Pr:Fus-qHam} to Remark \ref{rem:VdB-double} as follows.  
Recall that the Hamiltonian double quasi-Poisson structure on $\Ac(\Gamma_1)$ is the  structure\footnote{\label{Footnote:Conv}As pointed out in Remark \ref{rem:convention-on-arrows}, we use the \emph{opposite} convention to \cite{VdB1} in order to write arrows/paths.} constructed by Van den Bergh on $\kk\overline{\Gamma}_1$ localised at $e_2+ab$ and $e_1+ba$. 
 
Denote by $\Gamma_n$ the quiver consisting of $n$ arrows $c_\ell:1\to 2$, and denote by $d_\ell=c_\ell^\ast:2\to 1$ the corresponding arrows in the double $\overline{\Gamma}_n$. For a fixed ordering of the arrows starting at each vertex, Van den Bergh obtained a Hamiltonian double quasi-Poisson structure on $\Ac(\Gamma_n)$, which is defined as the algebra $\kk \overline{\Gamma}_n$ localised at all the 
$e_2+c_\ell d_\ell$ and $e_1+d_\ell c_\ell$. This structure is obtained by fusion, starting with $n$ copies of $\kk\overline{\Gamma}_1$. 
By construction, it was noticed by Van den Bergh that the order in which we perform fusion can be encoded in an ordering taken at each vertex on the arrows which start at that vertex. In our case the Hamiltonian double quasi-Poisson structure for the orderings
\begin{equation}
 c_1<c_2<\ldots <c_n \quad \text{at the vertex }1\,, \qquad 
 d_1<d_2< \ldots <d_n \quad \text{at the vertex }2\,,
\end{equation}
is precisely the result obtained in Proposition \ref{Pr:Fus-qHam}. 
The comparison is easily made using the explicit form of Van den Bergh's double bracket written in \cite[Theorem 3.3]{F19} (keeping in mind Footnote \ref{Footnote:Conv}). Note that, up to isomorphism, Van den Bergh's Hamiltonian double quasi-Poisson algebra structure associated with $\Gamma_n$ only depends on the quiver seen as an undirected graph \cite[Theorem 4.12]{F20}.
\end{rem}
 
\begin{rem}
Fix $q=(q_1,q_2)\in (\kk^\times)^2$.  
Building on the previous remark, the quotient algebra $\Afus/(\Phi^{(n)}-q_1e_1-q_2 e_2)$ (for $\Phi^{(n)}=\Phi^{(n)}_1+\Phi^{(n)}_2$ defined in \eqref{Eq:Fus-momap}) is the multiplicative preprojective algebra 
$\Lambda^q(\Gamma_n)$  of the quiver $\Gamma_n$ as introduced by Crawley-Boevey and Shaw \cite{CBShaw}. 
Furthermore, by Theorem \ref{Thm:Factorisation}, we obtain that the fission algebra $\mathcal{F}^q(\Gamma_n)$ attached to $\Gamma_n$ is, after localisation at the elements \eqref{Eq:invLoc}, isomorphic to $\Lambda^q(\Gamma_n)$.  
\end{rem}

\subsection{Alternative description of the algebra \texorpdfstring{$\Bloc$}{Aloc}}  \label{ss:Aloc}

By construction, $\Bloc$ is generated by the  orthogonal idempotents $e_1,e_2$ satisfying $e_1+e_2=1$, together with the elements 
\begin{equation} \label{Eq:Locgen1}
 a_1=e_2a_1 e_1\,,\quad \ldots \,,\quad\, a_n=e_2a_n e_1\,;\quad b_1=e_1 b_1 e_2\,,\quad \ldots\,,\quad\,  b_n=e_1b_n e_2\,,
\end{equation}
while the elements in \eqref{Eq:invLocB} are inverted. By localisation, the double quasi-Poisson bracket and the multiplicative moment map can be written as in Theorem  \ref{Thm:MAIN} in terms of the generators \eqref{Eq:Locgen1}. 

Let us introduce the following elements of $\Bloc$:
\begin{equation}\label{Eq:Locgen2}
\begin{aligned} 
  b_\ell'&=b_\ell\,, & &\text{for }1\leq \ell\leq n\,, \\
 a_1'&=a_1\,, &
&a_\ell'=a_\ell+(a_1,\ldots,b_{\ell-1})^{-1}(a_1,\ldots,a_{\ell-1})\,,\quad \text{for }2\leq \ell\leq n\,.
\end{aligned}
\end{equation}
Though the formulae are cumbersome, we can see by induction that the elements \eqref{Eq:Locgen1} can be expressed in terms of the elements $\{a_\ell',b_\ell'\}$ in \eqref{Eq:Locgen2}. Hence the elements $\{a_\ell',b_\ell'\}$ can be used as a set of generators for $\Bloc$
(where the elements \eqref{Eq:invLocB} written in terms of $\{a_\ell',b_\ell'\}$ are inverted). 
We can also write 
\begin{equation} \label{Eq:Locgen3}
 a_\ell'=a_\ell+(a_{\ell-1},\ldots,a_{1}) (b_{\ell-1},\ldots,a_{1})^{-1}\,,\quad \text{for }2\leq \ell\leq n\,,
\end{equation}
due to Lemma \ref{lem:curious-identity}.

\begin{prop} \label{Pr:Alt-qHam}
 In terms of the generators $\{a_k',b_k'\mid 1\leq k \leq n\}$, the Hamiltonian double quasi-Poisson algebra structure on $\Bloc$ is given by 
the $B$-linear double bracket 
\begin{subequations}
\label{eq:Alt-double-bracket} 
\begin{align}
 \lr{a_i',a_j'}&=
\begin{cases} 
-\frac{1}{2}\big(a_i'\otimes a_j'+a_j'\otimes a_i'\big)\,, &\mbox{if } i<j
 \\ 
  \;\;\;  0\,, &\mbox{if } i=j
 \\
 \;\;\;   \frac{1}{2}\big(a_i'\otimes a_j'+a_j'\otimes a_i'\big)\,, & \mbox{if } i>j
\end{cases};  \label{eq:Alt-double-bracket.a}
\\
 \lr{b_i',b_j'}&=
\begin{cases} 
-\frac{1}{2}\big(b_i'\otimes b_j'+b_j'\otimes b_i'\big)\,, &\mbox{if } i<j
 \\ 
  \;\;\;  0\,, &\mbox{if } i=j
 \\
 \;\;\;  \frac{1}{2}\big(b_i'\otimes b_j'+b_j'\otimes b_i'\big)\,, & \mbox{if } i>j
\end{cases};  \label{eq:Alt-double-bracket.b}
\\
 \lr{a_i',b_j'}&=
\begin{cases} 
 \;\;\;  \frac{1}{2}\big(e_1\otimes a_i'b_j'+b_j'a_i'\otimes e_2\big)\,, &\mbox{if } i<j
 \\ 
 \;\;\;  \frac{1}{2}\big(e_1\otimes a_i'b_i'+ b_i'a_i'\otimes e_2\big)+e_1\otimes e_2\,, &\mbox{if } i=j
 \\
- \frac{1}{2}\big(e_1\otimes a_i'b_j'+b_j'a_i'\otimes e_2\big)\,, &\mbox{if } i>j
\end{cases};   \label{eq:Alt-double-bracket.c}
\end{align}
\end{subequations}
and the multiplicative moment map $\Phi=\Phi_1+\Phi_2$ for 
\begin{equation} \label{Eq:Alt-momap}
 \Phi_1=(e_1+b_1' a_1')^{-1}\cdots (e_1+b_n'a_n')^{-1},\qquad 
 \Phi_2=(e_2+a_1'b_1')\cdots(e_2+a_n' b_n')\,.
\end{equation}
\end{prop}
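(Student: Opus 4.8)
The plan is to pass through three stages: first a factorisation of the two Euler continuants defining the moment map in terms of the new generators $\{a_k',b_k'\}$; then the moment map formula \eqref{Eq:Alt-momap}; and finally the double brackets \eqref{eq:Alt-double-bracket}, obtained by a direct computation from \eqref{eq:Euler-double-bracket} and the auxiliary bracket identities of \ref{sec:preparation-proof-main-theorem}. For the first stage one proves by induction on $n$ that, in $\Bloc$,
\[
(a_1,\dots,b_n) = (a_1,\dots,b_{n-1})(e_2+a_n'b_n') = \prod_{\ell=1}^{n}(e_2+a_\ell'b_\ell')\,,
\]
\[
(b_n,\dots,a_1) = (e_1+b_n'a_n')(b_{n-1},\dots,a_1) = (e_1+b_n'a_n')(e_1+b_{n-1}'a_{n-1}')\cdots(e_1+b_1'a_1')\,.
\]
For the first identity, the recursion \eqref{eq:def-Euler-continuant-very-beginning} gives $(a_1,\dots,b_n) = (a_1,\dots,b_{n-1})(a_n,b_n)+(a_1,\dots,a_{n-1})b_n$ (the first line in the proof of \eqref{eq:Convenient-formula-Euler-cont-b}); writing $(a_1,\dots,a_{n-1}) = (a_1,\dots,b_{n-1})\big((a_1,\dots,b_{n-1})^{-1}(a_1,\dots,a_{n-1})\big)$ and using the definition \eqref{Eq:Locgen2} of $a_n'$ (with $b_n'=b_n$) collapses the right-hand side to $(a_1,\dots,b_{n-1})(e_2+a_n'b_n')$, after which the induction hypothesis finishes the argument. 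For the second identity, applying the left recursion \eqref{eq:def-Euler-continuant-intro-2} twice gives $(b_n,\dots,a_1) = (e_1+b_na_n)(b_{n-1},\dots,a_1)+b_n(a_{n-1},\dots,a_1)$, and inserting the expression \eqref{Eq:Locgen3} for $a_n'$ (which itself rests on Lemma~\ref{lem:curious-identity}) collapses this to $(e_1+b_n'a_n')(b_{n-1},\dots,a_1)$; one then inducts. The same induction also records that each $a_\ell$ (hence each $b_\ell=b_\ell'$) is expressible through the $a_k',b_k'$, which is the statement, quoted just before the proposition, that $\{a_k',b_k'\mid 1\le k\le n\}$ generates $\Bloc$ with the elements \eqref{Eq:invLocB} inverted.

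The second stage is then immediate: by the first stage $\Phi_2=(a_1,\dots,b_n)=(e_2+a_1'b_1')\cdots(e_2+a_n'b_n')$, and taking the inverse of the second identity yields $\Phi_1=(b_n,\dots,a_1)^{-1}=(e_1+b_1'a_1')^{-1}\cdots(e_1+b_n'a_n')^{-1}$, which are precisely the components in \eqref{Eq:Alt-momap}.

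For the third stage, note that $b_k'=b_k$ makes \eqref{eq:Alt-double-bracket.b} coincide with \eqref{eq:Euler-double-bracket.b}. For the rest write $a_i'=a_i+g_i$ with $g_i:=(a_1,\dots,b_{i-1})^{-1}(a_1,\dots,a_{i-1})\in e_2\Bloc e_1$ (and $g_1=0$), and expand $\lr{a_i',a_j'}$ and $\lr{a_i',b_j'}$ by the Leibniz rules \eqref{Eq:outder}, \eqref{Eq:inder} and cyclic antisymmetry \eqref{Eq:cycanti}, distinguishing the cases $i<j$, $i=j$, $i>j$. This reduces everything to the brackets $\lr{g_i,a_j}$, $\lr{g_i,b_j}$, $\lr{g_i,g_j}$, which in turn are assembled from three inputs: the brackets of an even continuant $(a_1,\dots,b_\ell)$ with the arrows, given for $j\le\ell$ by the moment map identity \eqref{Phim} — available since Theorem~\ref{thm:Euler-quasi-Hamiltonian-algebra} is already proved, these being exactly \eqref{eq:inductive-hypothesis.a}--\eqref{eq:inductive-hypothesis.b} — and for $j>\ell$ by Lemma~\ref{lem:technic-lemma-no-inductive}; the brackets of the odd continuant $(a_1,\dots,a_{i-1})$ with the arrows, obtained by rewriting it via \eqref{eq:Convenient-formula-Euler-cont-a} and applying Leibniz term by term; and the brackets involving the inverse $(a_1,\dots,b_{i-1})^{-1}$, deduced from those of $(a_1,\dots,b_{i-1})$ by the inversion rule for double brackets (the first-argument version \eqref{PhimInv} together with the rule $\lr{c,X^{-1}}=-X^{-1}\lr{c,X}X^{-1}$ recorded after \eqref{eq:Euler-double-bracket}). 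Substituting these and simplifying produces \eqref{eq:Alt-double-bracket.a} and \eqref{eq:Alt-double-bracket.c}. Finally, comparing the resulting structure with Proposition~\ref{Pr:Fus-qHam} gives Theorem~\ref{Thm:Factorisation} at once, via the map $a_k'\mapsto c_k$, $b_k'\mapsto d_k$.

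The work is concentrated in the third stage. The computation is long and must be carried out separately for the three index ranges of $\lr{a_i',a_j'}$ and of $\lr{a_i',b_j'}$; the real difficulty is the bookkeeping of the many cross-terms produced by applying the Leibniz rule to $g_i$, and the decisive simplifications are the cancellations forced by the moment map identity \eqref{Phim} for the continuants $(a_1,\dots,b_\ell)$ with $\ell<n$. In particular, the fact that the change of coordinates \eqref{Eq:Locgen2} is built precisely out of those lower continuants is what makes these cancellations occur, and verifying that they do is the main obstacle.
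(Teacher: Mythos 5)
Your proposal follows essentially the same route as the paper: first the inductive factorisations $(a_1,\dots,b_\ell)=\prod_{k\le\ell}(e_2+a_k'b_k')$ and $(b_\ell,\dots,a_1)=(e_1+b_\ell'a_\ell')\cdots(e_1+b_1'a_1')$ (Step 1 of the paper's proof), then the moment map formula, then a Leibniz expansion of $\lr{a_i',a_j'}$ and $\lr{a_i',b_j'}$ reduced to brackets of even/odd continuants, for which you correctly identify the two key inputs (the moment-map property \eqref{Phim} of lower continuants when $j\le\ell$, and Lemma~\ref{lem:technic-lemma-no-inductive} when $j>\ell$), matching the paper's Lemmas~\ref{Lem:prep-1}--\ref{Lem:prep-4} and the interlude. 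The only cosmetic difference is that the paper avoids computing $\lr{g_i,g_j}$ head-on by instead deriving $\lr{a_i',-}$ for the relevant continuants in Lemmas~\ref{Lem:a-prim-1}--\ref{Lem:a-prim-3} and expanding only one of the two arguments, which trims the bookkeeping but does not change the argument.
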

The proof of Proposition \ref{Pr:Alt-qHam} is presented in \ref{ss:Proof-Alt-qHam}.

\subsection{Proof of Theorem \ref{Thm:Factorisation}}
\label{sec:proof-of-thm-factorisation}

We consider the generators introduced in \ref{ss:Afus} for $\Afus$ and \ref{ss:Aloc} for $\Bloc$. We first note that we have a $B$-linear isomorphism given by 
\begin{equation} \label{eq:isomorphism-quasi-Ham-psi}
 \psi:\Bloc\to \Afus\,,\qquad \psi(a_\ell')=c_\ell\,,\,\quad \psi(b_\ell')=d_\ell\,,
\end{equation}
for $\ell=1,\ldots,n$.
To see that this map is well-defined, note that the invertibility of the elements \eqref{Eq:invLocB} in $\Bloc$ is equivalent to the invertibility of the elements $(a_\ell',b_\ell')=e_2+a_\ell'b_\ell'$ and $(b_\ell',a_\ell')=e_1+b_\ell' a_\ell'$ in $\Bloc$ for $1\leq \ell \leq n$ due to \eqref{Eq:contInd}, which we obtain as part of the proof of Proposition \ref{Pr:Alt-qHam}.   
But these elements are precisely mapped onto the invertible elements $(c_\ell,d_\ell)$ and $(d_\ell,c_\ell)$ in $\Afus$. 

Next, by comparing Propositions \ref{Pr:Fus-qHam} and \ref{Pr:Alt-qHam}, we can see that 
\begin{equation}
\begin{aligned}
& \lr{\psi(a_i'),\psi(a_j')}^{(n)}=\psi^{\otimes 2}(\lr{a_i',a_j'})\,, 
\\
&\lr{\psi(b_i'),\psi(b_j')}^{(n)}=\psi^{\otimes 2}(\lr{b_i',b_j'})\,, 
\\
&   \lr{\psi(a_i'),\psi(b_j')}^{(n)}=\psi^{\otimes 2}(\lr{a_i',b_j'})\,, \quad 
\end{aligned}
\end{equation}
for any $i,j=1,\ldots,n$. We thus get that, for all $a,b\in \Bloc$,
\begin{equation}
 \lr{\psi(a),\psi(b)}^{(n)}=\psi^{\otimes 2}(\lr{a,b})\,.
\end{equation}
It is also clear that $\psi(\Phi_1)=\Phi_1^{(n)}$ and $\psi(\Phi_2)=\Phi_2^{(n)}$, so $\psi$ is an isomorphism of Hamiltonian double quasi-Poisson algebras.

 \subsection{Proof of Proposition \ref{Pr:Alt-qHam}} \label{ss:Proof-Alt-qHam}

\subsubsection{Preparation} 

Recall that the double quasi-Poisson bracket on $\Bloc$ takes the form \eqref{eq:Euler-double-bracket} on the elements $\{a_i,b_i\}$ since it is obtained by localisation of $\Bc(\Gamma_n)$. 

\begin{lem} \label{Lem:prep-1}
 The double bracket on $\Bc(\Gamma_n)$ is such that
 \begin{subequations}
\label{eq:Lem-prep-1}
\begin{align}
\lr{(a_1,\dots, b_i),b_j}&=
\begin{cases} 
\;\;\; \frac12\Big(b_j(a_1,\dots, b_i)\otimes e_2-b_j\otimes (a_1,\dots, b_i)\Big) \,, &\mbox{if } i<j,  \\ 
\;\;\;\frac12\Big(b_j(a_1,\dots, b_i)\otimes e_2+b_j\otimes (a_1,\dots, b_i)\Big) \,,  &\mbox{if } i\geq j ,
\end{cases}  \label{eq:Lem-prep-1.a}
\\
\lr{(a_1,\dots, b_i),a_j}&=
\begin{cases} 
\;\;\; \frac12 \Big(e_2\otimes (a_1,\dots, b_i)a_j- (a_1,\dots, b_i)\otimes a_j\Big)\,, &\mbox{if } i<j\!-\!1 , \\ 
\;\;\; \frac{1}{2}\Big(e_2\otimes (a_1,\dots, b_i)a_{j}- (a_1,\dots, b_i)\otimes a_{j}\Big)& \\
\qquad\; +\,e_2\otimes (a_1,\dots, a_i)\,, &\mbox{if } i=j\!-\!1 ,\\
-\frac12 \Big(e_2\otimes (a_1,\dots, b_i)a_j+ (a_1,\dots, b_i)\otimes a_j\Big)\,, & \mbox{if } i\geq j.
\end{cases}  \label{eq:Lem-prep-1.b}
\end{align}
\end{subequations}
 In particular, these identities hold in $\Bloc$.
\end{lem}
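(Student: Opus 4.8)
The plan is to obtain both displays of the lemma by assembling identities that are already at our disposal, so that no genuinely new computation is required; the only thing to watch is that the ranges of the indices $i,j$ are matched with the correct previously-proved formula. Since $\Bloc$ is a localisation of $\Bc(\Gamma_n)$ and the double bracket is compatible with localisation, it suffices to establish \eqref{eq:Lem-prep-1.a} and \eqref{eq:Lem-prep-1.b} in $\Bc(\Gamma_n)$ -- indeed already in $\kk\overline{\Gamma}_n$, as every element on either side is a polynomial in the arrows. I would split according to whether the second argument of the bracket is ``external'' or ``internal'' to the continuant $(a_1,\dots,b_i)$.

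For the external cases the brackets have already been computed in Lemma \ref{lem:technic-lemma-no-inductive}: part (i) gives $\lr{(a_1,\dots,b_i),b_j}$ for $j>i$, which is the first line of \eqref{eq:Lem-prep-1.a}; part (iii) gives $\lr{(a_1,\dots,b_i),a_{i+1}}$, which is the middle line of \eqref{eq:Lem-prep-1.b}; and part (ii) gives $\lr{(a_1,\dots,b_i),a_j}$ for $j\geq i+2$, which is its first line. For the internal cases, namely $b_j$ and $a_j$ with $j\leq i$, the required identities are precisely \eqref{eq:inductive-hypothesis.b} and \eqref{eq:inductive-hypothesis.a} with $\ell=i$, which are established by the strong induction carried out in \ref{sec:proof-Euler-multiplicative-moment-map} for $i\leq n-1$ and, for the top index $i=n$, are exactly the moment-map part of Theorem \ref{thm:Euler-quasi-Hamiltonian-algebra} itself. (Conceptually, for each fixed $i$ the arrows $a_1,\dots,a_i,b_1,\dots,b_i$ together with $e_1,e_2$ generate a copy of $\kk\overline{\Gamma}_i$ inside $\kk\overline{\Gamma}_n$ on which \eqref{eq:Euler-double-bracket} restricts to the bracket of $\Bc(\Gamma_i)$, so that $(a_1,\dots,b_i)$ is the $e_2$-component of a multiplicative moment map there; specialising \eqref{Phim} with $\Phi_s=(a_1,\dots,b_i)$, $e_s=e_2$ to $a=b_j$ and using $b_je_2=b_j$, $e_2b_j=0$, $(a_1,\dots,b_i)b_j=0$ collapses its four terms to the second line of \eqref{eq:Lem-prep-1.a}, and to $a=a_j$ using $a_je_2=0$, $e_2a_j=a_j$, $a_j(a_1,\dots,b_i)=0$ collapses them to the third line of \eqref{eq:Lem-prep-1.b}.) Finally, pushing the resulting identities through the localisation maps $\kk\overline{\Gamma}_n\to\Bc(\Gamma_n)\to\Bloc$ gives the statement in $\Bloc$, as needed in the proof of Proposition \ref{Pr:Alt-qHam}.

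Since every step is a citation of an already-established formula followed by an elementary idempotent bookkeeping, there is no substantial obstacle here. The only genuine points of care are: that the top index $i=n$ of \eqref{eq:inductive-hypothesis.a}--\eqref{eq:inductive-hypothesis.b} is covered, which it is because the moment-map part of Theorem \ref{thm:Euler-quasi-Hamiltonian-algebra} has been proved for $(a_1,\dots,b_n)$ in \ref{sec:proof-Euler-multiplicative-moment-map}; that the boundary value $j=i+1$ must be taken from the \emph{special} case (iii) of Lemma \ref{lem:technic-lemma-no-inductive} rather than from its case (ii) or from the moment-map relation; and that the regimes $j\geq i+2$, $j=i+1$, $j\leq i$ in \eqref{eq:Lem-prep-1.b}, and $j>i$, $j\leq i$ in \eqref{eq:Lem-prep-1.a}, exhaust all of $\{1,\dots,n\}$ so that no case is missed.
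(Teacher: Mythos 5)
Your proof is correct and follows essentially the same route as the paper: for $i<j$ you cite the three parts of Lemma \ref{lem:technic-lemma-no-inductive} exactly as the paper does, and for $i\geq j$ you observe that the bracket on $\{a_\ell,b_\ell\mid 1\leq\ell\leq i\}$ restricts to that of $\Bc(\Gamma_i)$ (equivalently $\kk\overline{\Gamma}_i$), so that $(a_1,\dots,b_i)$ is the $e_2$-component of the multiplicative moment map there, and then collapse the four terms of \eqref{Phim} using the idempotent relations — which is precisely the paper's argument, including the footnoted remark that the moment-map property holds already in $\kk\overline{\Gamma}_i$ without inverting the continuant.
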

\begin{proof}
 If $i<j$, these relations have been derived in Lemma \ref{lem:technic-lemma-no-inductive}. 
 
 If $i\geq j$, note by inspecting Theorem \ref{Thm:MAIN} that the double brackets involving the elements $\{a_\ell,b_\ell\mid 1\leq \ell \leq i\}$ of $\Bc(\Gamma_n)$ are exactly the same as the double brackets in $\Bc(\Gamma_i)$. 
 But since $(a_1,\dots, b_i)$ is the component in $e_2 \Bc(\Gamma_i) e_2$ of the multiplicative moment map of $\Bc(\Gamma_i)$, the claim follows\footnote{To be precise, the claim follows in $\Bc(\Gamma_i)$, where $(a_1,\dots, b_i)$ is invertible. But it suffices to note that our proof of the multiplicative moment map property for $(a_1,\dots, b_n)\in \Bc(\Gamma_n)$ in \ref{sec:proof-Euler-multiplicative-moment-map} does not require the invertibility of $(a_1,\dots, b_n)$, i.e. it holds in $\kk \overline{\Gamma}_n$. Thus, taking $n=i$, we can indeed use \eqref{Phim}.} from \eqref{Phim}. 
\end{proof}

We compute double brackets with the $(2i-1)$-th Euler continuant $(a_1,b_1,\ldots,a_{i-1},b_{i-1},a_i)$. By \eqref{eq:Convenient-formula-Euler-cont-a}, we can use the decomposition 
\begin{equation} \label{eq:Convenient-v2-a}
 (a_1,\ldots,a_i)=\sum_{\ell=2}^i (a_1,\ldots,b_{\ell-1})a_\ell\,+a_1\,.
\end{equation}
(The formula holds for $i=1$ since the sum is over an empty set, hence we omit it).

\begin{lem} \label{Lem:prep-2}
 The double bracket on $\Bc(\Gamma_n)$ is such that
 \begin{equation*}
\begin{aligned}
\lr{(a_1,\dots, a_i),b_j}&=
\begin{cases} 
\frac12\Big(b_j(a_1,\dots, a_i)\otimes e_2+ e_1 \otimes  (a_1,\dots, a_i)b_j\Big)\, , &\mbox{if } i<j,  \\ 
\frac12\Big(b_i(a_1,\dots, a_i)\otimes e_2+ e_1 \otimes  (a_1,\dots, a_i)b_i\Big) & \\
\qquad +\, e_1 \otimes (a_1,\ldots,b_{i-1}) \,, &\mbox{if } i=j,  \\ 
\frac12\Big(b_j(a_1,\dots, a_i)\otimes e_2 - e_1 \otimes  (a_1,\dots, a_i)b_j\Big) \,,  &\mbox{if } i> j ,
\end{cases}  
\end{aligned}
\end{equation*}
 In particular, these identities hold in $\Bloc$.
\end{lem}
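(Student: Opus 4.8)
The plan is to prove Lemma \ref{Lem:prep-2} by combining the decomposition \eqref{eq:Convenient-v2-a} with the right Leibniz rule \eqref{Eq:outder} and the already-established double brackets from Lemma \ref{Lem:prep-1} and Theorem \ref{Thm:MAIN}. Since $(a_1,\dots,a_i) = \sum_{\ell=2}^i (a_1,\dots,b_{\ell-1})a_\ell + a_1$, and since the double bracket is a derivation in its second argument, it suffices to compute $\lr{(a_1,\dots,b_{\ell-1})a_\ell, b_j}$ for each $\ell$ and $\lr{a_1,b_j}$, and then sum. The latter is one of the base cases in \eqref{eq:Euler-double-bracket.c}; the former expands via \eqref{Eq:outder} as $\lr{(a_1,\dots,b_{\ell-1})a_\ell,b_j} = \lr{(a_1,\dots,b_{\ell-1}),b_j}\ast a_\ell + (a_1,\dots,b_{\ell-1})\ast\lr{a_\ell,b_j}$ — wait, more precisely we are bracketing in the \emph{first} slot, so we should use the left Leibniz rule \eqref{Eq:inder}: $\lr{(a_1,\dots,b_{\ell-1})a_\ell,b_j} = \lr{(a_1,\dots,b_{\ell-1}),b_j}\ast a_\ell + (a_1,\dots,b_{\ell-1})\ast\lr{a_\ell,b_j}$, where $\ast$ is the inner bimodule multiplication. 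Both summands are known: the first from Lemma \ref{Lem:prep-1}(a), the second from \eqref{eq:Euler-double-bracket.c}.

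The key step is then a careful bookkeeping of the three index regimes. For $i<j$, every $\ell \in \{2,\dots,i\}$ satisfies $\ell < j$, so all brackets $\lr{a_\ell,b_j}$ and $\lr{(a_1,\dots,b_{\ell-1}),b_j}$ are in the ``$i<j$'' branches; the telescoping sum should collapse using \eqref{eq:Convenient-v2-a} (and \eqref{eq:Convenient-formula-Euler-cont-b} for reassembling continuants) to give $\frac12(b_j(a_1,\dots,a_i)\otimes e_2 + e_1\otimes(a_1,\dots,a_i)b_j)$. For $i>j$, all indices $\ell$ run over both sides of $j$, so the sum splits into a part with $\ell\le j$ contributing with one sign and $\ell>j$ contributing with the other; the extra $e_1\otimes e_2$ terms attached to the $\ell=j$ and possibly $\ell=j+1$ brackets (coming from the $\delta$-terms in \eqref{eq:Euler-double-bracket.c} and the $i=j-1$ exceptional branch of Lemma \ref{Lem:prep-1}(b)) must cancel or combine to produce exactly the stated answer with no leftover idempotent terms. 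For $i=j$, the term $\ell=j=i$ is the one carrying the $e_1\otimes e_2$ contribution from $\lr{a_i,b_i}$, and the exceptional ``$i=j-1$'' clause of Lemma \ref{Lem:prep-1} does \emph{not} arise (since $i\not< j$), so one expects to be left with the stated $e_1\otimes(a_1,\dots,b_{i-1})$ anomaly term after the telescoping.

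Alternatively — and this is likely the cleaner route — I would exploit the order-two automorphism $S$ from \eqref{eq:automorphism-of-order-two} together with Lemma \ref{Lem:prep-1}. Under $S$ we have $e_1\leftrightarrow e_2$, $a_\ell\leftrightarrow b_{n+1-\ell}$, and $\lr{-,-}\mapsto-\lr{-,-}$, and $S$ sends the continuant $(a_1,b_1,\dots,b_{i-1},a_i)$ to $(b_n, a_n,\dots)$-type continuants; but a more direct symmetry is the reflection within the first $i$ indices used already in Lemma \ref{lem:curious-identity} and in the proof of Lemma \ref{Lem:prep-1} for the case $i\ge j$. Concretely, $(a_1,\dots,a_i)$ plays, inside the subalgebra generated by $\{a_\ell,b_\ell\mid 1\le\ell\le i\}$ (whose double bracket coincides with that of $\Bc(\Gamma_i)$), a role dual to $(a_1,\dots,b_i)$; one can transport the computation of $\lr{(a_1,\dots,b_i),b_j}$ in Lemma \ref{Lem:prep-1}(a) under the automorphism that swaps the ``$a$-last'' and ``$b$-last'' continuants. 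I would try to set this up so that Lemma \ref{Lem:prep-2} follows from Lemma \ref{Lem:prep-1} by applying $S$ (restricted to indices $\le i$) and relabelling, reducing the work to checking that the anomalous terms transform correctly.

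The main obstacle I anticipate is the index-chasing in the $i>j$ and $i=j$ regimes: keeping track of which summands $\ell$ fall into which branch of Lemma \ref{Lem:prep-1}(b) — in particular the exceptional $i=j-1$ clause — and verifying that all the stray $e_1\otimes e_2$ contributions telescope correctly to leave precisely the claimed formula (with the single $e_1\otimes(a_1,\dots,b_{i-1})$ anomaly when $i=j$ and nothing extra otherwise). A secondary subtlety is making sure the reassembly of partial sums back into the continuants $(a_1,\dots,a_i)$ is valid \emph{before} localisation, i.e. in $\kk\overline{\Gamma}_n$, so that the final sentence ``in particular these identities hold in $\Bloc$'' is justified by pure localisation; this is handled exactly as in the footnote to Lemma \ref{Lem:prep-1}, since none of the formulae \eqref{eq:Convenient-formula-Euler-cont-a}--\eqref{eq:Convenient-formula-Euler-cont-b} or the double-bracket identities used require invertibility.
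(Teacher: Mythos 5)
Your first route is exactly the paper's proof: expand $(a_1,\dots,a_i)$ via \eqref{eq:Convenient-v2-a}, apply the left Leibniz rule \eqref{Eq:inder}, and feed in $\lr{(a_1,\dots,b_{\ell-1}),b_j}$ from Lemma~\ref{Lem:prep-1}(a) together with $\lr{a_\ell,b_j}$ from \eqref{eq:Euler-double-bracket.c}. The regime split by $\ell$ relative to $j$, the reassembly via \eqref{eq:Convenient-v2-a} and the recursion $(a_1,\ldots,b_j)=(a_1,\ldots,a_j)b_j+(a_1,\ldots,b_{j-1})$, and the observation that everything happens in $\kk\overline{\Gamma}_n$ before localisation all match.

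Two corrections on the details, though. First, a misattribution: in your bookkeeping of the $i>j$ case, both stray $e_1\otimes e_2$ contributions come from the $\delta$-terms in \eqref{eq:Euler-double-bracket.c} (the $\ell=j$ summand via $\lr{a_j,b_j}$ and the $\ell=j+1$ summand via $\lr{a_{j+1},b_j}$). Lemma~\ref{Lem:prep-1}(b) is never used here --- it computes $\lr{(a_1,\dots,b_i),a_j}$, not $\lr{\cdot,b_j}$ --- and \ref{Lem:prep-1}(a) has no exceptional $i=j-1$ clause. Since the $\delta$-term cancellations are exactly what makes the $i>j$ case delicate, getting their provenance right matters for the actual computation. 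Second, the alternative symmetry route almost certainly does not work as described. The automorphism $S$ sends $(a_1,\dots,b_{i-1},a_i)$ to the reversed continuant $(b_n,a_n,\dots,b_{n-i+1})$, not to an $(a_1,\dots,b_i)$-type continuant, so it does not ``swap the $a$-last and $b$-last continuants.'' There is no $\kk$-algebra automorphism of $\kk\overline{\Gamma}_n$ (or of a relevant subalgebra) that transports $(a_1,\dots,b_i)$ to $(a_1,\dots,a_i)$ while flipping the sign of the double bracket: those two continuants even live in different corners ($e_2\cdot e_2$ vs.\ $e_2\cdot e_1$) and are not in the same orbit of the obvious symmetry group. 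The paper's proof does not attempt this; it simply carries out the direct Leibniz computation in each of the three index regimes. You should commit to the first approach and drop the symmetry shortcut.
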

\begin{proof}
 If $i< j$, by \eqref{eq:Lem-prep-1.a}, we have 
\begin{small}
\begin{align*}
\lr{(a_1,\dots, a_i),b_j}=&  
\sum_{\ell=2}^i \Big[\lr{(a_1,\ldots,b_{\ell-1}),b_j}\ast a_\ell + (a_1,\ldots,b_{\ell-1})\ast \lr{a_\ell,b_j}\Big]\,+\lr{a_1,b_j} \\
=& \frac12\sum_{\ell=2}^i \Big(b_j(a_1,\dots, b_{\ell-1})\otimes e_2-b_j\otimes (a_1,\dots, b_{\ell-1})\Big) \ast a_\ell \\
&+\frac12\sum_{\ell=2}^i (a_1,\ldots,b_{\ell-1})\ast  \Big(e_1 \otimes a_\ell b_j + b_j a_\ell \otimes e_2\Big) 
+\frac12  (e_1 \otimes a_1 b_j + b_j a_1 \otimes e_2) \\
=&\frac12b_j\left[\sum_{\ell=2}^i (a_1,\ldots,b_{\ell-1})a_\ell\,+a_1\right]\otimes e_2
+\frac12 e_1 \otimes \left[\sum_{\ell=2}^i (a_1,\ldots,b_{\ell-1})a_\ell\,+a_1\right] b_j\,, 
 \end{align*}
\end{small}%
which is the desired result by \eqref{eq:Convenient-v2-a}. 

If $i=j=1$, this is just $\lr{a_1,b_1}$ \eqref{eq:Euler-double-bracket.c}. 
If $i=j$ is distinct from $1$, we use again \eqref{eq:Lem-prep-1.a} to get
\begin{small}
\begin{align*}
&\lr{(a_1,\dots, a_i),b_i}
\\
=&  \sum_{\ell=2}^i \Big[\lr{(a_1,\ldots,b_{\ell-1}),b_i}\ast a_\ell + (a_1,\ldots,b_{\ell-1})\ast \lr{a_\ell,b_i}\Big]\,+\lr{a_1,b_i} \\
=& \frac12\sum_{\ell=2}^i \Big(b_i(a_1,\dots, b_{\ell-1})\otimes e_2-b_i\otimes (a_1,\dots, b_{\ell-1})\Big) \ast a_\ell \\
&+\frac12\sum_{\ell=2}^{i-1} (a_1,\ldots,b_{\ell-1})\ast  \Big(e_1 \otimes a_\ell b_i + b_i a_\ell \otimes e_2\Big) \\
&+\frac12 (a_1,\ldots,b_{i-1})\ast \Big(e_1 \otimes a_i b_i + b_i a_i \otimes e_2 + 2 e_1 \otimes e_2\Big)
+\frac12  (e_1 \otimes a_1 b_j + b_j a_1 \otimes e_2) \\
=&\frac12b_i\left[\sum_{\ell=2}^i (a_1,\ldots,b_{\ell-1})a_\ell\,+a_1\right]\otimes e_2
+\frac12 e_1 \otimes \left[\sum_{\ell=2}^i (a_1,\ldots,b_{\ell-1})a_\ell\,+a_1\right] b_i
+ e_1 \otimes (a_1,\ldots,b_{i-1}), 
 \end{align*}
\end{small}%
which is the claimed result by \eqref{eq:Convenient-v2-a}. 

If $i>j$, we only prove the result for $j\neq 1$, and leave the easier case $j=1$ to the reader. 
We can use \eqref{eq:Lem-prep-1.a} one last time to obtain 
\begin{small}
\begin{align*}
\lr{(a_1,\dots, a_i),b_j}=
&  \sum_{\ell=2}^i \Big[\lr{(a_1,\ldots,b_{\ell-1}),b_j}\ast a_\ell + (a_1,\ldots,b_{\ell-1})\ast \lr{a_\ell,b_j}\Big]\,+\lr{a_1,b_j} \\
=& \frac12\sum_{\ell=2}^j \Big(b_j(a_1,\dots, b_{\ell-1})\otimes e_2-b_j\otimes (a_1,\dots, b_{\ell-1})\Big) \ast a_\ell \\
&+\frac12\sum_{\ell=j+1}^i \Big(b_j(a_1,\dots, b_{\ell-1})\otimes e_2 + b_j\otimes (a_1,\dots, b_{\ell-1})\Big) \ast a_\ell \\
&+\frac12\sum_{\ell=2}^{j-1} (a_1,\ldots,b_{\ell-1})\ast  \Big(e_1 \otimes a_\ell b_j + b_j a_\ell \otimes e_2\Big) \\
&+\frac12 (a_1,\ldots,b_{j-1})\ast \Big(e_1 \otimes a_j b_j + b_j a_j \otimes e_2 + 2 e_1 \otimes e_2\Big) \\
&-\frac12 (a_1,\ldots,b_{j})\ast \Big(e_1 \otimes a_{j+1} b_j + b_j a_{j+1} \otimes e_2 + 2 e_1 \otimes e_2\Big) \\
&-\frac12\sum_{\ell=j+2}^{i} (a_1,\ldots,b_{\ell-1})\ast  \Big(e_1 \otimes a_\ell b_j + b_j a_\ell \otimes e_2\Big)  
+\frac12  (e_1 \otimes a_1 b_j + b_j a_1 \otimes e_2) \\
=&\frac12b_j\left[\sum_{\ell=2}^i (a_1,\ldots,b_{\ell-1})a_\ell\,+a_1\right]\otimes e_2
-\frac12 e_1 \otimes \left[\sum_{\ell=2}^i (a_1,\ldots,b_{\ell-1})a_\ell\,+a_1\right] b_j \\
&+ e_1 \otimes \Big[ (a_1,\ldots,b_{j-1}) + (a_1,\ldots,a_{j})b_j - (a_1,\ldots, b_j) \Big], 
 \end{align*}
\end{small}%
which is the claimed result
 if we use \eqref{eq:Convenient-v2-a} as well as the recursive relation for $(a_1,\ldots, b_j)$, so that the last term disappears. 
\end{proof}

\begin{lem} \label{Lem:prep-3}
 The double bracket on $\Bc(\Gamma_n)$ is such that
 \begin{equation*}
\begin{aligned}
\lr{(a_1,\dots, a_i),a_j}&=
\begin{cases} 
\;\;\,  \frac12\Big(a_j \otimes (a_1,\dots, a_i) - (a_1,\dots, a_i) \otimes a_j\Big)\, , &\mbox{if } i\geq j,  \\ 
-\frac12\Big(a_j \otimes (a_1,\dots, a_i) + (a_1,\dots, a_i) \otimes a_j\Big)\, , &\mbox{if } i=j-1,\, j\neq 1.  \\ 
\end{cases}  
\end{aligned}
\end{equation*}
 In particular, these identities hold in $\Bloc$.
\end{lem}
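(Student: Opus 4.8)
The plan is to prove both cases by a single direct computation, with no induction — and, in particular, without ever needing a formula for $\lr{(a_1,\dots,a_i),a_j}$ in the range $i\le j-2$, which is deliberately not part of the statement. The descent to $\Bloc$ is automatic, since $\Bloc$ is a localisation of $\Bc(\Gamma_n)$, so I would work throughout in $\Bc(\Gamma_n)$. First I would expand the $(2i-1)$-th continuant by \eqref{eq:Convenient-v2-a} in the form
\[
(a_1,\dots,a_i)=\sum_{\ell=1}^i w_\ell\,,\qquad w_\ell:=(a_1,\dots,b_{\ell-1})a_\ell\ \ (\ell\ge2)\,,\quad w_1:=a_1\,,
\]
and likewise $(a_1,\dots,a_{j-1})=\sum_{\ell=1}^{j-1}w_\ell$. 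Applying the left Leibniz rule \eqref{Eq:inder}, and reading the $\ell=1$ summand with $(a_1,\dots,b_0)$ understood as $e_2$ (so that $\lr{(a_1,\dots,b_0),-}=0$ by $B$-linearity and $(a_1,\dots,b_0)\ast\lr{a_1,-}=\lr{a_1,-}$), one gets
\[
\lr{(a_1,\dots,a_i),a_j}=\sum_{\ell=1}^i\Big(\lr{(a_1,\dots,b_{\ell-1}),a_j}\ast a_\ell+(a_1,\dots,b_{\ell-1})\ast\lr{a_\ell,a_j}\Big)\,,
\]
where each $\lr{(a_1,\dots,b_{\ell-1}),a_j}$ is given by Lemma \ref{Lem:prep-1}, i.e.\ \eqref{eq:Lem-prep-1.b}, and each $\lr{a_\ell,a_j}$ by \eqref{eq:Euler-double-bracket.a}; everything else is bookkeeping of the inner $\ast$-action.

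For the case $i\ge j$ I would split the sum at $\ell=j$ into the three regimes of \eqref{eq:Lem-prep-1.b}. For $\ell<j$ both inner brackets are in their first case; after carrying out the $\ast$-actions the mixed terms $a_\ell\otimes(a_1,\dots,b_{\ell-1})a_j$ cancel and the $\ell$-summand reduces to $-\tfrac12 w_\ell\otimes a_j-\tfrac12 a_j\otimes w_\ell$. For $\ell>j$ one uses the last cases of \eqref{eq:Lem-prep-1.b} and \eqref{eq:Euler-double-bracket.a}, and the $\ell$-summand becomes $-\tfrac12 w_\ell\otimes a_j+\tfrac12 a_j\otimes w_\ell$. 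For $\ell=j$ one has $\lr{a_j,a_j}=0$ and the middle case of \eqref{eq:Lem-prep-1.b}, so the $\ell$-summand is $-\tfrac12 w_j\otimes a_j+\tfrac12 a_j\otimes w_j+a_j\otimes(a_1,\dots,a_{j-1})$. Summing over $\ell$ and using $(a_1,\dots,a_{j-1})=\sum_{\ell<j}w_\ell$ and $(a_1,\dots,a_i)=\sum_{\ell=1}^i w_\ell$, the left tensor factors add up to $-\tfrac12(a_1,\dots,a_i)$, while on the right the extra term produced at $\ell=j$ upgrades the coefficient of $w_\ell$ from $-\tfrac12$ to $+\tfrac12$ for every $\ell<j$, so the right factors add up to $+\tfrac12(a_1,\dots,a_i)$; this yields $\tfrac12\big(a_j\otimes(a_1,\dots,a_i)-(a_1,\dots,a_i)\otimes a_j\big)$. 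The boundary value $j=1$ I would treat the same way: there $\lr{(a_1,\dots,b_{\ell-1}),a_1}$ lies in the last case of \eqref{eq:Lem-prep-1.b} for all $\ell\ge2$, and the same telescoping returns the same formula.

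For the case $i=j-1$ (so $j\ge2$, whence $j\ne1$ automatically) the computation is identical but shorter: for every $1\le\ell\le i$ one has $\ell-1<(i+1)-1$, so each $\lr{(a_1,\dots,b_{\ell-1}),a_{i+1}}$ is in the first case of \eqref{eq:Lem-prep-1.b} and each $\lr{a_\ell,a_{i+1}}$ in the first case of \eqref{eq:Euler-double-bracket.a}. The $\ell$-summand then collapses to $-\tfrac12 w_\ell\otimes a_{i+1}-\tfrac12 a_{i+1}\otimes w_\ell$, and summing over $\ell$ via \eqref{eq:Convenient-v2-a} gives $-\tfrac12\big(a_{i+1}\otimes(a_1,\dots,a_i)+(a_1,\dots,a_i)\otimes a_{i+1}\big)$, as claimed.

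The hard part is entirely the bookkeeping in the case $i\ge j$: splitting the range of $\ell$ into exactly the three regimes of \eqref{eq:Lem-prep-1.b}, keeping the two tensor factors on the correct sides under the inner $\ast$-multiplication, and — the real subtlety — checking that the single extra term $a_j\otimes(a_1,\dots,a_{j-1})$ arising at $\ell=j$ exactly compensates the asymmetry between the $-\tfrac12$ coefficients (for $\ell<j$) and the $\pm\tfrac12$ coefficients (for $\ell\ge j$), so that $(a_1,\dots,a_i)$ is reconstituted on the right-hand factor. No new ideas beyond those already used for Lemmas \ref{Lem:prep-1} and \ref{Lem:prep-2} are needed; as an alternative one could obtain the case $i\ge j$ from the case $i=j-1$ by induction on $i$ via the recursion $(a_1,\dots,a_i)=(a_1,\dots,b_{i-1})a_i+(a_1,\dots,a_{i-1})$, but this route is not necessary.
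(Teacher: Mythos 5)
Your proof is correct and follows essentially the same route as the paper's: the expansion \eqref{eq:Convenient-v2-a}, the left Leibniz rule \eqref{Eq:inder}, and Lemma \ref{Lem:prep-1} together with \eqref{eq:Euler-double-bracket.a} applied termwise. The paper splits off $i>j$, $i=j$ and $i=j-1$ as three separate computations (and leaves $i>j=1$ to the reader), whereas you run the whole range $i\ge j$ through a single bookkeeping argument in which $i=j$ is simply the case of an empty $\ell>j$ range; this is a mild streamlining of the same argument, not a different method.
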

\begin{proof}
 We first assume that $i>j>1$, and we leave the case $i>j$ with $j=1$ to the reader. Using \eqref{eq:Convenient-v2-a} and \eqref{eq:Lem-prep-1.b}, we can write $\lr{(a_1,\dots, a_i),a_j}$ as 
\begin{small}
\begin{align*}
  \sum_{\ell=2}^i \Big[\lr{(a_1,&\ldots,b_{\ell-1}),a_j}\ast a_\ell + (a_1,\ldots,b_{\ell-1})\ast \lr{a_\ell,a_j}\Big]\,+\lr{a_1,a_j} \\
=& \frac12\sum_{\ell=2}^{j-1} \Big(e_2 \otimes (a_1,\dots, b_{\ell-1}) a_j - (a_1,\dots, b_{\ell-1})\otimes a_j \Big) \ast a_\ell \\
&+\frac12 \Big(e_2 \otimes (a_1,\dots, b_{j-1}) a_j - (a_1,\dots, b_{j-1})\otimes a_j + 2 e_2 \otimes (a_1,\ldots,a_{j-1}) \Big) \ast a_j \\
&-\frac12\sum_{\ell=j+1}^i \Big(e_2 \otimes (a_1,\dots, b_{\ell-1}) a_j + (a_1,\dots, b_{\ell-1})\otimes a_j \Big) \ast a_\ell \\
 &-\frac12\sum_{\ell=2}^{j-1} (a_1,\ldots,b_{\ell-1})\ast  \Big( a_\ell \otimes a_j + a_j \otimes a_\ell\Big) \\
&+\frac12\sum_{\ell=j+1}^{i} (a_1,\ldots,b_{\ell-1})\ast  \Big( a_\ell \otimes a_j + a_j \otimes a_\ell\Big) 
-\frac12  (a_1 \otimes a_j + a_j \otimes a_1) \\
 =&\frac12a_j\otimes \left[\sum_{\ell=2}^i (a_1,\ldots,b_{\ell-1})a_\ell\,+a_1\right]
 -\frac12  \left[\sum_{\ell=2}^i (a_1,\ldots,b_{\ell-1})a_\ell\,+a_1\right] \otimes a_j \\
 &+ a_j \otimes \left[ (a_1,\ldots,a_{j-1}) -\sum_{\ell=2}^{j-1} (a_1,\ldots,b_{\ell-1})a_\ell -a_1 \right] \\
=& \frac12\Big(a_j \otimes (a_1,\dots, a_i) - (a_1,\dots, a_i) \otimes a_j\Big)\,.
 \end{align*}
\end{small}

 Next, let $i=j$. If $j=1$, $\lr{a_1,a_1}=0$ and we are done. If $j\neq 1$, we compute $\lr{(a_1,\dots, a_j),a_j}$ as in the previous case and find 
\begin{small}
\begin{align*}
  \sum_{\ell=2}^j \Big[\lr{(a_1,&\ldots,b_{\ell-1}),a_j}\ast a_\ell + (a_1,\ldots,b_{\ell-1})\ast \lr{a_\ell,a_j}\Big]\,+\lr{a_1,a_j} \\
=& \frac12\sum_{\ell=2}^{j-1} \Big(e_2 \otimes (a_1,\dots, b_{\ell-1}) a_j - (a_1,\dots, b_{\ell-1})\otimes a_j \Big) \ast a_\ell \\
&+\frac12 \Big(e_2 \otimes (a_1,\dots, b_{j-1}) a_j - (a_1,\dots, b_{j-1})\otimes a_j + 2 e_2 \otimes (a_1,\ldots,a_{j-1}) \Big) \ast a_j \\
 &-\frac12\sum_{\ell=2}^{j-1} (a_1,\ldots,b_{\ell-1})\ast  \Big( a_\ell \otimes a_j + a_j \otimes a_\ell\Big) 
-\frac12  (a_1 \otimes a_j + a_j \otimes a_1) \\
 =&\frac12a_j\otimes \left[(a_1,\dots, b_{j-1}) a_j + 2 (a_1,\ldots,a_{j-1})-\sum_{\ell=2}^{j-1} (a_1,\ldots,b_{\ell-1})a_\ell\,-a_1\right] \\
& -\frac12  \left[\sum_{\ell=2}^j (a_1,\ldots,b_{\ell-1})a_\ell\,+a_1\right] \otimes a_j \\
=&\frac12a_j\otimes \Big[(a_1,\dots, b_{j-1}) a_j +  (a_1,\ldots,a_{j-1}) \Big] 
-\frac12  (a_1,\dots, a_j) \otimes a_j \\
=& \frac12\Big(a_j \otimes (a_1,\dots, a_j) - (a_1,\dots, a_j) \otimes a_j\Big)\,,
 \end{align*}
\end{small}%
where we needed to use the recursive relation for $(a_1,\ldots,a_{j})$ in the last line.  
 
 Finally, for $i=j-1$ with $j\neq 1$, we get for $\lr{(a_1,\dots, a_{j-1}),a_j}$ that 
\begin{small}
\begin{align*}
 \sum_{\ell=2}^{j-1} \Big[\lr{(a_1,&\ldots,b_{\ell-1}),a_j}\ast a_\ell + (a_1,\ldots,b_{\ell-1})\ast \lr{a_\ell,a_j}\Big]\,+\lr{a_1,a_j} \\
=& \frac12\sum_{\ell=2}^{j-1} \Big(e_2 \otimes (a_1,\dots, b_{\ell-1}) a_j - (a_1,\dots, b_{\ell-1})\otimes a_j \Big) \ast a_\ell \\
 &-\frac12\sum_{\ell=2}^{j-1} (a_1,\ldots,b_{\ell-1})\ast  \Big( a_\ell \otimes a_j + a_j \otimes a_\ell\Big) 
-\frac12  (a_1 \otimes a_j + a_j \otimes a_1) \\
 =&-\frac12a_j\otimes \Big[ \sum_{\ell=2}^{j-1} (a_1,\ldots,b_{\ell-1})a_\ell\,+ a_1\Big] 
 -\frac12  \left[\sum_{\ell=2}^{j-1} (a_1,\ldots,b_{\ell-1})a_\ell\,+a_1\right] \otimes a_j \\
=& - \frac12\Big(a_j \otimes (a_1,\dots, a_{j-1}) + (a_1,\dots, a_{j-1}) \otimes a_j\Big)\,.\qedhere
 \end{align*}
\end{small}%
\end{proof}

We now perform computations in $\Bloc$ involving the elements $\{a_j'\mid1\leq j\leq n\}$ for which we use the definition in \eqref{Eq:Locgen2}. 
\begin{lem} \label{Lem:prep-4}
 The double bracket on $\Bloc$ is such that
 \begin{equation*}
\begin{aligned}
\lr{a_i',a_j}&=
\begin{cases} 
\frac12\Big(a_i' \otimes a_j + a_j \otimes a_i' \Big)\, , &\mbox{if } i>  j,  \\ 
\frac12\Big(a_j' \otimes a_j + a_j \otimes a_j' \Big) - a_j' \otimes a_j' \,, &\mbox{if } i=j.  \\ 
\end{cases}  
\end{aligned}
\end{equation*}
\end{lem}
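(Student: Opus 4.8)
The plan is to compute $\lr{a_i',a_j}$ directly from the definition \eqref{Eq:Locgen2} of $a_i'$, reducing everything to the preparatory identities of Lemmas \ref{Lem:prep-1} and \ref{Lem:prep-3} and to the basic brackets \eqref{eq:Euler-double-bracket}. The case $i=1$ is trivial: $a_1'=a_1$, so $\lr{a_1',a_1}=\lr{a_1,a_1}=0$ by \eqref{eq:Euler-double-bracket.a}, which agrees with $\tfrac12(a_1'\otimes a_1+a_1\otimes a_1')-a_1'\otimes a_1'=a_1\otimes a_1-a_1\otimes a_1=0$.

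For $i\geq 2$, I would set $P:=(a_1,\dots,b_{i-1})$ and $Q:=(a_1,\dots,a_{i-1})$, so that $a_i'=a_i+P^{-1}Q$, with $P$ invertible in $\Bloc$ since it is among the elements \eqref{Eq:invLocB}. Applying the left Leibniz rule \eqref{Eq:inder} gives
\begin{equation*}
\lr{a_i',a_j}=\lr{a_i,a_j}+\lr{P^{-1},a_j}\ast Q+P^{-1}\ast\lr{Q,a_j}\,,
\end{equation*}
and I would treat the three summands in turn. The first is read off from \eqref{eq:Euler-double-bracket.a}; the third uses Lemma \ref{Lem:prep-3} for $\lr{Q,a_j}$; and for $\lr{P^{-1},a_j}$ I would combine cyclic antisymmetry \eqref{Eq:cycanti} with the inverse rule $\lr{a_j,P^{-1}}=-P^{-1}\lr{a_j,P}P^{-1}$ (legitimate because $PP^{-1}=e_2$ and $\lr{a_j,e_2}=0$, exactly as in the discussion preceding Theorem \ref{Thm:MAIN}), and then \eqref{eq:Lem-prep-1.b} for $\lr{P,a_j}$. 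Since the continuant index of $P$ and $Q$ is $i-1$, the hypothesis $i>j$ puts us in the ``$\geq$'' branches of Lemmas \ref{Lem:prep-1} and \ref{Lem:prep-3}, whereas the hypothesis $i=j$ puts us in the ``$i=j-1$''-type branches, which carry extra terms involving $e_2\otimes Q$.

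It then remains to substitute these values, simplify the tensors using $P^{-1}P=PP^{-1}=e_2$ and $e_2Q=Q$ together with the definitions of the inner ($\ast$) and outer bimodule actions, and collect terms. For $i>j$ all the terms containing $P^{-1}Q$ recombine with $a_i$ into $a_i'=a_i+P^{-1}Q$, producing $\tfrac12(a_i'\otimes a_j+a_j\otimes a_i')$. For $i=j$ the extra contributions of the ``$i=j-1$'' branches assemble into $-P^{-1}Q\otimes P^{-1}Q-\tfrac12(a_i\otimes P^{-1}Q+P^{-1}Q\otimes a_i)$, which is precisely $\tfrac12(a_j'\otimes a_j+a_j\otimes a_j')-a_j'\otimes a_j'$ once one expands $a_j'=a_j+P^{-1}Q$.

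I expect the main obstacle to be purely the bookkeeping: keeping the two tensor slots and the inner/outer multiplications straight while the idempotents $e_1,e_2$ are inserted and cancelled, and correctly matching the index comparisons to the branches of the preparatory lemmas — in particular recognising that the diagonal correction $-a_j'\otimes a_j'$ in the $i=j$ case is exactly the residue of the $e_2\otimes(a_1,\dots,a_{i-1})$ term appearing in \eqref{eq:Lem-prep-1.b}. No new conceptual input beyond the lemmas already established is required.
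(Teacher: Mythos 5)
Your proposal is correct and matches the paper's argument almost line for line: the paper also expands $a_i'=a_i+(a_1,\dots,b_{i-1})^{-1}(a_1,\dots,a_{i-1})$ via the left Leibniz rule, handles the inverse through $\lr{P^{-1},a_j}=-P^{-1}\ast\lr{P,a_j}\ast P^{-1}$, and substitutes the $\ell\geq j$ branches of Lemmas \ref{Lem:prep-1} and \ref{Lem:prep-3} when $i>j$ and the $\ell=j-1$ branches when $i=j$, after which everything regroups around $a_i'$. The only caveat is that your phrase about the $-a_j'\otimes a_j'$ term being ``exactly the residue'' of the extra $e_2\otimes(a_1,\dots,a_{i-1})$ term in \eqref{eq:Lem-prep-1.b} understates the bookkeeping slightly (the sign flips in the $\ell=j-1$ branches also rearrange the $\tfrac12$-terms), but the final identification is correct and the computation goes through as you describe.
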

\begin{proof}
 We will repeatedly use \eqref{eq:Euler-double-bracket.a}, \eqref{eq:Lem-prep-1.b} and Lemma \ref{Lem:prep-3}. If $i>j>1$, we have that $\lr{a_i',a_j}$ equals  
 \begin{small}
\begin{align*}
& \dgal{\Big(a_i+(a_1,\ldots,b_{i-1})^{-1}(a_1,\ldots,a_{i-1})\Big),a_j} \\
=& \frac12 (a_i \otimes a_j + a_j \otimes a_i) 
+\frac12 (a_1,\ldots,b_{i-1})^{-1}\ast \Big(a_j \otimes (a_1,\ldots,a_{i-1}) - (a_1,\ldots,a_{i-1}) \otimes a_j  \Big) \\
&-\!\frac12 (a_1,\ldots,b_{i-1})^{-1}\!\!\ast \!\big(\!- e_2\! \otimes \!(a_1,\ldots,b_{i-1})a_j \!- (a_1,\ldots,b_{i-1}) \!\otimes\! a_j  \big) \!\ast\! (a_1,\ldots,b_{i-1})^{-1}(a_1,\ldots,a_{i-1}) \\
=&\frac12 \Big[a_i+(a_1,\ldots,b_{i-1})^{-1}(a_1,\ldots,a_{i-1})\Big]\otimes a_j + \frac12 a_j \otimes \Big[a_i+(a_1,\ldots,b_{i-1})^{-1}(a_1,\ldots,a_{i-1})\Big]\\
=& \frac12\Big(a_i' \otimes a_j + a_j \otimes a_i' \Big) \,.
 \end{align*}
\end{small}%
If $i=j=1$, we have $\lr{a_1',a_1}=\lr{a_1,a_1}=0$. If $i=j$ is distinct from $1$,  we have that $\lr{a_j',a_j}$ can be computed as follows 
 \begin{small}
\begin{align*}
 &\dgal{\Big(a_j+(a_1,\ldots,b_{j-1})^{-1}(a_1,\ldots,a_{j-1})\Big),a_j} \\
=& 
-\frac12 (a_1,\ldots,b_{j-1})^{-1}\ast \Big(a_j \otimes (a_1,\ldots,a_{j-1}) + (a_1,\ldots,a_{j-1}) \otimes a_j  \Big) \\
&-\frac12 (a_1,\ldots,b_{j-1})^{-1}\ast \Big( e_2 \otimes (a_1,\ldots,b_{j-1})a_j - (a_1,\ldots,b_{j-1}) \otimes a_j \\
&\qquad \qquad \qquad \qquad \qquad \quad + 2 e_2 \otimes (a_1,\ldots,a_{j-1})\Big) \ast (a_1,\ldots,b_{j-1})^{-1}(a_1,\ldots,a_{j-1}) \\
=&-\frac12 (a_1,\ldots,b_{j-1})^{-1}(a_1,\ldots,a_{j-1})\otimes a_j - \frac12 a_j \otimes (a_1,\ldots,b_{j-1})^{-1}(a_1,\ldots,a_{j-1})\\
&-(a_1,\ldots,b_{j-1})^{-1}(a_1,\ldots,a_{j-1}) \otimes (a_1,\ldots,b_{j-1})^{-1}(a_1,\ldots,a_{j-1}) \\
=& -\frac12\Big((a_j' - a_j) \otimes a_j + a_j \otimes (a_j'-a_j) \Big) - (a_j'-a_j)\otimes (a_j'-a_j) \\
=&\frac12\Big(a_j' \otimes a_j + a_j \otimes a_j' \Big) - a_j' \otimes a_j' \,.\qedhere
 \end{align*}
\end{small}%
\end{proof}

\subsubsection{Proof of Proposition \ref{Pr:Alt-qHam}} 

We show that we can write $\Phi$ as \eqref{Eq:Alt-momap}, then we show that the double bracket takes the form \eqref{eq:Alt-double-bracket} on the generators $\{ a_\ell',b_\ell'\}$.

\textbf{Step 1: Rewriting $\Phi$.}  
For $1\leq \ell \leq n$, we prove that, 
\begin{equation} \label{Eq:contInd}
\begin{aligned}
 (a_1,\ldots,b_\ell)&=(e_2+a_1'b_1')\cdots(e_2+a_\ell' b_\ell')\,, 
 \\
 (b_\ell,\ldots,a_1)&= (e_1+b_\ell'a_\ell')\cdots (e_1+b_1' a_1')\,.
 \end{aligned}
\end{equation}
This gives in particular that $\Phi$ takes the form \eqref{Eq:Alt-momap} when $\ell=n$. We prove \eqref{Eq:contInd} by induction. The case $\ell=1$ is obvious. If we assume that \eqref{Eq:contInd} holds for $\ell-1$, we have 
\begin{equation*}
 \begin{aligned}
  (a_1,\ldots,b_\ell)=&(a_1,\ldots,b_{\ell-1})(e_2+a_\ell b_\ell) + (a_1,\ldots,a_{\ell-1})b_\ell\\
  =&(a_1,\ldots,b_{\ell-1}) (e_2+\underbrace{[a_\ell+(a_1,\ldots,b_{\ell-1})^{-1}(a_1,\ldots,a_{\ell-1})]}_{=\,a_\ell'\text{ by }\eqref{Eq:Locgen2}}b_\ell)\\
  =&(e_2+a_1'b_1')\cdots(e_2+a_{\ell-1}' b_{\ell-1}')(e_2+a_\ell' b_\ell')\,, \\
 (b_\ell,\ldots,a_1)=&(e_1+b_\ell a_\ell) (b_{\ell-1},\ldots,a_1) + (b_{\ell-1},\ldots,a_1)\\
 =&(e_1+b_\ell \underbrace{[a_\ell+(a_{\ell-1},\ldots,a_1)(b_{\ell-1},\ldots,a_1)^{-1}]}_{=\,a_\ell'\text{ by }\eqref{Eq:Locgen3}})    (b_{\ell-1},\ldots,a_1) \\
 =&(e_1+b_\ell'a_\ell')(e_1+b_{\ell-1}'a_{\ell-1}')\cdots (e_1+b_1'a_1')\,.
 \end{aligned}
\end{equation*}

\textbf{Step 2: The double bracket $\lr{b_i',b_j'}$.}   It is clear that \eqref{eq:Alt-double-bracket.b} holds since it is just \eqref{eq:Euler-double-bracket.b} because by definition $b'_\ell=b_\ell$. 

\textbf{Step 3: The double bracket $\lr{a_i',b_j'}$.} We have to check \eqref{eq:Alt-double-bracket.c}. It directly holds for $i=1$ by \eqref{eq:Euler-double-bracket.c} since $\lr{a_1',b_j'}=\lr{a_1,b_j}$. So, hereafter, we assume $i\neq 1$. Then,
we need to compute 
\begin{equation*}
\begin{aligned}
  \lr{a_i',b_j'}=&\dgal{\Big(a_i+(a_1,\ldots,b_{i-1})^{-1}(a_1,\ldots,a_{i-1})\Big), b_j} \\
  =& \lr{a_i,b_j} + (a_1,\ldots,b_{i-1})^{-1}\ast \lr{(a_1,\ldots,a_{i-1}) ,b_j} \\
  &- (a_1,\ldots,b_{i-1})^{-1}\ast \lr{(a_1,\ldots,b_{i-1}),b_j} \ast (a_1,\ldots,b_{i-1})^{-1} (a_1,\ldots,a_{i-1}) \,,
\end{aligned}
\end{equation*}
and this can be obtained from \eqref{eq:Euler-double-bracket.c} and Lemmae \ref{Lem:prep-1} and \ref{Lem:prep-2}. If $i<j$, we find 
\begin{small}
\begin{align*}
\lr{a_i',b_j'}=&\frac12 (e_1 \otimes a_ib_j+b_j a_i \otimes e_2)  \\ 
&+ \frac12  \Big(b_j (a_1,\ldots,a_{i-1}) \otimes (a_1,\ldots,b_{i-1})^{-1} + e_1 \otimes (a_1,\ldots,b_{i-1})^{-1}(a_1,\ldots,a_{i-1}) b_j \Big) \\
&-\frac12  \Big(b_j(a_1,\ldots,a_{i-1}) \otimes (a_1,\ldots,b_{i-1})^{-1} - b_j (a_1,\ldots,b_{i-1})^{-1}(a_1,\ldots,a_{i-1}) \otimes e_2 \Big) \\
=&\frac12 e_1 \otimes \Big[a_i+(a_1,\ldots,b_{i-1})^{-1}(a_1,\ldots,a_{i-1})\Big] b_j \\
&+\frac12\, b_j \Big[a_i+(a_1,\ldots,b_{i-1})^{-1}(a_1,\ldots,a_{i-1}) \Big] \otimes e_2\\
=& \frac12 (e_1 \otimes a_i' b_j' + b_j' a_i'\otimes e_2)\,,
\end{align*}
\end{small}%
as expected. The proof is basically the same for $i=j$ as we add a term $+e_1 \otimes e_2$ coming from $\lr{a_i,b_i}$. 

If $i=j+1$, we get
\begin{small}
\begin{align*}
\lr{a_{j+1}',b_j'}
=&-\frac12 (e_1 \otimes a_{j+1}b_j+b_j a_{j+1} \otimes e_2) - e_1 \otimes e_2  \\ 
&+ \frac12  \Big(b_j (a_1,\ldots,a_{j}) \otimes (a_1,\ldots,b_{j})^{-1} + e_1 \otimes (a_1,\ldots,b_{j})^{-1}(a_1,\ldots,a_{j}) b_j\Big) \\ 
&+ e_1 \otimes (a_1,\ldots,b_j)^{-1} (a_1,\ldots,b_{j-1})\\
&-\frac12  \Big(b_j(a_1,\ldots,a_{j}) \otimes (a_1,\ldots,b_{j})^{-1} + b_j (a_1,\ldots,b_{j})^{-1}(a_1,\ldots,a_{j}) \otimes e_2 \Big) \\
=&-\frac12 e_1 \!\otimes \!\Big[a_{j+1}+(a_1,\ldots,b_{j})^{-1}(a_1,\ldots,a_{j})\Big] b_j  \\
&-\frac12\, b_j \Big[a_{j+1}+(a_1,\ldots,b_{j})^{-1}(a_1,\ldots,a_{j}) \Big]\! \otimes e_2\\
&+ e_1 \otimes (a_1,\ldots,b_{j})^{-1}\Big[ (a_1,\ldots,a_{j}) b_j + (a_1,\ldots,b_{j-1}) - (a_1,\ldots,b_{j})\Big] \\
=&- \frac12 (e_1 \otimes a_i' b_j' + b_j' a_i'\otimes e_2)\,,
\end{align*}
\end{small}%
where we used the recursive relation for $(a_1,\ldots, b_j)$. 

If $i>j+1$, we finally obtain 
\begin{small}
\begin{align*}
\lr{a_i',b_j'}
=&-\frac12 (e_1 \otimes a_ib_j+b_j a_i \otimes e_2)  \\ 
&+ \frac12  \Big(b_j (a_1,\ldots,a_{i-1}) \otimes (a_1,\ldots,b_{i-1})^{-1} - e_1 \otimes (a_1,\ldots,b_{i-1})^{-1}(a_1,\ldots,a_{i-1}) b_j \Big) \\
&-\frac12  \Big(b_j(a_1,\ldots,a_{i-1}) \otimes (a_1,\ldots,b_{i-1})^{-1} + b_j (a_1,\ldots,b_{i-1})^{-1}(a_1,\ldots,a_{i-1}) \otimes e_2 \Big) \\
=&-\frac12 e_1 \!\otimes \Big[a_i+(a_1,\ldots,b_{i-1})^{-1}(a_1,\ldots,a_{i-1})\Big] b_j\\
&-\frac12\, b_j \Big[a_i+(a_1,\ldots,b_{i-1})^{-1}(a_1,\ldots,a_{i-1}) \Big]\! \otimes e_2\\
=&- \frac12 (e_1 \otimes a_i' b_j' + b_j' a_i'\otimes e_2)\,,
\end{align*}
\end{small}%
as we wanted.

\textbf{Step 4: Interlude (some double brackets involving $a_i'$).} 
For the sake of clarity, let us rewrite the following two identities which were obtained in Lemma \ref{Lem:prep-4} and in \eqref{eq:Alt-double-bracket.b} with $b_j=b_j'$ (the latter has just been checked): 
\begin{subequations} \label{Eq:a-prim}
 \begin{align}
   \lr{a_i',b_j}=&-\frac12 \big(e_1 \otimes a_i' b_j + b_j a_i' \otimes e_2\big)\,, &  &\text{ for } i>j\,, \label{Eq:a-prim-1}\\
    \lr{a_i',a_j}=&\frac12 \big(a_i' \otimes a_j + a_j \otimes a_i'\big)\,, & &\text{ for } i>j\,. \label{Eq:a-prim-2}
 \end{align}
\end{subequations}

\begin{lem} \label{Lem:a-prim-1}
 The double bracket on $\Bloc$ is such that 
 \begin{equation}
  \lr{a_i',(a_j,b_j)}= \frac12 \Big( a_i' \otimes (a_j,b_j) - (a_j,b_j) a_i' \otimes e_2 \Big)\,, \quad \text{ for } i>j\,.
 \end{equation}
\end{lem}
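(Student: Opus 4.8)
The plan is to compute $\lr{a_i',(a_j,b_j)}$ directly from the Leibniz rule, using that $(a_j,b_j)=e_2+a_jb_j$ and the two elementary double brackets $\lr{a_i',a_j}$ and $\lr{a_i',b_j}$ from \eqref{Eq:a-prim}, both valid since $i>j$. First I would expand
\begin{equation*}
 \lr{a_i',(a_j,b_j)}=\lr{a_i',a_jb_j}=\lr{a_i',a_j}\,b_j+a_j\lr{a_i',b_j}\,,
\end{equation*}
using the right Leibniz rule \eqref{Eq:outder} (note $\lr{a_i',e_2}=0$ by $B$-linearity). Substituting \eqref{Eq:a-prim-2} and \eqref{Eq:a-prim-1} gives
\begin{equation*}
 \tfrac12\big(a_i'\otimes a_j+a_j\otimes a_i'\big)b_j-\tfrac12 a_j\big(e_1\otimes a_i'b_j+b_ja_i'\otimes e_2\big)\,.
\end{equation*}
Here one has to be careful with which component of the tensor product the outer multiplications act on: by \eqref{Eq:outder}, multiplying on the right by $b_j$ affects the second tensor factor, and multiplying on the left by $a_j$ affects the first. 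So the first term becomes $\tfrac12(a_i'\otimes a_jb_j+a_j\otimes a_i'b_j)$ and the second becomes $-\tfrac12(a_je_1\otimes a_i'b_j+a_jb_ja_i'\otimes e_2)=-\tfrac12(a_j\otimes a_i'b_j+a_jb_ja_i'\otimes e_2)$, where I used $a_je_1=a_j$. The two middle terms $\pm\tfrac12\,a_j\otimes a_i'b_j$ cancel, leaving
\begin{equation*}
 \tfrac12\,a_i'\otimes a_jb_j-\tfrac12\,a_jb_ja_i'\otimes e_2\,.
\end{equation*}

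To finish I would rewrite $a_jb_j=(a_j,b_j)-e_2$ in both surviving terms. In the first term, $a_i'\otimes a_jb_j=a_i'\otimes(a_j,b_j)-a_i'\otimes e_2$; but $a_i'\in e_2\Bloc e_1$ (inherited from $a_i=e_2a_ie_1$ and the correction term lying in $e_2\Bloc e_1$), so $a_i'\otimes e_2$ and $a_i'\otimes(a_j,b_j)$ already have the correct idempotent shape, and actually the term $-\tfrac12 a_i'\otimes e_2$ must be recombined. More cleanly: in the second term $a_jb_ja_i'\otimes e_2=(a_j,b_j)a_i'\otimes e_2-a_i'\otimes e_2$ using $e_2a_i'=a_i'$. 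So the expression equals
\begin{equation*}
 \tfrac12\,a_i'\otimes a_jb_j-\tfrac12(a_j,b_j)a_i'\otimes e_2+\tfrac12\,a_i'\otimes e_2
 =\tfrac12\,a_i'\otimes\big((a_j,b_j)-e_2\big)+\tfrac12\,a_i'\otimes e_2-\tfrac12(a_j,b_j)a_i'\otimes e_2\,,
\end{equation*}
and the two $\pm\tfrac12\,a_i'\otimes e_2$ terms against the $+\tfrac12\,a_i'\otimes a_jb_j$ reorganise precisely into $\tfrac12\big(a_i'\otimes(a_j,b_j)-(a_j,b_j)a_i'\otimes e_2\big)$, which is the claimed identity.

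This is a short bookkeeping argument with no real obstacle; the only point requiring care is tracking the outer-bimodule action in the Leibniz rule (right multiplication hits the right tensor leg, left multiplication the left leg) and using the idempotent relations $a_j e_1=a_j$, $e_2 a_i'=a_i'$ at the right moments so that the $e_2$-valued terms collapse correctly. I would present it as a two-line computation citing \eqref{Eq:outder}, \eqref{Eq:a-prim-1}, \eqref{Eq:a-prim-2}, and the identity $(a_j,b_j)=e_2+a_jb_j$.
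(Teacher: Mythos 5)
Your proof is correct and follows exactly the same route as the paper: apply the right Leibniz rule to $\lr{a_i',a_jb_j}$, substitute the two entries of \eqref{Eq:a-prim}, simplify using $a_je_1=a_j$, and then absorb the surviving $a_i'\otimes e_2$ terms using $e_2a_i'=a_i'$ to reassemble into Euler continuants. The paper presents this as the same two-line computation with the same cancellations, so there is nothing to add.
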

\begin{proof}
 Using \eqref{Eq:a-prim}, we get for $i>j$ that 
\begin{small}
\begin{align*}
\lr{a_i',(a_j,b_j)}=& \lr{a_i',a_jb_j} \\
=& \frac12 (a_i' \otimes a_j + a_j \otimes a_i')b_j -\frac12 a_j(e_1 \otimes a_i' b_j + b_j a_i' \otimes e_2) \\
=& \frac12  a_i' \otimes (e_2+a_j b_j) -\frac12 (e_2+a_j b_j) a_i' \otimes e_2\,.\qedhere
\end{align*}
\end{small}
\end{proof}

\begin{lem} \label{Lem:a-prim-2}
 The double bracket on $\Bloc$ is such that 
 \begin{equation}
  \lr{a_i',(a_1,\ldots,b_j)}= \frac12 \Big( a_i' \otimes (a_1,\ldots,b_j) - (a_1,\ldots,b_j) a_i' \otimes e_2 \Big)\,, \quad \text{ for } i>j\,.
 \end{equation}
\end{lem}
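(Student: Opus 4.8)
The plan is to prove the identity by induction on $j$, carrying along the auxiliary ``odd-length'' counterpart
\begin{equation*}
\lr{a_i',(a_1,\dots,a_k)} = \tfrac12\big(a_i'\otimes(a_1,\dots,a_k) + (a_1,\dots,a_k)\otimes a_i'\big), \qquad i>k .
\end{equation*}
The base case $j=1$ is immediate: Lemma~\ref{Lem:a-prim-1} specialised to $j=1$ reads exactly $\lr{a_i',(a_1,b_1)}=\tfrac12\big(a_i'\otimes(a_1,b_1)-(a_1,b_1)a_i'\otimes e_2\big)$ for $i>1$.

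For the inductive step I would first establish the auxiliary identity for $(a_1,\dots,a_k)$ with $k\le j-1$. Using the expansion $(a_1,\dots,a_k)=\sum_{\ell=2}^k(a_1,\dots,b_{\ell-1})a_\ell+a_1$ from \eqref{eq:Convenient-v2-a}, I apply the right Leibniz rule \eqref{Eq:outder} to each summand and insert: the inductive hypothesis for $\lr{a_i',(a_1,\dots,b_{\ell-1})}$ (legitimate since $\ell-1\le k-1<k<i$), Lemma~\ref{Lem:prep-4} for $\lr{a_i',a_\ell}$ and $\lr{a_i',a_1}$ (legitimate since $\ell\le k<i$), and the idempotent relations $e_2a_\ell=a_\ell$. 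A short computation shows each summand $\lr{a_i',(a_1,\dots,b_{\ell-1})a_\ell}$ collapses to $\tfrac12\big(a_i'\otimes(a_1,\dots,b_{\ell-1})a_\ell+(a_1,\dots,b_{\ell-1})a_\ell\otimes a_i'\big)$, the cross terms cancelling in pairs; summing over $\ell$ and re-assembling via \eqref{eq:Convenient-v2-a} yields the auxiliary identity.

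Then I turn to $(a_1,\dots,b_j)$ itself, using the recursion $(a_1,\dots,b_j)=(a_1,\dots,b_{j-1})(a_j,b_j)+(a_1,\dots,a_{j-1})b_j$ (as obtained in the first step of the proof of Proposition~\ref{Pr:Alt-qHam}, or directly from \eqref{eq:def-Euler-continuant-very-beginning} and \eqref{eq:Convenient-v2-a}). Expanding $\lr{a_i',-}$ by the right Leibniz rule and feeding in the inductive hypothesis for $\lr{a_i',(a_1,\dots,b_{j-1})}$ ($j-1<i$), Lemma~\ref{Lem:a-prim-1} for $\lr{a_i',(a_j,b_j)}$ ($j<i$), the auxiliary identity for $\lr{a_i',(a_1,\dots,a_{j-1})}$ ($j-1<i$), and the Step~3 formula \eqref{Eq:a-prim-1} for $\lr{a_i',b_j}$ ($j<i$), together with $e_2(a_j,b_j)=(a_j,b_j)$ and $(a_1,\dots,a_{j-1})e_1=(a_1,\dots,a_{j-1})$, I find that the terms proportional to $(a_1,\dots,b_{j-1})a_i'\otimes\,\cdot\,$ and to $(a_1,\dots,a_{j-1})\otimes a_i'b_j$ cancel, and the surviving terms regroup, via the same recursion, into $\tfrac12\big(a_i'\otimes(a_1,\dots,b_j)-(a_1,\dots,b_j)a_i'\otimes e_2\big)$.

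The main obstacle here is not conceptual but organisational: one must keep track of the many cross terms produced by the Leibniz rule and verify that they telescope, and one must check at each step that the index restrictions $i>\,\cdot\,$ needed to invoke Lemmae~\ref{Lem:prep-4}, \ref{Lem:a-prim-1}, the Step~3 formula, and the inductive hypothesis are all satisfied --- which they are, precisely because $i>j$ forces $i>\ell$ and $i>\ell-1$ for every $\ell\le j$. It is to keep this telescoping transparent that I would run the induction simultaneously with the auxiliary odd-length identity rather than deriving it separately.
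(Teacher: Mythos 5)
Your proof is correct, but it follows a genuinely different route from the paper's. The paper avoids the odd-length identity (Lemma~\ref{Lem:a-prim-3}) entirely inside the proof of Lemma~\ref{Lem:a-prim-2}: it expands $(a_1,\dots,b_j)$ via the fuller decomposition \eqref{eq:Convenient-formula-Euler-cont-b}, namely $(a_1,\dots,b_j)=(a_1,\dots,b_{j-1})(a_j,b_j)+\sum_{\ell=2}^{j-1}(a_1,\dots,b_{\ell-1})a_\ell b_j+a_1b_j$, so that every continuant appearing in the Leibniz expansion is even-length and covered by the inductive hypothesis, Lemma~\ref{Lem:a-prim-1}, or \eqref{Eq:a-prim}. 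Lemma~\ref{Lem:a-prim-3} is then proved afterward, as a consequence of Lemma~\ref{Lem:a-prim-2}, by a second, independent induction. You instead use the two-term recursion $(a_1,\dots,b_j)=(a_1,\dots,b_{j-1})(a_j,b_j)+(a_1,\dots,a_{j-1})b_j$ and run the two lemmas as a simultaneous induction: at stage $j$, first derive the odd-length identity for all $k\le j-1$ from the even-length identity for indices $\le j-2$ via \eqref{eq:Convenient-v2-a}, then feed both into the two-term recursion to get the even-length identity at $j$. This is well-founded (the index strictly decreases at every appeal to the inductive hypothesis, and the constraints $i>j$ propagate to $i>\ell-1$ and $i>k$ for all $\ell,k\le j$), and I checked that the cross terms do cancel as you claim, in both substeps. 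The trade-off: your route keeps the Leibniz expansions short (two summands instead of $j$), at the cost of organising a coupled induction; the paper's route keeps the two lemmas logically independent, at the cost of a longer telescoping sum. Both are valid, and yours is arguably the more economical if one needs both Lemmas~\ref{Lem:a-prim-2} and~\ref{Lem:a-prim-3} anyway, as the proof of Proposition~\ref{Pr:Alt-qHam} does.
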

\begin{proof}
 We prove the result by induction on $j$. Note that the case $j=1$ is directly obtained from Lemma \ref{Lem:a-prim-1}. If $i>j>1$, by \eqref{eq:Convenient-formula-Euler-cont-b},
 \begin{small}
\begin{align*}
\lr{a_i',(a_1,\ldots,b_j)}=\lr{a_i',(a_1,\ldots,b_{j-1})(a_j,b_j)} + \sum_{\ell=2}^{j-1}\lr{a_i',(a_1,\ldots,b_{\ell-1})a_\ell b_j } + \lr{a_i',a_1 b_j}
\end{align*}
\end{small}%
which can be computed using the induction hypothesis together with Lemma \ref{Lem:a-prim-1} and \eqref{Eq:a-prim} as 
 \begin{small}
\begin{align*}
=&\frac12 \Big( a_i' \otimes (a_1,\ldots,b_{j-1}) - (a_1,\ldots,b_{j-1}) a_i' \otimes e_2 \Big) (a_j,b_j) \\
&+\frac12 (a_1,\ldots,b_{j-1})\Big( a_i' \otimes (a_j,b_j) - (a_j,b_j) a_i' \otimes e_2 \Big) \\
&+\frac12 \sum_{\ell=2}^{j-1} \Big( a_i' \otimes (a_1,\ldots,b_{\ell-1}) - (a_1,\ldots,b_{\ell-1}) a_i' \otimes e_2 \Big)    a_\ell b_j  \\
&+\frac12 \sum_{\ell=2}^{j-1} (a_1,\ldots,b_{\ell-1})  \Big(a_i' \otimes a_\ell + a_\ell \otimes a_i'\Big)   b_j
- \frac12 \sum_{\ell=2}^{j-1} (a_1,\ldots,b_{\ell-1}) a_\ell \Big(e_1 \otimes a_i' b_j + b_j a_i' \otimes e_2\Big) \\
&+ \frac12 (a_i' \otimes a_1 + a_1 \otimes a_i')   b_j - \frac12 a_1 (e_1 \otimes a_i' b_j + b_j a_i' \otimes e_2) \\
=&\frac12 a_i' \otimes \Big[(a_1,\ldots,b_{j-1})(a_j,b_j)  + \sum_{\ell=2}^{j-1} (a_1,\ldots,b_{\ell-1})a_\ell b_j + a_1 b_j  \Big] \\
&-\frac12 \Big[ (a_1,\ldots,b_{j-1})(a_j,b_j)  + \sum_{\ell=2}^{j-1} (a_1,\ldots,b_{\ell-1})a_\ell b_j + a_1 b_j \Big] a_i' \otimes e_2\,,
\end{align*}
\end{small}%
and this is the claimed result due to \eqref{eq:Convenient-formula-Euler-cont-b}. 
\end{proof}

\begin{lem} \label{Lem:a-prim-3}
 The double bracket on $\Bloc$ is such that 
 \begin{equation}
  \lr{a_i',(a_1,\ldots,a_j)}= \frac12 \Big( a_i' \otimes (a_1,\ldots,a_j) + (a_1,\ldots,a_j) \otimes a_i' \Big)\,, \quad \text{ for } i>j\,.
 \end{equation}
\end{lem}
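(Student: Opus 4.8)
The plan is to reduce the statement to Lemma~\ref{Lem:a-prim-2} together with \eqref{Eq:a-prim-2} by expanding the Euler continuant $(a_1,\ldots,a_j)$ via the identity \eqref{eq:Convenient-v2-a}, namely $(a_1,\ldots,a_j)=\sum_{\ell=2}^{j}(a_1,\ldots,b_{\ell-1})a_\ell+a_1$, and then differentiating term by term with the right Leibniz rule \eqref{Eq:outder}. The base case $j=1$ is nothing but \eqref{Eq:a-prim-2}, so I would assume $j\geq 2$ and $i>j$ throughout.

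First I would write $\lr{a_i',(a_1,\ldots,a_j)}$ as $\sum_{\ell=2}^{j}\big[\lr{a_i',(a_1,\ldots,b_{\ell-1})}\,a_\ell+(a_1,\ldots,b_{\ell-1})\,\lr{a_i',a_\ell}\big]+\lr{a_i',a_1}$, using \eqref{Eq:outder} and the outer bimodule structure on $\Bloc\otimes\Bloc$. Since $i>j\geq\ell$ for every index occurring in the sum, the hypotheses of Lemma~\ref{Lem:a-prim-2} (which requires $i>\ell-1$) and of \eqref{Eq:a-prim-2} (which requires $i>\ell$) are met, so each summand can be replaced by its explicit value. The key observation is then that, in the $\ell$-th summand, the cross-term $-\frac12(a_1,\ldots,b_{\ell-1})a_i'\otimes a_\ell$ produced by Lemma~\ref{Lem:a-prim-2} (after multiplying the right tensor factor by $a_\ell$ and using $e_2a_\ell=a_\ell$) is cancelled by the cross-term $+\frac12(a_1,\ldots,b_{\ell-1})a_i'\otimes a_\ell$ produced by \eqref{Eq:a-prim-2} (after multiplying the left tensor factor by $(a_1,\ldots,b_{\ell-1})$), leaving exactly $\frac12\,a_i'\otimes(a_1,\ldots,b_{\ell-1})a_\ell+\frac12\,(a_1,\ldots,b_{\ell-1})a_\ell\otimes a_i'$. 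Adding the $\ell=1$ contribution $\frac12(a_i'\otimes a_1+a_1\otimes a_i')$ and re-summing with \eqref{eq:Convenient-v2-a} applied in the reverse direction recovers $\frac12\big(a_i'\otimes(a_1,\ldots,a_j)+(a_1,\ldots,a_j)\otimes a_i'\big)$, as desired.

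This argument is entirely parallel to the proof of Lemma~\ref{Lem:a-prim-2}, and I do not expect a genuine obstacle: the only delicate point is the bookkeeping of the two bimodule actions in the Leibniz rule, so that the telescoping of the cross-terms is carried out consistently --- in particular, making sure that the same expansion \eqref{eq:Convenient-v2-a} is used both to break up $(a_1,\ldots,a_j)$ at the start and to reassemble it at the end. One could alternatively run an induction on $j$ using \eqref{eq:Convenient-formula-Euler-cont-b} in the style of Lemma~\ref{Lem:a-prim-2}, but the direct expansion above is shorter and avoids invoking the recursive relation for $(a_1,\ldots,a_j)$.
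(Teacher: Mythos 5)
Your proof is correct and takes essentially the same route as the paper: expanding $(a_1,\ldots,a_j)$ via \eqref{eq:Convenient-v2-a}, applying the Leibniz rule, and feeding in Lemma~\ref{Lem:a-prim-2} and \eqref{Eq:a-prim-2} so that the cross-terms cancel before reassembling. (The paper's proof is announced as ``by induction on $j$'' but in fact never uses the inductive hypothesis beyond the $j=1$ base case, so it is really the same direct term-by-term expansion you carried out.)
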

\begin{proof}
We prove the result by induction on $j$, and the case $j=1$ is a special case of \eqref{Eq:a-prim-2}. 
We can use \eqref{eq:Convenient-v2-a} to write 
 \begin{small}
\begin{align*}
 \lr{a_i',(a_1,\ldots,a_j)}=\sum_{\ell=2}^{j}\lr{a_i',(a_1,\ldots,b_{\ell-1})a_\ell} + \lr{a_i',a_1}\,,
\end{align*}
\end{small}%
and due to \eqref{Eq:a-prim-2} and Lemma \ref{Lem:a-prim-2} we find 
 \begin{small}
\begin{align*}
=&\frac12 \sum_{\ell=2}^{j}  \Big( a_i' \otimes (a_1,\ldots,b_{\ell-1}) - (a_1,\ldots,b_{\ell-1}) a_i' \otimes e_2 \Big) a_\ell \\
&+\frac12\sum_{\ell=2}^{j}   (a_1,\ldots,b_{\ell-1}) \Big( a_i' \otimes a_\ell + a_\ell \otimes a_i' \Big) 
+ \Big( a_i' \otimes a_1 + a_1 \otimes a_i' \Big) \\
=&\frac12 a_i' \otimes \Big[\sum_{\ell=2}^{j}(a_1,\ldots,b_{\ell-1}) a_\ell + a_1 \Big]
+ \frac12 \Big[\sum_{\ell=2}^{j}(a_1,\ldots,b_{\ell-1}) a_\ell + a_1 \Big] \otimes a_i'\,,
\end{align*}
\end{small}%
which is the desired result by \eqref{eq:Convenient-v2-a}. 
\end{proof}

\textbf{Step 5: The double bracket $\lr{a_i',a_j'}$.}
It remains to prove that $\lr{a_i',a_j'}$ takes the form \eqref{eq:Alt-double-bracket.a}, and it suffices to do so for $i\geq j$ due to the cyclic antisymmetry. 

Firstly, we consider the case when $i>j$. We write $a_j'$ using \eqref{Eq:Locgen2}. The case $j=1$ is just \eqref{Eq:a-prim-2}, while for $i>j>1$ we have that 
 \begin{small}
\begin{align*}
\lr{a_i',a_j'}=\dgal{a_i',\Big(a_j+(a_1,\ldots,b_{j-1})^{-1}(a_1,\ldots,a_{j-1})\Big)}\,.
\end{align*}
\end{small}%
This can be computed using \eqref{Eq:a-prim-2}, and Lemmae \ref{Lem:a-prim-2} and \ref{Lem:a-prim-3}, as 
 \begin{small}
\begin{align*}
=&\frac12 (a_i'\otimes a_j + a_j \otimes a_i')
+\frac12(a_1,\ldots,b_{j-1})^{-1} \Big( a_i' \otimes (a_1,\ldots,a_{j-1}) + (a_1,\ldots,a_{j-1}) \otimes a_i' \Big) \\
 &-\frac12(a_1,\ldots,b_{j-1})^{-1} \Big( a_i' \otimes (a_1,\ldots,b_{j-1}) - (a_1,\ldots,b_{j-1}) a_i' \otimes e_2 \Big) (a_1,\ldots,b_{j-1})^{-1}(a_1,\ldots,a_{j-1}) \\
=&\frac12 a_i' \otimes \Big[ a_j+(a_1,\ldots,b_{j-1})^{-1}(a_1,\ldots,a_{j-1}) \Big]
+\frac12 \Big[a_j+(a_1,\ldots,b_{j-1})^{-1}(a_1,\ldots,a_{j-1}) \Big]\otimes a_i' \\
=&\frac12( a_i' \otimes a_j' +  a_j' \otimes a_i')\,.
\end{align*}
\end{small}

When $i=j$, this is obviously zero if $j=1$ by \eqref{eq:Euler-double-bracket.a}. If $j> 1$ we use Lemma \ref{Lem:prep-4} to get $\lr{a_j',a_j}$ so that as in the previous case we can compute
 \begin{small}
\begin{align*}
\lr{a&_j',a_j'}
\\
=&\frac12 (a_j'\otimes a_j + a_j \otimes a_j') - a_j' \otimes a_j'\\
&+\frac12(a_1,\ldots,b_{j-1})^{-1} \Big( a_j' \otimes (a_1,\ldots,a_{j-1}) + (a_1,\ldots,a_{j-1}) \otimes a_j' \Big) \\
 &-\frac12(a_1,\ldots,b_{j-1})^{-1} \Big( a_j' \otimes (a_1,\ldots,b_{j-1}) - (a_1,\ldots,b_{j-1}) a_j' \otimes e_2 \Big) (a_1,\ldots,b_{j-1})^{-1}(a_1,\ldots,a_{j-1}) \\
=&\frac12 a_j' \otimes \Big[a_j+(a_1,\ldots,b_{j-1})^{-1}(a_1,\ldots,a_{j-1}) \Big]  \\
&+\frac12 \Big[a_j+(a_1,\ldots,b_{j-1})^{-1}(a_1,\ldots,a_{j-1}) \Big] \otimes a_j' - a_j' \otimes a_j' \,,
\end{align*}
\end{small}%
which is zero, as expected. So, $\lr{a'_j,a'_j}$ takes the form \eqref{eq:Alt-double-bracket.a}, and Proposition \ref{eq:Alt-double-bracket} follows.

 \subsection{Towards the quasi-bisymplectic form} \label{ss:qbisymp}

Working in full generalities as in \ref{sec:sec-double-quasi-Ham}, we consider a finitely generated unital algebra $A$ over $B=\oplus_{s\in I} \kk e_s$ and the $A$-bimodule $\Omega^1_B A$ of ($B$-relative) noncommutative differential $1$-forms. Following \cite{CQ95}, we put $\Omega_B A:=T_A \Omega^1_B A$ and note that $\du\colon A\to\Omega^1_B A$ can be extended to a differential on $\Omega_B A$. If $A$ is freely generated by elements $\{a_j\mid j\in J\}$, with $|J|<\infty$ and $a_j=e_{h_j} a_j e_{t_j}$, we can introduce the map 
\begin{equation}
 \tilde{\du}:=\sum_{j\in J}\du\! a_j\frac{\partial}{\partial a_j}:A\longrightarrow  A\otimes \Omega_B^1A\,, \quad 
 \tilde{\du}(b)= \sum_{j\in J}\left(\frac{\partial b}{\partial a_j}\right)'\otimes \du\! a_j\left(\frac{\partial b}{\partial a_j}\right)''\,,
\end{equation}
for the double derivations $\partial/\partial a_j\in \D_BA$ given by $\partial a_i/\partial a_j=\delta_{ij} \, e_{h_j}\otimes e_{t_j}$. 
Since we have that $A\otimes \Omega_B^1A\subset  \Omega_BA\otimes \Omega_BA$, we can decompose the differential as 
\begin{equation*}
 \du:A\stackrel{\tilde{\du}}{\longrightarrow} \Omega_BA\otimes \Omega_BA \stackrel{\tilde{\mc}}{\longrightarrow} \Omega_BA\,,
\end{equation*}
through the multiplication map $\tilde{\mc}$ on $\Omega_BA$. By iterating $\tilde{\du}$, we can introduce 
\begin{equation}
 \begin{aligned} \label{Eq:dtwo}
\Dtwo:=&\tilde{\mc}\circ (\du\otimes 1)\circ \tilde{\du}:A\longrightarrow  \Omega^2_BA \subset \Omega_BA\,,\\
\Dtwo(b)=&\sum_{i,j\in J} \left(\frac{\partial}{\partial a_i}\left(\frac{\partial b}{\partial a_j}\right)'\right)' \du\! a_i 
\left(\frac{\partial}{\partial a_i}\left(\frac{\partial b}{\partial a_j}\right)'\right)'' \du\! a_j\left(\frac{\partial b}{\partial a_j}\right)''\,.
 \end{aligned}
\end{equation}

The above construction can be applied to the algebra $\kk\overline{\Gamma}_n$, and after localisation to $\Bc(\Gamma_n)$. Using the double derivations given in \eqref{Eq:dder-ab}, we have 
\begin{equation*}
 \tilde{\du}=\sum_{j=1}^n \left( \du\! a_j\frac{\partial}{\partial a_j}+ \du\! b_j\frac{\partial}{\partial b_j}\right)\,.
\end{equation*}
In particular, we can define the map $\Dtwo:\Bc(\Gamma_n)\to \Omega_B^2(\Bc(\Gamma_n))$ by \eqref{Eq:dtwo}. 
Motivated by the construction of Paluba \cite[\S5.6]{Paluba} (see below), we introduce the element  
\begin{equation} \label{Eq:biform}
 \omega_n=\frac12 (b_n,\ldots,a_1)^{-1} \Dtwo(b_n,\ldots,a_1)
 - \frac12 (a_1,\ldots,b_n)^{-1} \Dtwo (a_1,\ldots,b_n)\,.
\end{equation}
Note that we use the two Euler continuants that are inverted in $\Bc(\Gamma_n)$ and which occur in the moment map $\Phi$, as defined in \eqref{eq:mult-moment-map-statement}. 

\begin{conj} \label{Conj:qbisymp}
 The triple $(\Bc(\Gamma_n),\Phi,\omega_n)$ is a quasi-bisymplectic algebra. Furthermore, the double quasi-Poisson bracket $\lr{-,-}$ given by \eqref{eq:Euler-double-bracket} is non-degenerate, and it is compatible with $\omega_n$. 
\end{conj}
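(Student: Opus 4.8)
The plan is to isolate non-degeneracy as the only substantial point. Suppose for a moment that the bivector $\PPn$ of Proposition \ref{prop:the-nc-bivector} is non-degenerate, i.e. the contraction map $\D_B\Bc(\Gamma_n)\to\Omega^1_B\Bc(\Gamma_n)$ it induces is an isomorphism of $\Bc(\Gamma_n)$-bimodules. Then, since $(\Bc(\Gamma_n),\lr{-,-},\Phi)$ is a Hamiltonian double quasi-Poisson algebra by Theorem \ref{thm:Euler-quasi-Hamiltonian-algebra}, Van den Bergh's correspondence between non-degenerate double quasi-Poisson brackets and quasi-bisymplectic forms \cite[\S8]{VdB2} produces a unique $2$-form $\omega\in\Omega^2_B(\Bc(\Gamma_n))$ corresponding to $\PPn$ such that $(\Bc(\Gamma_n),\Phi,\omega)$ is quasi-bisymplectic; moreover $\lr{-,-}$ is compatible with $\omega$ by construction, the gauge terms matching because the triple bracket \eqref{eq:triple-bracket-E3} and the noncommutative Cartan $3$-form of $\Phi$ carry the same normalisation constant in \cite{VdB1,VdB2}. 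So everything reduces to: (A) proving non-degeneracy of $\PPn$ on $\Bc(\Gamma_n)$; and (B) identifying the resulting $\omega$ with the explicit $\omega_n$ of \eqref{Eq:biform}.

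\textbf{Non-degeneracy.} For (A), I would first localise. By Theorem \ref{Thm:Factorisation} the algebra $\Bloc$ is the fusion of $n$ copies of $\Ac(\Gamma_1)$; Van den Bergh proved $\Ac(\Gamma_1)$ is quasi-bisymplectic, and fusion of quasi-bisymplectic algebras is again quasi-bisymplectic (the noncommutative analogue of the fusion of quasi-Hamiltonian manifolds \cite{AMM98}), so $\PPn$ becomes non-degenerate after inverting the Euler continuants \eqref{Eq:invLocB}. This is not yet enough: localisation is flat but can turn a non-isomorphism into an isomorphism, so the cokernel of $\D_B\Bc(\Gamma_n)\to\Omega^1_B\Bc(\Gamma_n)$ could a priori be a nonzero torsion bimodule supported on the vanishing locus of \eqref{Eq:invLocB}. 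To exclude this I would carry out the contraction directly over $\Bc(\Gamma_n)$: compute in closed form the pairings $\iota_{\partial/\partial a_i}\Dtwo(a_1,\ldots,b_n)$, $\iota_{\partial/\partial b_i}\Dtwo(a_1,\ldots,b_n)$ and their analogues for $(b_n,\ldots,a_1)$, using that $\Dtwo$ is built from the double derivatives via \eqref{Eq:dtwo}; the recursive identities \eqref{eq:Convenient-formula-Euler-cont-a}--\eqref{eq:Convenient-formula-Euler-cont-b} turn these into telescoping sums of Euler continuants, and Lemma \ref{lem:curious-identity} is exactly what matches the two halves of \eqref{Eq:biform}. From these one reads off the $2n\times 2n$ matrix of contractions $\iota_{\partial/\partial x}\iota_{\partial/\partial y}\omega_n$ with $x,y\in\{a_i,b_i\}$ and checks that its inverse, over $\Bc(\Gamma_n)$ where only the two continuants appearing in $\Phi$ are inverted, is the coefficient matrix of $\PPn$; the telescoping is precisely what makes this inverse expressible using only those two inversions.

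\textbf{Identifying the form.} For (B), the same contraction computation simultaneously shows that the $2$-form transported from $\PPn$ equals $\omega_n$ as written in \eqref{Eq:biform}. Alternatively one may localise and compare: the fused $2$-form on $\Bloc$ coming from $n$ copies of the one-arrow form is, after the change of variables \eqref{Eq:Locgen2}--\eqref{Eq:Locgen3} and the factorisations \eqref{Eq:contInd}, exactly the localisation of \eqref{Eq:biform} (this is Paluba's form \cite[\S5.6]{Paluba}); since $\Bc(\Gamma_n)\hookrightarrow\Bloc$ and $\omega_n$ already lies in $\Omega^2_B(\Bc(\Gamma_n))$, the identity $\omega=\omega_n$ holds on $\Bc(\Gamma_n)$. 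Once $\omega=\omega_n$, the quasi-bisymplectic axioms --- the cubic relation $\du\omega_n=\tfrac16\,\Theta_\Phi$, with $\Theta_\Phi$ the noncommutative Cartan $3$-form built from $\Phi^{-1}\du\Phi$, together with the moment map relation --- hold automatically, and non-degeneracy and compatibility of \eqref{eq:Euler-double-bracket} with $\omega_n$ are what was established in step (A).

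\textbf{Main obstacle.} The crux is non-degeneracy \emph{on $\Bc(\Gamma_n)$ itself} rather than merely on its localisation $\Bloc$: the fusion argument only settles the localised algebra, and the torsion-cokernel issue forces the explicit matrix inversion above, which is where the fine combinatorics of Euler continuants (the telescoping of \eqref{eq:Convenient-formula-Euler-cont-a}--\eqref{eq:Convenient-formula-Euler-cont-b} and the identity of Lemma \ref{lem:curious-identity}) enters essentially. A secondary, purely bookkeeping obstacle is tracking the normalisation constant in the cubic relation so that it agrees with the one in the quasi-Poisson triple bracket \eqref{eq:triple-bracket-E3}; this can be cross-checked at the level of representation spaces against Boalch's quasi-bisymplectic form, in the spirit of \ref{ss:qbisymp}. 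Modulo these computations, following the template of \cite[\S8]{VdB2}, Conjecture \ref{Conj:qbisymp} follows.
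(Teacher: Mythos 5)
This statement is a \emph{conjecture} in the paper, not a theorem: the authors explicitly write ``Since we are not going to prove this result here\dots'' and confine themselves to (i)~recording in a footnote that the $n=1$ case is known (non-degeneracy by Van den Bergh \cite[\S8.3]{VdB2}, the rest by Bozec--Calaque--Scherotzke \cite{BCS} via relative Calabi--Yau structures), and (ii)~proving the geometric consequence over $\kk=\CC$ at the level of representation spaces, namely Proposition~\ref{Pr:RepCorrespond}, which matches $\tr(\omega_{\Rep})$ with Boalch's quasi-Hamiltonian $2$-form and shows it corresponds to $\tr(\PP_{\Rep})$. There is therefore no proof of Conjecture~\ref{Conj:qbisymp} in the paper against which to compare your argument.

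Your proposal is best read as a roadmap rather than a proof, and you acknowledge as much. You correctly isolate the genuine difficulty: non-degeneracy must be established on $\Bc(\Gamma_n)$ itself, where only the two continuants $(a_1,\ldots,b_n)$ and $(b_n,\ldots,a_1)$ are inverted, and not merely on the further localisation $\Bloc$, where it is essentially automatic by Theorem~\ref{Thm:Factorisation} and fusion. You also correctly flag the cokernel issue (torsion supported on the vanishing locus of the intermediate continuants~\eqref{Eq:invLocB}). However, the crux --- computing the $2n\times 2n$ matrix of contractions of $\omega_n$ in closed form, inverting it over $\Bc(\Gamma_n)$, and matching the inverse with the coefficients of $\PPn$ --- is announced but not executed. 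No closed-form inverse is produced, and the claim that the telescoping in~\eqref{eq:Convenient-formula-Euler-cont-a}--\eqref{eq:Convenient-formula-Euler-cont-b} and Lemma~\ref{lem:curious-identity} suffice to express that inverse using only the two invertible continuants is a hope, not a verification. Similarly, the identification $\omega=\omega_n$ via Paluba's factorised form is asserted rather than checked; the alternative route through $\Bloc$ only shows equality after localisation, and one would still need to argue that equality in $\Omega^2_B(\Bloc)$ implies equality in $\Omega^2_B(\Bc(\Gamma_n))$ (injectivity of the localisation map on relative $2$-forms is plausible here but is an additional point to establish). In short, your outline reproduces the shape of an argument that would prove the conjecture, but the missing computations are precisely what makes this a conjecture in the paper. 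One alternative worth noting: for $n=1$ the statement was proved not by direct contraction but by exhibiting a relative Calabi--Yau structure \cite{BCS}; generalising \emph{that} mechanism to $\Gamma_n$ (e.g., a relative Calabi--Yau structure on the Boalch algebra viewed over $B$) would be a genuinely different and possibly more robust route than the matrix inversion you propose.
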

Since we are not going to prove this result here\footnote{In the case $n=1$, Conjecture \ref{Conj:qbisymp} holds. 
Non-degeneracy of the double quasi-Poisson bracket was proved by Van den Bergh \cite[\S8.3]{VdB2}, while the other parts of the conjecture were recently shown by Bozec, Calaque and Scherotzke using the theory of relative Calabi-Yau structures, see  \cite[\S5]{BCS} with $e=a_1$, $e^\ast=b_1$.}, the precise definitions (of quasi-bisymplectic algebra, non-degeneracy, or compatibility) are omitted and the reader can find them in \cite{VdB2}. 
Let us nevertheless remark that by \cite[Proposition 6.1]{VdB2}, if the conjecture holds, we will have that $\omega_n$ turns any representation space  $\Rep(\overline{\Gamma}_n,(d_1,d_2))$ (with its natural $\Gl_{d_1}(\kk)\times \Gl_{d_2}(\kk)$-action)  into a quasi-Hamiltonian space in the sense of \cite{AMM98}. Moreover, the induced $2$-form will be compatible with the quasi-Poisson bracket on $\Rep(\overline{\Gamma}_n,(d_1,d_2))$ induced by the \emph{double} quasi-Poisson bracket from Theorem \ref{Thm:MAIN}, which will be non-degenerate by \cite[\S6--7]{VdB2}. We should emphasise that, in the previous sentence, compatibility and non-degeneracy are meant in the geometric sense of \cite{AKSM02}, not at the noncommutative/associative algebra level stated in the conjecture. 
To strengthen Conjecture \ref{Conj:qbisymp}, we explain how the induced results on $\Rep(\overline{\Gamma}_n,(d_1,d_2))$ hold in the \emph{particular case} of $\kk=\CC$ thanks to the factorisation presented in this section. 

Fix $\kk=\CC$ and $d_1,d_2\geq 1$. Let $\PP_{\Rep}$ and $\omega_{\Rep}$ be the (matrix) bivectors and $2$-forms induced on $\Rep(\overline{\Gamma}_n,(d_1,d_2))$ by $\PPn$ and $\omega_n$, respectively. Their explicit expressions can be obtained using the formalism of \cite[\S7]{VdB1}.

\begin{prop} \label{Pr:RepCorrespond}
 The $2$-form $\tr(\omega_{\Rep})$ is the $2$-form from the quasi-Hamiltonian structure on $\Rep(\overline{\Gamma}_n,(d_1,d_2))$ constructed by Boalch in \cite{Bo14} (through the identification with the fission variety $\mathcal{B}^{n+1}(\CC^{d_1},\CC^{d_2})$). Furthermore, $\tr(\omega_{\Rep})$ corresponds to the quasi-Poisson bivector $\tr(\PP_{\Rep})$. 
\end{prop}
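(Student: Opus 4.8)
The plan is to reduce the statement on $\Rep(\overline{\Gamma}_n,(d_1,d_2))$ to the factorisation established in Section \ref{sec:factorisation-after-localiz}, together with the known $n=1$ case from Van den Bergh and the functorial behaviour of the relevant structures under fusion. First I would recall that by Remark \ref{Rem:factoris} the representation space $\Rep(\Ac(\Gamma_n),(d_1,d_2))$ is an open dense subvariety of $\Rep(\Bc(\Gamma_n),(d_1,d_2))$, so it suffices to identify the two $2$-forms (and the two bivectors) on this dense open locus; moreover, after the further localisation \eqref{Eq:invLocB} we have by Theorem \ref{Thm:Factorisation} an isomorphism $\psi:\Bloc \xrightarrow{\sim} \Afus$ of Hamiltonian double quasi-Poisson algebras, and $\Afus$ is by construction (Remark \ref{Rem:RelVDB}) the fusion of $n$ copies of $\Bc(\Gamma_1)=\Ac(\Gamma_1)$. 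On representation spaces, fusion of idempotents corresponds exactly to the operation of fusion of quasi-Hamiltonian $\Gl$-spaces in the sense of \cite{AKSM02} (this is the content of \cite[\S7.13]{VdB1} and \cite[\S5]{AKSM02}), and both the $2$-form $\tr(\omega_{\Rep})$ and the bivector $\tr(\PP_{\Rep})$ transform under this operation by the universal fusion formulae.

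Second, I would invoke the $n=1$ base case: for $\Ac(\Gamma_1)$ the statement is exactly \cite[Proposition 8.3.1]{VdB2} (and its $2$-form incarnation), namely that $\tr(\omega_1)$ on $\Rep(\overline{\Gamma}_1,(d_1,d_2))=\Bc^{\operatorname{VdB}}$ is the quasi-Hamiltonian $2$-form of the Van den Bergh space, which in turn is Boalch's $2$-form on the fission space $\mathcal{B}^2(\CC^{d_1},\CC^{d_2})$, and that it is compatible with the corresponding quasi-Poisson bivector. Then the key step is to match the two ways of building the $n$-arrow object: on Boalch's side, the fission space $\mathcal{B}^{n+1}(\CC^{d_1},\CC^{d_2})$ is obtained by fusing $n$ copies of $\mathcal{B}^2(\CC^{d_1},\CC^{d_2})$ with the standard fusion $2$-form correction (this is precisely how $\mathcal{B}^{n+1}$ is constructed in \cite{Bo14}, and is spelled out in \cite[Ch.~5]{Paluba}, in particular \cite[\S5.6]{Paluba} for the $2$-form \eqref{Eq:biform}); on our side, $\tr(\omega_{\Rep})$ for $\Bloc$ is obtained by fusing the $n$ copies of $\tr(\omega_1)$ with the Van den Bergh fusion correction, which under the correspondence with the geometric picture is the same correction term. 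Since the two $2$-forms agree on each building block and the gluing recipes coincide, they agree on the fused space; the same argument applied verbatim to bivectors gives that $\tr(\PP_{\Rep})$ is the Boalch quasi-Poisson bivector, and compatibility is preserved under fusion by \cite[\S5]{AKSM02}. Finally, I would transport the identity back from $\Rep(\Bloc,(d_1,d_2))$ to $\Rep(\Bc(\Gamma_n),(d_1,d_2))$ using density of the locus where the Euler continuants \eqref{Eq:invLocB} are invertible and continuity of both sides.

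The main obstacle I anticipate is the bookkeeping needed to show that the change of generators \eqref{Eq:Locgen2}--\eqref{Eq:Locgen3} underlying the isomorphism $\psi$ induces, at the level of representation spaces, precisely Boalch's factorisation of the fission space $\mathcal{B}^{n+1}$ into a product of $\mathcal{B}^2$'s — i.e.\ that the coordinate change $a_\ell' = a_\ell + (a_1,\ldots,b_{\ell-1})^{-1}(a_1,\ldots,a_{\ell-1})$ matches the substitution implicit in Boalch's iterated fusion (through the Gauss-decomposition/Stokes-matrix parametrisation \eqref{eq:Boalch-space-3}). This requires carefully comparing \eqref{eq:B-hat-equiv-param}, the explicit form of the Van den Bergh/Boalch $2$-form in \cite[\S5.6]{Paluba}, and the induced form $\tr(\omega_{\Rep})$ from \eqref{Eq:biform} via the formalism of \cite[\S7]{VdB1}; once the dictionary between the two parametrisations is fixed, the compatibility of fusion corrections is formal. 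A secondary, more routine difficulty is writing down $\tr(\PP_{\Rep})$ and $\tr(\omega_{\Rep})$ explicitly enough via \cite[\S7]{VdB1} to check the non-degeneracy pairing, but this is exactly parallel to the $n=1$ computation in \cite[\S8.3]{VdB2} and can be organised block-diagonally along the fusion decomposition.
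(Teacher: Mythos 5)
Your strategy for the second part of the proposition (the correspondence between $\tr(\omega_{\Rep})$ and $\tr(\PP_{\Rep})$) is essentially the paper's: restrict to the dense open locus $M^{\mathrm{inv}}$ where the intermediate continuants \eqref{Eq:invLocB} are invertible, observe via Theorem \ref{Thm:Factorisation} that on this locus both structures arise by fusing $n$ copies of the $n=1$ case, invoke the Van den Bergh correspondence for $n=1$, and extend globally by holomorphicity. For the first part, though, the paper takes a shorter and more direct route than the one you propose: instead of reconstructing Boalch's $2$-form by iterated fusion of $n$ copies of $\mathcal{B}^2$ and then passing to a dense-limit argument, it simply evaluates $\tr(\omega_{\Rep})$ from \eqref{Eq:biform} in the matrix variables $A_i,B_i$, performs the relabelling \eqref{Eq:bifact}, and observes that the resulting expression \eqref{Eq:biform-C} is \emph{verbatim} Paluba's formula for the quasi-Hamiltonian $2$-form on $\mathcal{B}^{n+1}(\CC^{d_1},\CC^{d_2})$ in \cite[Theorem 5.6.1]{Paluba}. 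This sidesteps entirely the bookkeeping you flag as the main obstacle (matching the change of generators \eqref{Eq:Locgen2}--\eqref{Eq:Locgen3} with Boalch's iterated factorisation map $f_R$): that comparison is only needed in the paper's proof of the second part, and even there it is relegated to a footnote. Your fusion route should in principle work for the $2$-form too, but requires the extra density step and the full dictionary between $\psi$ and $f_R$; the direct comparison buys a cleaner global statement with no recourse to continuity. One small imprecision in your write-up: $\mathcal{B}^{n+1}$ is \emph{not} obtained by fusing $n$ copies of $\mathcal{B}^2$; as Remark \ref{Rem:factoris} emphasises, $\mathcal{B}^{n+1}$ is a partial compactification of that fused space, which is only dense open in it. Your closing density transport implicitly compensates, so this is a phrasing slip rather than a logical gap, but it is worth stating correctly since the whole point of the Boalch algebra is to capture the compactification.
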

\begin{proof}
 Let $A_i\in \Mat_{d_2\times d_1}(\CC)$, $B_i\in\Mat_{d_1\times d_2}(\CC)$ be the matrix-valued functions on $\Rep(\overline{\Gamma}_n,(d_1,d_2))$ returning the elements representing $a_i,b_i$ for any $1\leq i \leq n$. Then, we can write from \eqref{Eq:biform} that 
 \begin{equation} \label{Eq:biform-B}
\begin{aligned}
 \tr(\omega_{\Rep})=&\frac12 \tr\Big( (B_n,\ldots,A_1)^{-1} \Dtwo(B_n,\ldots,A_1) \Big) \\
 &- \frac12 \tr\Big( (A_1,\ldots,B_n)^{-1} \Dtwo (A_1,\ldots,B_n) \Big)\,, 
\end{aligned}
\end{equation}
where the matrix $2$-forms occurring on the right-hand side are given by 
$$\Dtwo C_1 \cdots C_m=\sum_{k<k'} C_1 \cdots C_{k-1} (dC_k) C_{k+1} \cdots C_{k'-1} (d C_{k'})C_{k'+1} \cdots C_m\,,$$
with matrix $1$-forms $(dC_j)$ defined in the obvious way. Upon relabelling the matrices as 
\begin{equation} \label{Eq:bifact}
 \mathrm{b}_\ell=\left\{ 
\begin{array}{cc}
A_{(2n-\ell+2)/2} & \text{ for }\ell \text{ even}, \\
B_{(2n-\ell+1)/2}& \text{ for } \ell \text{ odd},
\end{array}
\right. \qquad \ell=1,\ldots,2n\,, 
\end{equation}
(these are \emph{not} the elements $b_j\in \Bc(\Gamma_n)$), we have that 
 \begin{equation} \label{Eq:biform-C}
\begin{aligned}
 \tr(\omega_{\Rep})=&\frac12 \tr\Big( (\mathrm{b}_1,\ldots,\mathrm{b}_{2n})^{-1} \Dtwo(\mathrm{b}_1,\ldots,\mathrm{b}_{2n}) \Big) \\
 &- \frac12 \tr\Big( (\mathrm{b}_{2n},\ldots,\mathrm{b}_1)^{-1} \Dtwo (\mathrm{b}_{2n},\ldots,\mathrm{b}_1) \Big)\,. 
\end{aligned}
\end{equation}
Using the $2n$ matrices $(\mathrm{b}_\ell)$, the space $\Rep(\overline{\Gamma}_n,(d_1,d_2))$ can be directly identified with the higher fission space $\mathcal{B}^{n+1}(\CC^{d_1},\CC^{d_2})$  using the parametrisation given by Paluba in \cite[Chapter 5]{Paluba}. 
Furthermore, the formula \eqref{Eq:biform-C} for $\tr(\omega_{\Rep})$ exactly matches\footnote{There is an overall sign that changes. This is a consequence of the factorisation \eqref{Eq:bifact} which sends our moment map to the inverse of the one in \cite{Paluba}.} the quasi-Hamiltonian $2$-form on $\mathcal{B}^{n+1}(\CC^{d_1},\CC^{d_2})$ from \cite{Bo14} in the form written by Paluba in \cite[Theorem 5.6.1]{Paluba}. (In the case $n=1$, the $2$-form on $\mathcal{B}^{2}(\CC^{d_1},\CC^{d_2})$ is already written in that way in \cite[Theorem 2.9]{Bo14}).

For $n=1$, $\tr(\PP_{\Rep})$ is Van den Bergh's quasi-Poisson bivector \cite{VdB1}, which corresponds to $\tr(\omega_{\Rep})$ \cite{VdB2,Ya}, establishing the second part of the statement in that case. If $n\geq 2$, note that the factorisation  from Theorem \ref{Thm:Factorisation} yields a dense embedding of Hamiltonian quasi-Poisson manifolds 
\begin{equation}
\underset{1\leq i \leq n}{\times} \Rep(\overline{\Gamma}_1,(d_1,d_2)) 
 \simeq \Rep(\Afus,(d_1,d_2)) 
 \hookrightarrow \Rep(\overline{\Gamma}_n,(d_1,d_2)) \,,
\end{equation}
where the Hamiltonian quasi-Poisson structure on the left is obtained by fusion. Denote by $M^{\textrm{inv}} \subset \Rep(\overline{\Gamma}_n,(d_1,d_2))$ this dense subspace. 
Thus, when restricted to $M^{\textrm{inv}}$, $\tr(\PP_{\Rep})$ is obtained by fusion from $n$ copies of the $n=1$ case. 
We get from \cite[Theorem 5.6.1]{Paluba} that  $\tr(\omega_{\Rep})$ admits the same factorisation\footnote{The fact that the factorisation \eqref{Eq:Locgen2} is equivalent (at the level of representation spaces) to performing inductively the right factorisation map $f_R:\mathcal{B}^{\ell+1}\times \mathcal{B}^2\to \mathcal{B}^{\ell+2}$ of \cite[\S5.5]{Paluba} for $\ell=1,\ldots,n-1$ can be shown after some minor computations using the parametrisation \eqref{Eq:bifact}.} on $M^{\textrm{inv}}$ in terms of $n$ copies of the $n=1$ case. 
As we recalled, the correspondence of the bivector and the $2$-form is already known on $\Rep(\overline{\Gamma}_1,(d_1,d_2))$, so that 
$\tr(\PP_{\Rep})$ and $\tr(\omega_{\Rep})$ correspond to one another on $M^{\textrm{inv}}$. 
Being in correspondence (see \cite[Lemma 10.2]{AKSM02}) means that the composite 
\begin{equation*}
 \tr(\PP_{\Rep})^\sharp \circ \tr(\omega_{\Rep})^\flat: T\Rep(\overline{\Gamma}_n,(d_1,d_2))\to T\Rep(\overline{\Gamma}_n,(d_1,d_2))\,,
\end{equation*}
coincides with a specific holomorphic function when restricted to $TM^{\textrm{inv}}$. 
By holomorphicity, these functions must then coincide globally, so $\tr(\PP_{\Rep})$ and $\tr(\omega_{\Rep})$ correspond to one another globally.   
\end{proof}

  \end{document}